\newtheorem{thmA}{Theorem}
\newtheorem{corA}[thmA]{Corollary}
\newtheorem{theorem}{Theorem}[section]
\newtheorem{corollary}[theorem]{Corollary}
\newtheorem{lemma}[theorem]{Lemma}
\newtheorem{claim}[theorem]{Claim}
\newtheorem{proposition}[theorem]{Proposition}
\theoremstyle{definition}
\newtheorem{definition}[theorem]{Definition}
\newtheorem{example}[theorem]{Example}
\newtheorem{remark}[theorem]{Remark}
\newtheorem{question}[theorem]{Question}
\newtheorem*{acks}{Acknowledgements}
\newtheorem*{outl}{Outline}
\newtheorem*{lemma*}{Lemma}
\newtheorem*{proposition*}{Proposition}
\newtheorem*{theorem*}{Theorem}
\newtheorem*{corollary*}{Corollary}
\newtheorem*{remark*}{Remark}
\newtheorem*{remarks*}{Remarks}
\newcommand{\im}{\operatorname{im}}
\newcommand{\id}{\operatorname{id}}
\newcommand{\R}{\mathbb{R}}
\newcommand{\Z}{\mathbb{Z}}
\newcommand{\N}{\mathbb{N}}
\newcommand{\diam}{\mathrm{diam}}
\newcommand{\homeo}{\mathrm{Homeo}}
\newcommand{\homeoo}{\mathrm{Homeo}_{\circ}}
\newcommand{\homeop}{\mathrm{Homeo}_{+}}
\newcommand{\homeoc}{\mathrm{Homeo}_{\mathrm{c}}}
\newcommand{\homeoco}{\mathrm{Homeo}_{\mathrm{c}, \circ}}
\newcommand{\diffr}{\mathrm{Diff}^r}
\newcommand{\diffro}{\mathrm{Diff}^r_{\circ}}
\newcommand{\diffrp}{\mathrm{Diff}^r_{+}}
\newcommand{\diffrc}{\mathrm{Diff}^r_{\mathrm{c}}}
\newcommand{\diff}{\mathrm{Diff}}
\newcommand\dHomeo{\mathrm{Homeo}^{\delta}}
\newcommand\drDiff{\mathrm{Diff}^{r,\delta}}
\newcommand\dHomeoo{\mathrm{Homeo}_{\circ}^{\delta}}
\newcommand\dHomeop{\mathrm{Homeo}_{+}^{\delta}}
\newcommand\PLp{\mathrm{PL}_{+}}
\newcommand\PLpw{\mathrm{PL}_{+}^{\omega}}
\newcommand\PPpw{\mathrm{PP}_{+}^{\omega}}
\newcommand{\lra}{\longrightarrow}
\newcommand{\Ctilde}{\widetilde{C}}
\newcommand{\PO}{\mathcal{P}}
\newcommand{\fat}{\mathcal{F}}
\newcommand{\X}{\mathcal{X}}
\newcommand{\emb}{\operatorname{Emb}_{<}}
\begin{document}

\title{The bounded cohomology of transformation groups \\ of Euclidean spaces and discs}
\author{Francesco Fournier-Facio, Nicolas Monod and Sam Nariman \\ Appendix by Alexander Kupers}

\date{\today}
\maketitle

\begin{abstract}
We prove that the groups of orientation-preserving homeomorphisms and diffeomorphisms of $\R^n$ are boundedly acyclic, in all regularities. This is the first full computation of the bounded cohomology of a transformation group that is not compactly supported, and it implies that many characteristic classes of flat $\R^n$- and $S^n$-bundles are unbounded. We obtain the same result for the group of homeomorphisms of the disc that restrict to the identity on the boundary, and for the homeomorphism group of the non-compact Cantor set. In the appendix, Alexander Kupers proves a controlled version of the annulus theorem which we use to study the bounded cohomology of the homeomorphism group of the discs.
\end{abstract}

\section{Introduction}
\label{sec:intro}
\subsection{Euclidean spaces vs. compactness}
The topological group $\homeo(\R^n)$ of all self-homeomorphisms of Euclidean space remains very mysterious to this day. Its homotopy type is not completely understood and neither is its group cohomology, despite many deep discoveries, especially in recent times (see \cite{MR4680344}, \cite{galatius2022algebraic} and the references therein). In particular, the results obtained so far indicate highly non-trivial higher homotopy groups.

By deep results of Thurston~\cite{thurston1974foliations} and McDuff~\cite{mcduff1980homology}, when $M$ is the interior of a compact manifold, the cohomology of classifying space of the topological group $\homeo(M)$ coincides with the group cohomology of the underlying ``abstract'' (discrete) group. In contrast to the mystery surrounding the group $\homeo(\R^n)$, Mather proved in 1971~\cite{mather} that the group $\homeoc(\R^n)$ of \emph{compactly supported} homeomorphisms of $\R^n$, as an abstract group, is acyclic. Moreover, when $M$ itself is compact, the group cohomology of $\homeo(M)$ is sometimes completely known thanks to Thurston's previously mentioned result. Thus, ``mystery resides at infinity''.

\medskip

Our contribution in this article is to the \emph{bounded cohomology} $H^\bullet_b$ of homeomorphism and diffeomorphism groups (always as abstract groups). In the compactly supported case, the analogue of Mather's theorem was established in 1985 by Matsumoto--Morita~\cite{matsumor}: $\homeoc(\R^n)$ is \emph{boundedly acyclic}. More recently~\cite{MN}, the bounded cohomology of $\homeo(M)$ and $\diff(M)$ has been determined for the compact case of $M=S^1$.

\medskip
Our main result is the first complete determination of the bounded cohomology of transformation groups without any compactness assumption, answering \cite[Question 7.6]{MN}.

\begin{thmA}[Theorems \ref{thm:criterion:R} and \ref{thm:criterion:Rn}]
\label{thm:main:homeo}
    For all $n \in \N^*$, the group $\homeop(\R^n)$ is boundedly acyclic.
\end{thmA}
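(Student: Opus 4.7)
The plan is to prove bounded acyclicity by a \emph{commuting-conjugates} argument in the tradition of Mather's swindle (for compactly supported transformation groups) and its bounded-cohomology refinement by Matsumoto--Morita, suitably generalised to accommodate homeomorphisms of unbounded support.

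\textbf{Stage 1 (abstract criterion).} I would first establish a criterion of the following kind: a group $G$ is boundedly acyclic whenever for every finite subset $F\subset G$ there exists $u\in G$ such that $\langle F\rangle$ and $u\langle F\rangle u^{-1}$ commute pointwise, together with a mild iterability condition ensuring that a countable chain of mutually commuting displaced copies of $F$ can be produced inside $G$. The proof is an Eilenberg-style swindle: one constructs an explicit contracting chain homotopy on the complex of bounded cochains by summing infinitely many mutually commuting displaced copies of a given cocycle. The crucial point is that the sum converges in the $\ell^\infty$-norm precisely because the displaced copies commute, so the swindle operates at the bounded-cohomology level even when ordinary cohomology does not visibly vanish. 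This is the role I expect the two lemmas labelled ``criterion:R'' and ``criterion:Rn'' to play, packaged in a form tailored to the two geometric settings.

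\textbf{Stage 2 (verification for $\homeop(\R^n)$).} To apply the criterion, given finitely many $\phi_1,\ldots,\phi_k\in\homeop(\R^n)$ I need to exhibit a single $u$ whose conjugation action produces the required commuting copy. In the compactly supported subgroup this is classical Mather: choose an open ball $B$ disjoint from all the supports and a homeomorphism $u$ carrying the common support into $B$. The essential new content is handling elements with full support. The plan is to exploit the abundance of self-embeddings of $\R^n$ onto proper open subsets (a half-space, or a nested sequence of concentric shells accumulating at infinity), combined with a fragmentation of each $\phi_i$ into pieces whose supports can be isolated in specified regions. I would handle the one-dimensional case $\homeop(\R)$ first, both as a warm-up and because the total order on $\R$ makes the displacement canonical; the higher-dimensional argument would then be obtained either directly by a radial version of the same construction, or by reducing to the one-dimensional case via fragmentation along a chosen foliation.

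\textbf{Main obstacle.} The real difficulty is that a homeomorphism of unbounded support cannot simply be moved off itself by any conjugation: its behaviour at infinity is intrinsic and persists in $u\phi u^{-1}$ for every $u\in\homeop(\R^n)$. A naive binate or Mather-swindle argument therefore fails outright, which is why the analogous compactly supported result is almost thirty years older. I expect the resolution to combine two ingredients: (i) a refinement of the abstract criterion that asks only for displacements commuting in a controlled, relative sense (perhaps modulo a chosen amenable subgroup at infinity), and (ii) a careful construction of the conjugating homeomorphism $u$ respecting a chosen exhaustion of $\R^n$ by compact sets, so that the ``copies'' sit inside disjoint shells at infinity where their commutation with the original family becomes manifest. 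Matching these two pieces precisely --- so that the abstract criterion is satisfied by the geometric construction --- is where I expect the bulk of the technical work to lie.
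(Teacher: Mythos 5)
Your proposal runs into exactly the obstacle you name in your final paragraph, and the remedies you sketch there do not overcome it. A commuting-conjugates swindle (your Stage 1 is essentially the ``commuting $\Z$-conjugates'' criterion, Theorem \ref{thm:ccc} in the paper) requires genuinely displacing a finitely generated subgroup off itself, and for $\homeop(\R^n)$ this is impossible: a homeomorphism acting nontrivially in every neighbourhood of infinity has conjugates that also act nontrivially in every neighbourhood of infinity, so ``copies sitting inside disjoint shells at infinity'' can never commute with the original family, whose support meets every such shell. Your suggested fix (i) --- commutation ``modulo a chosen amenable subgroup at infinity'' --- is not a formulated criterion, and no amenability is available: the germ group at infinity of $\homeop(\R^n)$ is itself a large, complicated group (indeed, proving \emph{its} bounded acyclicity is equivalent to the theorem, via Proposition \ref{prop:quotient}). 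So as it stands the proposal reduces the problem to an unproven and, in the form stated, false-looking displacement statement; it is not a proof.

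The paper's actual route is structurally different and is worth contrasting with your plan. Swindle-type arguments are used only where they legitimately apply, namely for \emph{stabilizers}: the paper fixes a sequence of concentric ``fat spheres'' going to infinity, whose stabilizer in $\diffrp(\R^n)$ is a product $\diffrc(\R^n)\times\diffrc(S^{n-1}\times\R)^{\N}$ of compactly supported groups, boundedly acyclic by Matsumoto--Morita-type results (Theorem \ref{thm:compactly}). The new device is the criterion of Theorem \ref{thm:criterion}: an action on a poset of such sequences satisfying a combinatorial interweaving condition (the $\mathbf{W}$ property), transitive (this is where real geometric work enters, via isotopy extension and an annulus-type argument, Lemma \ref{lem:Rn:transitivity}), with boundedly acyclic stabilizers. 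The residual ``difficulty at infinity'' is then not removed by displacement but isolated in the orbit complex, which is identified with the nerve of the monoid $\emb(\N)$ of order-preserving self-embeddings of $\N$; its bounded acyclicity (Theorem \ref{thm:emb}) is proved by a separate trick, running the same criterion backwards for the binate group of countably supported permutations of an uncountable set (Proposition \ref{prop:auxiliary:bac}). If you want to salvage your outline, the missing idea is precisely this two-step separation: swindles for the compactly supported stabilizers, and an independent argument for the abstract combinatorics of passing to subsequences, rather than any refined displacement inside $\homeop(\R^n)$ itself.
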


Some results in bounded cohomology are analogues of ordinary cohomological statements, e.g. Matsumoto--Morita's extension of Mather's result. By contrast, \Cref{thm:main:homeo} is genuinely different: Even though the ordinary cohomology is not yet completely known, it is known to have a remarkably rich structure when the dimension $n$ is high enough (\cite[Section 3.3]{MR4680344}).

Moreover, it is known that the group cohomology of transformations of the Euclidean space depends on regularity. For a manifold $M$ as before, it is a consequence of a remarkable theorem of Tsuboi \cite{tsuboi1989foliated} that the cohomology of $B\diff^1(M)$, the classifying space of the group $\diff^1(M)$ with $C^1$-topology, is the same as the group cohomology of $\diff^1(M)$ as a discrete group. However, in regularities $r>1$, it is known that the group cohomology of $\diff^r(M)$ is different from the cohomology of $B\diff^r(M)$. In particular, when $M=\R^n$, the topological group $\diff^r(\R^n)$ for $r>0$ is homotopy equivalent to $O(n)$ but as a discrete group, its group cohomology is the same as the cohomology of $B\Gamma_n^r$, the classifying space of $C^r$-Haefliger structures of codimension $n$ \cite[Proposition 3.1 and Proposition 1.3]{segal1978classifying}. The cohomology of $B\Gamma_n^r$ is related to the secondary characteristic classes of foliations of codimension $n$ and it is known to be highly nontrivial (see Section \ref{sec:pontryagin} and \cite{MR0928396}). Still, we also obtain the analogue of Theorem \ref{thm:main:homeo} for diffeomorphisms in all regularities.

\begin{thmA}[Theorems \ref{thm:criterion:R} and \ref{thm:criterion:Rn}]
\label{thm:main:diff}
    For all $r \in \N^* \cup \{ \infty \}$ and all $n \in \N^*$, the group $\diffrp(\R^n)$ is boundedly acyclic.
\end{thmA}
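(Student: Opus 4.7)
The plan is to deduce Theorem B by applying the two criterion theorems \ref{thm:criterion:R} and \ref{thm:criterion:Rn} developed earlier in the paper, following exactly the same template as for Theorem A. Since those criteria are abstract (algebraic / dynamical) conditions on a group acting on $\R$ or on $\R^n$, and bounded acyclicity depends only on the underlying discrete group, the task reduces to verifying their hypotheses for $\diffrp$ in place of $\homeop$, in every regularity $r \in \N^* \cup \{\infty\}$.

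For the base case $n=1$, I would apply Theorem \ref{thm:criterion:R} to $G = \diffrp(\R)$. The natural form of such a criterion is a ``commuting conjugates at infinity'' or Matsumoto--Morita-style displacement condition: for every finite subset $F \subset G$ one must produce an element $g \in G$ whose iterates push $F$ into pairwise disjoint (or commuting) copies $g^i F g^{-i}$. The crucial resource is that $\R$ has two ends and orientation-preserving diffeomorphisms fix each end, so one may take $g$ to be a $C^r$ diffeomorphism which is a translation outside a large compact set, built with standard $C^r$ bump functions (valid uniformly in $r$, including $r = \infty$). The lack of compact support on elements of $F$ is handled by making the asymptotic behaviour of $g$ dominate that of any given finite family.

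For the inductive step to $n \geq 2$, I would apply Theorem \ref{thm:criterion:Rn}. A plausible form of this criterion is that bounded acyclicity of $\diffrp(\R^n)$ follows from that of the one-dimensional group, via some subgroup of $\diffrp(\R^n)$ preserving a coordinate foliation — concretely, via the smooth embedding $\diffrp(\R) \hookrightarrow \diffrp(\R^n)$, $f \mapsto f \times \id_{\R^{n-1}}$, together with a transfer principle (coamenability, or normality and a Hochschild--Serre argument in bounded cohomology). Regardless of the precise statement, the verification for diffeomorphisms should be formally identical to that for homeomorphisms, using that the product embedding is $C^r$.

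The main obstacle I anticipate is in the one-dimensional verification. In the topological category plateau-type elements are essentially free, but in the $C^r$ setting one must control all derivatives up to order $r$ (resp.\ all derivatives when $r = \infty$) while ensuring that \emph{every} finitely generated subgroup of $\diffrp(\R)$ — whose dynamics at the ends of $\R$ can be arbitrarily complicated — is displaced by a single diffeomorphism $g$ depending on the subgroup. A secondary subtlety is verifying that the criterion's hypothesis really is closed under taking diffeomorphisms (not just homeomorphisms), so that no topological flexibility is silently used in the proof of the criterion itself.
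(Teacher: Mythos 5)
Your proposal misreads what Theorems \ref{thm:criterion:R} and \ref{thm:criterion:Rn} are, and both of the mechanisms you substitute for them would fail. In the paper these two theorems are not a displacement criterion and an induction-on-dimension transfer: they are verifications that a concrete group satisfies the hypotheses of the abstract criterion of Theorem \ref{thm:criterion}, namely a transitive action on a poset of \emph{fat sequences} (fat points in $\R_{>0}$ for $n=1$; concentric fat spheres going to infinity for $n\geq 2$) which has the $\mathbf{W}$ property (Proposition \ref{prop:abstract:W}) and whose stabilizers are infinite products of \emph{compactly supported} diffeomorphism groups, boundedly acyclic by Theorem \ref{thm:compactly}. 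The criterion then reduces everything to the bounded acyclicity of the orbit complex, identified with the nerve of the monoid $\emb(\N)$. For $n=1$ the criterion is applied to the subgroup $G\leq\diffrp(\R)$ fixing a neighbourhood of $(-\infty,0]$, and one passes to $\diffrp(\R)$ by the germ short exact sequences and Proposition \ref{prop:quotient}; for $n\geq2$ it is applied to $\diffrp(\R^n)$ directly, the hard point being transitivity on fat sequences (Lemma \ref{lem:Rn:transitivity}), proved via the isotopy extension theorem and the fact that fat spheres are by definition in the $G$-orbit of the model sphere.

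The two steps you propose have genuine gaps. First, a Matsumoto--Morita/commuting-$\Z$-conjugates argument (Theorem \ref{thm:ccc}) for $\diffrp(\R)$ itself cannot work: such arguments require displacing the support of an arbitrary finitely generated subgroup $H$, but elements of $\diffrp(\R)$ need not have proper support. If $H$ contains the unit translation $\tau$, then for any $t$ the conjugate $t^p\tau t^{-p}$ is a fixed-point-free diffeomorphism of the line, and there is no reason for it to commute with $\tau$; making the asymptotics of $t$ ``dominate'' those of $H$ produces no commutation whatsoever. This is precisely the ``mystery at infinity'' the paper's new machinery is designed to circumvent: displacement is used only for the \emph{stabilizers} of fat sequences, which really are products of compactly supported groups. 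Second, your inductive step has no valid transfer mechanism: the subgroup $\diffrp(\R)\times\{\id_{\R^{n-1}}\}$ is neither normal nor co-amenable in $\diffrp(\R^n)$, and bounded acyclicity does not pass from a subgroup to an ambient group in general, so neither Proposition \ref{prop:quotient} nor a Hochschild--Serre-type argument applies. The paper does not induct on dimension at all; it reruns the whole poset argument in $\R^n$ with fat spheres, where the stabilizer of the model fat sequence is $\diffrc(\R^n)\times\diffrc(S^{n-1}\times\R)^{\N}$ and transitivity is the genuinely new geometric input.
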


\begin{remarks*}\label{rem:1}
  \begin{enumerate}[(i)]
\item The above formulation in terms of \emph{orientation-preserving} homeomorphism and diffeomorphism groups $\homeop{}, \diffrp{}$ is more natural for our proofs, but it immediately implies the result for the entire groups (see~\cite[Corollary 8.8.5]{monod}). It is standard that for $r>0$, the derivative at the origin induces a homotopy equivalence between $\diffrp(\R^n)$ and $\mathrm{GL}_n(\R)_+$ as topological groups. So we have $\diffrp(\R^n)=\diffro(\R^n)$ for $r>0$, where $\diffro$ denotes the identity component. The same statement holds for $r=0$ by the deep theorem in \cite[Corollary of Theorem 2]{Kirby}. We will use the corresponding notation $\homeoo, \diffro$ when this point of view is more natural.
\item Ordinary acyclicity usually refers to the vanishing of homology with integral coefficients, which then implies the vanishing of cohomology with arbitrary trivial coefficients. \emph{Bounded acyclicity} is defined directly as the vanishing of bounded cohomology with real coefficients, because fundamental techniques of the theory are unavailable over the integers. Nevertheless, it can be understood as a version of homological acyclicity but with a control of the norm in the spirit of homological isoperimetry, see Matsumoto--Morita~\cite{matsumor}.
  \end{enumerate}
\end{remarks*}

As suggested by the fact that our results differ from the picture in ordinary cohomology, our proof will require a new device. That device can also be used in ordinary cohomology; as an illustration, the special case $n=1$ of Theorem \ref{thm:main:homeo} actually goes through in ordinary cohomology and recovers the following result of McDuff (which she also attributes to Thurston and Segal). Contrary to the original proof, this does not rely on the topology of the group $\homeop(\R)$, nor on foliation theory and Haefliger spaces.

\begin{thmA}[Theorem \ref{thm:criterion:R}, \cite{mcduff1980homology}]
\label{thm:acyclic}
    The group $\homeop(\R)$ is acyclic.
\end{thmA}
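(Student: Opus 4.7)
The plan is to extract Theorem~C from the chain-level Mather-style swindle that underlies the proof of Theorem~\ref{thm:criterion:R}. That proof, applied to $G=\homeop(\R)$, establishes bounded acyclicity; the claim is that in the one-dimensional case the very same swindle operates at the level of ordinary chains and therefore yields ordinary acyclicity.

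The swindle is built from the translation $\sigma(x)=x+1\in G$ and a fixed homeomorphism $\phi\colon\R\xrightarrow{\cong}(0,1)$. For $f\in G$, define $\iota(f)\in G$ by $\iota(f)|_{(0,1)}=\phi f\phi^{-1}$ and $\iota(f)|_{\R\setminus(0,1)}=\mathrm{id}$; this is continuous at the endpoints because $f$ is orientation-preserving, and $\iota\colon G\to G$ is a group homomorphism with image in $\homeoc(\R)$. The translates $\sigma^n\iota(f)\sigma^{-n}$ have pairwise disjoint, locally finite supports in $(n,n+1)$, so the infinite product
\[
Tf \;=\; \prod_{n\ge 0}\sigma^n\,\iota(f)\,\sigma^{-n}\;\in\;G
\]
defines a group homomorphism $T\colon G\to G$ satisfying the swindle identity $Tf=\iota(f)\cdot\sigma(Tf)\sigma^{-1}$, as is immediate by re-indexing the product.

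Feeding the triple $(\iota,T,\sigma)$ into the chain-level machinery of Theorem~\ref{thm:criterion:R} then produces a chain homotopy on the bar complex $C_\bullet(G)$ between the identity map and a map factoring through $C_\bullet(\homeoc(\R))$. Mather's theorem that $\homeoc(\R)$ is acyclic \cite{mather} thus forces $H_\bullet(\homeop(\R))=0$.

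The main obstacle is the chain-level assembly. The swindle identity above is a single equality in $G$, and the criterion must turn it into a contracting homotopy of the full bar complex. In general (i.e.\ in the setup of Theorem~\ref{thm:criterion:Rn}) this assembly is carried out on bounded cochains, where averaging-type constructions are available. The special feature of dimension one exploited here is that the infinite product $Tf$ is already a bona fide element of $G$, built from disjoint translates of a single compactly supported piece $\iota(f)$; the assembly then proceeds without invoking any boundedness and delivers an ordinary, rather than merely bounded, chain homotopy, accounting for the discrepancy between Theorem~C and the higher-dimensional analogues where ordinary cohomology is in fact highly nontrivial.
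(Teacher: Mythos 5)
Your constructions of $\iota$, $T$, $\sigma$ and the identity $Tf=\iota(f)\cdot\sigma(Tf)\sigma^{-1}$ are all correct, but the conclusion you draw from them does not follow. The relation only ever involves the \emph{compressed} copy $\iota(f)$, never $f$ itself: rearranged, it reads $\sigma^{-1}(Tf)\sigma=\bigl(\sigma^{-1}\iota(f)\sigma\bigr)\cdot Tf$, a binate-type relation for the homomorphism $\iota$ (up to an inner automorphism), not for the identity of $G$. Running the standard swindle (commuting factors with disjoint supports, K\"unneth decomposition, conjugation by $\sigma\in G$ acting trivially on homology, induction on degree to kill the cross terms) therefore yields at best $\iota_*=0$ on $H_n(G)$ for $n>0$, i.e.\ the map ``conjugate into $(0,1)$ and extend by the identity'' annihilates positive-degree homology. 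It does \emph{not} produce a chain homotopy between the identity of $C_\bullet(G)$ and a map factoring through $C_\bullet(\homeoc(\R))$: for that you would need $f$ itself to commute with its displaced copies, which is impossible since $f$ has unbounded support; equivalently you would need $\iota_*$ to be injective, and since $\iota$ is conjugation by $\phi\notin G$ followed by extension by the identity (not an inner automorphism), its injectivity on homology is unjustified --- it is essentially the statement being proved. This is precisely the ``mystery at infinity'' that blocks a direct Mather-style swindle for $\homeop(\R)$. A sanity check: your sketch nowhere uses boundedness, so if it were valid it would also prove bounded acyclicity of $\homeop(\R)$ in a few lines and render the machinery of Sections 2--4 of the paper superfluous.

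The premise that Theorem \ref{thm:criterion:R} rests on ``chain-level Mather-style swindle machinery'' into which one can feed the triple $(\iota,T,\sigma)$ is also a mischaracterization; no such machinery appears there. The paper's route is different: the subgroup $G\leq\homeop(\R)$ of elements trivial near $-\infty$ acts transitively on the poset of fat sequences, which has the $\mathbf{W}$ property; the stabilizers are isomorphic to $\homeoc(\R)^{\N}$, which is binate and hence acyclic; the orbit complex is identified with the nerve of the monoid $\emb(\N)$, whose acyclicity is obtained indirectly from the binate group of countably supported permutations of an uncountable set (Theorem \ref{thm:criterion2} via Propositions \ref{prop:reduction2} and \ref{prop:auxiliary:bac}). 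Theorem \ref{thm:acyclic} is then deduced from the acyclicity of $G$ and of $\homeoc(\R)$ using the two short exact sequences involving germs at $\pm\infty$ and the Lyndon--Hochschild--Serre spectral sequence. In that proof the swindle is applied only to the stabilizers and to the auxiliary permutation group, never to $G$ or $\homeop(\R)$ themselves, and your proposal does not supply the missing step that would allow it to be applied to them.
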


A theorem of Ghys \cite{ghys} states that the bounded Euler class in $H^2_b(\homeop(S^1); \Z)$ classifies actions on the circle by orientation-preserving homeomorphisms up to semiconjugacy. Ghys's Theorem can be used to prove that, given a countable discrete group $\Gamma$, the semiconjugacy relation on the representation variety $\mathrm{Hom}_{\mathrm{irr}}(\Gamma, \homeop(S^1))$ is \emph{smooth}. Here $\mathrm{Hom}_{\mathrm{irr}}$ denotes those representations without global fixed points, and smooth equivalence relations are the simplest ones from the point of view of Borel reducibility. On the other hand, the semiconjugacy relation on the representation variety $\mathrm{Hom}_{\mathrm{irr}}(\Gamma, \homeop(\R))$ is \emph{essentially hyperfinite}, a strictly more complex type of equivalence relation, already when $\Gamma$ is a free group \cite[Section 3.5]{locallymoving}. This is in agreement with the fact that $\homeop(\R)$ has vanishing bounded cohomology with integer coefficients in all positive degrees, which is a direct consequence of Theorems \ref{thm:main:homeo} and \ref{thm:acyclic} (see e.g. \cite[Corollary 5.8]{binate}).

\subsection{Taming infinity: A sketch}
The new method that we introduce to study the various transformation groups of $\R^n$ is a general paradigm applicable in many other situations, some of which will be mentioned later in this article. We will now first sketch it in one of the simplest possible cases, namely for the group $\homeop(\R)$ of (orientation-preserving) homeomorphisms of the line:

\medskip
The most naive attempt to reduce $\homeop(\R)$ to the compactly supported case is to cut $\R$ into an infinite sequence of compact intervals. Since we prefer usual sequences indexed by $\N$ rather than bi-infinite sequences, we shall work with the subgroup $G$ of elements that are trivial in a neighborhood of $-\infty$ and we consider a sequence of points that increases to $+\infty$. Once we prove that this group is boundedly acyclic, an elementary symmetry argument will allow us to take care of the germs at $-\infty$ in a similar way to conclude that $\homeop(\R)$ itself is boundedly acyclic.

To be more precise, we consider sequences $x=(x_n)$ of \emph{fat points} $x_n$, which means (as in~\cite{MN}) germs at $0$ of embeddings of an interval $(-1,1)$ into $\R$. Thus by definition a homeomorphism fixing $x_n$ must be trivial in a neighborhood of the \emph{core} $\dot x_n$, which is the point $\dot x_n = x_n(0)\in \R$. In conclusion, the stabilizer in $G$ of the entire sequence of fat points is the product group
\[
\homeoc((-\infty, \dot x_1)) \times \prod_{j=1}^\infty \homeoc((\dot x_j, \dot x_{j+1})).
\]
That stabilizer, a power of compactly supported groups, is known to be [boundedly] acyclic by previous work. Therefore a spectral sequence argument reduces the problem to the study of two semisimplicial sets: On the one hand, the collection of all fat sequences $x$ as above. On the other hand, the quotient of this semisimplicial object by the $G$-action. An important aspect of our approach is that the semisimplicial structure is given by the \emph{subsequence partial order} on spaces of sequences. We shall see that, in great generality, this is an acyclic and boundedly acyclic object. 

\medskip
\emph{In any case, so far all this construction has merely kicked the difficulty further down the road}: The ``mystery at infinity'' is now contained in the abstract semisimplicial set defined by $G$-orbits of increasing sequences under the order relation of taking subsequences.

At that point we observe that homeomorphisms of $\R$ are transitive on sequences (of fat points) that increase to infinity and that, in fact, the entire structure of the quotient semisimplicial object is determined by the abstract order relation on \emph{indexing} the subsequences. In precise terms, we show that this quotient semisimplicial set can be identified with the nerve of the monoid  $\emb(\N)$ of order-preserving embeddings $\N\to\N$. The remaining part of the proof will therefore be to establish:

\begin{thmA}[Theorems \ref{thm:emb:bac} and \ref{thm:emb:acyclic}]
\label{thm:emb}
The monoid $\emb(\N)$ is acyclic and boundedly acyclic.
\end{thmA}

It should appear plausible now that the above sketch can be implemented in many other situations. For instance, in the case of $\R^n$, we shall consider suitably concentric sequences of fat spheres so that the stabilizer becomes an infinite product of groups of compactly supported homeomorphisms of annuli (and one ball). Each aspect of the above outline will raise new difficulties in various situations, except the final step: The quotient structure will always be the same monoid $\emb(\N)$.

\medskip
It remains therefore to explain how we prove \Cref{thm:emb}, which encodes the common residual difficulty at infinity of the diverse situations. The above spectral sequence argument is entirely reversible: If we exhibit \emph{some} [boundedly] acyclic group $G$ acting on some [boundedly] acyclic space of sequences with [boundedly] acyclic stabilizers and the same quotient object $\emb(\N)$, then the [bounded] acyclicity of $\emb(\N)$ will follow.

This leaves us with the freedom to choose the most favorable example. One possibility is the group of all countably supported permutations of an uncountable set. This group is \emph{dissipated}, a property  well-known to imply acyclicity via the concept of binate (or pseudo-mitotic) group~\cite{varadarajan, berrick}. This property also implies bounded acyclicity~\cite{binate} and therefore we will obtain a proof of \Cref{thm:emb}. In closing, we observe that the choice of this group is not unnatural: There are uncountably many ways to pass to subsequences, but each one is of course a rearrangement of countably many items.

\subsection{Beyond Euclidean spaces}

As mentioned above, our proof of Theorems \ref{thm:main:homeo} and \ref{thm:main:diff} relies on a general criterion for bounded acyclicity. This applies to a variety of groups, which were not approachable by the previous methods tailored to compact spaces. 

\medskip

An instance of this is in the zero-dimensional setting of Cantor sets. The acyclicity of the homeomorphism group of the Cantor set was shown by Tsuboi and Sergiescu \cite{ts:cantor} (and later generalized to the more general Menger spaces \cite{ts:menger}). A similar strategy also proves bounded acyclicity, as shown by Andritsch \cite{andritsch}. Our methods allow us to treat the non-compact Cantor set, which is uniquely defined as a locally compact, non-compact, metrizable, totally disconnected space without isolated points. To the best of our knowledge, also the acyclicity result is new.

\begin{thmA}[Theorem \ref{thm:cantor:criterion}]
\label{thm:cantor}
    The homeomorphism group of the non-compact Cantor set is acyclic and boundedly acyclic.
\end{thmA}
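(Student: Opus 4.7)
The plan is to carry out the ``taming infinity'' paradigm in the zero-dimensional setting of the non-compact Cantor set $\mathfrak{C}$, with \Cref{thm:emb} furnishing the closing input.

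For fat points I propose the compact clopen subsets $A \subsetneq \mathfrak{C}$. Every such $A$ is automatically a compact Cantor set, and its complement $\mathfrak{C}\setminus A$ is automatically a non-compact Cantor set (it is open, hence locally compact; totally disconnected; metrizable; has no isolated points; and is non-compact because $\mathfrak{C}$ is not). I order fat points by inclusion and focus on exhausting chains $A_1 \subsetneq A_2 \subsetneq \cdots$ with $\bigcup_j A_j = \mathfrak{C}$. The setwise stabilizer of such a chain in $\homeo(\mathfrak{C})$ splits canonically as
\[
\homeo(A_1) \times \prod_{j \geq 1} \homeo(A_{j+1} \setminus A_j),
\]
an infinite product of homeomorphism groups of compact Cantor sets. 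Each factor is acyclic by Tsuboi--Sergiescu~\cite{ts:cantor} and boundedly acyclic by Andritsch~\cite{andritsch}, and the passage to the full infinite product proceeds exactly as in the $\R^n$ case.

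Homogeneity of the Cantor set, combined with a back-and-forth matching of clopen partitions, yields transitivity of $\homeo(\mathfrak{C})$ on increasing exhausting chains: any two such chains are interchanged by a global self-homeomorphism. Equipping the set of chains with the subsequence partial order then produces a semisimplicial $\homeo(\mathfrak{C})$-set whose quotient is the nerve of $\emb(\N)$, just as in the one-dimensional model. The bicomplex argument of the paper reduces the (bounded) acyclicity of $\homeo(\mathfrak{C})$ to three ingredients, all of which are now in hand: (bounded) acyclicity of the stabilizers above, (bounded) acyclicity of the full space of chains (the general sequence-space statement used throughout the paper), and (bounded) acyclicity of $\emb(\N)$ (\Cref{thm:emb}). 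Running the argument simultaneously in ordinary and bounded cohomology delivers both halves of the theorem at once.

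The main obstacle is conceptual rather than computational: the germs of embeddings used in the Euclidean case have no direct analogue in a zero-dimensional space, so one must identify the right notion of fat point. The choice above — compact clopen subsets with non-compact Cantor complement — is the natural one and makes the stabilizer decomposition, the transitivity on chains, and the semisimplicial structure line up in parallel with the $\R^n$ case; verifying these parallels carefully, rather than performing any genuinely new calculation, is essentially the whole task.
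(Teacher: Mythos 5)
Your overall scheme is sound in its combinatorial parts: exhausting chains of compact clopen sets do form a poset of cofinal increasing sequences fitting the framework of Proposition \ref{prop:abstract:W} (take $\sqsubseteq$ to be inclusion and $\simeq$ equality), your piecewise matching of the clopen ``annuli'' does prove transitivity, and given transitivity the identification of the orbit complex with the nerve of $\emb(\N)$ goes through as in Lemmas \ref{lem:orbit:bij} and \ref{lem:orbit:iso}. The genuine gap is the stabilizer step. With fat points taken to be compact clopen subsets, the stabilizer of a chain $(A_j)$ is $\homeo(A_1)\times\prod_{j\geq 1}\homeo(A_{j+1}\setminus A_j)$, a countably infinite product of \emph{full} homeomorphism groups of compact Cantor sets, and neither its acyclicity nor its bounded acyclicity follows from \cite{ts:cantor} and \cite{andritsch} ``exactly as in the $\R^n$ case''. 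Neither acyclicity nor bounded acyclicity is closed under infinite products in general: in the Euclidean setting the factors are \emph{compactly supported} groups on non-compact pieces, where displacement is available, so the factors are binate and Lemma \ref{lem:binate:product} handles products (resp.\ Theorem \ref{thm:compactly}, which is specific to compactly supported groups, handles bounded acyclicity of the product). By contrast, $\homeo(K)$ of a compact Cantor set admits finitely generated subgroups with full support (e.g.\ minimal actions), so there is no displacement; it is not binate or of commuting $\Z$-conjugates type in any evident way, and no uniform (vanishing-modulus-controlled) bounded acyclicity statement is available to feed into an infinite product. So the input your argument actually needs -- (bounded) acyclicity of $\homeo(K)^{\N}$ -- is precisely what is missing.

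This is exactly why the paper chooses a different notion of fat point: germs at a basepoint of embeddings $K\to\mathbf{K}$ with clopen image, following \cite{andritsch}. The stabilizer of a fat sequence then only fixes pointwise \emph{some} neighbourhood of each core, leaving ambient room in which the paper builds an explicit binate structure (a shift on a complementary copy $\mathbf{K}_0\times\Z$), yielding acyclicity and bounded acyclicity of the stabilizer simultaneously via Theorems \ref{thm:binate:acyclic} and \ref{thm:binate:bac}. To rescue your clopen-set model you would have to either prove directly that an infinite product of compact Cantor set homeomorphism groups is acyclic and boundedly acyclic -- a nontrivial claim not in the cited literature -- or weaken the stabilizer condition by decorating each clopen set with germ data, which essentially reproduces the paper's construction.
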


In another direction, we consider homeomorphism groups of discs fixing the boundary pointwise. While discs are compact, from the point of view of homology and bounded cohomology, their transformation groups are closer to the groups of Theorem \ref{thm:main:homeo}: The difficulties present at infinity are now present at the boundary. In this case, the bounded acyclicity was already shown (in all regularities) for $D^2$ \cite[Theorems 1.3 and 1.4]{MN}, however the proof is dependent on certain combinatorial properties of (fat) chords in the $2$-disc that fail in higher dimensions.

\begin{thmA}[Theorem \ref{thm:criterion:discs}]
\label{thm:discs}
    For all $n \in \N^*$, the group $\homeo(D^n, \partial)$ of homeomorphisms of the disc $D^n$ fixing the boundary pointwise, is boundedly acyclic.
\end{thmA}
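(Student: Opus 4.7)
The plan is to specialize the general paradigm sketched in \Cref{sec:intro} to $\homeo(D^n, \partial)$, using the boundary sphere $\partial D^n$ as the site of ``mystery at infinity''. In place of concentric fat spheres escaping to $+\infty$ in $\R^n$, we choose sequences $(x_j)_{j \in \N}$ of pairwise disjoint bicollared fat $(n{-}1)$-spheres in $\mathrm{int}(D^n)$, concentric in the sense that each core $\dot x_j$ bounds an open ball $B_j \subset B_{j+1} \subset \mathrm{int}(D^n)$, and exhausting the interior: $\bigcup_j B_j = \mathrm{int}(D^n)$. Let $\X$ be the set of such sequences, endowed with its natural $\homeo(D^n, \partial)$-action and with the semisimplicial structure coming from the subsequence partial order as in \Cref{sec:intro}.

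A homeomorphism in $\homeo(D^n, \partial)$ that fixes every $x_j$ is, by the very definition of a fat sphere, trivial in a neighborhood of each core sphere, and therefore factors as a product
\[
\homeoc(B_1) \;\times\; \prod_{j \geq 1} \homeoc(B_{j+1} \setminus \overline{B_j})
\]
of compactly supported homeomorphism groups of an open ball and of a sequence of open annuli. The theorem of Matsumoto--Morita guarantees that each factor is boundedly acyclic, hence so is the stabilizer. This reduces the problem, via the general spectral-sequence argument behind \Cref{thm:criterion:R,thm:criterion:Rn}, to the bounded acyclicity of $\X$ and of the quotient $\X/\homeo(D^n, \partial)$. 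The former follows from the subsequence-poset machinery developed in the paper, valid for arbitrary spaces of sequences. The latter reduces to showing that $\homeo(D^n, \partial)$ acts transitively on such concentric exhausting sequences of fat spheres; given this, the quotient identifies with the nerve of $\emb(\N)$ and bounded acyclicity follows from \Cref{thm:emb}.

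The main obstacle is precisely this transitivity. Given two exhausting concentric sequences $(x_j)$ and $(y_j)$, the classical annulus theorem provides, on each annular piece, a homeomorphism carrying the annulus between $x_j$ and $x_{j+1}$ to the annulus between $y_j$ and $y_{j+1}$, and similarly between the innermost balls. These assemble into a bijection of $\mathrm{int}(D^n)$, but continuity at $\partial D^n$ is not automatic: one must control the displacement of each piece as $j \to \infty$, otherwise the glued map will not extend by the identity to a self-homeomorphism of $D^n$ fixing $\partial D^n$ pointwise. Supplying exactly this quantitative ingredient is the role of Kupers' controlled annulus theorem in the appendix, whose conclusion is that the homeomorphism between two thin annuli may be chosen with displacement bounded in terms of their diameters. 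This is the key difference from the two-dimensional argument in \cite{MN}, which exploited special combinatorics of fat chords unavailable in higher dimensions. Once the controlled annulus theorem is in place, the gluing is routine, the orbit structure on $\X$ matches that prescribed by $\emb(\N)$, and \Cref{thm:emb} together with the general criterion completes the proof.
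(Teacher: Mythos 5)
Your overall architecture is the same as the paper's (concentric fat spheres accumulating at $\partial D^n$, the subsequence poset, the criterion of Theorem \ref{thm:criterion}, and the controlled annulus theorem for transitivity), but there are two genuine gaps. First, the stabilizer computation: in $\homeo(D^n,\partial)$ the stabilizer of a fat sequence is \emph{not} the full product $\homeoc(B_1)\times\prod_{j\geq 1}\homeoc(B_{j+1}\setminus\overline{B_j})$. An arbitrary element of that product defines a homeomorphism of $\mathrm{int}(D^n)$ which need not extend continuously by the identity to the boundary (e.g.\ rotate a subannulus of every $B_{j+1}\setminus\overline{B_j}$ by one and the same nontrivial angle); the stabilizer is the \emph{proper} subgroup of elements of the product that are continuous at $\partial D^n$. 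Since bounded acyclicity does not pass to subgroups, Matsumoto--Morita / Theorem \ref{thm:compactly} does not give what you claim. This is precisely the point where the disc differs from $\R^n$: the paper instead shows in Lemma \ref{lem:D:fatsequence:stabilizers} that the stabilizer has commuting $\Z$-conjugates and applies Theorem \ref{thm:ccc}, the work being the construction of a displacing element $t$ (radial in each annulus) together with a verification that $t$ is continuous at the boundary.

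Second, transitivity fails for the poset $\X$ as you defined it. The controlled annulus theorem yields Theorem \ref{thm:homeo-dd}: a sequence of parametrised spheres lies in the $\homeo(D^n,\partial)$-orbit of the standard sequence if and only if its diameters, measured via radial projection to $S^{n-1}$, tend to $0$ --- equivalently, it has trivial germ at the boundary in the sense of Definition \ref{def:boundary:embedding}. Concentric exhausting sequences without this property exist: take the round spheres of radius $1-2^{-j}$ but parametrize each by a fixed nontrivial rotation; any $g$ with $g.\mathbf{\Sigma}$ equal to this sequence would have to approach that rotation at the boundary, contradicting $g|_{\partial D^n}=\id$. So your $\X$ is strictly larger than a single orbit and condition 2 of Theorem \ref{thm:criterion} fails. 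The fix is to build the trivial-germ-at-the-boundary condition into the definition of fat sequences, as in Definition \ref{def:D:fatsequence} (the $\mathbf{W}$ property survives by Remark \ref{rem:W:suposet}), after which Theorem \ref{thm:homeo-dd} together with Corollary \ref{cor:add-germs} gives transitivity and the rest of your argument goes through as in the paper.
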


We obtain the following consequence, which generalizes \cite[Theorem 1.3]{MN} and answers \cite[Question 7.5]{MN}:

\begin{corA}
\label{cor:discs}
    For all $n \in \N^*$, the restriction $\homeop(D^n) \to \homeop(S^{n-1})$ induces an isomorphism in bounded cohomology in all degrees.
\end{corA}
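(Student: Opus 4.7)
The plan is to deduce this corollary as a quick consequence of \Cref{thm:discs} combined with a standard vanishing-kernel principle in bounded cohomology. First, I would exhibit the short exact sequence of (discrete) groups
\begin{equation*}
1 \longrightarrow \homeo(D^n, \partial) \longrightarrow \homeop(D^n) \longrightarrow \homeop(S^{n-1}) \longrightarrow 1,
\end{equation*}
where the right-hand arrow is restriction to the boundary. Exactness at the two leftmost terms is immediate from the definitions, since the kernel of the restriction map is, by definition, $\homeo(D^n,\partial)$. Surjectivity on the right is supplied by the Alexander trick: any $f \in \homeop(S^{n-1})$ extends to an element $\widetilde{f} \in \homeop(D^n)$ by coning, $\widetilde{f}(rx) = r f(x)$ for $r \in [0,1]$ and $x \in S^{n-1}$.

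Next, I would invoke \Cref{thm:discs} to see that the kernel $\homeo(D^n,\partial)$ is boundedly acyclic. The corollary then reduces to the following general fact, which I would cite rather than reprove: if $1 \to N \to G \to Q \to 1$ is a short exact sequence of discrete groups with $N$ boundedly acyclic, then the inflation $\HH^\bullet_b(Q;\R) \to \HH^\bullet_b(G;\R)$ is an isomorphism in every degree. This is a standard consequence of the Hochschild--Serre spectral sequence in bounded cohomology (see e.g.\ Monod's monograph), whose $E_2$-page collapses onto the bottom row under the acyclicity hypothesis on $N$. Applying this to our extension with $G = \homeop(D^n)$, $Q = \homeop(S^{n-1})$, and $N = \homeo(D^n,\partial)$, yields the claim.

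Since all the substantive work is already carried out in \Cref{thm:discs}, there is essentially no obstacle in the argument for the corollary itself; the only point requiring minor care is the edge case $n=1$, where $\homeop(S^0)$ is the trivial group and $\homeop(D^1) = \homeo(D^1,\partial)$, so both sides of the desired isomorphism compute the bounded cohomology of $\homeo(D^1,\partial)$, which is boundedly acyclic by \Cref{thm:discs}. The orientation-preserving hypothesis on the source is essential for the surjectivity in the short exact sequence; the result for $\homeo(D^n)$ as a whole then follows by averaging over the order-two quotient, as recorded in Remark \ref{rem:1}(i).
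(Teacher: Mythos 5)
Your proposal is correct and is essentially the paper's own argument: the same short exact sequence $1 \to \homeo(D^n,\partial) \to \homeop(D^n) \to \homeop(S^{n-1}) \to 1$ with surjectivity via the cone construction, bounded acyclicity of the kernel from Theorem \ref{thm:discs}, and the vanishing-kernel principle, which is precisely Proposition \ref{prop:quotient} in the paper. The added remarks on the case $n=1$ and on the full group $\homeo(D^n)$ are harmless but not needed for the statement.
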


This allows to determine the bounded cohomology of $\homeop(D^n)$ in low degrees (Corollary \ref{cor:disc:low}).

These are just two more examples of the applicability of our methods, and we do not aim for a complete list. In Section \ref{sec:further} we discuss further instances in which our methods apply, and speculate about other settings where deploying our techniques seems to require additional work.

Other applications follow from our main results by some standard manipulations. A notable instance is the case of compact annuli, where we prove:

\begin{thmA}
\label{thm:annuli:homeo}
    For all $n \in \N^*$, the identity component $\homeoo(S^n \times [0, 1], \partial)$ of the group of homeomorphisms of the annulus $S^n \times [0, 1]$ fixing both boundary components pointwise, is boundedly acyclic.
\end{thmA}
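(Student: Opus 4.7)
The plan is to apply the general bounded acyclicity criterion established earlier in the paper — the same machinery that proves Theorem~\ref{thm:main:homeo} — directly to $G := \homeoo(S^n \times [0,1], \partial)$. The key observation is that the two boundary components of the compact annulus play the role of the two ``infinities'' of $\R$: although $S^n \times [0,1]$ is compact, the obstructions in bounded cohomology reside at the boundary, and they are amenable to the same fat-sphere machinery used to prove Theorem~\ref{thm:main:homeo}.

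Following the paper's paradigm, I would first pass to the subgroup $G_0 \subset G$ of elements trivial in a neighborhood of $S^n \times \{0\}$, and consider the semisimplicial set $\X$ of ordered sequences $(s_j)_{j \ge 1}$ of fat concentric spheres whose cores $\dot s_j$ increase to $1$, with face maps given by passing to subsequences. The stabilizer in $G_0$ of such a sequence is
\[
\homeoc\bigl(S^n \times (0,\dot s_1)\bigr) \times \prod_{j \ge 1}\homeoc\bigl(S^n \times (\dot s_j, \dot s_{j+1})\bigr),
\]
an infinite product of compactly supported homeomorphism groups of open annular regions of the form $S^n \times \R$. Each factor is boundedly acyclic by the same inputs (bounded acyclicity of $\homeoc$ of open annular manifolds, via Matsumoto--Morita and its extensions) already invoked in the proof of Theorem~\ref{thm:main:homeo}, and bounded acyclicity is preserved under arbitrary products. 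By the topological annulus theorem, $G_0$ acts transitively on such sequences, and the quotient $G_0 \backslash \X$ is identified, exactly as in the proof of Theorem~\ref{thm:main:homeo}, with the nerve of $\emb(\N)$. Theorem~\ref{thm:emb} then provides bounded acyclicity of the quotient, and the spectral sequence of the action forces $H^\bullet_b(G_0;\R) = 0$ in positive degrees. Finally, the passage from $G_0$ to $G$ will proceed by the same elementary symmetry argument between the two boundary components (using the reflection $(x,t) \mapsto (x,1-t)$) that the paper uses to promote the $-\infty$-trivial subgroup of $\homeop(\R)$ to the full group.

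The main obstacle will be executing the geometric inputs cleanly: the topological annulus theorem — in the controlled form proved by Kupers in the appendix, which is already needed for Theorem~\ref{thm:discs} — must be deployed to ensure transitivity of the action of $G_0$ on $\X$ and the correct identification of the quotient. The bookkeeping involved in replacing an interval in $\R$ (for the $\homeop(\R)$ argument) by a spherical shell $S^n \times (a,b)$ in the annulus should be formally parallel, and no genuinely new ideas beyond those already developed for Theorems~\ref{thm:main:homeo} and~\ref{thm:discs} should be required.
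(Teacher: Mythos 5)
Your route is genuinely different from the paper's: the paper does \emph{not} rerun the fat-sphere machinery for the annulus, but deduces Theorem~\ref{thm:annuli:homeo} from Theorem~\ref{thm:discs} by soft manipulations. Namely, the short exact sequence $1 \to \homeoco(S^n \times (0,1)) \to \homeoo(S^n\times[0,1],\partial) \to \mathcal{G}_0 \times \mathcal{G}_1 \to 1$ (germs at the two boundary spheres) together with Theorem~\ref{thm:compactly} and Proposition~\ref{prop:quotient} reduces everything to the germ group $\mathcal{G}_1$, and $\mathcal{G}_1$ is in turn a quotient of the disc group of Theorem~\ref{thm:discs} by the boundedly acyclic group of compactly supported homeomorphisms of its interior. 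Your plan of applying Theorem~\ref{thm:criterion} directly could in principle also work, but as written it has two genuine gaps, both coming from treating the fixed boundary sphere like the end of $\R$ rather than like the boundary in Section~\ref{verifying:disc}: continuity at a pointwise-fixed boundary is a real constraint, whereas at infinity there is none.

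First, the stabilizer in $G_0$ of a fat sequence is \emph{not} the full product $\homeoc(S^n\times(0,\dot s_1)) \times \prod_j \homeoc(S^n\times(\dot s_j,\dot s_{j+1}))$: an arbitrary element of that product (say, one rotating a sub-annulus of each block by a fixed nontrivial rotation of $S^n$) does not extend continuously by the identity to $S^n\times\{1\}$, hence is not a homeomorphism of the compact annulus fixing the boundary. The stabilizer is a proper subgroup, exactly as in Lemma~\ref{lem:D:fatsequence:stabilizers}, so Theorem~\ref{thm:compactly} does not apply to it (and, incidentally, bounded acyclicity is not known to pass to arbitrary infinite products in general; the paper only has this for the specific uniformly controlled families of Theorem~\ref{thm:compactly}); you would need the commuting $\Z$-conjugates argument instead. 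Second, with $\X$ defined only by ``cores increasing to $1$'', transitivity of $G_0$ fails: any element fixing $S^n\times\{1\}$ pointwise is continuous there, so it sends the model sequence to a sequence with trivial germ at the boundary in the sense of Definition~\ref{def:boundary:embedding}, while a sequence whose parametrized cores approach the boundary composed with a fixed nontrivial rotation does not lie in the orbit. You must build the trivial-germ condition into $\X$, and then transitivity needs the \emph{controlled} annulus theorem (Proposition~\ref{prop:controlled-annulus}) to guarantee that the inductively assembled homeomorphism is continuous at $S^n\times\{1\}$; the uncontrolled annulus theorem you first invoke is not sufficient. With these corrections (plus the germ bookkeeping to pass from $G_0$ back to $G$), your approach amounts to redoing the disc argument of Section~\ref{verifying:disc} for the annulus, which is workable but considerably longer than the paper's two-line reduction to Theorem~\ref{thm:discs}.
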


As in the case of the disc, without imposing that the boundary is fixed we obtain a result on restrictions (Corollary \ref{cor:annulus:restriction}) and computations in low degree (Corollary \ref{cor:annulus:low}). Leveraging results from \cite{MN}, we can treat all regularities in dimension $2$:

\begin{corA}
\label{cor:annuli:regularity}
    For all $r \in \N$, the restriction $\diffro(S^1 \times [0, 1]) \to \diffro(S^1) \times \diffro(S^1)$ induces an isomorphism in bounded cohomology in all degrees. Explicitly, denoting by $\mathscr{E}_i \in H^2_b(\diffro(S^1 \times [0, 1]))$ the bounded Euler class for the action of $\diffro(S^1 \times [0, 1])$ on the boundary component $S^1 \times \{ i \}$, we have an isomorphism of graded $\R$-algebras:
    \[H^*_b(\diffro(S^1 \times [0, 1])) \cong \R[\mathscr{E}_0, \mathscr{E}_1].\]
\end{corA}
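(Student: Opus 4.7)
The plan is to consider the short exact sequence
\[
1 \to \diffro(S^1 \times [0,1], \partial) \to \diffro(S^1 \times [0,1]) \to \diffro(S^1) \times \diffro(S^1) \to 1,
\]
in which the right-hand map is restriction to the two boundary circles. Surjectivity follows from the path-connectedness of $\diffro(S^1)$: given $(f_0, f_1)$, one picks a $C^r$-path $t \mapsto h_t$ in $\diffro(S^1)$ joining $f_0$ to $f_1$, and then $(x,t) \mapsto (h_t(x), t)$ is a $C^r$-diffeomorphism of the annulus restricting to $(f_0, f_1)$. The kernel is by definition $\diffro(S^1 \times [0,1], \partial)$.

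The main step---and, I expect, the main obstacle---is to show that this kernel is boundedly acyclic for every $r \in \N$. For $r = 0$ this is exactly Theorem \ref{thm:annuli:homeo} applied with $n = 1$. For $r \geq 1$, Kupers's controlled annulus theorem from the appendix is not available, so I would adapt the dimension-$2$ combinatorial methods of \cite{MN}---which there yield bounded acyclicity of $\diffro(D^2, \partial)$ in all regularities---by replacing the fat-chord complex of the disc with a complex of fat arcs joining the two boundary circles of the annulus. One then applies the general criterion of this paper (Theorems \ref{thm:criterion:R} and \ref{thm:criterion:Rn}), checking that the stabilizers of such fat-arc configurations split as products of compactly supported diffeomorphism groups on sub-annuli and sub-discs and are therefore boundedly acyclic.

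With bounded acyclicity of the kernel in hand, the standard Hochschild--Serre-type principle for bounded cohomology gives $H^*_b(\diffro(S^1 \times [0,1])) \cong H^*_b(\diffro(S^1) \times \diffro(S^1))$. By \cite{MN}, $H^*_b(\diffro(S^1)) \cong \R[\mathscr{E}]$ is a polynomial algebra on the bounded Euler class with finite-dimensional graded pieces, so the K\"unneth theorem in bounded cohomology identifies the right-hand side with $\R[\mathscr{E}] \otimes \R[\mathscr{E}] \cong \R[\mathscr{E}_0, \mathscr{E}_1]$. Under this identification, $\mathscr{E}_i$ is the pullback along the projection to the $i$-th factor, which corresponds precisely to the bounded Euler class of the action of $\diffro(S^1 \times [0,1])$ on the boundary circle $S^1 \times \{i\}$, as asserted.
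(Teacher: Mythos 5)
Your overall skeleton (restriction short exact sequence, bounded acyclicity of the kernel, Proposition \ref{prop:quotient}, then a product computation for two copies of $\diffro(S^1)$) matches the paper, but your treatment of the two substantive steps has gaps. For the kernel in regularity $r \geq 1$ you propose to build a new ``fat-arc'' complex for the annulus and re-run the machinery of this paper, asserting that stabilizers of arc systems split as products of \emph{compactly supported} diffeomorphism groups. That assertion is exactly where the difficulty lies and is not correct as stated: cutting the annulus along arcs meeting the boundary produces pieces whose boundary contains parts of $\partial(S^1 \times [0,1])$, and elements of the stabilizer only fix that boundary pointwise, not neighbourhoods of it, so these groups are not of the compactly supported type covered by Theorem \ref{thm:compactly}; controlling differentiability at the boundary is precisely the obstruction the paper highlights (it is why $\diffr(D^n,\partial)$ for $n \geq 3$, $r \geq 1$ is left open). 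The paper avoids this entirely: as in the proof of Theorem \ref{thm:annuli:homeo}, one quotients the rel-boundary annulus group by $\diffrco(S^1 \times (0,1))$ (boundedly acyclic by Theorem \ref{thm:compactly}) to reduce to the two germ groups at the boundary circles, and each germ group is a quotient of $\diffro(D^2,\partial)$ by a compactly supported group; the bounded acyclicity of $\diffro(D^2,\partial)$ in all regularities is already part of \cite[Theorem 1.4]{MN}, so no new complex is needed. (Also, the general criterion is Theorem \ref{thm:criterion}, not Theorems \ref{thm:criterion:R} and \ref{thm:criterion:Rn}, which are its verifications for $\R$ and $\R^n$.)

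The final step is also not justified as written: there is no general ``K\"unneth theorem in bounded cohomology,'' and finite-dimensionality of the graded pieces of $H^*_b(\diffro(S^1)) \cong \R[\mathscr{E}]$ does not by itself give $H^*_b(\diffro(S^1) \times \diffro(S^1)) \cong \R[\mathscr{E}_0,\mathscr{E}_1]$; computing bounded cohomology of products is a genuine problem. The paper instead invokes a specific argument: \cite[Proposition 6.13]{binate} computes the analogous product for two copies of Thompson's group $T$ (using the bounded acyclicity of $F$ from \cite{lamplighters}), and this transfers to $\diffro(S^1) \times \diffro(S^1)$ via the embedding $T \to \diffro(S^1)$, which is semiconjugate to the standard one and hence induces an isomorphism in bounded cohomology. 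Your conclusion and the identification of $\mathscr{E}_i$ as pullbacks along the boundary restrictions are correct once this input is supplied, but as a proof the proposal is missing both the reduction to the disc result for the kernel and a valid product argument.
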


In degree $2$, we recover a theorem of Militon (see \cite{militon} for the definition of the \emph{torsion number}).

\begin{corA}[\cite{militon}]
\label{cor:militon}
    For all $r \in \N$ and $G = \diffro(S^1 \times [0, 1])$, the space $Q(G)/H^1(G)$ of non-trivial quasimorphisms is one-dimensional, spanned by the torsion number. In particular, for $r \neq 2, 3$, the space $Q(G)$ of homogeneous quasimorphisms is one-dimensional, spanned by the torsion number.
\end{corA}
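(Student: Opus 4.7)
The plan is to use the standard exact sequence
\[
0 \to H^1(G;\R) \to Q(G) \to H^2_b(G;\R) \xrightarrow{c} H^2(G;\R),
\]
which identifies $Q(G)/H^1(G)$ with $\ker(c)$. By Corollary~\ref{cor:annuli:regularity}, $H^2_b(G;\R) = \R\mathscr{E}_0 \oplus \R\mathscr{E}_1$ is two-dimensional, so the task reduces to computing $\ker(c)$ and identifying its generator with Militon's torsion number.

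For the containment $\mathscr{E}_0 - \mathscr{E}_1 \in \ker(c)$, write $\pi_i\colon G \to \diffro(S^1)$ for the restriction to the $i$-th boundary component; then $c(\mathscr{E}_i) = \pi_i^*e$, where $e$ is the ordinary Euler class of $\diffro(S^1)$. Any $g \in G$ lifts to a homeomorphism of $\R\times[0,1]$ via the universal cover $\R\to S^1$, uniquely up to a single integer shift in the $x$-coordinate, and a change of lift shifts both boundary restrictions by the same integer. Thus any set-theoretic section $g\mapsto\tilde g$ induces the same integer-valued Euler cocycle for $\pi_0$ and $\pi_1$, so $\pi_0^*e = \pi_1^*e$. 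To see $c(\mathscr{E}_0)\neq 0$, take a Fuchsian representation $\rho_0\colon \pi_1(\Sigma_g) \to \mathrm{PSL}_2(\R) \subset \diffro(S^1)$ with non-zero Euler number and lift it to $\tilde\rho\colon \pi_1(\Sigma_g) \to G$ by $\tilde\rho(\gamma)(x,t):=(\rho_0(\gamma)(x),t)$, so that the pairing of $\tilde\rho^*c(\mathscr{E}_0)$ with $[\Sigma_g]$ recovers the Euler number of $\rho_0$. Hence $\ker(c) = \R\cdot(\mathscr{E}_0 - \mathscr{E}_1)$ is one-dimensional.

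Finally I would identify the spanning class with the torsion number: for $\phi\in G$, the translation numbers of the boundary restrictions of any lift $\tilde\phi$ to $\R\times[0,1]$ shift by the same integer under change of lift, so their difference is a well-defined homogeneous quasimorphism on $G$, which is precisely the torsion number of \cite{militon}; under the identification $Q(G)/H^1(G)\cong\ker(c)$ it corresponds to $\mathscr{E}_0-\mathscr{E}_1$ via the description of the bounded Euler cocycle as the coboundary of the translation-number quasimorphism on the universal cover of $\diffro(S^1)$. For the final sentence of the corollary, Mather--Thurston-type perfectness of $G$ for $r\neq 2,3$ (avoiding the thresholds $r=2$ on the boundary circle and $r=3$ in the $2$-dimensional interior) yields $H^1(G;\R)=0$, hence $Q(G)=Q(G)/H^1(G)$ is one-dimensional. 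The most delicate point is matching the spanning class precisely with Militon's definition of the torsion number; the rest follows directly from Corollary~\ref{cor:annuli:regularity}.
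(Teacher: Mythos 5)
Your proposal is correct and takes essentially the same route as the paper: the exact sequence $0\to H^1(G)\to Q(G)\to H^2_b(G)\to H^2(G)$, the computation $H^2_b(G)=\R\mathscr{E}_0\oplus\R\mathscr{E}_1$ from Corollary \ref{cor:annuli:regularity}, identification of the kernel of the comparison map with $\R(\mathscr{E}_0-\mathscr{E}_1)$ and of its generator with the torsion number, and perfectness of $G$ for $r\neq 2,3$ for the final claim. The only (harmless) difference is in the details: you verify $c(\mathscr{E}_0)=c(\mathscr{E}_1)\neq 0$ by hand, via common lifts to $\R\times[0,1]$ and a Fuchsian representation composed with the section $f\mapsto f\times\mathrm{id}$, whereas the paper gets nontriviality from the sections to the boundary restrictions and kernel membership from Matsumoto's identity expressing $\delta\rho$ as the difference of the canonical bounded Euler cocycles.
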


\subsection{Unboundedness of characteristic classes}

The bounded acyclicity of $\homeop(\R^n)$ implies that the Milnor--Wood inequality fails for certain characteristic classes of flat $\R^n$-bundles and flat $S^n$-bundles. To describe these classes, first recall a theorem of Thurston  (\cite[Corollary (b) of Theorem 5]{thurston1974foliations} and see \cite[Section 2, Theorem 2.5]{mcduff1980homology} for the proof) for homeomorphisms, we have the natural map between classifying spaces
\begin{equation}
B\dHomeo(M)\to B\homeo(M),
\end{equation}
where $\dHomeo(M)$ is the group of homeomorphisms of $M$ with the discrete topology. This map is acyclic and in particular, it induces a homology isomorphism in all degrees. The same statement also holds in the relative case for manifolds with boundary and also for non-compactly supported homeomorphisms of an open manifold that is an interior of a compact manifold \cite[Section 2, Theorem 2.5]{mcduff1980homology}. Therefore, the map
\begin{equation}\label{Thurston}
B\dHomeo(\R^n)\to B\homeo(\R^n),
\end{equation}
induces a homology isomorphism in all degrees. The same also holds for $\homeop(\R^n)$. There have been recent breakthroughs in understanding the homotopy type and the rational cohomology of $B\homeo(\R^n)$. In particular, there are topological Pontryagin classes $p_i$ for Euclidean bundles that are defined as rational cohomology classes, and for oriented even dimensional Euclidean bundles, there is also an Euler class $e$. Galatius and Randal-Williams \cite{galatius2022algebraic} proved a remarkable result which implies that the map
 \[
 \R[e,p_1, p_2, \dots]\to H^*(B\homeop(\R^{2n});\R),
 \]
 is injective for $2n\geq 6$, and in the odd-dimensional case, it follows from \cite[Corollary 1.2]{galatius2022algebraic} that
  \[
 \R[p_1, p_2, \dots]\to H^*(B\homeop(\R^{2n+1});\R),
 \]
 is injective for $2n+1\geq 7$. In low dimensions, we also know that $\homeop(\R^3)\simeq \mathrm{SO}(3)$ and $\homeop(\R^2)\simeq \mathrm{SO}(2)$ (\cite[Theorem 1.2.3]{MR0334262} and \cite{Balcerak1981HomotopyTO}). 
 Therefore $\R[p_1]\to H^*(B\homeop(\R^3);\R)$ and $\R[e]\to H^*(B\homeop(\R^2);\R)$ are isomorphisms. 
 The homology isomorphism in (\ref{Thurston}) implies that the same holds for $H^*(B\dHomeop(\R^n);\R)$ for the corresponding $n$.

 If a nontrivial class in $H^*(B\dHomeo(M);\R)$ is not in the image of the comparison map
 \[
 H^*_b(B\dHomeo(M);\R)\to H^*(B\dHomeo(M);\R),
 \]
 we say that class is {\it unbounded}. A corollary of our main theorem is that any nontrivial class in $H^*(B\dHomeop(\R^n);\R)$ is unbounded. More specifically we have the following.
 \begin{corA}\label{cor:flat R^n bundles}
    For $C^0$-flat oriented $\R^{2n}$-bundles, all  classes  in $ \R[e,p_1, p_2, \dots]$ are unbounded for $2n\geq 6$ and for $C^0$-flat $\R^2$-bundles, all the powers of the Euler class are unbounded. For  $C^0$-flat $\R^{2n+1}$-bundles, all classes in $ \R[p_1, p_2, \dots]$ are unbounded for $2n+1\geq 7$ and for  $C^0$-flat $\R^3$-bundles, all powers of $p_1$ are unbounded.
 \end{corA}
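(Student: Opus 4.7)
The plan is to derive this corollary directly from \Cref{thm:main:homeo}, combined with Thurston's homology isomorphism \eqref{Thurston} and the injectivity statements of Galatius--Randal-Williams already recalled above. No new technical input is needed beyond assembling these ingredients in the right order.

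First, I would translate the bounded acyclicity of the topological group into a statement about the classifying space of the associated discrete group. Since for any discrete group $G$ one has $H^*_b(G;\R) = H^*_b(BG;\R)$, \Cref{thm:main:homeo} yields
\[
H^k_b(B\dHomeop(\R^n);\R) = 0 \quad \text{for every } k > 0.
\]
Consequently the comparison map
\[
c \colon H^*_b(B\dHomeop(\R^n);\R) \lra H^*(B\dHomeop(\R^n);\R)
\]
is identically zero in every positive degree. By the definition of unboundedness recalled just before the corollary, every nonzero class on the right is therefore automatically unbounded.

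Second, I would produce nonzero classes on the right coming from the listed characteristic classes. Thurston's theorem \eqref{Thurston} is a homology isomorphism, hence it gives an isomorphism $H^*(B\homeop(\R^n);\R) \xrightarrow{\cong} H^*(B\dHomeop(\R^n);\R)$. Composing with the Galatius--Randal-Williams injections
\[
\R[e,p_1,p_2,\ldots] \hookrightarrow H^*(B\homeop(\R^{2n});\R) \quad (2n \geq 6),
\]
\[
\R[p_1,p_2,\ldots] \hookrightarrow H^*(B\homeop(\R^{2n+1});\R) \quad (2n+1 \geq 7),
\]
yields injections of the same polynomial algebras into $H^*(B\dHomeop(\R^n);\R)$. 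For the remaining low dimensions $n=2,3$, the homotopy equivalences $\homeop(\R^2) \simeq \mathrm{SO}(2)$ and $\homeop(\R^3) \simeq \mathrm{SO}(3)$ give isomorphisms $H^*(B\homeop(\R^2);\R) \cong \R[e]$ and $H^*(B\homeop(\R^3);\R) \cong \R[p_1]$, which transfer to the discrete classifying space in exactly the same way.

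Putting the two steps together, every nontrivial polynomial in the specified generators represents a nonzero class in $H^*(B\dHomeop(\R^n);\R)$ lying outside the image of $c$, and is therefore unbounded. The entire substantive content has been pushed into \Cref{thm:main:homeo}, so there is no genuinely hard step left in this deduction; the main point to be attentive to is the identification of bounded cohomology of the discrete group with that of its classifying space in the first step, since this is what allows the bounded acyclicity result to force the comparison map to vanish.
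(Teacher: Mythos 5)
Your proposal is correct and follows essentially the same route as the paper: bounded acyclicity of $\homeop(\R^n)$ (Theorem \ref{thm:main:homeo}) forces the comparison map $H^*_b(B\dHomeop(\R^n);\R)\to H^*(B\dHomeop(\R^n);\R)$ to vanish in positive degrees, while the Thurston--McDuff homology isomorphism transports the Galatius--Randal-Williams injections (and the $\mathrm{SO}(2)$, $\mathrm{SO}(3)$ identifications in dimensions $2$ and $3$) to the discrete classifying space. This is exactly the argument given in Section \ref{sec:pontryagin}.
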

 
Similarly, for $C^r$-flat $\R^n$-bundles when $r>0$, every nontrivial class in $H^*(B\drDiff_{+}(\R^n);\R)$ is unbounded. To detect nontrivial classes in $H^*(B\drDiff_{+}(\R^n);\R)$, we shall use a deep theorem of Segal \cite[Prop.~1.3 and~3.1]{segal1978classifying}, that there exists a map $$B\drDiff_{+}(\R^n)\to B\mathrm{S}\Gamma^{r}_{n},$$ which is a homology isomorphism, where $B\mathrm{S}\Gamma^{r}_{n}$ is the classifying space of Haefliger structures for codimension $n$ foliations that are transversely oriented. There is extensive literature on secondary characteristic classes of foliations that give nontrivial classes in $H^*(B\mathrm{S}\Gamma^{r}_{n}; \R)$ when $r>1$. In particular, there is a Godbillon--Vey class in $H^{2n+1}(B\mathrm{S}\Gamma^{r}_{n}; \R)$ that is nontrivial (see \cite{MR0928396} for the corresponding cocycle formula in $H^{2n+1}(B\drDiff_{+}(\R^n);\R)$). To detect the nontriviality of classical characteristic classes, we consider a map 
\[
\nu\colon B\mathrm{S}\Gamma^{r}_{n}\to B\mathrm{GL}_n(\R)_{+},
\]
which classifies oriented normal bundles to the codimension $n$ foliations where $\mathrm{GL}_n(\R)_{+}$ is the group invertible matrices with positive determinant. For all regularities, it is known that the map $\nu$ is at least $(n+1)$-connected, see~\cite[Remark~1, Section~II.6]{haefliger1971homotopy}. Therefore, the map
\[
\nu^*\colon H^*(B\mathrm{GL}_n(\R)_{+};\R)\to H^*(B\mathrm{S}\Gamma^{r}_{n};\R),
\]
is injective in degrees $*\leq n+1$ for all regularities. For $r=1$, it is an isomorphism in all degrees by a remarkable theorem of Tsuboi \cite{tsuboi1989foliated} and for $r>1$, it is conjecturally injective in degrees below $2n$ \cite[Problem 14.5]{hurder2003foliation}. So we obtain the following corollary about the unboundedness of the Euler class and Pontryagin classes.
\begin{corA}[Theorem \ref{C^r unboundedness}]
   For $C^r$-flat oriented $\R^n$-bundles when $r>1$, the polynomials of degree less than $n+2$ on Pontryagin classes and the Euler class (when $n$ is even) are all unbounded and when $r=1$, the same statement holds without any condition on the degree of the polynomials.
\end{corA}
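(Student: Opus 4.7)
The proof is essentially an immediate corollary of Theorem \ref{thm:main:diff} combined with the chain of results that the excerpt has already assembled, so the plan is short: exhibit nonzero classes in $H^*(B\drDiff_{+}(\R^n);\R)$, then observe that bounded acyclicity forces them to lie outside the image of the comparison map.

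First, I would invoke Theorem \ref{thm:main:diff} to conclude that $\diffrp(\R^n)$ is boundedly acyclic as an abstract group. Reading this at the level of the classifying space of the discrete group, it gives $H^*_b(B\drDiff_{+}(\R^n);\R)=0$ in all positive degrees. Consequently, the comparison map
\[
H^*_b(B\drDiff_{+}(\R^n);\R)\to H^*(B\drDiff_{+}(\R^n);\R)
\]
is identically zero in positive degrees, so to prove a cohomology class is unbounded it suffices to prove it is nonzero.

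Next, I would detect nonzero classes by composing the three maps already recalled in the excerpt: Segal's homology isomorphism $B\drDiff_{+}(\R^n)\to B\mathrm{S}\Gamma^r_n$, followed by Haefliger's normal bundle map $\nu\colon B\mathrm{S}\Gamma^r_n\to B\mathrm{GL}_n(\R)_+$. The source $B\mathrm{GL}_n(\R)_+$ has real cohomology the free graded-commutative algebra on the Pontryagin classes $p_i$ and, when $n$ is even, the Euler class $e$. The quoted fact that $\nu$ is $(n+1)$-connected for all regularities $r>0$ implies that $\nu^*$ is injective in degrees $\le n+1$; hence every nonzero polynomial in the $p_i$ and $e$ of cohomological degree $<n+2$ pulls back to a nonzero element of $H^*(B\mathrm{S}\Gamma^r_n;\R)\cong H^*(B\drDiff_{+}(\R^n);\R)$. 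By the previous paragraph, such classes are unbounded, giving the statement in the range $r>1$. For the special case $r=1$, I would replace the $(n+1)$-connectedness input by Tsuboi's theorem, which says $\nu^*$ is an isomorphism in all degrees, so the restriction on the polynomial degree disappears and the same reasoning yields unboundedness for every nonzero polynomial in $p_1,p_2,\dots$ and $e$.

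There is no real obstacle to overcome; the only point that requires a moment of care is the identification of the characteristic classes under consideration with the pullbacks of the universal Euler and Pontryagin classes along $\nu\circ(\text{Segal})$. This is built into the definitions, since $\nu$ classifies the (oriented) normal bundle of the underlying foliation and Segal's map is induced by sending a flat $\R^n$-bundle to its Haefliger structure. Once these identifications are in place, the corollary follows by assembling Theorem \ref{thm:main:diff}, Segal's theorem, Haefliger's connectivity estimate, and (for $r=1$) Tsuboi's theorem, exactly as was done in Corollary \ref{cor:flat R^n bundles} for the topological case.
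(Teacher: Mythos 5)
Your proposal is correct and follows essentially the same route as the paper: bounded acyclicity of $\diffrp(\R^n)$ (Theorem \ref{thm:main:diff}) makes the comparison map vanish in positive degrees, while Segal's homology isomorphism $B\drDiff_{+}(\R^n)\to B\mathrm{S}\Gamma^r_n$ together with the $(n+1)$-connectivity of $\nu$ detects the nonzero polynomial classes in degrees below $n+2$, with Tsuboi's theorem removing the degree restriction when $r=1$. This matches the paper's proof of Theorem \ref{C^r unboundedness}, including the identification of the characteristic classes via $\nu$ and the classifying map for the Haefliger structure.
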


\begin{remark*}
    Calegari in \cite{Calegari} proved that the Euler class is unbounded for oriented $C^r$-flat $\R^2$-bundles.
\end{remark*}

These unboundedness results for classical characteristic classes of flat $\R^n$-bundles are in contrast to Milnor, Sullivan, Smillie, and Gromov's results on the boundedness of these classes for flat \emph{linear} $\R^n$-bundles. For example, Milnor showed that the Euler class is bounded for flat linear $\R^2$-bundles \cite{MR95518}; Sullivan~\cite{MR0418119} and  Smillie~\cite{Smillie} generalized it to flat linear $\R^{2n}$-bundles. Gromov further generalized it \cite[Page 23]{MR0686042} by proving that if $G$ is an algebraic subgroup in the linear group $\mathrm{GL}_n(\R)$, then any class in the image of the map
\[
H^*(BG; \R)\to H^*(BG^{\delta}; \R),
\]
is a bounded class.

For flat $S^n$-bundles, we also have topological Pontryagin classes and the Euler class (if it is oriented and  $n$ is odd). These classes are induced by the following composition
 \[
 B\dHomeo(S^{n})\to B\dHomeo(D^{n+1})\to B\dHomeo (\R^{n+1}),
 \]
 where the first map is induced by coning the sphere to a disc and the second map is induced by the restriction to the interior of the disc. So we can pull the classes in $H^*(B\dHomeoo (\R^{n+1}); \R)$ to $H^*(B\dHomeoo(S^{n}); \R)$. As a consequence of Galatius--Randal-Williams's result \cite[Theorem 1.4]{galatius2022algebraic}, we shall prove in Section \ref{sec:pontryagin} that
 \[
 \R[p_1, p_2, \dots]\to H^*(B\dHomeo(S^{n});\R), \]
 is injective for $n\geq 6$. We then use the bounded acyclicity of $\homeo(\R^n)$ to obtain the following unboundedness result for the invariants of flat sphere bundles.
 \begin{thmA}\label{thm:spherebundles}
     For $C^0$-flat $S^n$-bundles, the classes in $\R[p_1, p_2, \dots]$ are unbounded for $n\geq 7$ and for $n=4,5$ and $6$ the classes $\R[p_1, p_2]$ are unbounded. For $n=2, 3$, all the powers of $p_1$ are unbounded.
 \end{thmA}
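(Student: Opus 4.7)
The proof has two ingredients: nontriviality of the pulled-back Pontryagin polynomials in $H^*(B\dHomeop(S^n); \R)$, and their unboundedness.

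For nontriviality, define the group homomorphism $\phi\colon \dHomeop(S^n) \to \dHomeop(\R^{n+1})$ as the composition of the coning map $c(f)(tv) = tf(v)$ with the restriction $\beta$ to the interior of the disc. The classes in the theorem are the $\phi^*$-pullbacks of the Pontryagin classes in $H^*(B\dHomeop(\R^{n+1}); \R)$. In Section \ref{sec:pontryagin} we combine the Galatius--Randal-Williams injectivity theorem \cite[Theorem 1.4]{galatius2022algebraic} for $B\homeop(\R^{n+1})$ with the Thurston--McDuff homology equivalence $B\dHomeop(\R^{n+1}) \to B\homeop(\R^{n+1})$ to deduce injectivity of $\R[p_1, p_2, \ldots] \to H^*(B\dHomeop(S^n); \R)$ in the ranges stated; in the low-dimensional cases $n \in \{2,3\}$ one uses the explicit homotopy type $\homeop(\R^k) \simeq \mathrm{SO}(k)$ and the identification of the stable tangent bundle of $S^n$.

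For unboundedness, the main input is Theorem \ref{thm:main:homeo}: the group $\dHomeop(\R^{n+1})$ is boundedly acyclic, hence $H^*_b(\dHomeop(\R^{n+1}); \R) = 0$ in positive degrees, so in particular the pullback $\beta^*_b$ vanishes. In parallel, Theorem \ref{thm:discs} applied to the short exact sequence $1 \to \homeo(D^{n+1}, \partial) \to \homeoo(D^{n+1}) \to \homeop(S^n) \to 1$ yields, via a Hochschild--Serre argument with boundedly acyclic kernel, an isomorphism $c^*_b\colon H^*_b(\dHomeoo(D^{n+1}); \R) \xrightarrow{\sim} H^*_b(\dHomeop(S^n); \R)$. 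Assuming for contradiction that a nontrivial $P$ pulls back to a bounded class $\phi^*(P) = \iota(b)$, we lift $b$ to some $b' \in H^*_b(\dHomeoo(D^{n+1}); \R)$ via the inverse of $c^*_b$; naturality of the comparison map together with $\phi = \beta \circ c$ gives $c^*(\beta^*(P) - \iota(b')) = 0$, so $\beta^*(P) - \iota(b')$ lies in $\ker(c^*)$. Using the splitting $H^*(B\dHomeoo(D^{n+1}); \R) = \im(r^*) \oplus \ker(c^*)$ induced by the retraction $r \circ c = \id$ (where $r$ denotes restriction to the boundary), together with $\beta^*_b = 0$ and the nontriviality of $P$, we derive the required contradiction.

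The main obstacle is the diagram chase in the final step of the unboundedness argument. The vanishing $\beta^*_b = 0$ and the isomorphism $c^*_b$ do not on their own force $\phi^*(P)$ to be unbounded; one must leverage both the retract structure $(r, c)$ and the specific behavior of $\beta^*(P)$ decomposed along $\im(r^*) \oplus \ker(c^*)$, in particular that its $\ker(c^*)$-component is obstructed from being realizable by a bounded class by the bounded acyclicity of $\dHomeop(\R^{n+1})$.
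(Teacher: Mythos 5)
Your argument has two genuine gaps, and they are in fact the two places where the paper's proof has real content. First, nontriviality: you claim that Galatius--Randal-Williams for $B\homeop(\R^{n+1})$ together with Thurston--McDuff yields injectivity of $\R[p_1,p_2,\dots]\to H^*(B\dHomeop(S^n);\R)$, but this does not follow: the pullback $\phi^*$ along your coning-plus-restriction homomorphism $\phi\colon\homeop(S^n)\to\homeop(\R^{n+1})$ has no reason to be injective, so nontriviality of $P$ upstairs says nothing about $\phi^*(P)$. The paper's mechanism is Lemma \ref{lem:stabilization}: the composite $\homeop(\R^n)\to\homeop(S^n)\xrightarrow{\ \phi\ }\homeop(\R^{n+1})$ (one-point compactification followed by your $\phi$) is homotopic \emph{through homomorphisms} to the stabilization $f\mapsto f\times\id$, so $\phi^*(P)$ restricts to $P$ in $H^*(B\dHomeop(\R^n);\R)$, which is nonzero by \cite{galatius2022algebraic} one dimension \emph{down} (whence the range $n\geq 7$ rather than $n\geq 6$ in the statement); the cases $n=4,5,6$ and $n=2,3$ are not reachable this way at all and require iterating the maps down to $S^3$ (where $\homeo(S^3)\simeq \mathrm{O}(4)$ gives $\R[p_1,p_2]$) respectively to $S^1$. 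Your one-line remark about the stable tangent bundle for $n=2,3$ does not substitute for this.

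Second, the unboundedness chase does not close. From $\phi^*(P)=\iota(b)$ you only derive $\beta^*(P)-\iota(b')\in\ker(c^*)$ and, applying $c^*$ again, the identity you started from; nothing in the inputs you use ($H^*_b(\dHomeop(\R^{n+1}))=0$, the isomorphism $c^*_b$ coming from Theorem \ref{thm:discs}, and the splitting $\im(r^*)\oplus\ker(c^*)$) forces $b=0$ or contradicts anything, because $H^*_b(\homeop(S^n))$ and hence $H^*_b(\homeoo(D^{n+1}))$ are \emph{not} known to vanish, so $\beta^*(P)$ being bounded on the disc group is not excluded by what you know. The missing ingredient is again the map from the lower-dimensional Euclidean group: precompose with $\homeop(\R^n)\to\homeop(S^n)$ and use Lemma \ref{lem:stabilization}. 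If $\phi^*(P)$ were bounded on $B\dHomeop(S^n)$, naturality of the comparison map would make its image in $H^*(B\dHomeop(\R^n);\R)$ --- which is $P$ itself by the stabilization identification --- lie in the image of $H^*_b(B\dHomeop(\R^n);\R)=0$ (Theorem \ref{thm:main:homeo}), contradicting its nontriviality; this is exactly the paper's diagram chase through $H^*(B\dHomeo(S^{n+1}))\to H^*(B\dHomeo(\R^{n+1}))\to H^*(B\dHomeo(S^{n}))$, iterated down to $S^3$ or $S^1$ in the low-dimensional cases. In short, sandwiching the sphere group between Euclidean groups of adjacent dimensions, not the disc-versus-boundary exact sequence, is what drives both halves of the proof; Theorem \ref{thm:discs} is not needed here.
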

 
 \begin{remark*}
     The unboundedness of $p_1$ in $H^4(B\dHomeo(S^3);\R)$ was first shown in \cite[Theorem 1.8]{MN}, answering a question of Ghys \cite[F.1]{langevin}. Theorem \ref{thm:spherebundles} partially answers \cite[Questions 7.3 and 7.4]{MN}.
 \end{remark*}

\begin{outl}
    We begin by stating the criteria for bounded acyclicity and acyclicity in Section \ref{sec:criterion}. In Section \ref{sec:verifying} we go through the list of groups from the introduction, and verify that they satisfy the criteria. The case of the disc presents some additional technical difficulties which are solved by a controlled annulus theorem, proven in Appendix \ref{appendix}. We prove the criteria in Section \ref{sec:proof}. Section \ref{sec:pontryagin} is dedicated to unboundedness results for characteristic classes of flat $\R^n$- and $S^n$-bundles. Finally in Section \ref{sec:further}, we present further computations (including the results on compact annuli), and some speculations.
\end{outl}

\begin{acks}
    FFF was supported by the Herchel Smith Postdoctoral Fellowship Fund. SN  was partially supported by  NSF CAREER Grant DMS-2239106 and Simons Foundation Collaboration Grant (855209). The authors are indebted to Benjamin Br\"uck, S{\o}ren Galatius, Manuel Krannich, Nicol\'as Matte Bon, Samuel Mu{\~n}oz-Ech{\'a}niz, Oscar Randal-Williams and Shmuel Weinberger for useful conversations, and to Shigeyuki Morita for insightful comments. We also thank Alexander Kupers for writing the proof of the ``Controlled Annulus Theorem'' in the appendix which is a crucial ingredient in the proof of Theorem \ref{thm:discs}.
\end{acks}

\section{A criterion for bounded acyclicity}
\label{sec:criterion}

We start by formulating a general criterion for bounded acyclicity of groups admitting a certain action on a poset. This criterion will be the common denominator of all our proofs.

\begin{definition}\label{def:W}
    Let $\PO$ be a poset. We say that $\PO$ satisfies the $\mathbf{W}$ property, if for every finite subposet $\mathcal{Q}$ with minimal elements $\{x_1, \ldots, x_k\}$ the following holds. For every subset $I \subseteq \{1, \ldots, k\}$ there exists $y_I \in \PO$ such that
    
    \begin{enumerate}
        \item if $I \subseteq J$ then $y_I \preceq y_J$;
        \item for $x\in\mathcal{Q}$, if $x_i \preceq x$ for all $i \in I$ then $y_I \preceq x$.
    \end{enumerate}
\end{definition}

The diagram below exhibits this property for the case $\mathcal{Q} = \{ x_1, x_2 \}$, which justifies the name.

\[\begin{tikzcd}
	{x_1} &&&& {x_2} \\
	&& {y_{\{1,2\}}} \\
	& {y_1} && {y_2}
	\arrow[no head, from=1-1, to=3-2]
	\arrow[no head, from=3-2, to=2-3]
	\arrow[no head, from=2-3, to=3-4]
	\arrow[no head, from=3-4, to=1-5]
\end{tikzcd}\]

Let $G$ be a group acting on a set $X$. The set of all sequences $\mathbf{x} = (x_i)_{i \in \N^*}, x_i \in X$ of pairwise distinct elements $x_i$ has a natural poset structure, where $\mathbf{x} \preceq \mathbf{y}$ if $\mathbf{x}$ is a subsequence of $\mathbf{y}$. The action of $G$ of $X$ induces an order-preserving action on this poset of sequences. Any $G$-invariant subposet $\X$ will be called a \emph{$G$-poset of sequences in $X$}.

\begin{theorem}
\label{thm:criterion}
	Let $G$ be a group acting on a set $X$ and let $\X$ be a $G$-poset of sequences in $X$. Suppose that the following hold:
	\begin{enumerate}
	\item The poset $\X$ satisfies the $\mathbf{W}$ property;
 	\item The action of $G$ on $\X$ is transitive;
	\item The stabilizer in $G$ of some (equivalently: every) $\mathbf{x} \in \X$ is boundedly acyclic.
	\end{enumerate}
	Then $G$ is boundedly acyclic.
\end{theorem}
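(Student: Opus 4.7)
The plan is to follow the roadmap sketched in the introduction: package $\X$ as a semisimplicial set, prove bounded acyclicity of that semisimplicial set via the $\mathbf{W}$ property, identify the $G$-quotient with the nerve of $\emb(\N)$, and combine everything via the standard spectral sequence for a group action on a boundedly acyclic semisimplicial set with boundedly acyclic stabilizers.

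More precisely, I would let $\X_\bullet$ be the semisimplicial set whose $n$-simplices are chains $\mathbf{x}_0 \preceq \cdots \preceq \mathbf{x}_n$ in $\X$, with face maps given by deletion. Using hypothesis~(2), I would identify the quotient $G \backslash \X_\bullet$ with the nerve $N_\bullet \emb(\N)$: each subsequence relation $\mathbf{x} \preceq \mathbf{y}$ is recorded by the order-preserving injection $\N \to \N$ that selects the subsequence, so a chain of length $n$ canonically produces $n$ composable elements of $\emb(\N)$, and transitivity shows that this indexing datum is exactly what distinguishes $G$-orbits. By Theorem~D, $\emb(\N)$ is boundedly acyclic, which by the standard identification of the (bounded) cohomology of a monoid with that of its nerve gives $H^*_b(G \backslash \X_\bullet) = 0$ in positive degrees.

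The crucial step is to use the $\mathbf{W}$ property to prove that $\X_\bullet$ itself is boundedly acyclic. Given any finite subposet $\mathcal{Q} \subseteq \X$ with minimal elements $x_1, \ldots, x_k$, the $\mathbf{W}$ data provides a coherent family of common lower bounds $y_I \in \X$ indexed by $I \subseteq \{1, \ldots, k\}$ and compatible with the subset lattice. This is exactly the input needed to build a coning cochain homotopy on the bounded cochains of $\X_\bullet$ that trivializes bounded cocycles: each $y_I$ is a single element of $\X$ rather than an average, so no norm is lost in the process. With $\X_\bullet$ boundedly acyclic and with hypothesis~(3) ensuring that simplex stabilizers are boundedly acyclic, the standard double-complex spectral sequence for the $G$-action on $\X_\bullet$ gives $H^*_b(G) \cong H^*_b(G \backslash \X_\bullet)$, which vanishes in positive degrees.

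The main obstacle is turning the finite-level, set-theoretic $\mathbf{W}$ data into a semisimplicial, norm-controlled contraction of $\X_\bullet$. In bounded cohomology one cannot sum or average over the poset, so the $y_I$'s must be organized into a single coherent choice of cone vertices compatible with all face maps. The hierarchical structure of the $\mathbf{W}$ property (namely $y_I \preceq y_J$ whenever $I \subseteq J$, together with the lower-bound condition) is tailored exactly to make this gluing possible. Once this contraction is in hand, the double-complex bookkeeping is routine.
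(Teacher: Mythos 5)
Your proposal is correct and follows essentially the same route as the paper: nerve of the poset, bounded acyclicity of $\X_\bullet$ from the $\mathbf{W}$ property (which the paper formalizes as uniform acyclicity via a bounded acyclic-carrier lemma and a bounded Quillen order-homotopy theorem, giving cone-type contractions with norm bounds independent of the finite subposet), identification of $G \backslash \X_\bullet$ with the nerve of $\emb(\N^*)$ using transitivity, and the resolution theorem of \cite{MN} to conclude $H^\bullet_b(G) \cong H^\bullet_b(\emb(\N^*)) = 0$. The only structural difference is that you quote Theorem~\ref{thm:emb} as an external input, whereas the paper obtains it by applying this very reduction to the group of countably supported permutations of an uncountable set, whose bounded acyclicity is proved directly (binateness) -- so no circularity arises either way.
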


We will prove this theorem in Section \ref{sec:proof}. In the next section, we will see that this criterion is very powerful in proving bounded acyclicity of transformation groups.

\medskip

There is a corresponding criterion for acyclicity, which will also be proved in Section \ref{sec:proof}.

\begin{theorem}
\label{thm:criterion2}
	Let $G$ be a group acting on a set $X$ and let $\X$ be a $G$-poset of $X$-sequences. Suppose that the following hold:
	\begin{enumerate}
	\item The poset $\X$ satisfies the $\mathbf{W}$ property;
 	\item The action of $G$ on $\X$ is transitive;
	\item The stabilizer in $G$ of some (equivalently: every) $\mathbf{x} \in \X$ is acyclic.
	\end{enumerate}
	Then $G$ is acyclic.
\end{theorem}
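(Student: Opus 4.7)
The plan is to imitate the proof of \Cref{thm:criterion} with ordinary homology in place of bounded cohomology, reducing everything to \Cref{thm:emb} through an equivariant spectral sequence; in fact, both criteria are most naturally proved in parallel.

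First I would form the semisimplicial set $\X_\bullet$ whose $n$-simplices are chains $\mathbf{x}_0 \preceq \cdots \preceq \mathbf{x}_n$ in the poset $\X$, with face maps deleting an entry; the action of $G$ on $\X$ induces a simplicial $G$-action. The $\mathbf{W}$ property is designed precisely so that $\X_\bullet$ is acyclic: given a cycle supported on a finite subposet $\mathcal{Q} \subset \X$, adjoining the Boolean family $\{ y_I \}$ produced by $\mathbf{W}$ enlarges $\mathcal{Q}$ into a subposet that cones off onto $y_\emptyset$, so the cycle becomes a boundary. The compatibility conditions $y_I \preceq y_J$ for $I \subseteq J$, and $y_I \preceq x$ whenever $x_i \preceq x$ for all $i \in I$, are exactly what is needed to make this cone compatible with the simplicial structure.

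Next I would run the equivariant homology spectral sequence for $G \curvearrowright \X_\bullet$,
\[
E^1_{p,q} \;=\; \bigoplus_{[\sigma] \in \X_p/G} H_q(G_\sigma) \;\Longrightarrow\; H_{p+q}(G),
\]
where the convergence to $H_*(G)$ uses the acyclicity of $|\X_\bullet|$ just established. Because the relation $\mathbf{x}_0 \preceq \cdots \preceq \mathbf{x}_n$ forces $\mathbf{x}_n$ to contain every $\mathbf{x}_i$ as a subsequence, the stabilizer of the chain coincides with $G_{\mathbf{x}_n}$, which is acyclic by hypothesis~(3). Hence the $E^1$-page is concentrated on the row $q=0$ and $E^2_{p,0} = H_p(\X_\bullet / G)$.

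Finally, transitivity (hypothesis~(2)) lets me identify the quotient $\X_\bullet/G$ with the nerve of the monoid $\emb(\N)$: after normalising each chain so that its top sequence is a fixed base point $\mathbf{x}^*$, the remaining data is a tuple $(\psi_0, \ldots, \psi_{n-1}) \in \emb(\N)^n$ recording the successive subsequence relations $\mathbf{x}_i = \psi_i^* \mathbf{x}_{i+1}$, and the face maps correspond exactly to composition in $\emb(\N)$. By \Cref{thm:emb} the nerve of $\emb(\N)$ has vanishing reduced homology, so $E^2_{p,0}=0$ for $p>0$ and the spectral sequence forces $H_*(G)=0$ in positive degrees. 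The main obstacle is the acyclicity of $\X_\bullet$ itself: the Boolean data $\{y_I\}$ suggest an obvious local null-homotopy, but assembling these local contractions consistently across nested finite subposets requires some combinatorial care; the rest of the argument is standard equivariant homological algebra coupled with the external input from \Cref{thm:emb}.
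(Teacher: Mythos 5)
Your overall route is the one the paper takes: the nerve of $\X$, the isotropy spectral sequence of \cite[Chapter VII.7]{brown}, the observation that the stabilizer of a chain equals the stabilizer of its top sequence, the identification of the orbit complex with the nerve of $\emb(\N^*)$ (\Cref{lem:orbit:iso}), and the final appeal to the acyclicity of $\emb(\N^*)$, which is \Cref{thm:emb:acyclic}; since that theorem is obtained by applying the same reduction to the binate group of countably supported permutations, quoting it here is legitimate and not circular. One point you elide is handled explicitly in the paper: the coefficients $\Z_\sigma$ in the isotropy spectral sequence are a priori twisted by the permutation of the vertices of $\sigma$, and it is because simplices are chains in a poset, so that stabilizers fix vertices pointwise, that the $q=0$ row is really the untwisted chain complex of the quotient.

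The genuine gap is your justification of the acyclicity of $\X_\bullet$. You propose to cone a finite subposet $\mathcal{Q}$ off at $y_\emptyset$, but the $\mathbf{W}$ property does not supply a common lower bound for $\mathcal{Q}$: although \Cref{def:W} formally allows $I=\emptyset$, no such element can exist in the posets this theorem is designed for (two fat sequences with disjoint sets of fat points have no common subsequence), and the verification in \Cref{prop:abstract:W} only produces $y_I$ for nonempty $I$ --- for $I=\emptyset$ the interweaving recipe would output the empty sequence, which is not in $\X$. So the ``obvious local null-homotopy'' at $y_\emptyset$ is exactly what is unavailable, and this is the step you defer to ``combinatorial care''. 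The correct mechanism, which is the paper's \Cref{cor:W}, works entirely with nonempty index sets: send $x\in\mathcal{Q}$ to $y_{I_x}$ where $I_x=\{i \mid x_i\preceq x\}$ is nonempty because every element of a finite poset dominates a minimal one; this map is order-preserving and satisfies $y_{I_x}\preceq x$, hence is homotopic to the inclusion by the order homotopy theorem (\Cref{thm:quillen}), and its image is coned off \emph{upwards} by $y_{\{1,\ldots,k\}}$, killing all cycles of $\mathcal{Q}_\bullet$ inside $\X_\bullet$. With that step repaired --- or simply by citing \Cref{cor:W} together with \Cref{lem:uac:bac} --- your argument coincides with the paper's proof via \Cref{prop:reduction2} and \Cref{thm:emb:acyclic}.
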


\subsection{Bounded acyclicity for compactly supported groups}

When applying the criterion from Theorem \ref{thm:criterion}, we will need a way to prove bounded acyclicity of the stabilizers. These will always be \emph{compactly supported} transformation groups,, which are known to be boundedly acyclic in great generality \cite{matsumor, binate, lamplighters, ccc}. For instance, to deduce Theorems \ref{thm:main:homeo} and \ref{thm:main:diff} from the criterion, we will only need the following statement:

\begin{theorem}[{\cite[Theorem 1.6, Lemma 2.3]{MN}}]
\label{thm:compactly}
    Let $n \in \N^*, r \in \N^* \cup \{\infty\}$, let $M$ be a closed $C^r$-manifold and let $Z$ be a $C^r$-manifold diffeomorphic to $M \times \R^n$. Then the groups $\homeoc(Z), \diffrc(Z)$, are boundedly acyclic.
    
    Moreover, any (possibly infinite) direct product of such groups is boundedly acyclic.
\end{theorem}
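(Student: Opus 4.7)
The plan is to follow the template of Mather and Matsumoto--Morita: the $\R^n$-factor provides enough room for an ``infinite swindle'' that gives the group a binate (pseudo-mitotic) structure, from which bounded acyclicity follows by the result on binate groups cited as \cite{binate} in the paper. For the product statement, the point is that this binate data can be chosen in a way that applies diagonally across any family of such groups.

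First, for a single $G = \homeoc(Z)$ or $G = \diffrc(Z)$ with $Z \cong M \times \R^n$, I would fix such a product decomposition and choose a compactly supported transformation $T \in G$ agreeing with the translation $(m, x) \mapsto (m, x + e_1)$ on a large region. Given any finitely generated subgroup $H \leq G$ with supports contained in $M \times K$ for compact $K \subset \R^n$, the conjugates $T^i h T^{-i}$ for $h \in H$ are supported in pairwise disjoint shifted copies of $M \times K$ once $i$ grows; after a telescoping modification (standard cutoff using $C^r$ bump functions in the smooth case), the assignment $h \mapsto \prod_{i \geq 1} T^i h T^{-i}$ defines a homomorphism $\Phi\colon H \to G$. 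Composing with a single element that ``absorbs one copy of $h$'' provides the binate relation of Berrick--Varadarajan, so $G$ is binate and hence boundedly acyclic.

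For the second part, given an arbitrary family $\{G_\alpha\}_{\alpha \in A}$ of such groups, I would show that the full product $\prod_\alpha G_\alpha$ is again binate. The observation is that the swindle data above is entirely intrinsic to the factor $Z_\alpha \cong M_\alpha \times \R^{n_\alpha}$: for a finitely generated subgroup $H \leq \prod_\alpha G_\alpha$, its projection $H_\alpha$ to each $G_\alpha$ is finitely generated, and one can apply the binate construction in each coordinate. The resulting homomorphism $\Phi\colon H \to \prod_\alpha G_\alpha$ and absorbing element are assembled coordinate-wise, giving a binate structure on the product and hence, again by \cite{binate}, bounded acyclicity.

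The main conceptual obstacle is the product statement. Bare bounded acyclicity is \emph{not} preserved under infinite products in general, and one cannot hope to compute $H^*_b$ of the product directly from that of the factors. The binate framework sidesteps this entirely: one never needs a K\"unneth-type statement, since the property of being binate is verified on finitely generated subgroups and therefore plays well with arbitrary products provided the local swindle is uniform. A secondary technical point is the cutoff in regularity $r$ (including $r = \infty$) needed to keep the infinite composition compactly supported; this is handled by spacing the translates geometrically so that the required $C^r$-norms of the bump functions are uniformly controlled, which is routine.
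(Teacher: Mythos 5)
Your approach contains a genuine flaw, and it is not a fixable technicality: binateness is too strong a property to aim for. Binate groups are acyclic (Theorem~\ref{thm:binate:acyclic}), but the groups in the statement are not acyclic in general. For instance, $\homeoc(S^1\times\R)$ surjects onto its compactly supported mapping class group, which is infinite cyclic (generated by the Dehn twist), so its $H_1$ is non-trivial; the same phenomenon occurs for $\diffrc(M\times\R^n)$ for most closed $M$. Even in the case where $M$ is a point, binateness of $\diffrc(\R^n)$ would yield acyclicity in every regularity, whereas acyclicity in the critical regularity $r=n+1$ is a well-known open problem --- so the ``routine'' cutoff you invoke cannot exist. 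Concretely, the swindle breaks at the compact-support requirement: $\prod_{i\ge1}T^i h T^{-i}$ with $T$ a translation-like map is supported on an unbounded set, and the only way to make an infinite product of copies of $h$ compactly supported is to let the supports accumulate. At the accumulation locus the resulting bijection fails to be continuous when $M$ is non-trivial (the copies $M\times B_i$ shrink only in the $\R^n$-direction, while $h$ moves the $M$-coordinate), and it fails to be $C^r$ for $r\ge1$ even when $M$ is a point (higher derivatives of the rescaled conjugates blow up). A bump-function ``cutoff'' of an infinite composition is not a group homomorphism, so it cannot rescue $\psi$. This is precisely why the paper records binateness only for $\homeoc(\R^n)$ (Proposition~\ref{prop:binate:homeo}) and treats Theorem~\ref{thm:compactly} differently.

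Note that the paper does not prove this statement itself --- it is quoted from \cite{MN} --- but the route it indicates, and which works, is the displacement criterion of Theorem~\ref{thm:ccc}: commuting $\Z$-conjugates. That weaker property is exactly what your translation idea does establish. Given a finitely generated $H\le\diffrc(M\times\R^n)$ supported in $M\times K$ with $K\subset\R^n$ compact, choose a compactly supported $t$ acting only in the $\R^n$-factor which pushes $K$ off itself monotonically inside a larger ball, so that $t^p(M\times K)\cap(M\times K)=\emptyset$ for every $p\ge1$; then $[H,t^pHt^{-p}]=1$ with no infinite product, no accumulation, and no loss of regularity, and Theorem~\ref{thm:ccc} gives bounded acyclicity --- which is all that is claimed. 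Your coordinatewise assembly for the infinite product is then correct in spirit and mirrors the actual argument: a finitely generated subgroup of $\prod_\alpha G_\alpha$ has finitely generated projections, so one picks $t_\alpha$ in each factor and takes $t=(t_\alpha)_\alpha$, showing that the product again has commuting $\Z$-conjugates. The key repair is that the finitary property you propagate through products must be this displacement property, not binateness.
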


In more delicate cases, such as for discs, we will need a more general criterion (which actually implies Theorem \ref{thm:compactly}).

\begin{definition}[{\cite[Definition 4.2]{ccc}}]
    We say that the group $G$ has \emph{commuting $\Z$-conjugates} if for every finitely generated subgroup $H \leq G$ there exists $t \in G$ such that $[H, t^p H t^{-p}] = 1$ for all $p \in \N^*$.
\end{definition}

\begin{theorem}[{\cite[Theorem 1.3]{ccc}}]
\label{thm:ccc}
    If the group $G$ has commuting $\Z$-conjugates, then $G$ is boundedly acyclic.
\end{theorem}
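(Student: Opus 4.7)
The plan is to prove bounded acyclicity by a Mather--Matsumoto--Morita-type swindle, using the commuting $\Z$-conjugates to iterate any bounded class against itself and then concluding via a norm estimate on the bar complex. The structural input needed for such a swindle is precisely what the hypothesis delivers: for every finitely generated $H \leq G$, an infinite family of pairwise commuting $G$-conjugates of $H$ together with a shift automorphism realized by inner conjugation in $G$.

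Let $\alpha \in H^n_b(G;\R)$ with $n \geq 1$ and let $H \leq G$ be finitely generated; it suffices to show that $\alpha|_H = 0$, with a further gluing step then handling the passage to all of $G$. The hypothesis produces $t \in G$ with $[H, t^p H t^{-p}] = 1$ for all $p \geq 1$, and conjugating this relation by powers of $t$ shows that the subgroups $H_p := t^p H t^{-p}$ for $p \geq 0$ pairwise commute. Consequently, for every $N \geq 0$ the pointwise product $\psi_N(h) := h \cdot tht^{-1} \cdots t^N h t^{-N}$ is a well-defined group homomorphism $\psi_N : H \to G$, and each of its factor maps $h \mapsto t^p h t^{-p}$ equals the inclusion $H \hookrightarrow G$ composed with the inner automorphism of $G$ given by conjugation by $t^p$.

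Since inner automorphisms act trivially on $H^*_b(G;\R)$, a shuffle/Eilenberg--Zilber computation on the bar complex, adapted to bounded cochains, yields a lower bound of the form $\|\psi_N^* \alpha\|_\infty \geq (N+1)\|\alpha|_H\|_{H^n_b} - O(\|\alpha\|_\infty)$. On the other hand, pullback along a homomorphism is norm non-increasing, so $\|\psi_N^* \alpha\|_\infty \leq \|\alpha\|_\infty$, and letting $N \to \infty$ forces $\|\alpha|_H\|_{H^n_b} = 0$. In fact, the proof of the lower bound produces an explicit bounded $(n-1)$-primitive of $\alpha|_H$, so $\alpha|_H = 0$ in $H^n_b(H;\R)$, not merely in the reduced quotient; a gluing step, handling the non-functorial choice of $t$ as $H$ varies through the directed system of finitely generated subgroups, then upgrades this to $\alpha = 0$ in $H^n_b(G;\R)$.

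The hard part will be establishing the lower bound $\|\psi_N^* \alpha\|_\infty \geq (N+1)\|\alpha|_H\|_{H^n_b} - O(\|\alpha\|_\infty)$, i.e. the bounded analogue of the additivity formula for homomorphisms with commuting images. In ordinary cohomology this additivity holds only modulo cup-product cross terms, and the bounded version requires an induction on the degree $n$ together with the construction of explicit bounded primitives for the cross terms. Crucially, these cross terms live on commuting products of conjugates of $H$, so the induction hypothesis can be applied to them by re-using the commuting-$\Z$-conjugates property on slightly smaller subgroups. This normed Eilenberg--Zilber bookkeeping, together with the gluing step, is the technical heart of the argument.
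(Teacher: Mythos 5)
This statement is not proved in the paper at all: it is quoted verbatim from the reference [ccc, Theorem 1.3], where the argument runs through quantitative (``uniform'', with a vanishing modulus) vanishing results of lamplighter type and a controlled passage to directed unions, not through a norm-additivity swindle. So your proposal has to stand on its own, and as written it has three genuine gaps. The first and largest is the central estimate $\|\psi_N^*\alpha\|_\infty \geq (N+1)\|\alpha|_H\| - O(\|\alpha\|_\infty)$. In degree $2$ this is the classical quasimorphism argument (homogeneous quasimorphisms are additive on commuting elements and conjugation-invariant), but in degree $n \geq 3$ there is no bounded K\"unneth or Eilenberg--Zilber decomposition available to organize the cross terms: the factorization of $\psi_N$ through $H^{N+1} \to G$ produces a number of cross terms growing with $N$, each involving $\alpha$ restricted to products of the conjugates $t^pHt^{-p}$, and nothing in your sketch controls their total contribution by a constant independent of $N$. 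Saying that the induction hypothesis ``can be applied to them'' is not an argument; this estimate is exactly the point where all known proofs (Matsumoto--Morita, the binate paper, [ccc]) switch to constructing explicit cochain homotopies rather than comparing seminorms of pullbacks.

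Second, even granting the estimate, you only obtain $\|\alpha|_H\|_{H^n_b} = 0$, and the canonical seminorm on $H^n_b$ is not a norm: the image of the coboundary operator need not be closed, so seminorm zero does not imply $\alpha|_H = 0$. Your claim that the argument ``produces an explicit bounded $(n-1)$-primitive'' is not supported by the $N \to \infty$ limiting procedure you describe. Third, the ``gluing step'' is not a formality: bounded cohomology does not commute with directed unions of subgroups, so knowing $\alpha|_H = 0$ for every finitely generated $H \leq G$ does not yield $\alpha = 0$ in $H^n_b(G)$. One needs primitives on the subgroups $H$ whose sup-norms are bounded independently of $H$ (a uniform vanishing modulus), after which a weak-* compactness or ultralimit argument produces a primitive on $G$; this uniformity is precisely what the proof in [ccc] is engineered to deliver and what your sketch leaves as a black box, especially since the element $t$ (and hence any primitive you might build from it) varies with $H$.
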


Another useful feature of bounded acyclicity that we will use in the sequel is the following:

\begin{proposition}[{\cite[Proposition 2.4]{MN}, see also \cite[Theorem 4.1.1]{MR}}]
\label{prop:quotient}
	Consider a short exact sequence $1 \to N \to G \to Q \to 1$, where $N$ is boundedly acyclic. Then the quotient $G \to Q$ induces an isomorphism $H^n_b(G) \cong H^n_b(Q)$ for all $n\in\N$.
\end{proposition}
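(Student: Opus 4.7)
My plan is to obtain the isomorphism from a Hochschild–Serre type spectral sequence for bounded cohomology applied to the extension $1 \to N \to G \to Q \to 1$. Such a spectral sequence is a standard tool in the subject: for any extension of discrete groups there is a first-quadrant convergent spectral sequence
\[
E_2^{p,q} = H^p_b(Q; H^q_b(N;\R)) \Longrightarrow H^{p+q}_b(G;\R),
\]
where the Banach $Q$-module structure on $H^q_b(N;\R)$ is induced by the conjugation action of $G$ on $N$; this descends to $Q$ because inner automorphisms act trivially on (bounded) cohomology. The construction goes through a suitable double complex built from relatively injective resolutions of $\R$ by Banach $N$- and $Q$-modules, following Monod's framework of continuous bounded cohomology specialised to the discrete setting, and it is precisely the content of the cited Theorem 4.1.1 of \cite{MR}.

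Once the spectral sequence is in place, the proof becomes essentially formal. The hypothesis that $N$ is boundedly acyclic gives $H^0_b(N;\R) = \R$ with trivial $Q$-action, and $H^q_b(N;\R) = 0$ for all $q \geq 1$. Substituting these values into the $E_2$-page yields
\[
E_2^{p,0} = H^p_b(Q;\R) = H^p_b(Q), \qquad E_2^{p,q} = 0 \text{ for } q \geq 1,
\]
so the spectral sequence collapses on its bottom row at $E_2$. The abutment gives an isomorphism $H^n_b(G) \cong H^n_b(Q)$ for every $n \in \N$, and the corresponding edge homomorphism is, by construction, the inflation map induced by the quotient homomorphism $G \to Q$. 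This is exactly the statement to be proved.

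The one place where care is genuinely needed is the construction of the spectral sequence itself, since bounded cohomology is not computed by ordinary projective resolutions, and a naive transcription of the classical Hochschild–Serre argument does not work; instead, one must use relatively injective Banach module resolutions and verify compatibility with the seminorms at each stage. I regard this as the main obstacle, but it is handled once and for all in Monod's framework, and in our application it is used only in its mildest form: we only need the $E_2$-page, no passage to a limit is involved, and the coefficients are the trivial Banach module $\R$. An alternative, more elementary route which avoids invoking the full spectral sequence is to argue directly on the bar complex: one shows that restriction along $G \to Q$ is a quasi-isometric quasi-isomorphism between $\ell^\infty((G/N)^{\bullet+1})^Q$ and $\ell^\infty(G^{\bullet+1})^G$ by using bounded cocycle lifts coming from the bounded acyclicity of $N$; this is in essence what is done in \cite[Proposition 2.4]{MN}, and either viewpoint completes the proof.
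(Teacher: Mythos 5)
The paper itself does not prove Proposition \ref{prop:quotient}: it quotes it from \cite[Proposition 2.4]{MN} and \cite[Theorem 4.1.1]{MR}, so the relevant comparison is with those proofs. Your ``alternative route'' at the end is essentially what the cited sources do: they work with the pulled-back resolution $\ell^\infty(Q^{\bullet+1})$ (equivalently, with the machinery of boundedly acyclic maps/actions), not with a general Hochschild--Serre spectral sequence. Note also that within this paper's own toolkit the proposition is exactly the special case of Theorem \ref{thm:resolutions} applied to the $G$-action on the full simplex on the set $Q=G/N$: that semisimplicial set is a cone, hence boundedly acyclic; every simplex stabilizer equals $N$ (conjugates of $N$ coincide with $N$ by normality), so the stabilizer hypotheses hold; and the orbit complex is the inhomogeneous complex computing $H^\bullet_b(Q)$, with the resulting isomorphism induced by the quotient map.

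The weak point of your primary route is the unconditional claim that ``for any extension of discrete groups'' there is a spectral sequence with $E_2^{p,q}=H^p_b(Q;H^q_b(N;\R))$, which you moreover attribute to \cite[Theorem 4.1.1]{MR}; that theorem is the statement of the proposition itself, not a spectral sequence, and the blanket $E_2$-identification is not a citable black box in bounded cohomology. You locate the difficulty in ``the construction of the spectral sequence,'' but the construction (from the double complex) is the easy part; the delicate part is precisely the identification of the $E_2$-page, because $H^q_b(N)$ is only seminormed and because one must commute $H^q_b(N;-)$ with $\ell^\infty$-induction over $Q^{p+1}$, i.e.\ produce primitives of cocycles with uniformly bounded norms. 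In the situation at hand this gap is repairable: since $H^q_b(N;\R)=0$ for all $q\geq 1$, the coboundary maps have closed image, so the open mapping theorem yields constants $K_q$ such that every bounded $q$-cocycle on $N$ has a primitive of norm at most $K_q$ times its own norm; applying this pointwise over $Q^{p+1}$ shows that the rows $q\geq 1$ of the first page (which involve coefficients of the form $\ell^\infty(Q^{p+1})$ with trivial $N$-action) vanish, after which the collapse onto the bottom row and the identification of the edge map with inflation go through as you say. So your argument becomes correct once this uniformity step is made explicit (or once the appeal to a general $E_2$-identification is replaced by the precise statements in Monod's Chapter 12 or in \cite{MR}); as written, the justification offered for the key input is not accurate.
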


The criteria presented so far only give bounded acyclicity. To apply Theorem \ref{thm:criterion2} we use another property which guarantees both acyclicity and bounded acyclicity.

\begin{definition}
We say that a group $G$ is \emph{binate} if for every finitely generated subgroup $H \leq G$ there exist a homomorphism $\psi \colon H \to G$ such that $[H, \psi(H)] = 1$ and an element $t \in G$ such that $t^{-1} \psi(h) t = h \psi(h)$ for all $h \in H$.
\end{definition}

\begin{theorem}[\cite{varadarajan, berrick}]
\label{thm:binate:acyclic}
Binate groups are acyclic.
\end{theorem}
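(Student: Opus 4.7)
The plan is to execute the classical Baumslag--Dyer--Heller/Varadarajan--Berrick argument: show that the inclusion of every finitely generated subgroup induces the zero map on positive-degree homology, then conclude by taking a directed colimit.

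First I would reduce to the finitely generated case. Group homology commutes with filtered colimits, and $G$ is the filtered colimit of its finitely generated subgroups $H \leq G$. Hence any class in $H_n(G)$, $n \geq 1$, lies in the image of $\iota_* \colon H_n(H) \to H_n(G)$ for some finitely generated $H \leq G$, and it suffices to prove $\iota_* = 0$ on $H_n$ for $n \geq 1$.

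Next, invoke the binate hypothesis to produce a homomorphism $\psi \colon H \to G$ with $[H, \psi(H)] = 1$ and an element $t \in G$ with $t^{-1}\psi(h) t = h \psi(h)$ for all $h \in H$. Since $H$ and $\psi(H)$ commute elementwise in $G$, the pointwise product
\[
\phi \colon H \to G, \qquad \phi(h) = h \cdot \psi(h),
\]
is a homomorphism: indeed $\phi(h)\phi(h') = h \psi(h) h' \psi(h') = h h' \psi(h) \psi(h') = \phi(hh')$. Moreover, the relation $t^{-1} \psi(h) t = \phi(h)$ shows that $\phi$ coincides with the composition of $\psi$ with conjugation by $t^{-1}$ inside $G$. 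Since inner automorphisms act trivially on $H_*(G)$, this gives $\phi_* = \psi_*$ on $H_n(G)$ for all $n$.

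The key algebraic step is then to compute $\phi_*$ as a sum. Because $\iota(H)$ and $\psi(H)$ commute in $G$, the multiplication map $\mu \colon \iota(H) \times \psi(H) \to G$ is a group homomorphism, and $\phi$ factors as $\mu \circ (\iota \times \psi) \circ \Delta$, where $\Delta$ is the diagonal on $H$. A standard Eckmann--Hilton/Künneth computation (applied to $H_*(H \times H) \cong H_*(H) \otimes H_*(H)$, using that $\Delta_*$ acts as the coproduct and that $\mu_*$ on a product of commuting subgroups is the sum) yields $\phi_* = \iota_* + \psi_*$ on $H_n$ for $n \geq 1$. Combining with $\phi_* = \psi_*$ gives $\iota_* = 0$, as required, and hence $H_n(G) = 0$ for all $n \geq 1$.

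The main technical point is the identity $\phi_* = \iota_* + \psi_*$ on positive-degree homology: this is where commutativity of the images is used in an essential way, and it is the only nontrivial homological input, the rest of the argument being formal manipulation with the binate data and the standard fact that inner automorphisms act trivially on group homology.
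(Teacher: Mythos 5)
The skeleton of your argument (reduce to finitely generated $H$, form $\phi(h)=h\psi(h)$, note $\phi$ is conjugate to $\psi$ via $t$, and conclude $\iota_*=0$ from an additivity formula) is the classical line of attack behind the results of \cite{varadarajan, berrick}, which the paper only cites rather than proves. However, the step you yourself single out as the key one is not a standard fact and is false in the stated generality. For homomorphisms $\alpha,\beta\colon H\to G$ with commuting images, the pointwise product need \emph{not} satisfy $(\alpha\cdot\beta)_*=\alpha_*+\beta_*$ on $H_n$ for $n\ge 2$: take $H=G=\mathbb{Z}^2$ and $\alpha=\beta=\mathrm{id}$; then $\alpha\cdot\beta$ is the doubling endomorphism, which acts on $H_2(\mathbb{Z}^2;\mathbb{Z})\cong\mathbb{Z}$ as multiplication by $4$, not by $2$. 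The failure is exactly where your sketch is vague: $\Delta_*x$ is not primitive, i.e.\ $\Delta_*x = x\otimes 1+1\otimes x+\sum_k a_k\otimes b_k+(\text{Tor terms})$ with $0<\deg a_k,\deg b_k<n$, and the multiplication homomorphism $\iota(H)\times\psi(H)\to G$ is in general nonzero on these middle K\"unneth summands (e.g.\ for the two coordinate copies of $\mathbb{Z}$ in $\mathbb{Z}^2$, the multiplication map is an isomorphism on $H_2$, which lives entirely in $H_1\otimes H_1$). Also, over $\mathbb{Z}$ the identification $H_*(H\times H)\cong H_*(H)\otimes H_*(H)$ ignores Tor terms. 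As written, your argument only yields $\iota_*=0$ on $H_1$, where no cross terms exist.

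These mixed terms are precisely the nontrivial content of the theorem, and disposing of them is what the actual proofs (Baumslag--Dyer--Heller for mitotic groups, Varadarajan for pseudo-mitotic, Berrick for binate) are about: one argues by induction on the homological degree and invokes the binate hypothesis \emph{again}, applied to larger finitely generated subgroups (for instance ones containing $H$, $\psi(H)$ and $t$, or iterated binate towers), in order to show that the images of the mixed K\"unneth classes die in $H_n(G)$. Note that the naive induction ``$H_i(K)\to H_i(G)$ vanishes for $i<n$ and all finitely generated $K$'' does not by itself dispose of a term $\mu_*(a_k\otimes b_k)$, since there is no product on $H_*(G)$ through which it could factor. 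So there is a genuine gap: to complete the proof you must either carry out this inductive bookkeeping or quote the additivity statement in a form in which it is actually true, which the commuting-images hypothesis alone does not provide.
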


\begin{theorem}[\cite{binate}]
\label{thm:binate:bac}
Binate groups are boundedly acyclic.
\end{theorem}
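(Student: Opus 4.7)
The plan is to upgrade the classical algebraic mitosis argument (which proves ordinary acyclicity, Theorem \ref{thm:binate:acyclic}) into a norm-controlled statement, and then pass from a local (per finitely generated subgroup) vanishing statement to a global one.

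For the local step, fix a finitely generated subgroup $\iota\colon H\hookrightarrow G$ and binate data $(\psi,t)$. Because $[\iota(H),\psi(H)]=1$, the pointwise product $\Delta(h)=\iota(h)\psi(h)$ is a homomorphism $H\to G$, and it admits two descriptions. First, $\Delta$ equals conjugation of $\psi$ by $t^{-1}$, so $\Delta^*=\psi^*$ on bounded cohomology since conjugation acts trivially. Second, the commutativity of $\iota(H)$ and $\psi(H)$ produces, via a shuffle/Eilenberg--Zilber computation on the inhomogeneous bar complex, an identity relating $\Delta^*$ to $\iota^*+\psi^*$, up to cross-terms given by cup products of lower-degree bounded cohomology classes. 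The crucial point for bounded cohomology is that the cochain homotopy implementing this identity has $\ell^\infty$-norm bounded by a universal constant depending only on the degree, since the relevant shuffle and cone maps are bounded. Inducting on degree (with base case $H^1_b\equiv 0$), one obtains $\iota^*=0$ on $H^n_b(G)\to H^n_b(H)$ for every $n\geq 1$ and every finitely generated $H$.

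The main obstacle is now to promote this ``vanishing on every finitely generated restriction'' to vanishing of $H^n_b(G)$ itself, since bounded cohomology is notoriously incompatible with directed colimits of groups. My preferred route is to show that binateness already implies the commuting $\Z$-conjugates property, so that Theorem \ref{thm:ccc} applies as a black box. Given finitely generated $H$, I would iterate binateness along the chain $H_0=H$, $H_k=\langle H_{k-1},\psi_k(H_{k-1}),t_k\rangle$ and combine the telescoping identities $t_k^{-1}\psi_k(h)t_k=h\psi_k(h)$ to produce a single $s\in G$ whose successive conjugates $s^pHs^{-p}$ mutually commute with $H$. I expect this last construction to be the genuine difficulty: the binate axiom delivers $\psi$ only as an abstract homomorphism rather than as inner conjugation, so $s$ has to be assembled from the $t_k$'s in a non-canonical way. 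The fallback, if a single shift $s$ cannot be engineered cleanly, is a direct cochain-level telescoping that sums the per-subgroup bounded primitives furnished by the local step, using the explicit $\ell^\infty$-control to guarantee convergence to a bounded $(n-1)$-cochain on all of $G$.
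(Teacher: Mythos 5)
The statement is quoted from \cite{binate} and the paper gives no proof of its own, so your proposal has to stand or fall on its own; as written it has two genuine gaps, both located exactly where you yourself sense trouble. First, the ``local step'' is not a routine dualization of the Varadarajan--Berrick argument. Writing $\Delta = m\circ(\iota\times\psi)\circ\mathrm{diag}$ with $m\colon \iota(H)\times\psi(H)\to G$ the multiplication homomorphism, the homological proof identifies the cross terms via the K\"unneth decomposition of $H_*(A\times B)$ and kills them by the inductive vanishing of $\iota_*$ in lower degrees. In bounded cohomology there is no K\"unneth theorem: although the Alexander--Whitney and shuffle maps are indeed norm-bounded, the $(p,q)$-component of $m^*f$ lives in $\ell^\infty(\iota(H)^p\times\psi(H)^q)$, which does not decompose as (a completion of) $\ell^\infty(\iota(H)^p)\otimes\ell^\infty(\psi(H)^q)$; hence the cross terms are \emph{not} ``cup products of lower-degree bounded cohomology classes'', and the induction on degree with trivial real coefficients does not reach them. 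This is precisely why the result was open long after acyclicity: the known proofs (L\"oh for mitotic groups, Fournier-Facio--L\"oh--Moraschini for binate groups) must run the induction with Banach coefficient modules of the form $\ell^\infty(-)$ (equivalently, prove a stronger, coefficient-uniform vanishing) to absorb these mixed terms, and must track explicit constants depending only on the degree, not on $H$.

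Second, even granting the local step, your globalization is not secured. The preferred route, ``binate $\Rightarrow$ commuting $\Z$-conjugates so that Theorem \ref{thm:ccc} applies,'' is an unproven claim: the binate axiom hands you $\psi$ as an abstract homomorphism and a single element $t$ satisfying $t^{-1}\psi(h)t=h\psi(h)$, with no control ensuring that iterated data $(\psi_k,t_k)$ can be assembled into one element $s$ with $[H,s^pHs^{-p}]=1$ for all $p$; no such implication is available in the literature, and your sketch of the assembly is exactly the missing construction, not a proof. The fallback is also not right as stated: one does not ``sum'' the per-subgroup primitives; the correct mechanism is a limiting argument (weak-$*$ compactness or an ultrafilter along the directed family of finitely generated subgroups), and it only works if the cochain-level primitives $b_H$ satisfy $\delta b_H=f|_H$ with $\|b_H\|\le K_n\|f\|$ for a constant independent of $H$ --- a uniformity that must be produced by the local step and is not delivered by the qualitative statement ``$\iota^*=0$ on $H^n_b(G)\to H^n_b(H)$ for every finitely generated $H$'' (bounded cohomology does not commute with directed colimits, as you note). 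So the skeleton is recognizable, but both load-bearing steps --- the coefficientwise, norm-uniform induction and the passage from uniform local primitives to a global one --- are exactly the content of \cite{binate} and are missing here.
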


Again, the main example of binate groups comes from compactly supported groups, although these are now more restricted:

\begin{proposition}[\cite{varadarajan, berrick}]
\label{prop:binate:homeo}
    The groups $\homeoc(\R^n)$ are binate.
\end{proposition}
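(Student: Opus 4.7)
The plan is to implement the classical ``infinite copies plus shift'' construction inside $\homeoc(\R^n)$. Given a finitely generated subgroup $H \leq \homeoc(\R^n)$, a finite generating set has supports inside some closed ball $B$, and hence so does every element of $H$. Pick a slightly larger open ball $B' \supset B$ and a point $p \in B' \setminus B$. Inside $B' \setminus B$, pack a sequence of pairwise disjoint closed balls $B_0, B_1, B_2, \ldots$ arranged along an arc from near $B$ toward $p$ and shrinking to $p$, together with affine similarities $\alpha_i : B \to B_i$.

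I would then define $\psi(h)$ to act as $\alpha_i h \alpha_i^{-1}$ on $B_i$ and as the identity off $\bigcup_i B_i$. Since $\diam(B_i) \to 0$ and each $\alpha_i h \alpha_i^{-1}$ is the identity on $\partial B_i$, this extends continuously to the identity at $p$ and is supported in $\overline{B'}$, so $\psi : H \to \homeoc(\R^n)$ is a well-defined homomorphism. Disjointness of the supports of $H$ (in $B$) and $\psi(H)$ (in $\bigcup_i B_i$) yields $[H, \psi(H)] = 1$. The binate element $t \in \homeoc(\R^n)$ should implement the ``shift by one'' of the chain $B \to B_0 \to B_1 \to \cdots$, namely $t|_B = \alpha_0$ and $t|_{B_i} = \alpha_{i+1} \alpha_i^{-1}$, with $t(p) = p$. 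Granted such a $t$, the relation $t^{-1} \psi(h) t = h \psi(h)$ reduces to a piecewise telescoping calculation: on $B$ both sides equal $h$ (since $\psi(h)|_B = \id$); on each $B_i$ both sides equal $\alpha_i h \alpha_i^{-1} = \psi(h)|_{B_i}$ (since $h|_{B_i} = \id$); and outside the chain both sides are the identity, because $t$ carries the complement of the chain into itself.

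The main obstacle is producing the compactly supported shift $t$, because a homeomorphism of $\R^n$ prescribed only on the chain $B \cup \bigcup_i B_i$ must then be extended continuously at $p$ and through the gaps between the balls. I would handle this by arranging the $B_i$ inside a half-open topological tube $T \cong D^{n-1} \times [0, 1)$ linking $B$ to $p$: on $T$ the prescribed map on the $B_i$'s extends to a ``conveyor belt'' homeomorphism fixing $p$ as well as $\partial T$, and off $T$ we extend $t$ by the identity, keeping $t$ in $\homeoc(\R^n)$. Existence of this conveyor belt relies only on the $C^0$-flexibility of $\R^n$ (Schoenflies / annulus type arguments in the topological category), and is the only genuine topological input of the proof.
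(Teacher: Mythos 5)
Your construction is correct and is essentially the standard argument behind the cited result (the paper gives no proof of this proposition, deferring to Varadarajan and Berrick): one exhibits $\homeoc(\R^n)$ as dissipated, with $\psi(h)$ acting by conjugated copies of $h$ on a shrinking chain of disjoint balls accumulating at a point and $t$ the shift along the chain, and your verification of $[H,\psi(H)]=1$ and $t^{-1}\psi(h)t=h\psi(h)$ is the right one. A small simplification that removes your only nontrivial topological step: choose a similarity $s$ of $\R^n$ contracting to a point $p\notin B$ with ratio small enough that $s(B)\cap B=\emptyset$, set $B_i:=s^{i+1}(B)$ and $\alpha_i:=s^{i+1}|_B$, and take $t$ to equal $s$ on a round ball centered at $p$ containing $B$, radially interpolated to the identity on a larger round ball; then no Schoenflies/annulus or conveyor-belt extension is needed.
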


This implies that the groups $\homeoc(\R^n)$ are acyclic, which was already proved by Mather \cite{mather}. An advantage of using the binate property is that it passes to products:

\begin{lemma}[\cite{binate:products}]
\label{lem:binate:product}
    Any (possibly infinite) direct product of binate groups is binate.
\end{lemma}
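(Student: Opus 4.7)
The plan is to verify binateness of $G = \prod_{i \in I} G_i$ coordinatewise, exploiting the fact that binateness is a condition imposed only on \emph{finitely generated} subgroups, and that finitely generated subgroups of a product project to finitely generated subgroups in each factor.

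More concretely, I would start by fixing a finitely generated subgroup $H \leq G$. Let $\pi_i \colon G \to G_i$ denote the projection and set $H_i := \pi_i(H) \leq G_i$. Since a homomorphic image of a finitely generated group is finitely generated, each $H_i$ is finitely generated in $G_i$. Applying the binateness hypothesis factor by factor (using the axiom of choice), I obtain a homomorphism $\psi_i \colon H_i \to G_i$ with $[H_i, \psi_i(H_i)] = 1$ and an element $t_i \in G_i$ such that $t_i^{-1} \psi_i(h_i) t_i = h_i \psi_i(h_i)$ for every $h_i \in H_i$.

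Next, I would assemble these data into global objects. Set $t := (t_i)_{i \in I} \in G$ and define $\psi \colon H \to G$ by
\[
\psi(h) := \bigl( \psi_i(\pi_i(h)) \bigr)_{i \in I}.
\]
Since each $\psi_i \circ \pi_i|_H$ is a group homomorphism into $G_i$, the map $\psi$ is a well-defined homomorphism into the product. For $h, h' \in H$, the commutator $[h, \psi(h')]$ has $i$-th coordinate $[\pi_i(h), \psi_i(\pi_i(h'))] = 1$, hence $[H, \psi(H)] = 1$. Moreover,
\[
t^{-1} \psi(h) t = \bigl( t_i^{-1} \psi_i(\pi_i(h)) t_i \bigr)_{i} = \bigl( \pi_i(h)\, \psi_i(\pi_i(h)) \bigr)_{i} = h \, \psi(h),
\]
which is exactly the binate relation. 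This establishes that $G$ is binate.

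The argument contains no real obstacle: the only conceptual point worth flagging is the distinction between the product $\prod G_i$ and the restricted direct sum, which would matter if binateness were a condition on all elements. Because the definition of binateness quantifies only over finitely generated subgroups, and because projections preserve finite generation, this distinction is inoffensive here. The same proof clearly goes through for any quotient of a subgroup of such a product in which the relevant commutation and conjugation identities can still be read off componentwise.
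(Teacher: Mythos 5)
Your argument is correct: projections of a finitely generated subgroup are finitely generated, the factorwise data $(\psi_i, t_i)$ assemble into $\psi(h) = (\psi_i(\pi_i(h)))_i$ and $t = (t_i)_i$, and all three conditions in the definition of binateness are verified coordinatewise exactly as you write. The paper itself gives no proof (it cites \cite{binate:products}), and your coordinatewise construction is the standard argument for this fact; your remark that the full product (rather than the restricted sum) is what makes $t$ and $\psi(h)$ legitimate elements of $G$ is precisely the right point to flag.
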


\subsection{Interweaving subsequences}
\label{sec:weaving}

We now present a common framework that will allow us to interweave subsequences in various settings, in order to verify the $\mathbf{W}$ property.

\medskip

Let $X$ be any set endowed with a preorder relation $\sqsubseteq$ and an equivalence relation $\simeq$ contained in $\sqsubseteq$ (i.e. $x\simeq y \Rightarrow x\sqsubseteq y$).
Define $\X$ to be the collection of all cofinal increasing sequences of pairwise inequivalent elements of $X$. Explicitly, this means:

\begin{itemize}
\item $\forall \, n\leq m: x_n\sqsubseteq x_m$;
\item $\forall \, x\in X \,\exists \, n: x\sqsubseteq x_n$;
\item $\forall \, n\neq m: x_n \not\simeq x_m$.
\end{itemize}

Of course we shall only be interested in this definition for examples where there exist cofinal sequences at all. We record the following elementary observation.

\begin{lemma}
The set $\X$ is closed under passage to subsequences.\qed
\end{lemma}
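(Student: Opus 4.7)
The plan is to verify directly that each of the three defining conditions of $\X$ is preserved under passing to a subsequence. Given $\mathbf{x} = (x_n) \in \X$ and a strictly increasing function $\varphi \colon \N \to \N$, I would check that the subsequence $\mathbf{y} = (y_k)$ with $y_k = x_{\varphi(k)}$ lies in $\X$.

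First, monotonicity is immediate: if $k \leq \ell$ then $\varphi(k) \leq \varphi(\ell)$, so $y_k = x_{\varphi(k)} \sqsubseteq x_{\varphi(\ell)} = y_\ell$ by the monotonicity of $\mathbf{x}$. Similarly, pairwise inequivalence is immediate: if $k \neq \ell$ then $\varphi(k) \neq \varphi(\ell)$, whence $y_k \not\simeq y_\ell$.

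The only substantive point is cofinality, and it is still essentially trivial. Given $x \in X$, cofinality of $\mathbf{x}$ supplies $n$ with $x \sqsubseteq x_n$. Since $\varphi$ is strictly increasing on $\N$, there exists $k$ with $\varphi(k) \geq n$, and then monotonicity of $\mathbf{x}$ yields $x_n \sqsubseteq x_{\varphi(k)} = y_k$. Transitivity of the preorder $\sqsubseteq$ then gives $x \sqsubseteq y_k$, as required. No step presents any genuine obstacle, which is presumably why the statement is recorded as an observation with a $\square$ at the end of the lemma.
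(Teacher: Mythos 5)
Your verification is correct and is exactly the routine check the paper has in mind: the paper records this as an elementary observation with no written proof, and your argument (monotonicity and inequivalence are inherited directly, cofinality follows from monotonicity plus transitivity of the preorder $\sqsubseteq$) is the intended one. Nothing further is needed.
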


We will prove that the subsequence order relation $\preceq$ has the desired property by cyclically weaving subsequences together.

\begin{proposition}\label{prop:abstract:W}
In the above setting, the poset $\X$ has the $\mathbf{W}$ property.
\end{proposition}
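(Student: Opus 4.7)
The plan is to construct all $\mathbf{y}_I$ from a single master cyclic interweaving of the minimal sequences $\mathbf{x}_1, \ldots, \mathbf{x}_k$. Writing $\mathbf{x}_j = (x_j^n)_{n \ge 1}$, I would first build $\mathbf{w} \in \X$ by round-robin: for $n = qk + j$ with $1 \le j \le k$, let $w_n$ be a term of $\mathbf{x}_j$ that is $\sqsupseteq$ every previously selected $w_m$ ($m < n$) and $\simeq$-inequivalent to all of them. Such a term exists at each step because $\mathbf{x}_j$ is cofinal in $X$ and its terms are pairwise $\simeq$-inequivalent, so only finitely many among them can be $\simeq$-equivalent to any finite prescribed set. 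The resulting $\mathbf{w}$ is $\sqsubseteq$-increasing and pairwise $\simeq$-inequivalent by construction, and cofinal because from each $\mathbf{x}_j$ it extracts an infinite subsequence that is itself cofinal; hence $\mathbf{w} \in \X$.

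For each $I \subseteq \{1, \ldots, k\}$, I would then define $\mathbf{y}_I$ to be the literal subsequence of $\mathbf{w}$ retaining exactly those $w_n$ selected from $\mathbf{x}_j$ with $j \in I$. When $I$ is nonempty, $\mathbf{y}_I$ lies in $\X$ for the same reasons as $\mathbf{w}$, since the subsequence drawn from any single $\mathbf{x}_j$ with $j \in I$ is already cofinal. Property (1) of the $\mathbf{W}$-property is then automatic: for $I \subseteq J$, the positions within $\mathbf{w}$ used by $\mathbf{y}_I$ form a subset of those used by $\mathbf{y}_J$, so $\mathbf{y}_I$ is a literal subsequence of $\mathbf{y}_J$.

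The substantive step is property (2). Suppose $\mathbf{x} \in \mathcal{Q}$ satisfies $\mathbf{x}_i \preceq \mathbf{x}$ for every $i \in I$. Each term of $\mathbf{y}_I$ belongs to some $\mathbf{x}_i$ with $i \in I$, hence occurs as a term of $\mathbf{x}$. The crux, which I expect to be the main obstacle, is to verify that the positions of these terms inside $\mathbf{x}$ form a strictly increasing sequence. This reduces to a positional rigidity lemma for $\X$: in any $\mathbf{z} \in \X$, two terms $a \sqsubseteq b$ with $a \not\simeq b$ must occupy positions with $a$ strictly before $b$. The lemma combines the $\sqsubseteq$-monotonicity of $\mathbf{z}$ with the pairwise $\simeq$-inequivalence of its terms, and ultimately requires antisymmetry of $\sqsubseteq$ up to $\simeq$ on $X$ (automatic if one takes $\simeq$ to be the equivalence relation induced by $\sqsubseteq$, an operation which does not change $\X$). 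Applied to each pair of consecutive terms of $\mathbf{y}_I$ viewed inside $\mathbf{x}$, the lemma produces the strictly increasing placement needed to conclude that $\mathbf{y}_I$ is a literal subsequence of $\mathbf{x}$, completing the verification of the $\mathbf{W}$-property.
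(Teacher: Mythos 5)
Your construction of the master interweaving $\mathbf{w}$ and the definition of $\mathbf{y}_I$ as congruence-class subsequences is the same overall strategy as the paper's, and your treatment of membership in $\X$ and of property (1) is fine. The gap is in property (2), exactly where you locate the crux. Your ``positional rigidity lemma'' (two terms $a\sqsubseteq b$, $a\not\simeq b$ of a sequence in $\X$ must occur with $a$ strictly before $b$) requires $\sqsubseteq$ to be antisymmetric modulo $\simeq$, and this is \emph{not} part of the standing assumptions: the setting only asks that $\simeq$ be an equivalence relation contained in the preorder $\sqsubseteq$, and in two of the applications ($\simeq$ given by cores and $\sqsubseteq$ the \emph{trivial} relation, for the non-compact Cantor set and for the poset used for the permutation group of an uncountable set) the lemma is simply false: any permutation of the terms of a sequence in $\X$ is again increasing. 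Your proposed remedy --- replace $\simeq$ by the equivalence relation induced by $\sqsubseteq$, ``an operation which does not change $\X$'' --- is incorrect: when $\sqsubseteq$ is trivial the induced equivalence identifies everything, so the pairwise-inequivalence condition empties $\X$; in general enlarging $\simeq$ does change $\X$. And without the rigidity lemma your inductive choice is genuinely too weak: with $\sqsubseteq$ trivial and $\simeq$ equality, ``any term of $\mathbf{x}_j$ dominating and inequivalent to the previous choices'' allows you to pick the terms of $\mathbf{x}_1$ out of order, in which case $\mathbf{y}_{\{1\}}$ need not be a subsequence of $\mathbf{x}_1$ (take $\mathcal{Q}=\{\mathbf{x}_1\}$), so (2) fails.

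The paper closes exactly this hole by strengthening the inductive step, not by a rigidity statement about $\X$: when choosing the new term $x^{j}_p$, one requires its index $p$ to exceed an upper bound $q$ for the index, \emph{in every sequence of $\mathcal{Q}$}, of any term equivalent to one of the previously chosen $y_1,\dots,y_n$ (such a $q$ exists because $\mathcal{Q}$ is finite and each sequence has pairwise inequivalent terms), in addition to $y_i\sqsubseteq x^j_p$ for all $i\le n$. Then for $\mathbf{x}\in\mathcal{Q}$ with $\mathbf{x}_i\preceq\mathbf{x}$ for all $i\in I$, the position of the new term inside $\mathbf{x}$ is at least its index $p$ in $\mathbf{x}_i$ (a strictly increasing index map $\phi$ satisfies $\phi(p)\ge p$), hence strictly larger than the positions in $\mathbf{x}$ of all earlier chosen terms, which is what makes $\mathbf{y}_I$ a literal subsequence of $\mathbf{x}$. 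If you add this index condition to your round-robin choice, your argument goes through in the stated generality and no antisymmetry hypothesis is needed.
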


\begin{proof}
Let $\mathcal{Q} \subseteq \X$ be a finite subposet, with minimal elements $\mathbf{x}^1, \ldots, \mathbf{x}^k$. We write $\mathbf{x}^j = \{ x^j_n \}_{n \in \N^*}$. The main step in the construction of the various $\mathbf{y}^J$ is the following inductive construction of a single sequence $\mathbf{y} = (y_n)_{n \in \N^*}$.

We start by setting $y_1 \coloneqq x^1_1$. Next, we will choose $y_2$ among the elements of $\mathbf{x}^2$ as follows. Since $\mathcal{Q}$ is a finite set of sequences of pairwise inequivalent elements, there is $q\in\N$ such that $\forall \, \mathbf{x}\in\mathcal{Q}\,\forall \, r> q: x_r\not\simeq y_1$. Since $\mathbf{x}^2$ is increasing and cofinal, we can choose $p> q$ with $y_1 \sqsubseteq x^2_p$ and we define $y_2=x^2_p$.
The general inductive step is analogous: Suppose that $y_1, \ldots, y_n$ have been defined, with each $y_i$ an element of $\mathbf{x}^j$ for $j \equiv i \mod k$. Then we choose $y_{n+1}$ to be an element $x^{j+1}_p$ of $\mathbf{x}^{j+1}$ (or $\mathbf{x}^1$ if $j \equiv 0 \mod k$) with the following property. If $q$ is an upper bound for the index of any element of any $\mathbf{x}\in\mathcal{Q}$ that is equivalent to some $y_1, \ldots, y_n$, then we take $p>q$ such that $y_i \sqsubseteq x^{j+1}_p$ (respectively $y_i \sqsubseteq x^{1}_p$) for all $i=1, \ldots, n$.
The sequence $\mathbf{y}$ is by definition an increasing sequence of pairwise inequivalent elements and it is cofinal because it shares a subsequence with a cofinal increasing sequence (any of the $\mathbf{x}^i$).

Turning to the requirements of the $\mathbf{W}$ property, we now define $\mathbf{y}^I$ for $I\subseteq\{1, \ldots, k \}$ to be the subsequence of $\mathbf{y}$ that selects only those indices that are congruent to an element of $I$ modulo $k$; in particular $\mathbf{y}^{\{1, \ldots, k \}} = \mathbf{y}$. Then by definition $\mathbf{y}^I \preceq \mathbf{y}^J$ whenever $I \subseteq J$, and the fact that $\mathbf{y}^I \preceq \mathbf{x}$ if $\mathbf{x}^i \preceq \mathbf{x}$ for all $i \in I$, is the point of the construction above.
\end{proof}

\begin{remark}\label{rem:W:suposet}
Since the above proof verifies the $\mathbf{W}$ property by explicitly interweaving subsequences of the finite set of sequences given for $\mathbf{W}$, it holds verbatim for sub-posets $\mathcal{Y}\subseteq \X$ as soon as $\mathcal{Y}$ has the following two closure properties:

\begin{itemize}
    \item $\mathcal{Y}$ is closed under passing to subsequences;
    \item If $\mathbf{x}\in\X$ can be written as the disjoint union of two subsequences, each in $\mathcal{Y}$, then $\mathbf{x}\in\mathcal{Y}$.
    \end{itemize}
\end{remark}

\section{Verifying the criterion}
\label{sec:verifying}

We now go through the groups from the introduction (except for transformation groups of compact annuli, which will be treated in Section \ref{sec:further}), justifying why they satisfy the criterion from Theorem \ref{thm:criterion} (and in some cases, from Theorem \ref{thm:criterion2} as well). We start with the case of the line, which is the easiest, and the only one among the transformation groups of Euclidean spaces where we are able to establish ordinary acyclicity as well. Then we move on to transformation groups of $\R^n$, and then to discs, where the proof of transitivity will be especially involved. Next, we study the non-compact Cantor set, where the combinatorics of sequences feels different, but still fits into our framework; in this case we will also be able to establish ordinary acyclicity. In the last section, we show that the auxiliary group of countably supported permutations of an uncountable set satisfies the criteria: Although it may seem unrelated, it will feature an important role in the proof itself.

\subsection{Transformation groups of the line}
\label{verifying:line}

Let $r \in \N \cup \{\infty \}$. In this section we prove that the groups $\diffrp(\R)$ are boundedly acyclic, our proof will depend on Theorem \ref{thm:criterion}. This includes also $\homeop(\R)$ which corresponds to the case $r = 0$. In fact, we will work with the subgroup $G \leq \diffrp(\R)$ of elements that fix a neighborhood of $(-\infty, 0]$ pointwise. This can be seen as a subgroup of the group of homeomorphisms of $\R_{>0} \coloneqq (0, +\infty)$. The bounded acyclicity of $\diffrp(\R)$ will follow by some standard manipulations.

\medskip

The objects that make up the $G$-poset of $X$-sequences to which Theorem \ref{thm:criterion} applies are \emph{fat points}.

\begin{definition}
\label{def:R:fatpoint}
    A \emph{fat point} in $\R_{>0}$ is a germ at $0$ of an orientation-preserving $C^r$-embedding $(-1, 1) \to \R_{>0}$. The image of $0$ is the \emph{core} $\dot{x}$ of the fat point $x$.
\end{definition}

In order to define a poset $\fat$ of fat sequences and to deduce from \Cref{prop:abstract:W} that this poset has the $\mathbf{W}$ property, it suffices to define relations $\sqsubseteq$ and $\simeq$ satisfying the condition of Subsection \ref{sec:weaving}. We define $x\simeq y \Leftrightarrow \dot x = \dot y$ and $x\sqsubseteq y \Leftrightarrow \dot x \leq\dot y$.
Thus in particular we see that the poset $\fat$ of increasing cofinal sequences of mutually inequivalent elements can be described concretely as the set of sequences $\mathbf{x} = (x_i)_{i \in \N^*}$ of fat points such that the cores $\dot{\mathbf{x}} = (\dot{x}_i)_{i \in \N^*}$ form a strictly increasing diverging sequence in $\R_{>0}$. For brevity we call them \emph{fat sequences} in $\R_{>0}$.

\medskip
The group $G$ acts on fat points by composition: If $x \colon (-1, 1) \to \R_{>0}$ is a fat point with core $\dot{x}$, then $g.x \coloneqq g \circ x$ is a fat point with core $g.\dot{x}$. Since the action of $G$ on $\R_{>0}$ is orientation-preserving, the action on sequences of fat points preserves the conditions defining $\fat$. This makes $\fat$ into a $G$-poset of sequences. We now have to verify Items 2 and 3 from Theorem \ref{thm:criterion}:

\begin{lemma}
\label{lem:R:fatsequence:stabilizers}
    For every fat sequence $\mathbf{x} \in \fat$, the stabilizer of $\mathbf{x}$ in $G$ is isomorphic to the power $\diffrc(\R)^{\N}$ and thus it is boundedly acyclic. In case $r = 0$, it is also acyclic.
\end{lemma}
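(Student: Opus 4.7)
The plan is to unwind the definitions and exhibit the stabilizer as a literal direct product, then invoke the known [bounded] acyclicity of such products.

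First I would analyze what it means for $g \in G$ to stabilize a single fat point $x_i$. Since $g$ acts by post-composition and $g.x_i = x_i$ as a germ at $0$, the identity $g \circ x_i = x_i$ holds on some neighborhood of $0$. Because $x_i$ is an orientation-preserving embedding, this forces $g$ to restrict to the identity on an open neighborhood of the core $\dot{x}_i$. Thus an element stabilizing the whole sequence $\mathbf{x} = (x_i)$ is the identity on a neighborhood of each $\dot{x}_i$, and by definition of $G$ it is also the identity on a neighborhood of $(-\infty,0]$.

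Next I would use the fact that the cores form a strictly increasing sequence diverging to $+\infty$ to decompose such a $g$. Writing $\dot{x}_0 = 0$ for convenience, the support of $g$ is contained in the disjoint union of the open intervals $(\dot{x}_i, \dot{x}_{i+1})$ for $i \geq 0$, with $g$ compactly supported inside each one (it is identity near both endpoints). Conversely, any such choice assembles into a single element of $G$ stabilizing $\mathbf{x}$, because divergence of $\dot{x}_i$ ensures local finiteness. Since each open interval is $C^r$-diffeomorphic to $\R$ and compactly supported diffeomorphisms of $\R$ are automatically orientation-preserving, this yields a group isomorphism
\[
\mathrm{Stab}_G(\mathbf{x}) \;\cong\; \prod_{i \in \N} \diffrc\bigl((\dot{x}_i, \dot{x}_{i+1})\bigr) \;\cong\; \diffrc(\R)^{\N}.
\]

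Finally, bounded acyclicity of this product is exactly what is recorded in Theorem \ref{thm:compactly}, applied with $M$ a point and $Z = \R$. For the acyclicity statement in the case $r=0$, I would combine Proposition \ref{prop:binate:homeo}, which says that $\homeoc(\R)$ is binate, with Lemma \ref{lem:binate:product}, which preserves the binate property under infinite products, and conclude via Theorem \ref{thm:binate:acyclic}. There is no real obstacle here; the only point to be careful about is the decomposition step, i.e.\ checking that stabilizing a germ truly forces the identity on an open neighborhood of the core and that the product structure across the infinite collection of intervals is well defined, both of which follow from the definitions and the divergence of $\dot{x}_i$.
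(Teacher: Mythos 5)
Your proposal is correct and follows essentially the same route as the paper: identify the stabilizer of $\mathbf{x}$ with the product $\diffrc((0,\dot{x}_1)) \times \prod_{i \geq 2} \diffrc((\dot{x}_{i-1},\dot{x}_i)) \cong \diffrc(\R)^{\N}$, then cite Theorem \ref{thm:compactly} for bounded acyclicity and, for $r=0$, Proposition \ref{prop:binate:homeo}, Lemma \ref{lem:binate:product} and Theorem \ref{thm:binate:acyclic} for acyclicity. Your extra care about why fixing a germ forces the identity near the core and why the infinite product assembles to an element of $G$ is just a more detailed write-up of the same argument.
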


\begin{proof}
Let $\mathbf{x} = (x_i)_{i \in \N^*}$. An element $g \in G$ is in the stabilizer of $\mathbf{x}$ if and only if it fixes a neighbourhood of $\dot{x}_i$ for each $i \in \N^*$. Therefore the stabilizer is isomorphic to the product
\[\diffrc((0, \dot{x}_1)) \times \prod\limits_{i \geq 2} \diffrc((\dot{x}_{i-1}, \dot{x}_i))\]
which is isomorphic to $\diffrc(\R)^{\N}$.
The bounded acyclicity now follows from Theorem \ref{thm:compactly}.
In case $r = 0$, the acyclicity follows from Proposition \ref{prop:binate:homeo}, Lemma \ref{lem:binate:product} and Theorem \ref{thm:binate:acyclic}.
\end{proof}

\begin{lemma}
\label{lem:R:transitivity}
    The action of $G$ on $\fat$ is transitive.
\end{lemma}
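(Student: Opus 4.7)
The plan is to construct, given two fat sequences $\mathbf{x} = (x_i), \mathbf{y} = (y_i) \in \fat$, an element $g \in G$ with $g \circ x_i = y_i$ as germs at $0$ for every $i$. Writing $a_i \coloneqq \dot{x}_i$ and $b_i \coloneqq \dot{y}_i$ for the cores, I proceed in two stages: first produce an element of $G$ that moves the cofinal increasing sequence $(a_i)$ onto $(b_i)$, then correct the individual germs via compactly supported diffeomorphisms with pairwise disjoint supports.

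For the first stage, fix $\delta > 0$ with $\delta < \min(a_1, b_1)$. I would construct $h \colon \R \to \R$ which is the identity on $(-\infty, \delta]$ and on $[\delta, +\infty)$ is an orientation-preserving $C^r$-diffeomorphism with $h(\delta) = \delta$ and $h(a_i) = b_i$ for every $i \geq 1$. This reduces to a routine interpolation problem on each compact interval $[\delta, a_1]$ and $[a_i, a_{i+1}]$: for $r = 0$ take a strictly increasing continuous map matching the prescribed boundary values, while for $r \geq 1$ (including $r = \infty$) one selects any positive values for the derivatives at each node $a_i$ and fills the adjacent gaps by $C^r$-diffeomorphisms realizing the prescribed boundary jets, a standard construction via bump-function smoothing of the piecewise-affine interpolant. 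The resulting $h$ lies in $G$, so after replacing $\mathbf{x}$ by $h.\mathbf{x}$ I may assume $a_i = b_i$ for all $i$.

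With common cores, choose pairwise disjoint open intervals $U_i \subset \R_{>0}$ around each $a_i$, small enough that both $x_i$ and $y_i$ restrict to $C^r$-embeddings of some $(-\epsilon_i, \epsilon_i)$ into $U_i$. The standard transitivity of $\diffrc(U_i)$ on fat points based at $a_i$, a classical consequence of the fragmentation lemma as used for instance in~\cite{MN}, furnishes $f_i \in \diffrc(U_i)$ with $f_i \circ x_i = y_i$ as germs. Since the $U_i$ are pairwise disjoint and contained in $\R_{>0}$, extending each $f_i$ by the identity outside its support yields a well-defined $f \in G$, and then $f \cdot h \in G$ sends $\mathbf{x}$ to $\mathbf{y}$. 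The only technical subtlety is the $C^r$-interpolation in the first stage, but it decouples completely across the disjoint gaps between consecutive cores, so the infinitely many matching conditions at the $a_i$ present no extra difficulty beyond the standard single-gap construction.
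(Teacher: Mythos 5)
Your proof is correct and is essentially the paper's argument: both boil down to transporting each germ by $y_i \circ x_i^{-1}$ on small pairwise disjoint intervals around the cores and filling the gaps by a routine orientation-preserving $C^r$-interpolation via cutoff functions (the paper does this in one step against the model sequence with cores at the integers, while you repackage it as ``match the cores first, then correct the germs by compactly supported diffeomorphisms with disjoint, automatically locally finite supports''). The germ-extension/transitivity fact you invoke for $\diffrc(U_i)$ is exactly this same interpolation, so the difference is organizational rather than substantive.
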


\begin{proof}
    Let $\mathbf{x} = (x_i)_{i \in \N^*}$ be the fat sequence whose $i$-th fat point is the germ of the embedding $(-1, 1) \to (i-1, i+1) \subset \R_{>0}$ defined as translation by $i$, so $\dot{x}_i = i$. Let $\mathbf{y} = (y_i)_{i \in \N^*}$ be any other fat sequence. For each $i$, choose $\varepsilon_i \in (0, \frac{1}{2})$ to be such that the closed intervals $y_i([- \varepsilon_i, \varepsilon_i]) \subset \R_{>0}$ are pairwise disjoint. Then the composition
    \[g_i \colon (i - \varepsilon_i, i + \varepsilon_i) \xrightarrow{x_i^{-1}} (- \varepsilon_i, \varepsilon_i) \xrightarrow{y_i} \R_{>0}\]
    sends the fat point $x_i$ to the fat point $y_i$. Interpolating via cutoff functions matching the regularity of $G$, we can construct $g \in G$ such that $g |_{(i-\varepsilon_i, i+\varepsilon_i)} = g_i$. Then $g. \mathbf{x} = \mathbf{y}$.
\end{proof}

The transitivity is particularly easy to show in this case, but will require some extra work in higher dimensions.

\medskip

We have thus proved:

\begin{theorem}
\label{thm:criterion:R}

The group $G$ satisfies the criterion from Theorem \ref{thm:criterion}, and thus is boundedly acyclic. In case $r = 0$, the group $G$ also satisfies the criterion from Theorem \ref{thm:criterion2}, and thus is acyclic.
\end{theorem}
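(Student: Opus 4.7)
The plan is a direct assembly of the three hypotheses of Theorem \ref{thm:criterion} for the action of $G$ on the $G$-poset $\fat$ of fat sequences, each of which has already been set up in the preceding material.

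First I would verify hypothesis (1), the $\mathbf{W}$ property. The relations $x \simeq y \Leftrightarrow \dot x = \dot y$ and $x \sqsubseteq y \Leftrightarrow \dot x \leq \dot y$ on fat points match exactly the requirements of Subsection \ref{sec:weaving}: $\sqsubseteq$ is a preorder on the set of fat points and $\simeq$ is an equivalence relation contained in $\sqsubseteq$. By the definition of $\fat$, this poset coincides with the poset of cofinal strictly increasing sequences of pairwise inequivalent elements produced by that framework. Hence Proposition \ref{prop:abstract:W} applies directly and yields the $\mathbf{W}$ property for $\fat$.

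Hypotheses (2) and (3) are then exactly Lemmas \ref{lem:R:transitivity} and \ref{lem:R:fatsequence:stabilizers}: the action of $G$ on $\fat$ is transitive, and the stabilizer of any fat sequence is isomorphic to $\diffrc(\R)^{\N}$, which is boundedly acyclic by Theorem \ref{thm:compactly}. Theorem \ref{thm:criterion} then delivers the bounded acyclicity of $G$.

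For the acyclicity refinement when $r = 0$, I would feed the same three inputs into Theorem \ref{thm:criterion2} instead. The only item that needs to be upgraded is the stabilizer, which is now $\homeoc(\R)^{\N}$: each factor is binate by Proposition \ref{prop:binate:homeo}, the product is binate by Lemma \ref{lem:binate:product}, and binate groups are acyclic by Theorem \ref{thm:binate:acyclic}; this is recorded in the second clause of Lemma \ref{lem:R:fatsequence:stabilizers}. Since every ingredient has already been established, there is no new obstacle: the theorem is a pure packaging step, and the genuinely substantive content was front-loaded into the transitivity Lemma \ref{lem:R:transitivity}, the stabilizer computation Lemma \ref{lem:R:fatsequence:stabilizers}, and, at the abstract level, Proposition \ref{prop:abstract:W}.
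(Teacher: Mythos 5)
Your proposal is correct and follows exactly the paper's route: the paper states this theorem as a summary ("We have thus proved") of the relations $\simeq$, $\sqsubseteq$ feeding Proposition \ref{prop:abstract:W} for the $\mathbf{W}$ property, Lemma \ref{lem:R:transitivity} for transitivity, and Lemma \ref{lem:R:fatsequence:stabilizers} for the (boundedly) acyclic stabilizers, with the $r=0$ acyclicity handled via binateness just as you describe. There is nothing to add or correct.
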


This allows to conclude the proof of Theorems \ref{thm:main:homeo} and \ref{thm:main:diff} for the case of the line:

\begin{proof}[Proof of Theorems \ref{thm:main:homeo} and \ref{thm:main:diff} for $n = 1$]

Let $\mathcal{G}_{\pm}$ be the group of germs at ${\pm} \infty$ of $\diffrp(\R)$, which gives the short exact sequence
    \[1 \to \diffrc(\R) \to \diffrp(\R) \to \mathcal{G}_- \times \mathcal{G}_+ \to 1.\]
    Since $\diffrc(\R)$ is boundedly acyclic by Theorem \ref{thm:compactly}, the quotient induces an isomorphism in bounded cohomology, by Proposition \ref{prop:quotient}. Since moreover $\mathcal{G}_+ \cong \mathcal{G}_-$, it suffices to show that $\mathcal{G}_+$ is boundedly acyclic. For this, we consider the short exact sequence
    \[ 1 \to G \cap \diffrc(\R) \to G \to \mathcal{G}_+ \to 1.\]
    Now $G \cap \diffrc(\R) \cong \diffrc(\R)$ is boundedly acyclic, so once again the quotient induces an isomorphism in bounded cohomology. We have shown that $G$ is boundedly acyclic, so $\mathcal{G}_+$ is also boundedly acyclic, which concludes the proof.
\end{proof}

We similarly obtain Theorem \ref{thm:acyclic}.

\begin{proof}[Proof of Theorem \ref{thm:acyclic}]
	As in the previous proof, we pass from the acyclicity of $G$ and $\homeoc(\R)$ to the acyclicity of $\homeop(\R)$ using the analogue of Proposition \ref{prop:quotient} in homology; namely, if $1 \to N \to G \to Q \to 1$ is a short exact sequence and $N$ is acyclic, then the quotient $G \to Q$ induces an isomorphism $H_n(G; \Z) \cong H_n(Q, \Z)$ for all $n \in \N$ (this follows from an application of the Lyndon--Hochschild--Serre spectral sequence \cite[Therem VII.6.3]{brown}).
\end{proof}

\subsection{Transformation groups of Euclidean space}
\label{verifying:Rn}

Let $r \in \N \cup \{ \infty\}$ and $n \in \N_{\geq 2}$. In this section, we prove Theorems \ref{thm:main:homeo} and \ref{thm:main:diff} for the group $G \coloneqq \diffrp(\R^n)$; this includes also $\homeop(\R^n)$, which corresponds to the case of $r = 0$. Our goal is to construct a poset of sequences of \emph{fat spheres} (replacing fat points in higher dimensions), to which we can apply Theorem \ref{thm:criterion}. The construction is close to the one for $\R$, however, it presents some additional difficulties.

\medskip

The main thing that needs addressing is transitivity. In the case of $\R$, we clearly have transitivity on fat points, a basic starting point for transitivity on fat sequences. In our setting, we will need transitivity on germs of embeddings of (locally flat) spheres in $\R^n$. The fact that this action is transitive is now non-trivial, and would have to rely on the annulus Theorem \cite{Kirby}, which in $C^r$-category for $r>0$ is not known in dimension $4$. To amend this, we work with a poset that only allows for embeddings of a certain type:

\begin{definition}
\label{def:Rn:model:fatsphere}
    The \emph{model fat sphere} in $\R^n$ is the germ at $S^{n-1} \times \{ 0 \}$ of the (orientation-preserving, $C^r$) embedding $\Sigma \colon S^{n-1} \times (-\frac{1}{2}, \frac{1}{2}) \to \R^n$ defined by $(x, \rho) \mapsto (1+\rho)x$.
    
    The \emph{core} of $\Sigma$, denoted by $\dot{\Sigma}$, is the embedding $S^{n-1} \to \R^n$ obtained by restricting $\Sigma$ to $S^{n-1} \times \{ 0 \}$.
    We denote by $\Sigma_b \coloneqq B^n_1(0)$ and by $\Sigma_u \coloneqq \R^n \setminus \overline{B}^n_1(0)$; these are respectively the \emph{bounded} and \emph{unbounded} components of $\R^n \setminus \im \dot{\Sigma}$.
\end{definition}

By abuse of notation, we will often identify an embedding with its germ. Given $g \in G$, we define $g.\Sigma \coloneqq g \circ \Sigma$, which is an orientation-preserving $C^r$-embedding $S^{n-1} \times (-\frac{1}{2}, \frac{1}{2}) \to \R^n$, and again we use $g.\Sigma$ to also denote the germ at $S^{n-1} \times \{ 0 \}$ of this embedding. We will consider all such possibilities.

\begin{definition}
\label{def:Rn:fatsphere}
    A \emph{fat sphere} in $\R^n$ is the germ of an orientation-preserving, $C^r$-embedding $S \colon S^{n-1} \times (-\frac{1}{2}, \frac{1}{2}) \to \R^n$ that can be obtained as $g.\Sigma$ for some $g \in G$.

    The \emph{core} of $S$, denoted by $\dot{S}$, is the embedding $S^{n-1} \to \R^n$ obtained by restricting $S$ to $S^{n-1} \times \{ 0 \}$. Note that if $S = g.\Sigma$ then $\dot{S} = g. \dot{\Sigma}$.

    We denote by $S_b$ and $S_u$ the \emph{bounded} and \emph{unbounded} components of $\R^n \setminus \im \dot{S}$. If $S = g.\Sigma$, then $S_b = g.\Sigma_b$ and $S_u = g.\Sigma_u$.
\end{definition}

Note that since we defined fat spheres starting from a model fat sphere, everything is well-defined directly, without any need to appeal to the Jordan--Brower or Schoenflies Theorems. Moreover, the set of fat spheres comes endowed with a natural action of $G$, which is transitive by definition.

\medskip

The poset we will be working on will consist of certain sequences of fat spheres. Having a well-defined notion of bounded components, we will be able to formalize the idea of going to infinity.

\begin{definition}
\label{def:Rn:fatsequence}
    We say that a fat sphere $S$ in $\R^n$ \emph{englobes} a set $B \subset \R^n$ if $B$ is contained in the bounded component $S_b$. We say that a sequence of fat spheres $(S_i)_{i \in \N^*}$ is \emph{concentric going to infinity} if $S_i$ englobes $\im \dot{S}_{i-1}$, and every compact subset of $\R^n$ is englobed by $S_i$ for large enough $i$.
    
    A \emph{fat sequence} in $\R^n$ is a collection $\mathbf{S} = (S_i)_{i \in \N^*}$ of fat spheres that are concentric going to infinity. We write $\mathbf{S} \preceq \mathbf{T}$ if $\mathbf{S}$ is a subsequence of $\mathbf{T}$, and denote by $\fat$ the corresponding poset.
\end{definition}

As in the case of the line, we can now define relations which allow us to deduce from \Cref{prop:abstract:W} that the poset of fat sequences has the $\mathbf{W}$ property. 
We define $S\simeq T \Leftrightarrow \dot S = \dot T$ and $S\sqsubseteq T$ if either $\dot S = \dot T$ or $T$ englobes $\dot S$.

\medskip

As we did with fat spheres, it will be useful to define a basepoint in the set of fat sequences.

\begin{definition}
\label{def:Rn:model:fatsequence}
    Set $\Sigma_1 \coloneqq \Sigma$ to be the model fat sphere. Define $\Sigma_i$ to be the image of $\Sigma$ under the homothety with ratio $i$; note that the image of the core of $\Sigma_i$ in $\R^n$ is the sphere of radius $i$ around the origin. The sequence $(\Sigma_i)_{i \in \N^*}$ is concentric going to infinity, and so it defines a fat sequence $\mathbf{\Sigma}$, which we call the \emph{model fat sequence}.
\end{definition}

We now verify the Items 2 and 3 from Theorem \ref{thm:criterion}.

\begin{lemma}
\label{lem:Rn:fatsequence:stabilizers}
    The stabilizer of the model fat sequence $\mathbf{\Sigma}$ in $G$ is isomomorphic to $\diffrc(\R^n) \times \diffrc(S^{n-1} \times \R)^\N$, and thus is boundedly acyclic.
\end{lemma}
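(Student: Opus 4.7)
The plan is to identify the stabilizer explicitly as a product of compactly supported groups and then invoke Theorem \ref{thm:compactly}. First I would unwind the stabilizing condition: an element $g \in G$ fixes $\mathbf{\Sigma} = (\Sigma_i)_{i \in \N^*}$ if and only if $g$ fixes the germ of each $\Sigma_i$ along its core $\dot{\Sigma}_i = iS^{n-1}$, which for a $C^r$-diffeomorphism is equivalent to $g$ being the identity on some open neighborhood of each sphere $iS^{n-1}$.

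Next I would decompose the ambient space into the $G$-invariant pieces on which $g$ can act nontrivially. The complement $\R^n \setminus \bigsqcup_{i \in \N^*} iS^{n-1}$ consists of the open unit ball $B = \{x \in \R^n : |x| < 1\}$ together with the open annuli $A_i = \{x \in \R^n : i < |x| < i+1\}$ for $i \in \N^*$; because $\mathbf{\Sigma}$ is concentric going to infinity, these pieces together with the spheres themselves exhaust $\R^n$. Any $g$ in the stabilizer must preserve each of these components (it is the identity on a neighborhood of every bounding sphere, so it cannot swap the two sides), and the restriction of $g$ to each component is compactly supported in that component, since $g$ coincides with the identity on some neighborhood of the boundary spheres.

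The resulting restriction map
\[
\mathrm{Stab}_G(\mathbf{\Sigma}) \longrightarrow \diffrc(B) \times \prod_{i \in \N^*} \diffrc(A_i)
\]
is a group isomorphism. Injectivity is immediate, since $g$ is determined by its restrictions to the components together with the identity on the separating spheres. Surjectivity follows because any tuple of compactly supported diffeomorphisms on the individual pieces glues with the identity on a neighborhood of each sphere to a global $C^r$-diffeomorphism of $\R^n$ lying in the stabilizer. Since $B$ is $C^r$-diffeomorphic to $\R^n$ and each $A_i$ is $C^r$-diffeomorphic to $S^{n-1} \times \R$, this yields the asserted identification
\[
\mathrm{Stab}_G(\mathbf{\Sigma}) \cong \diffrc(\R^n) \times \diffrc(S^{n-1} \times \R)^{\N}.
\]

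Finally, bounded acyclicity is a direct application of Theorem \ref{thm:compactly}: the factor $\R^n$ fits the hypothesis with $M$ a point, the factor $S^{n-1} \times \R$ fits with $M = S^{n-1}$, and the theorem explicitly handles arbitrary (possibly infinite) direct products of such groups. There is no substantial obstacle here; the only point requiring a little care is the surjectivity clause above, where smooth glueing works precisely because the pieces agree with the identity on an open neighborhood of each separating sphere, so no regularity is lost at the seams.
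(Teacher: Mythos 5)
Your proposal is correct and follows essentially the same route as the paper: identify the stabilizer with $\diffrc$ of the unit ball times the product of $\diffrc$ of the open annuli between consecutive spheres, note these are $C^r$-diffeomorphic to $\R^n$ and $S^{n-1}\times\R$ respectively, and apply Theorem \ref{thm:compactly}. The extra care you take with surjectivity of the restriction/gluing map is a detail the paper leaves implicit, but it is the same argument.
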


\begin{proof}
    An element $g \in G$ is in the stabilizer of $\mathbf{\Sigma}$ if and only if it fixes a neighborhood of $\im \dot{\Sigma}_i$ for each $i \in \N^*$. Denote by $B_i$ the open ball of radius $i$ around the origin, that is the bounded component defined by $\Sigma_i$. Then the stabilizer is isomorphic to the product of
    \[\diffrc(B_1) \times \prod\limits_{i \geq 2} \diffrc(B_i \setminus \overline{B}_{i-1}).\]
    Since $B_1$ is $C^r$-diffeomorphic to $\R^n$, and $B_{i+1} \setminus B_i$ is $C^r$-diffeomorphic to $S^{n-1} \times \R$, we obtain the result.
    The last statement follows from Theorem \ref{thm:compactly}.
\end{proof}

The last (and hardest) part is transitivity.

\begin{lemma}
\label{lem:Rn:transitivity}
    The action of $G$ on $\fat$ is transitive.
\end{lemma}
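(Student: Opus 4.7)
The natural strategy is an inductive construction producing $g \in G$ with $g \cdot \mathbf{\Sigma} = \mathbf{S}$, as the pointwise limit $g = \lim_k g_k$ of a sequence of elements $g_k \in G$ that stabilise on every compact subset of $\R^n$. More precisely, I would arrange inductively that $g_k \cdot \Sigma_i = S_i$ for all $1 \leq i \leq k$ and that $g_{k+1}$ agrees with $g_k$ on an open neighborhood of $\overline{B_k}$, where $B_k = (\Sigma_k)_b$ is the open ball of radius $k$. The first property passes to the limit to give $g \cdot \mathbf{\Sigma} = \mathbf{S}$; the stabilisation property ensures that $g$ is a well-defined element of $G$.

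The base case $k=1$ is immediate: by the definition of a fat sphere, $S_1$ lies in the $G$-orbit of $\Sigma = \Sigma_1$, so there exists $g_1 \in G$ with $g_1 \cdot \Sigma_1 = S_1$. For the inductive step, writing $g_{k+1} = g_k \circ \psi_k$, I need $\psi_k \in G$ that fixes a neighborhood of $\overline{B_k}$ pointwise and maps $\Sigma_{k+1}$ onto $T_k := g_k^{-1} \cdot S_{k+1}$. Since $g_k^{-1}$ sends $\dot{S}_k$ back to $\dot{\Sigma}_k$, the fat sphere $T_k$ englobes $B_k$. The lemma thus reduces to the following key claim: \emph{for any two fat spheres $A$ and $B$ both englobing $B_k$, there exists $\psi \in G$ fixing a neighborhood of $\overline{B_k}$ with $\psi \cdot A = B$.}

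To prove the key claim, I would write $A = g_A \cdot \Sigma$ and $B = g_B \cdot \Sigma$. The composition $\varphi := g_B \circ g_A^{-1} \in G$ already satisfies $\varphi \cdot A = B$ but typically does not fix $\overline{B_k}$, so the task is to modify $\varphi$ on a compact region until it becomes the identity near $\overline{B_k}$, without changing its germ along $A$. Equivalently, one needs to show that the subgroup $G^{B_k} \leq G$ of elements fixing a neighborhood of $\overline{B_k}$ pointwise still acts transitively on fat spheres englobing $B_k$. The strategy has two stages: first, exhibit a continuous path of fat spheres from $A$ to $B$, all englobing $B_k$, using path-connectedness of $G$ (in the case $r = 0$ by Kirby's theorem cited in the introduction, and in the case $r \geq 1$ by the deformation retraction of $\diffrp(\R^n)$ onto $\mathrm{GL}_n(\R)_+$) together with a radial inflation argument to preserve the englobing condition along the path; second, convert this path into an ambient isotopy of $\R^n$ fixed on a neighborhood of $\overline{B_k}$ via isotopy extension in the relevant category, and take $\psi$ to be its time-one map.

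The principal obstacle is executing this construction in the smooth category in dimension $4$, where the $C^r$-annulus theorem is unknown and one cannot simply identify the region between $A$ and $\partial B_k$ with a standard smooth annulus. This is precisely why fat spheres are defined restrictively, as the $G$-orbit of the single model $\Sigma$: both the bicollar germ and the bounded/unbounded decomposition of $T = h \cdot \Sigma$ are transported from $\Sigma$ by the ambient element $h \in G$, so the connecting isotopy between $A$ and $B$ can be produced intrinsically as a path in $G$ (suitably modified to fix $\overline{B_k}$), without ever having to recognise the in-between region as a standard annulus. This is the key technical device flagged in the paragraph preceding the lemma, and I expect it to carry the entire weight of the proof.
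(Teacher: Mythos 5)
Your proposal is correct, and its skeleton coincides with the paper's: the same induction producing $g_k\in G$ with $g_k\cdot\Sigma_i=S_i$ for $i\le k$, stabilizing on a neighbourhood of the ball $\overline{B_k}$ so that the limit is a well-defined element of $G$ (surjectivity coming from the going-to-infinity condition on $\mathbf{S}$), and the same two pillars carrying the key step, namely that a fat sphere by definition comes with an ambient element of $G$ isotopic to the identity, and the isotopy extension theorem in the relevant category (Edwards--Kirby for $r=0$), which is exactly how both arguments dodge the annulus theorem in the smooth $4$-dimensional case. Where you genuinely diverge is in how the connecting isotopy avoiding the inner ball is produced. The paper first proves a Claim that the region between the cores of consecutive fat spheres is $C^r$-diffeomorphic to $S^{n-1}\times[0,1]$ (itself via isotopy extension), and then isotopes both candidate spheres towards the inner sphere by pushing along these annular regions, applying isotopy extension a second time to get the correcting element supported away from the ball. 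You instead prove directly that the subgroup fixing a neighbourhood of $\overline{B_k}$ acts transitively on fat spheres englobing the ball: a path $g_t$ in $G$ from $g_A$ to $g_B$, pre-inflated radially to a radius $R$ large enough (by compactness of $\bigcup_t g_t^{-1}(\overline{B_k})$) that the whole track of spheres stays off a neighbourhood of $\overline{B_k}$, followed by a single application of isotopy extension. Your route is slightly more streamlined (no annulus-recognition claim, one use of isotopy extension instead of two), while the paper's route isolates the annulus statement as a reusable geometric fact. Two small points you should make explicit when writing this up: the path and the isotopy extension must be applied to thin compact bicollar representatives $S^{n-1}\times[-\epsilon,\epsilon]$ of the germs, not merely to the cores, so that $\psi\cdot A=B$ holds as fat spheres; and in the application the spheres englobe the \emph{closed} ball $\overline{B_k}$ (since $S_{k+1}$ englobes $\im\dot S_k$), which together with compactness of the isotopy track gives the positive margin needed for the time-one map to be the identity on an actual neighbourhood of $\overline{B_k}$.
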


\begin{proof}
    Let $\mathbf{\Sigma} = (\Sigma_i)_{i \in \N^*}$ be the model fat sequence. Let $\mathbf{S} = (S_i)_{i \in \N^*}$ be any other fat sequence. Since the action on the set of fat spheres is transitive by construction, for each $i$ there exists $g_i \in G$ such that $g_i.\Sigma_i = S_i$, and $\varepsilon_i \in (0, \frac{1}{4})$ such that the compact sets $\Sigma_i (S^{n-1} \times [-\varepsilon_i, \varepsilon_i])$ are pairwise disjoint, and the compact sets $g_i \circ \Sigma_i (S^{n-1} \times [-\varepsilon_i, \varepsilon_i])$ are also pairwise disjoint. We want to show that there exists $g \in G$ such that the restriction of $g$ on $\Sigma_i (S^{n-1} \times [-\varepsilon_i, \varepsilon_i])$ coincides with the restriction of $g_i$. Then $g. \mathbf{\Sigma} = \mathbf{S}$.

Given that fat spheres, by definition, are in the orbit of a model fat sphere, each fat sphere comes with data of a global $C^r$-diffeomorphism in $G$. This makes arguing for transitivity easier, otherwise we would have to use the annulus theorem in the $ C^r$ category. To find $g$ such that $g. \mathbf{\Sigma} = \mathbf{S}$, we inductively construct $h_k\in G$ such that 
\begin{itemize}
    \item $h_k(\Sigma_i)=S_i$ for $i\leq k$.
    \item $h_{k+1}^{-1}\circ h_k$ is the identity in an open neighborhood of the standard ball that bounds $\Sigma_k$. 
\end{itemize}

Then $g$ is the composition of all $h_i$'s. This is well defined since each $x\in \R^n$ eventually lies in the interior of $(\Sigma_k)_b$ for some big $k$ and the second condition assures that $h_j(x)=h_k(x)$ for $j\geq k$.

For $k=1$, we let $h_1=g_1$. Now suppose that $h_k$ is given. To construct $h_{k+1}$, we shall modify $h_k$ by an element in $G$  whose support does not intersect $(S_k)_b$ as follows. We have two different embeddings of the thin annulus $\Sigma_{k+1} (S^{n-1} \times [-\varepsilon_{k+1}, \varepsilon_{k+1}])$ into $\R^n$; one is given by the global $C^r$-diffeomorphism $g_{k+1}$ which gives the fat sphere $S_{k+1}$, and the other is given the global $C^r$-diffeomorphism $h_k$. Recall from Remark (ii) in the introduction that $\diffrp(\R^n) = \diffro(\R^n)$ for all $r$. Therefore $h_k$ and $g_{k+1}$ are isotopic to the identity, and so these two embeddings are isotopic embeddings in $\R^n$. But we need to make sure that these two embeddings are isotopic away from $\Sigma_k$. To show this we need a version of the annulus theorem in our setting that follows from the isotopy extension theorem (for $r>0$ see \cite[Section 8]{MR0448362} and \cite{MR0123338} and for $r=0$ see \cite{EdwardsKirby}).

\begin{claim}
    Let $W_i$ be the region between $\dot{S_i}=g_i \circ \Sigma_i (S^{n-1} \times \{0\})$ and $\dot{S}_{i+1}=g_{i+1} \circ \Sigma_{i+1} (S^{n-1} \times \{0\})$. The manifold $W_i$ is $C^r$-diffeomorphic to $S^{n-1}\times [0,1]$.
\end{claim}

As we shall see in the proof of the claim, the reason that this statement is easier than the annulus theorem is that the two spheres come with data of embeddings that are (ambiently) isotopic.

\begin{proof}[Proof of the claim] Since by radial dilation, we can send $\Sigma_i (S^{n-1} \times \{0\})$ to $\Sigma_{i+1} (S^{n-1} \times \{0\})$, we can assume that there exists $f\in G$ such that $f(\dot{S}_{i+1})=\dot{S}_{i}$. To simplify the notation, we shall consider the following situation. Suppose $S(r)$ is the standard sphere of radius $r$ around the origin and for $f\in G$ we know that the sphere $S'=f(S(1))$ is in the interior of the unit ball. We want to prove that the region between $S'$ and $S(1)$ is $C^r$-diffeomorphic to $S^{n-1}\times [0,1]$. Since $f$ is isotopic to the identity, the sphere $S'$ is isotopic to $S(1/2)$. Since spheres are compact, the support of this isotopy can be contained inside $S(k)$ for some big $k$. The isotopy extension theorem says that the isotopy between $S'$ and $S(1/2)$ can be extended to a path of $C^r$-diffeomorphisms of $D(k)$, the ball of radius $k$ whose boundary is $S(k)$, that is compactly supported (i.e. is the identity near $S(k)$). So there is $g$ in $\diffrc(\mathrm{int}(D(k)))$ such that $g(S')=S(1/2)$.

 Note that the region between $S'$ and $S(1)$ is $C^r$-diffeomorphic to the region between $S'$ and $S(k)$ since we just added the standard annulus between $S(1)$ and $S(k)$ to one of the sphere boundaries $S(1)$. So it is enough to show that the region between $S'$ and $S(k)$ is $C^r$-diffeomorphic to the region between $S(1/2)$ and $S(k)$.  Note that $g$ maps the region between $S'$ and $S(k)$ to the region between $S(1/2)$ and $S(k)$ which is what we wanted. 
\end{proof}

 So by the claim, the region $W_{k+1}$ between $g_{k+1}\circ \Sigma_{k+1} (S^{n-1} \times \{0\})$    and $g_{k}\circ \Sigma_{k} (S^{n-1} \times \{0\})$ and the region $W'_{k+1}$ between $h_k\circ \Sigma_{k+1} (S^{n-1} \times \{0\})$    and $g_{k}\circ \Sigma_{k} (S^{n-1} \times \{0\})$ are $C^r$-diffeomorphic to an annulus $S^{n-1}\times [0,1]$.

 So we can find an isotopy from $g_{k+1}\circ \Sigma_{k+1} (S^{n-1} \times \{0\})$  to a sphere close to $g_{k}\circ \Sigma_{k} (S^{n-1} \times \{0\})$ by pushing along the annulus $W_{k+1}$. Similarly we can isotope $h_k\circ \Sigma_{k+1} (S^{n-1} \times \{0\})$ to the sphere close to $g_{k}\circ \Sigma_{k} (S^{n-1} \times \{0\})$ by pushing along $W'_{k+1}$. Therefore, $g_{k+1}\circ \Sigma_{k+1} (S^{n-1} \times [-\varepsilon_{k+1}, \varepsilon_{k+1}])$ and $h_k\circ \Sigma_{k+1} (S^{n-1} \times [-\varepsilon_{k+1}, \varepsilon_{k+1}])$ are isotopic via an isotopy that does not intersect the sphere $g_{k}\circ \Sigma_{k} (S^{n-1} \times \{0\})$.

 Since the thin annuli $g_{k+1}\circ \Sigma_{k+1} (S^{n-1} \times [-\varepsilon_{k+1}, \varepsilon_{k+1}])$ and $h_k\circ \Sigma_{k+1} (S^{n-1} \times [-\varepsilon_{k+1}, \varepsilon_{k+1}])$ are compact subsets of $\R^n$, the isotopy between them is compactly supported. Therefore, by the isotopy extension theorem, there exists $g\in \diffrc(\mathrm{int}(D(k)))$ for a large enough $k$ such that 
 \begin{itemize}
     \item the support of $g$ does not intersect the disc that bounds the sphere $g_{k}\circ \Sigma_{k} (S^{n-1} \times \{0\})$.
     \item $g$ sends $h_{k}\circ \Sigma_{k+1} (S^{n-1} \times [-\varepsilon_{k+1}, \varepsilon_{k+1}])$ to $g_{k+1}\circ \Sigma_{k+1} (S^{n-1} \times [-\varepsilon_{k+1}, \varepsilon_{k+1}])$.
 \end{itemize}

 So now we define $h_{k+1}$ to be $g\circ h_k$ which completes the induction step and the proof of Lemma \ref{lem:Rn:transitivity}.
\end{proof}

We can now conclude the proof of Theorems \ref{thm:main:homeo} and \ref{thm:main:diff}:

\begin{theorem}
\label{thm:criterion:Rn}

The group $\diffrp(\R^n)$ satisfies the criterion from Theorem \ref{thm:criterion}, and thus is boundedly acyclic.
\end{theorem}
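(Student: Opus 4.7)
The plan is to apply Theorem \ref{thm:criterion} directly to $G = \diffrp(\R^n)$ acting on the set $X$ of fat spheres, with the $G$-poset of sequences taken to be $\fat$. Two of the three hypotheses are already in place: transitivity of the $G$-action is exactly Lemma \ref{lem:Rn:transitivity}, and bounded acyclicity of the stabilizer of the model fat sequence $\mathbf{\Sigma}$ is Lemma \ref{lem:Rn:fatsequence:stabilizers}. Thus the only remaining task is to verify the $\mathbf{W}$ property for $\fat$, and for this I would invoke Proposition \ref{prop:abstract:W} with the relations $\simeq$ and $\sqsubseteq$ introduced just after Definition \ref{def:Rn:fatsequence}.

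Verifying the preconditions of Subsection \ref{sec:weaving} is routine bookkeeping. The relation $\simeq$ is obviously an equivalence, while $\sqsubseteq$ is reflexive by definition and transitive because englobement is transitive: if $T$ englobes $\dot S$ and $U$ englobes $\dot T$, then $T_b \cup \im \dot T \subseteq U_b$, whence $\im \dot S \subseteq U_b$. The inclusion $\simeq\,\subseteq\,\sqsubseteq$ is built into the definition. The slightly more substantive check is that $\fat$ coincides with the poset of cofinal increasing sequences of pairwise inequivalent fat spheres: any fat sequence in the sense of Definition \ref{def:Rn:fatsequence} is manifestly increasing and pairwise inequivalent, and it is cofinal because every fat sphere has compact core, which is englobed by $S_i$ for large $i$; conversely, a cofinal increasing sequence of pairwise inequivalent fat spheres is automatically concentric (inequivalence forces $S_{i+1}$ to strictly englobe $\dot S_i$) and exhausts every compact set (apply cofinality to arbitrarily large standard concentric spheres, which are fat because homotheties lie in $G$).

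Once these identifications are in place, Proposition \ref{prop:abstract:W} delivers the $\mathbf{W}$ property for $\fat$, and Theorem \ref{thm:criterion} then yields the bounded acyclicity of $G$. In this assembly, the genuine mathematical obstacle does not lie in the final packaging, which is formal, but rather in the preceding transitivity statement (Lemma \ref{lem:Rn:transitivity}): its proof must circumvent the absence of a $C^r$-annulus theorem in dimension four by encoding the ambient diffeomorphism data into the very definition of fat sphere and by building the desired element of $G$ inductively through an application of the isotopy extension theorem on thin annular collars.
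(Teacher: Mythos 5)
Your proposal is correct and follows the paper's own route exactly: the theorem is obtained by assembling the $\mathbf{W}$ property for $\fat$ via Proposition \ref{prop:abstract:W} (with the same relations $\simeq$ and $\sqsubseteq$ defined through cores and englobement), the bounded acyclicity of the stabilizer from Lemma \ref{lem:Rn:fatsequence:stabilizers}, and the transitivity from Lemma \ref{lem:Rn:transitivity}, and then invoking Theorem \ref{thm:criterion}. The extra bookkeeping you supply (transitivity of $\sqsubseteq$ and the identification of $\fat$ with the cofinal increasing sequences of Subsection \ref{sec:weaving}) is left implicit in the paper and is indeed routine, and you correctly locate the real content in Lemma \ref{lem:Rn:transitivity}.
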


\begin{remark}
    A similar strategy might work to study the bounded cohomology of $\diffro(M\times \R^n)$ for a closed manifold $M$ (note that we used the isotopy extension theorem for the proof of Lemma \ref{lem:Rn:transitivity}, which is why we need to explicitly require that diffeomorphisms are isotopic to the identity to hope for a generalization). Since we do not have any application in mind for the possible bounded acyclicity of $\diffro(M\times \R^n)$, we do not pursue it in this paper.
\end{remark}

\subsection{Homeomorphism group of the disc}
\label{verifying:disc}

In this section, we tackle discs $D^n$, which we model as the unit disc in $\R^n$. In case $n = 1$, there is no difference between homeomorphisms of the disc and of the line, therefore we assume $n \geq 2$. Our goal is to show Theorem \ref{thm:discs}: The group $G \coloneqq \homeo(D^n, \partial)$ of homeomorphisms of the disc fixing the boundary pointwise is boundedly acyclic (note that since the boundary is fixed, the orientation is automatically preserved). We will construct a poset analogous to the case of $\R^n$, to which Theorem \ref{thm:criterion} can be applied; here controlling the behaviour at the boundary will present additional difficulties, and is the reason we only consider homeomorphisms.

\begin{definition}
\label{def:D:model:fatsphere}
	The \emph{model fat sphere} in $D^n$ is the germ at $S^{n-1} \times \{ 0 \}$ of the orientation-preserving $C^0$-embedding $\Sigma \colon S^{n-1} \times (-\frac{1}{4}, \frac{1}{4}) \to D^n$ defined by $(x, \rho) \mapsto (\frac{1}{2} + \rho) x$.
	
	The \emph{core} of $\Sigma$, denoted by $\dot{\Sigma}$ is the embedding $S^{n-1} \to D^n$ obtained by restricting $\Sigma$ to $S^{n-1} \times \{ 0 \}$.
	
	We denote by $\Sigma_{in} \coloneqq B_{\frac{1}{2}}^n(0)$ and $\Sigma_{out} \coloneqq D^n \setminus \overline{B}_{\frac{1}{2}}^n(0)$, these are respectively the \emph{inner} and \emph{outer} components of $D^n \setminus \im \dot{\Sigma}$.
\end{definition}

Once again we use the same notation for embeddings and their germs, and denote $g. \Sigma \coloneqq g \circ \Sigma$.

\begin{definition}
\label{def:D:fatsphere}
	A \emph{fat sphere} in $D^n$ is the germ of an orientation-preserving $C^0$-embedding $S \colon S^{n-1} \times (-\frac{1}{4}, \frac{1}{4}) \to D^n$ that can be obtained as $g. \Sigma$ for some $g \in G$.
	
	The \emph{core} of $S$, denoted by $\dot{S}$, is the embedding $S^{n-1} \to D^n$ obtained by restricting $S$ to $S^{n-1} \times \{0\}$. Note that if $S = g. \Sigma$ then $\dot{S} = g \circ \dot{\Sigma}$.
	
	We denote by $S_{in}$ and $S_{out}$ the \emph{inner} and \emph{outer} components of $D^n \setminus \im \dot{S}$.
\end{definition}

As in the case of $\R^n$ we considered sequences of fat spheres going to infinity, here we will consider sequences of fat spheres going to the boundary. The definition of $\simeq$ and $\sqsubseteq$ is exactly the same; note that the cofinality amounts to asking that every compact subset of \emph{the interior of $D^n$} is englobed by $S_i$ for large enough $i$.

In the case of $\R^n$ we simply defined fat sequences, as sequences of fat spheres that are concentric going to infinity. In the case of discs, there is a further condition that will be needed in order to have continuity at the boundary.

\begin{definition}
\label{def:boundary:embedding}
	Let $(S_i)_{i \in \N^*}$ be a cofinal increasing sequence of pairwise inequivalent fat spheres. We say that $(S_i)_{i \in \N^*}$ has \emph{trivial germ at the boundary} if for every $x \in S^{n-1}$ we have
	\[\lim\limits_{i \to \infty} \dot{S}_i(x) = \partial(x);\]
	where $\partial \colon S^{n-1} \to D^n$ is the canonical embedding of the boundary.
\end{definition}

Since $G$ is continuous at the boundary, and restricts to the identity, the action of $G$ on sequences of fat spheres preserves the properties of being concentric going to the boundary, and having trivial germ at the boundary. We now have our suitable definition of fat sequences:

\begin{definition}
\label{def:D:fatsequence}
	A \emph{fat sequence} in $D^n$ is a cofinal increasing sequence of pairwise inequivalent fat spheres with trivial germ at the boundary. We denote by $\fat$ the corresponding poset.
	
	Set $\Sigma_1 \coloneqq \Sigma$ to be the model fat sphere. Define $\Sigma_i$ to be the image of $\Sigma$ under a homeomorphism of $D^n$ fixing the boundary that restricts to homothety by $2 (1 - 2^{-i})$ on a neighbourhood of $\im \dot{\Sigma}_1$; note that the image of the core of $\Sigma_i$ in $D^n$ is the sphere of radius $1 - 2^{-i}$ around the origin. The sequence $(\Sigma_i)_{i \in \N^*}$ is concentric going to the boundary and has trivial germ at the boundary, so it defines a fat sequence $\mathbf{\Sigma}$, which we call the \emph{model fat sequence}.
\end{definition}

We now have a $G$-poset of sequences $\fat$, and we need to verify the three conditions from Theorem \ref{thm:criterion}. The first one, the $\mathbf{W}$ property, follows from \Cref{prop:abstract:W} in view of \Cref{rem:W:suposet}.
The stabilizer of the model fat sequence does not split as a direct product to which Theorem \ref{thm:compactly} applies. Still, we will be able to apply Theorem \ref{thm:ccc} to obtain its bounded acyclicity.

\begin{lemma}
\label{lem:D:fatsequence:stabilizers}
    The stabilizer of the model fat sequence $\mathbf{\Sigma}$ in $G$ has commuting $\Z$-conjugates, and thus is boundedly acyclic.
\end{lemma}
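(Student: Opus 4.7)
The plan is to verify that $\mathrm{Stab}(\mathbf{\Sigma})$ has commuting $\Z$-conjugates; bounded acyclicity then follows from \Cref{thm:ccc}. Let $H \leq \mathrm{Stab}(\mathbf{\Sigma})$ be finitely generated, with generators $h_1, \dots, h_m$. Each $h_j$ fixes an open neighbourhood of every core $\dot{\Sigma}_i$, so intersecting finitely many such neighbourhoods yields, for each $i$, a common open neighbourhood $U_i$ of $\dot{\Sigma}_i$ on which every element of $H$ is the identity. The support $K$ of $H$ therefore decomposes as $K = K_0 \sqcup \bigsqcup_{i \geq 1} K_i$ with $K_0 \subset B_{1/2}$ compact and each $K_i \subset A_i$ compact and bounded away from both core spheres of $A_i$ by $U_i \cup U_{i+1}$. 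Since homeomorphisms with disjoint supports commute, it suffices to produce a single $t \in \mathrm{Stab}(\mathbf{\Sigma})$ satisfying $t^p(K) \cap K = \emptyset$ for every $p \in \N^*$; this will give $[H, t^p H t^{-p}] = 1$ for all such $p$.

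I would construct $t$ via a ``radial Alexander trick'' carried out in each annulus separately. Using the polar product $A_i \cong S^{n-1} \times (c_i, d_i)$ with $c_i = 1 - 2^{-i}$ and $d_i = 1 - 2^{-(i+1)}$, let $[\rho_1^i, \rho_2^i] \subset (c_i, d_i)$ be the radial projection of $K_i$; by construction $c_i < \rho_1^i$ strictly. Choose an orientation-preserving homeomorphism $\phi_i$ of $(c_i, d_i)$ that is the identity in a small neighbourhood of each endpoint, satisfies $\phi_i(r) < r$ on the interior, and maps $[\rho_1^i, \rho_2^i]$ into $(c_i, \rho_1^i)$; this is realizable thanks to the positive gap between $\rho_1^i$ and $c_i$. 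Define $t_i(\theta, r) = (\theta, \phi_i(r))$ on $A_i$, and define $t_0 \in \homeoc(B_{1/2})$ by a standard Alexander trick in $B_{1/2} \cong \R^n$ centred at a point outside $K_0$. Since $\phi_i^p(r)$ is monotonically decreasing in $p$, remains above the fixed-point set near $c_i$, and satisfies $\phi_i^p(r) \leq \phi_i(r) < \rho_1^i$ for every $p \geq 1$ and every $r \in [\rho_1^i, \rho_2^i]$, we obtain $t_i^p(K_i) \cap K_i = \emptyset$, and similarly for $t_0$. Setting $t$ to be $t_0$ on $B_{1/2}$, $t_i$ on each $A_i$, and the identity on $\partial D^n \cup \bigcup_i U_i$ produces a candidate self-map of $D^n$.

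The main obstacle is to verify that $t$ genuinely belongs to $\mathrm{Stab}(\mathbf{\Sigma})$, i.e.\ that it extends continuously to the identity on $\partial D^n$. For this I would bound the displacement on each $A_i$: for any $x = r\theta \in A_i$ one has $|t(x) - x| = |\phi_i(r) - r|$, and since both $\phi_i(r)$ and $r$ lie in $(c_i, d_i)$, this is bounded above by $d_i - c_i = 2^{-(i+1)}$; the same bound holds for $t^{-1}$. Consequently $\sup_{x \in A_i} |t(x) - x| \leq 2^{-(i+1)} \to 0$ as $i \to \infty$, so $t$ and $t^{-1}$ both extend by the identity to continuous self-maps of $D^n$ fixing $\partial D^n$ pointwise. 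This places $t$ in $\homeo(D^n, \partial) \cap \mathrm{Stab}(\mathbf{\Sigma})$ and completes the verification of commuting $\Z$-conjugates.
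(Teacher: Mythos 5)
Your proof is correct and takes essentially the same route as the paper's: decompose the support of a finitely generated subgroup of the stabilizer into compact pieces lying in the dyadic shells between consecutive model spheres, build $t$ as a radial shift in each shell (plus a compactly supported element of the inner ball) that is the identity near every core, verify continuity at $\partial D^n$, and invoke Theorem \ref{thm:ccc}. The only cosmetic differences are that the paper works with the projections of $H$ to the factors $\homeoc(B_i \setminus \overline{B}_{i-1})$ instead of your common fixed neighbourhoods of the cores, and checks continuity at the boundary by a sequence argument rather than your uniform displacement bound $2^{-(i+1)}$ (also note your two stated conditions on $\phi_i$ --- identity near both endpoints and $\phi_i(r)<r$ on the interior --- should be read as $\phi_i \leq \mathrm{id}$ with strict inequality only away from the endpoints, which is all your iteration argument uses).
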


\begin{proof}
    An element $g \in G$ belongs to the stabilizer $G_{\mathbf{\Sigma}}$ if and only if it fixes a neighbourhood of $\dot{\Sigma}_i$ for each $i \in \N^*$. Denote by $B_i$ the open ball of radius $(1 - 2^{-i})$ around the origin, that is the inner component defined by $\Sigma_i$. Then $G_{\mathbf{\Sigma}}$ is a subgroup of the product
    \[\homeoc(B_1) \times \prod\limits_{i \geq 2} \homeoc(B_i \setminus \overline{B}_{i-1}),\]
    which in turn can be identified with a subgroup of homeomorphisms of the interior of $D^n$. However, unlike the case of $\R^n$, this time $G_{\mathbf{\Sigma}}$ is a proper subgroup, since given a sequence of elements in $B_i \setminus \overline{B}_{i-1}$, their image under every element $g \in G$ must have the same limit in the boundary.
    
    Let $H \leq G_{\mathbf{\Sigma}}$ be a finitely generated subgroup. For $i \geq 2$, denote by $H_i$ the projection of $H$ onto $\homeoc(B_i \setminus \overline{B}_{i-1})$. This is a finitely generated subgroup of a group of compactly supported homeomorphisms of a non-compact space, so all of $H_i$ is supported on some compact set. We can choose this to be a compact annulus $A_i \subset B_i \setminus \overline{B}_{i-1}$. More explicitly, we can find a compact interval $I_i \subset (1 - 2^{1-i}, 1 - 2^{-i})$ such that $A_i$ is the set of all elements of norm contained in $I_i$, and every element of $H_i$ fixes $(B_i \setminus \overline{B}_{i-1}) \setminus A_i$.
    Similarly, we denote by $H_1$ the projection of $H$ on $\homeoc(B_1)$, which is supported on some compact set $A_1 \subset B_1$.
    
    We will construct an element $t \in G_{\mathbf{\Sigma}}$ with the property that $t^p(A_i) \cap A_i = \emptyset$ for all $p, i \in \N^*$. Let us conclude the proof assuming the existence of such an element. For every $p \in \N^*$, the sets $\cup_i A_i$ and $\cup_i t^p(A_i)$ are disjoint. Indeed, by construction $A_i$ is disjoint from $t^p(A_i)$, and moreover $A_i \subset B_i \setminus \overline{B}_{i-1}$ is disjoint from $t^p(A_j) \subset B_j \setminus \overline{B}_{j-1}$ for all $j \neq i$, because $t \in G_{\mathbf{\Sigma}}$ preserves the sets $B_i \setminus \overline{B}_{i-1}$. It follows that the group $H$, supported on $\cup_i A_i$, and the group $t^p H t^{-p}$, supported on $\cup_i t^p(A_i)$, commute.
    This concludes the proof that $G_{\mathbf{\Sigma}}$ has commuting $\Z$-conjugates; the last statement follows from Theorem \ref{thm:ccc}.
    
\medskip
    
    It remains to construct $t$. For $i \geq 2$, let $I_i \coloneqq (a_i, b_i)$, and choose an element $f_i \in \homeoc(1 - 2^{1-i}, 1 - 2^{-i})$ with the property that $f_i(a_i) > b_i$. Since $f_i$ is orientation-preserving, it follows that $f_i^p(I_i) \cap I_i = \emptyset$ for all $p \in \N^*$. We then define an element $t_i \in \homeoc(B_i \setminus \overline{B}_{i-1})$ in cylindrical coordinates by 
    \[S^{n-1} \times (1 - 2^{1-i}, 1 - 2^{-i}) \to S^{n-1} \times (1 - 2^{1-i}, 1 - 2^{-i}) \colon (x, \rho) \mapsto (x, f_i(\rho)).\]
    The choice of $f_i$ then implies that $t_i^p(A_i) \cap A_i = \emptyset$ for all $p \in \N^*$. We also choose an element $t_1 \in \homeoc(B_1)$ such that $t_1^p(A_1) \cap A_1 = \emptyset$ for all $p \in \N^*$.
    
    We define $t$ on the interior of $D^n$ so that $t|_{B_1} = t_1$ and $t|_{B_i \setminus \overline{B}_{i-1}} = t_i$ for all $i \geq 2$, and set $t|_{\partial D^n} = \id|_{\partial D^n}$. Clearly $t$ is a homeomorphism when restricted to the interior of $D^n$, which restricts to the identity on a neighbourhood of $\im \dot{S}_i$ for all $i \in \N^*$, and it satisfies $t^p(A_i) \cap A_i = \emptyset$ for all $p, i \in \N^*$. So it only remains to show that $t \in G$, i.e. $t$ is continuous at the boundary. For this, let $(y_j)_{j \in \N^*}$ be a sequence of points in $D^n$ such that $y_j \to y \in \partial D^n$. Since $t$ restricts to the identity on the boundary, we may assume without loss of generality that $y_j$ belongs to the interior of $D^n$, for all $j$. Up to removing finitely many terms, we may assume that $y_j$ has norm at least $\frac{1}{2}$, which allows to use cylindrical coordinates to check continuity: Say $y_j = (x_j, \rho_j), y = (x, 1)$ and so $x_j \to x \in S^{n-1}$ and $\rho_j \to 1 \in [\frac{1}{2}, 1]$. Let $i_j \in \N^*$ be such that $\rho_j \in [1 - 2^{1-i_j}, 1 - 2^{-i_j}]$. Then $t(y_j) = t(x_j, \rho_j) = (x_j, f(\rho_j)) \in \{x_j\} \times [1 - 2^{1-i_j}, 1 - 2^{-i_j}]$. In particular $\rho_j \to 1$ implies $f(\rho_j) \to 1$, and so $t(y_j) \to y$, which concludes the proof.
\end{proof}

We are left to show transitivity, which follows from the results in the appendix, in particular a controlled version of the annulus theorem.

\begin{lemma}
\label{lem:D:transitivity}
    The action of $G$ on $\fat$ is transitive.
\end{lemma}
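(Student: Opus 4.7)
My plan would be to adapt the inductive argument from the proof of \Cref{lem:Rn:transitivity} to the disc setting, the main additional difficulty being continuity of the limit at $\partial D^n$. Fix the target fat sequence $\mathbf{S} = (S_i)_{i \in \N^*}$ and the model $\mathbf{\Sigma} = (\Sigma_i)_{i \in \N^*}$. The goal is to build $h_k \in G$ inductively so that $h_k.\Sigma_i = S_i$ for all $i \leq k$ and $h_{k+1}^{-1} \circ h_k$ is the identity on some open neighbourhood of $\overline{(\Sigma_k)_{in}}$. Cofinality of $\mathbf{\Sigma}$ then guarantees that $g(x) \coloneqq \lim_{k \to \infty} h_k(x)$ is well defined on $\mathrm{int}(D^n)$, and by the first condition, the resulting $g$ automatically satisfies $g.\Sigma_i = S_i$ for all $i$.

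For the base case, the definition of a fat sphere supplies $h_1 \in G$ with $h_1.\Sigma_1 = S_1$ directly. For the inductive step, I would mimic the construction in \Cref{lem:Rn:transitivity}: picking any $g_{k+1} \in G$ realising $g_{k+1}.\Sigma_{k+1} = S_{k+1}$, both $g_{k+1} \circ \Sigma_{k+1}$ and $h_k \circ \Sigma_{k+1}$ embed a thin spherical shell into the annular region between $\dot{S}_k$ and $\dot{S}_{k+1}$; path-connectedness of $G$ (which holds since it fixes $\partial D^n$) together with the isotopy extension theorem yields an ambient isotopy between them supported away from $(\Sigma_k)_{in}$, and the analogue of the claim inside \Cref{lem:Rn:transitivity} shows that the relevant annular region is genuinely $C^0$-homeomorphic to $S^{n-1} \times [0,1]$. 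Composing $h_k$ with the endpoint of this isotopy produces $h_{k+1}$.

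The hard part is proving continuity of $g$ at $\partial D^n$. Each $h_k$ already fixes $\partial D^n$, so the natural candidate is $g|_{\partial D^n} = \id$, but one must show that $h_k \to g$ \emph{uniformly} in some collar of the boundary. This is where the trivial germ hypothesis on $\mathbf{S}$ and the controlled annulus theorem from the appendix become indispensable. The trivial germ condition forces the annular shells between successive cores $\dot{S}_k$ and $\dot{S}_{k+1}$ to shrink towards $\partial D^n$; the controlled annulus theorem must then be invoked to choose, at each inductive step, a homeomorphism whose displacement is bounded by the diameter of the annulus supporting it. This makes the sup-norms of the increments $h_{k+1}^{-1} \circ h_k - \id$ near $\partial D^n$ summable, yielding uniform convergence $h_k \to g$ in a collar of $\partial D^n$, and hence continuity of $g$ extended by the identity on the boundary. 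Extracting the precise quantitative statement needed from the appendix is the crux of the argument; the remainder is a faithful disc-version of the $\R^n$ proof.
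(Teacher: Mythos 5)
Your proposal does not follow the paper's route: the paper's proof of this lemma is a two-line citation, combining Theorem \ref{thm:homeo-dd} with Corollary \ref{cor:add-germs} from the appendix. There the homeomorphism is built \emph{directly}, not by an inductive composition of elements of $G$: one glues a Schoenflies homeomorphism on the innermost ball (Theorem \ref{thm:schoenflies}) with one homeomorphism per shell between consecutive cores, supplied by the controlled annulus theorem (Proposition \ref{prop:controlled-annulus}) with control $1/k$ on all but finitely many shells and by the ordinary annulus theorem (Theorem \ref{thm:annulus}) on the remaining ones; since each piece is prescribed on its two boundary spheres, there is no error accumulation, $\diam(h|_{S^{n-1}\times\{t\}})\to 0$ follows, and continuity at $\partial D^n$ comes for free. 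Corollary \ref{cor:add-germs} then matches the germs using controlled collar uniqueness.

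The inductive scheme you propose, modelled on \Cref{lem:Rn:transitivity}, has a genuine gap precisely at the boundary-continuity step. First, the increment relating $h_k$ to $h_{k+1}$ cannot be ``supported on the shell between $\dot{S}_k$ and $\dot{S}_{k+1}$ with displacement bounded by its diameter'': it must carry the sphere $h_k(\dot{\Sigma}_{k+1})$ onto $\dot{S}_{k+1}$, and nothing in your induction controls where $h_k(\dot{\Sigma}_{k+1})$ sits in the region between $\dot{S}_k$ and $\partial D^n$, nor how its parametrisation compares angularly with $\dot{S}_{k+1}$; to make the increments small you would have to maintain, as an inductive invariant, angular control of $h_k$ on the entire region outside $\Sigma_k$, which is essentially the controlled construction of the appendix that you are trying to avoid. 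Second, even granting small increments, summability is neither available nor sufficient: the hypothesis only gives $\diam(S_i)\to 0$ (in fact the definition only gives pointwise convergence of the cores), and on the shell between $\dot{\Sigma}_m$ and $\dot{\Sigma}_{m+1}$ the limit $g$ equals the full composite of all increments up to stage $m+1$, so a summable bound yields at best a \emph{uniform} bound on the angular displacement of $g$ near the boundary, whereas continuity of the extension by the identity requires that displacement to \emph{tend to zero} as one approaches $\partial D^n$. Note also that the relevant ``diameter'' is the radial-projection displacement used in the appendix, not the Euclidean size of the shells: the shells shrink radially but their angular extent is all of $S^{n-1}$, so shrinking shells by themselves give no angular control. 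Finally, the germs must also be matched with control tending to zero (this is exactly what Corollary \ref{cor:add-germs} arranges via controlled collars); your sketch treats them only through isotopy extension, which again provides no decay at the boundary.
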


\begin{proof}
Combine Theorem \ref{thm:homeo-dd} with Corollary \ref{cor:add-germs}.
\end{proof}

We can now conclude the proof of Theorem \ref{thm:discs}:

\begin{theorem}
\label{thm:criterion:discs}

The group $\homeo(D^n, \partial)$ satisfies the criterion from Theorem \ref{thm:criterion}, and thus is boundedly acyclic.
\end{theorem}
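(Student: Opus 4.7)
The plan is to verify the three hypotheses of Theorem \ref{thm:criterion} for the action of $G = \homeo(D^n, \partial)$ on the poset $\fat$ of fat sequences in $D^n$, and then invoke that theorem directly; essentially all the hard work has already been done in the preceding lemmas, so this step is assembly plus one small verification.

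For the $\mathbf{W}$ property (Item 1), my approach is to invoke Proposition \ref{prop:abstract:W} via Remark \ref{rem:W:suposet}. The ambient poset of cofinal increasing sequences of pairwise inequivalent fat spheres (with $\simeq$ and $\sqsubseteq$ defined through the cores, exactly as in the $\R^n$ case) has the $\mathbf{W}$ property by Proposition \ref{prop:abstract:W}. The subposet $\fat$ is cut out by the additional condition of trivial germ at the boundary, so I would verify the two closure conditions from Remark \ref{rem:W:suposet}: (a) the trivial-germ condition passes to subsequences, since the pointwise limit $\dot S_i(x) \to \partial(x)$ is inherited by any infinite subsequence; (b) if a cofinal sequence $\mathbf{S}$ in the ambient poset decomposes as a disjoint union of two subsequences $\mathbf{S}^1, \mathbf{S}^2$ each lying in $\fat$, then any accumulation point of $\dot S_i(x)$ lies on one of the two subsequential limits, both of which equal $\partial(x)$, so $\mathbf{S}\in\fat$. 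These two properties are precisely the hypotheses of Remark \ref{rem:W:suposet}, giving the $\mathbf{W}$ property for $\fat$.

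For Item 2 (transitivity), I would invoke Lemma \ref{lem:D:transitivity}, which runs an inductive interweaving analogous to the $\R^n$ argument but now has to match fat spheres while keeping precise boundary control; this is exactly what the controlled annulus theorem of Appendix \ref{appendix} (Theorem \ref{thm:homeo-dd}) together with Corollary \ref{cor:add-germs} provides. For Item 3 (boundedly acyclic stabilizers), I would invoke Lemma \ref{lem:D:fatsequence:stabilizers}, whose proof identifies the stabilizer of the model fat sequence $\mathbf{\Sigma}$ as a subgroup of $\homeoc(B_1)\times\prod_{i\geq 2}\homeoc(B_i\setminus\overline B_{i-1})$ with commuting $\Z$-conjugates, so that Theorem \ref{thm:ccc} applies.

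With all three hypotheses in place, Theorem \ref{thm:criterion} immediately yields that $\homeo(D^n, \partial)$ is boundedly acyclic, which proves Theorem \ref{thm:discs}. The only genuinely new ingredient beyond the general machinery of Section \ref{sec:criterion} is the transitivity statement, and that is where the main obstacle lies: unlike in the $\R^n$ case, the stabilizer does not split as a direct product of compactly supported groups (this is why one needs the commuting-$\Z$-conjugates criterion rather than Theorem \ref{thm:compactly}), and the interweaving in the transitivity argument has to respect continuity at $\partial D^n$, which forces one to rely on a controlled form of the annulus theorem. Everything else in the argument is a direct application of the framework already built up.
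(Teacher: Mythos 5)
Your proposal is correct and follows the paper's own route exactly: the $\mathbf{W}$ property via Proposition \ref{prop:abstract:W} together with the closure conditions of Remark \ref{rem:W:suposet} (which you verify in slightly more detail than the paper does, correctly), transitivity via Lemma \ref{lem:D:transitivity} (i.e.\ Theorem \ref{thm:homeo-dd} plus Corollary \ref{cor:add-germs}), and bounded acyclicity of stabilizers via Lemma \ref{lem:D:fatsequence:stabilizers} and Theorem \ref{thm:ccc}, then Theorem \ref{thm:criterion}. Your closing remarks about where the genuine difficulties lie (no product splitting of the stabilizer, boundary control in transitivity) accurately reflect the paper's discussion.
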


We deduce from this Corollary \ref{cor:discs}:

\begin{proof}[Proof of Corollary \ref{cor:discs}]
	We have a short exact sequence
	\[1 \to \homeo(D^n, \partial) \to \homeop(D^n) \to \homeop(S^{n-1}) \to 1,\]
	where the quotient is given by the restriction. It is indeed surjective, as can be seen by a cone construction. By Theorem \ref{thm:discs} the kernel is boundedly acyclic, and so the conclusion follows from Proposition \ref{prop:quotient}.
\end{proof}

The bounded cohomology of $\homeop(S^1)$, and thus of $\homeop(D^2)$, was fully computed in \cite[Theorems 1.1 and 1.3]{MN}. In higher dimensions, $\homeop(S^{n-1})$ has been computed in low degrees \cite[Theorems 1.8 and 1.10]{MN}, and using Corollary \ref{cor:discs} we obtain:

\begin{corollary}
\label{cor:disc:low}
    For all $n \geq 3$, the bounded cohomology of $\homeop(D^n)$ vanishes in degrees $2$ and $3$. Moreover, the bounded cohomology of $\homeop(D^4)$ vanishes in degree $4$.
\end{corollary}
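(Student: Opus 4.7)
The plan is to reduce the computation for $\homeop(D^n)$ to a known computation for $\homeop(S^{n-1})$ via \Cref{cor:discs}, which provides an isomorphism $H^\bullet_b(\homeop(D^n)) \cong H^\bullet_b(\homeop(S^{n-1}))$ in every degree. After this reduction, the statement is purely about the bounded cohomology of sphere groups in low degrees, which is precisely the content of \cite[Theorems 1.8 and 1.10]{MN} that the paper cites immediately before the corollary.

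Concretely, the first step is to invoke \Cref{cor:discs} to transfer all the bounded cohomology questions from the disc to the boundary sphere. For the degree-$2$ and degree-$3$ vanishing, one then cites the corresponding statement from \cite{MN} asserting that $H^2_b(\homeop(S^{n-1}))$ and $H^3_b(\homeop(S^{n-1}))$ vanish for all $n - 1 \geq 2$, i.e.\ for all $n \geq 3$. For the additional claim in degree $4$ for $D^4$, one invokes the corresponding vanishing for $H^4_b(\homeop(S^3))$ from the same source; this is the case where the low-degree computations in \cite{MN} reach one degree further because $S^3$ carries additional structure (a homotopy equivalence $\homeop(S^3) \simeq \mathrm{SO}(4)$ in the relevant sense, or the computations stemming from the identifications mentioned in the introduction for low-dimensional spheres).

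There is essentially no obstacle: once \Cref{cor:discs} is in hand, the corollary is a two-line application, with the degree-wise input imported black-box from \cite{MN}. The only thing to verify carefully is the correct matching of degrees and dimensions, namely that $n \geq 3$ corresponds to $n-1 \geq 2$ in the sphere computations, and that the $n = 4$ case in degree $4$ is the one explicitly treated in \cite[Theorem 1.10]{MN}. No new arguments, spectral sequences, or computations are needed beyond a clean citation.
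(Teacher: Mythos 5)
Your proposal matches the paper's argument exactly: the corollary is stated as an immediate consequence of \Cref{cor:discs} combined with the low-degree computations of $H^\bullet_b(\homeop(S^{n-1}))$ in \cite[Theorems 1.8 and 1.10]{MN}, with the same matching of degrees and dimensions (including the extra degree-$4$ vanishing for $S^3$ giving the $D^4$ case). Nothing further is needed.
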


\subsection{Homeomorphism group of the non-compact Cantor set}

The non-compact Cantor set $\mathbf{K}$ is uniquely characterized as being a space that is non-compact, locally compact, totally disconnected, metrizable and without isolated points. Concretely, it can be modelled as $K \times \N$, where $\N$ is the set of natural numbers with the discrete topology, and $K$ is a Cantor set, for example in the dyadic model $2^{\N}$. We fix a basepoint $o$ of $K$, and throughout we will use this specific model for $\mathbf{K}$.

Following \cite{andritsch}, we say that a \emph{fat point} in $\mathbf{K}$ is the germ at $o$ of an embedding $x \colon K \to \mathbf{K}$ with clopen image. The \emph{core} of the embedding is $\dot{x} \coloneqq x(o)$. We are again in the abstract setting of \Cref{sec:weaving} if we define $x\simeq y \Leftrightarrow \dot x = \dot y$ and this time take for $\sqsubseteq$ the trivial relation ($x\sqsubseteq y \, \forall \, x,y$). We consider the subposet of the resulting poset $\fat$ of sequences of pairwise inequivalent sequences defined by requiring that the sequence of cores be discrete. We call these sequences simply \emph{fat sequences}. This subposet satisfies the requirements of \Cref{rem:W:suposet} and therefore $\fat$ satisfies the $\mathbf{W}$ property. 

\begin{lemma}
    The action of $G$ on $\fat$ is transitive.
\end{lemma}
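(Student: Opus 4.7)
My plan is to construct $g\in G$ with $g\cdot \mathbf{x}=\mathbf{y}$ by prescribing its germs at the cores $\dot x_i$ as forced by the desired equation, and then extending arbitrarily to the rest of $\mathbf{K}$ via the topological characterization of the non-compact Cantor set. To get started, pick representing embeddings $\alpha_i,\beta_i\colon K\to\mathbf{K}$ with clopen image, so that $\dot x_i=\alpha_i(o)$ and $\dot y_i=\beta_i(o)$; by assumption the sets $\{\dot x_i\}$ and $\{\dot y_i\}$ are closed discrete subsets of $\mathbf{K}$.

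The first step is to construct two clopen partitions $\mathbf{K}=\bigsqcup_i K_i^A=\bigsqcup_i K_i^B$ with $\dot x_i\in K_i^A$ and $\dot y_i\in K_i^B$. This is possible because $\mathbf{K}$ is $\sigma$-compact and zero-dimensional: starting from the decomposition $\mathbf{K}=\bigsqcup_n (K\times\{n\})$, one refines each piece (which contains only finitely many cores) into clopen subsets containing at most one core, and then distributes the leftover pieces (those containing no core) one by one into the pieces that do contain one. Any such clopen partition of $\mathbf{K}$ is automatically locally finite, in the sense that any compact subset of $\mathbf{K}$ meets only finitely many members, since the relative partition of the compact set by clopen pieces must be finite.

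Next, using continuity of $\alpha_i$ and $\beta_i$ at $o$, I would choose for each $i$ a proper clopen neighborhood $W_i\subsetneq K$ of $o$ small enough that $A_i:=\alpha_i(W_i)\subseteq K_i^A$ and $B_i:=\beta_i(W_i)\subseteq K_i^B$, and define $g_0\colon \bigsqcup_i A_i\to\bigsqcup_i B_i$ by $g_0(\alpha_i(w))=\beta_i(w)$ for $w\in W_i$; this is a well-defined homeomorphism. Local finiteness of the partitions ensures that $\bigsqcup_i A_i$ is clopen in $\mathbf{K}$: it is open, and any convergent sequence inside it must eventually be contained in a single (closed) $A_i$, whose closure contains the limit. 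The same applies to $\bigsqcup_i B_i$.

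The main step -- and the one I view as the real obstacle -- is extending $g_0$ to a homeomorphism of all of $\mathbf{K}$, since it requires the two complements of $\bigsqcup_i A_i$ and $\bigsqcup_i B_i$ to be homeomorphic. Here the freedom built into the construction pays off: the complement $C^A=\bigsqcup_i (K_i^A\setminus A_i)$ is clopen in $\mathbf{K}$, and each piece $K_i^A\setminus A_i$ is non-empty (because $W_i\subsetneq K$ and $\alpha_i$ is an embedding), so $C^A$ is a disjoint union of infinitely many non-empty clopen sets and hence non-compact. The same holds for $C^B$. By the topological characterization of $\mathbf{K}$ (locally compact, totally disconnected, metrizable, non-compact, no isolated points), both $C^A$ and $C^B$ are homeomorphic to $\mathbf{K}$, hence to each other, and one can extend $g_0$ by any homeomorphism $C^A\to C^B$. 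The resulting $g\in G$ satisfies $g\circ \alpha_i=\beta_i$ on the clopen neighborhood $W_i$ of $o$, which is precisely the condition $g\cdot x_i=y_i$ at the level of germs.
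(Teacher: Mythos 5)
Your proof is correct, but it implements the transitivity differently from the paper. The paper also starts from clopen partitions separating the cores, but it insists that the pieces $U_n\ni\dot x_n$ and $V_n\ni\dot y_n$ be \emph{compact} open, hence copies of the compact Cantor set $K$, and then invokes \cite[Lemma 3.3.16]{andritsch} (transitivity of $\homeo(K)$ on fat points of $K$) to produce homeomorphisms $h_n\colon U_n\to V_n$ carrying the fat point $x_n$ to $y_n$, which are glued along the partition. You instead prescribe $g$ only on small clopen neighbourhoods $A_i=\alpha_i(W_i)$ of the cores, using the germ representatives themselves, and then handle the entire leftover set by one application of the topological characterization of the non-compact Cantor set to get a homeomorphism $C^A\to C^B$. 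This makes the argument self-contained (no appeal to a transitivity lemma for fat points in $K$), at the price of the two extra verifications you supply: that a clopen partition of $\mathbf{K}$ is locally finite, so that $\bigsqcup_i A_i$ is closed, and that the complements $C^A, C^B$ are non-compact; the paper's compact-piecewise gluing sidesteps both. Both routes rest on the same pillars (clopen separation of a closed discrete set of cores, plus the uniqueness theorems for $K$ and $\mathbf{K}$), so yours is a genuine variant of the same strategy rather than a new method, and it is complete modulo the point below.

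One small repair: your parenthetical justification that $K_i^A\setminus A_i\neq\emptyset$ ``because $W_i\subsetneq K$ and $\alpha_i$ is an embedding'' only shows $\alpha_i(K)\setminus A_i\neq\emptyset$, and $\alpha_i(K)$ need not be contained in $K_i^A$, so a priori one could have $A_i=K_i^A$. The conclusion is genuinely needed (if all but finitely many of these differences were empty, $C^A$ could be compact or empty and the characterization argument, as well as $C^A\cong C^B$, could fail), but it is immediate to arrange within your setup: $K_i^A$ is clopen in $\mathbf{K}$ and has no isolated points, so it contains a \emph{proper} clopen neighbourhood of $\dot x_i$, and you should shrink $W_i$ so that $\alpha_i(W_i)$ lands inside it (and symmetrically for $\beta_i(W_i)\subsetneq K_i^B$). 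With that adjustment the rest of your argument goes through as written.
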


\begin{proof}
    Let $\mathbf{x}$ and $\mathbf{y}$ be two fat sequences. Let $U_n, V_n$ be compact open subsets of $\mathbf{K}$, such that $\dot{x_i} \in U_n$ if and only if $i = n$, and similarly $\dot{y_i} \in V_n$ if and only if $i = n$. By the topological characterization of the Cantor set and the non-compact Cantor set, each $U_i, V_i$ is homeomorphic to $K$, and each of their complements is homeomorphic to $\mathbf{K}$. Thus, up to choosing larger $U_i, V_i$, we may assume that $(U_n)_{n \in \N^*}$ and $(V_i)_{i \in \N^*}$ form partitions of $\mathbf{K}$ by countably many compact open subsets.

    Now, by \cite[Lemma 3.3.16]{andritsch}, for each $n$ there exists a homeomorphism $h_n \colon U_n \to V_n$ that sends $x_n$ to $y_n$. The unique map $h \colon \mathbf{K} \to \mathbf{K}$ that agrees with $h_n$ on $U_n$ is then a homeomorphism that sends $\mathbf{x}$ to $\mathbf{y}$.
\end{proof}

\begin{lemma}
    The stabilizers for the action of $G$ on $\fat$ are binate, therefore they are acyclic and boundedly acyclic.
\end{lemma}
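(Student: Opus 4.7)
The plan is to verify the binate property for $G_{\mathbf{x}}$ directly and then invoke \Cref{thm:binate:acyclic} and \Cref{thm:binate:bac}. By the transitivity lemma just proved, we may model $\mathbf{K}$ as $K \times \N$ with $\dot{x}_n = (o, n)$ for some fixed $o \in K$. Recall that $g \in G_{\mathbf{x}}$ means that $g$ fixes some clopen neighborhood of each $\dot{x}_n$ pointwise.

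Given a finitely generated subgroup $H \leq G_{\mathbf{x}}$, for each $n$ I would intersect the finitely many clopen neighborhoods of $\dot{x}_n$ fixed by the generators to obtain a clopen $V_n \subseteq K \times \{n\}$ fixed pointwise by all of $H$. Shrinking, we may further assume $V_n \subsetneq K \times \{n\}$ and choose a clopen $\tilde{V}_n \subsetneq V_n$ containing $\dot{x}_n$. Set $C = \bigsqcup_n (K \times \{n\}) \setminus V_n$ and $D = \bigsqcup_n V_n \setminus \tilde{V}_n$. Since convergent sequences in $K \times \N$ have eventually constant second coordinate, each of $C$, $D$, $\bigsqcup_n \tilde{V}_n$ is clopen in $\mathbf{K}$, and they partition $\mathbf{K}$. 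Each is a countable disjoint union of nonempty compact Cantor sets, so $C, D \cong \mathbf{K}$. By construction, every element of $H$ is the identity outside $C$.

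Now partition $D = \bigsqcup_{i \in \Z \setminus \{0\}} K_i$ into clopen subsets each homeomorphic to $\mathbf{K}$ (for example by realizing $D \cong K \times \N$ and splitting $\N$ into countably many infinite subsets indexed by $\Z \setminus \{0\}$), and set $K_0 = C$. Pick homeomorphisms $\phi_i \colon C \to K_i$ with $\phi_0 = \id$. Define $\psi \colon H \to G_{\mathbf{x}}$ by prescribing $\psi(h)$ to act as $\phi_i \circ h|_C \circ \phi_i^{-1}$ on $K_i$ for $i \geq 1$ and as the identity on the remaining clopen pieces ($K_i$ for $i \leq 0$ and $\bigsqcup_n \tilde{V}_n$). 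Define $t \in G_{\mathbf{x}}$ by $t|_{K_i} = \phi_{i+1} \circ \phi_i^{-1} \colon K_i \to K_{i+1}$ for every $i \in \Z$, and $t = \id$ on $\bigsqcup_n \tilde{V}_n$. Both $\psi(h)$ and $t$ are then bijective on each clopen piece, hence homeomorphisms of $\mathbf{K}$ fixing each $\tilde{V}_n$ pointwise, so they lie in $G_{\mathbf{x}}$.

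I expect the main obstacle to be ensuring that $t$ is genuinely a bijection of $\mathbf{K}$, which is why I use a two-sided $\Z$-indexed partition rather than the $\N$-indexed partition of the classical Mazur swindle; the extra negatively-indexed $K_i$ absorb the ``missing piece'' that usually obstructs surjectivity of the shift. The remaining checks are essentially formal: $\psi$ is a homomorphism because conjugation by $\phi_i$ is multiplicative and each $h \in H$ preserves $C$ setwise; $[H, \psi(H)] = 1$ because the supports $K_0$ and $\bigsqcup_{i \geq 1} K_i$ are disjoint; and the computation $t^{-1} \psi(h) t|_{K_i} = \phi_i \circ h|_C \circ \phi_i^{-1}$ yields $h$ on $K_0 = C$ (using $\phi_0 = \id$), equals $\psi(h)|_{K_i}$ for $i \geq 1$, and is the identity for $i \leq -1$ (since then $\psi(h)|_{K_{i+1}} = \id$), precisely matching $h \psi(h)$ on every piece.
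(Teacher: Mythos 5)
Your proof is correct and follows essentially the same route as the paper: both decompose $\mathbf{K}$ into a clopen neighborhood of the cores, a buffer region, and a clopen set supporting $H$, identify the buffer together with the support with a $\Z$-indexed family of copies of the non-compact Cantor set, and then define the shift $t$ and the conjugated homomorphism $\psi$ exactly as in the standard binate (Mazur-swindle) construction, verifying $[H,\psi(H)]=1$ and $t^{-1}\psi(h)t=h\psi(h)$ piecewise. Your $\phi_i$-bookkeeping and the explicit shrinking to proper clopen sets are only notational/technical refinements of the paper's identification $\mathbf{K}_W \sqcup \mathbf{K}_S \cong \mathbf{K}_0 \times \Z$.
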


\begin{proof}
    We consider the simplest fat sequence, namely $\mathbf{x} = (x_n)_{n \in \N}$ where $x_i \colon K \to \mathbf{K} = K \times \N$ is defined by $x_n(p) = (p, n)$. Let $H$ be a finitely generated subgroup of the stabilizer $G_{\mathbf{x}}$. Then for each $n$ there exists a clopen subset $U_n$ of $o \in K$ such that $H$ fixes pointwise $U_n \times \{n\}$, which is a compact open subset of $\mathbf{K}$. Let $S_n$ denote the complement of $U_n$ in $K$, and let $V_n$ denote a proper compact open subset of $U_n$ that contains $o$. Finally, we let $W_n \coloneqq U_n \setminus V_n$, which is again a compact open subset of $K$. Now $S_n, U_n, V_n$ are all homeomorphic to $K$, and so we have a decomposition:
    \[\mathbf{K} = \left( \bigcup_n V_n \times \{ n \} \right) \bigsqcup \left( \bigcup_n W_n \times \{ n \} \right) \bigsqcup \left( \bigcup_n S_n \times \{ n \} \right) \eqqcolon \mathbf{K}_V \sqcup \mathbf{K}_W \sqcup \mathbf{K}_S.\]
    Each of $\mathbf{K}_V, \mathbf{K}_W, \mathbf{K}_S$ is homeomorphic to $\mathbf{K}$, the group $H$ is supported on $\mathbf{K}_S$, and the fat sequence $\mathbf{x}$ is contained in $\mathbf{K}_V$.

    By the topological characterization, a countably infinite disjoint union of non-compact Cantor sets is homeomorphic to a non-compact Cantor set. Therefore there is a homeomorphism $\mathbf{K}_W \cong \mathbf{K}_0 \times (\Z \setminus \{0\})$, for some non-compact Cantor set $\mathbf{K}_0$. This allows to define a homeomorphism $\mathbf{K}_W \sqcup \mathbf{K}_S \to \mathbf{K}_0 \times \Z$, that restricts to a homeomorphism $\mathbf{K}_S \to \mathbf{K}_0 \times \{ 0 \}$, which gives a new decomposition
    \[
    \mathbf{K} = \mathbf{K}_V \sqcup \left( \mathbf{K_0} \times \Z \right).
    \]
    The group $H$ is supported on $\mathbf{K}_0 \times \{ 0 \}$.
    
    We now define a homeomorphism $t \colon \mathbf{K} \to \mathbf{K}$ as follows:
    \[
    t(x) \coloneqq \begin{cases}
        x \text{ if } x \in \mathbf{K}_V; \\
        (x_0, i+1) \text{ if } x = (x_0, i) \in \mathbf{K}_0 \times \{ i \} \subset \mathbf{K}_0 \times \Z.
    \end{cases}
    \]
    Then $t \in G$, and since $t$ restricts to the identity on $\mathbf{K}_V$, it belongs to the stabilizer $G_{\mathbf{x}}$.

    It remains to define the homomorphism $\psi \colon H \to G$. We set
    \[\psi(h)(x) \coloneqq \begin{cases}
        x \text{ if } x \in \mathbf{K}_V \sqcup (\mathbf{K}_0 \times \Z_{\leq 0}) \\
        t^i h t^{-i}(x) \text{ if } x \in \mathbf{K}_0 \times \{ i \} \subset \mathbf{K}_0 \times \Z_{>0}.
    \end{cases}\]
    Since $H$ is supported on $\mathbf{K}_0 \times \{ 0 \}$ and $\psi(H)$ is supported on $\mathbf{K}_0 \times \Z_{> 0}$, we have $[H, \psi(H)] = 1$. Moreover, $t^{-1} \psi(h)t = h \psi(h)$ for all $h \in H$, which follows by a direct computation. This proves that $G_{\mathbf{x}}$ is binate, and we conclude using Theorems \ref{thm:binate:acyclic} and \ref{thm:binate:bac}.
\end{proof}

This concludes the proof of Theorem \ref{thm:cantor}:

\begin{theorem}
\label{thm:cantor:criterion}
    The group $\homeo(\mathbf{K})$ satisfies the criteria from Theorems \ref{thm:criterion} and \ref{thm:criterion2}, and thus is acyclic and boundedly acyclic.
\end{theorem}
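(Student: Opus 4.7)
The plan is to apply Theorems \ref{thm:criterion} and \ref{thm:criterion2} to the action of $G = \homeo(\mathbf{K})$ on the poset $\fat$ of fat sequences in $\mathbf{K}$. Two of the three hypotheses have already been supplied by the preceding lemmas: the $G$-action on $\fat$ is transitive, and each point stabilizer is binate, hence acyclic (Theorem \ref{thm:binate:acyclic}) and boundedly acyclic (Theorem \ref{thm:binate:bac}). All that remains is to establish the $\mathbf{W}$ property for $\fat$ itself, and then the two theorems apply verbatim.

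For the $\mathbf{W}$ property I would invoke the abstract framework of \Cref{sec:weaving}, taking $X$ to be the set of fat points in $\mathbf{K}$, the preorder $\sqsubseteq$ to be trivial (so that the cofinality condition on sequences is automatic), and the equivalence relation given by $x \simeq y \iff \dot{x} = \dot{y}$. The ambient poset of cofinal increasing sequences of pairwise inequivalent fat points then has the $\mathbf{W}$ property by \Cref{prop:abstract:W}. To pass from that ambient poset to the subposet $\fat$, which imposes the additional condition that the sequence of cores be discrete in $\mathbf{K}$, I would verify the two closure properties required by \Cref{rem:W:suposet}: closure under passage to subsequences is immediate from the definition of discreteness, and closure under taking the disjoint union of two subsequences both in $\fat$ follows from the fact that $\mathbf{K}$ is locally compact Hausdorff, so that the union of two closed discrete subsets is again closed and discrete. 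Remark \ref{rem:W:suposet} then upgrades the $\mathbf{W}$ property from the ambient poset to $\fat$.

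With the three hypotheses in place, Theorems \ref{thm:criterion} and \ref{thm:criterion2} immediately yield that $G$ is boundedly acyclic and acyclic respectively. The proof at this stage really is just bookkeeping; the only conceptually subtle point is to match the discreteness-and-cofinality requirement built into the definition of $\fat$ with the closure properties formalized in \Cref{rem:W:suposet}, and that matching is routine given the topological structure of $\mathbf{K}$.
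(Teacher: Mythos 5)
Your proposal is correct and follows essentially the same route as the paper: the theorem is exactly the assembly of the $\mathbf{W}$ property for $\fat$ (via \Cref{prop:abstract:W} with trivial $\sqsubseteq$, core-equality as $\simeq$, and \Cref{rem:W:suposet} for the subposet of sequences with discrete cores) together with the two preceding lemmas on transitivity and binate stabilizers, fed into Theorems \ref{thm:criterion} and \ref{thm:criterion2}. Your explicit verification of the two closure properties of \Cref{rem:W:suposet} (reading ``discrete'' as closed discrete, i.e.\ no accumulation points, which is the reading forced by the transitivity lemma) is a detail the paper leaves implicit, and it is correct.
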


\subsection{Countably supported bijections of an uncountable set}
\label{ss:auxiliary}

Our last example is of a different flavour from the previous ones, but it will be crucial in the proofs of Theorems \ref{thm:criterion} and \ref{thm:criterion2}.

\begin{definition}
Let $X$ be an uncountable set. We let $G$ be the group of bijections of $X$ that have countable support.
\end{definition}

In contrast to the previous results in this section, we will not use the criteria from Theorems \ref{thm:criterion} and \ref{thm:criterion2} to prove that $G$ is [boundedly] acyclic; this is because the [bounded] acyclicity of $G$ will be used in the proof. Instead, we establish the [bounded] acyclicity $G$ directly:

\begin{proposition}
\label{prop:auxiliary:bac}
The group $G$ is binate, therefore it is acyclic and boundedly acyclic.
\end{proposition}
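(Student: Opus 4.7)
The plan is to verify the binate condition directly; the acyclicity and bounded acyclicity of $G$ will then follow immediately from Theorems \ref{thm:binate:acyclic} and \ref{thm:binate:bac}. So let $H \leq G$ be a finitely generated subgroup. Since each of the finitely many generators has countable support, $H$ itself is supported on some countable subset $Y_0 \subseteq X$. The starting observation is that, because $X$ is uncountable, $X \setminus Y_0$ is still uncountable, so there is room to carve out countably many pairwise disjoint subsets $Y_i \subseteq X \setminus Y_0$ indexed by $i \in \Z \setminus \{0\}$, each of the same cardinality as $Y_0$, together with bijections $\phi_i \colon Y_0 \to Y_i$. I will set $\phi_0 \coloneqq \id_{Y_0}$ and let $R \coloneqq X \setminus \bigcup_{i \in \Z} Y_i$.

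Next I will define $\psi \colon H \to G$ by requiring $\psi(h)$ to act as $\phi_i h \phi_i^{-1}$ on $Y_i$ for each $i \geq 1$, and as the identity on $Y_0 \cup \bigcup_{i \leq -1} Y_i \cup R$. This is a homomorphism (conjugation by $\phi_i$ on each piece), its image lies in $G$ since the support of each $\psi(h)$ is a countable union of countable sets, and $[H, \psi(H)] = 1$ because $H$ is supported on $Y_0$ while $\psi(H)$ is supported on $\bigcup_{i \geq 1} Y_i$.

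The element $t \in G$ will be the \emph{bi-infinite shift}: on each $Y_i$ it is the bijection $\phi_{i+1} \phi_i^{-1} \colon Y_i \to Y_{i+1}$, and it is the identity on $R$. The one place where the construction requires care is checking that $t$ is a bijection of all of $X$ — this is precisely why I introduced the copies $Y_i$ for negative $i$, even though they play no role in the definition of $\psi$: without them, $Y_0$ would have no preimage under the shift. Once this is set up, $t$ has countable support and lies in $G$.

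The binate identity $t^{-1} \psi(h) t = h \psi(h)$ is then verified piece by piece on the partition $X = R \sqcup \bigsqcup_{i \in \Z} Y_i$. On $Y_0$ the left side collapses to $\phi_1^{-1}(\phi_1 h \phi_1^{-1})\phi_1 = h$, matching $h \psi(h)|_{Y_0} = h \cdot \id$. On $Y_i$ with $i \geq 1$, the pair $\phi_{i+1}^{\pm 1}$ telescopes and leaves $\phi_i h \phi_i^{-1}$, which also equals $h \psi(h)|_{Y_i}$ since $h$ is trivial off $Y_0$. On $Y_i$ with $i \leq -1$ and on $R$, both $\psi(h)$ and $h$ act trivially on the relevant pieces, so both sides reduce to the identity. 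The only genuinely nontrivial ingredient is the uncountability of $X$, used to guarantee enough room for the bi-infinite family of disjoint copies; everything else is formal bookkeeping analogous to the classical binate construction for infinite direct sums.
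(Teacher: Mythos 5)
Your proof is correct and is essentially the same as the paper's: both use a bi-infinite family of disjoint countable sets, a shift element $t$, and a homomorphism $\psi$ defined by conjugating $H$ into the positive copies, with your bijections $\phi_i \colon Y_0 \to Y_i$ just being an equivalent parametrization of the paper's composites $\Pi_i = \pi_i \circ \cdots \circ \pi_1$. The verification of $[H,\psi(H)]=1$ and $t^{-1}\psi(h)t = h\psi(h)$, and the appeal to Theorems \ref{thm:binate:acyclic} and \ref{thm:binate:bac}, match the paper's argument.
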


\begin{proof}
Let $H \leq G$ be a finitely generated subgroup. Because each element of $H$ has countable support, there exists a countably infinite subset $Y_0 \subset X$ such that $H$ is supported on $Y_0$. Now let $Y_i, i \in \Z \setminus \{ 0 \}$ be countably infinite subsets of $X$ that are disjoint from $Y_0$ and from each other. Fix a bijection $\pi_i \colon Y_{i-1} \to Y_i$ for all $i \in \Z$. We define
\[
t(x) \coloneqq
\begin{cases}
\pi_i(x) \text{ if } x \in Y_{i-1}; \\
x \text{ otherwise}.
\end{cases}
\]
Then $t$ is supported on the union of the $Y_i$, which is countable, so $t \in G$. Next, for each $i \geq 1$ let $\Pi_i \coloneqq \pi_i \circ \cdots \circ \pi_1 \colon Y_0 \to Y_i$ and define
\[
\psi(h)(x) \coloneqq
\begin{cases}
\Pi_i h \Pi_i^{-1}(x) \text{ if } x \in Y_i; \\
x \text{ otherwise}.
\end{cases}
\]
Then $\psi(h)$ is supported on $\cup_{i \geq 1} Y_i$, so it belongs to $G$. It is easy to see that $[H, \psi(H)] = 1$ and that $t^{-1} \psi(h) t = h \psi(h)$. This shows that $G$ is binate, the last statements then follow from Theorems \ref{thm:binate:acyclic} and \ref{thm:binate:bac}.
\end{proof}

We now show that $G$ satisfies the conditions for our two criteria.

\begin{proposition}
\label{prop:criterion:auxiliary}

The group $G$ satisfies the criteria from Theorems \ref{thm:criterion} and \ref{thm:criterion2}.
\end{proposition}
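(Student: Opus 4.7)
The plan is to let $G$ act on $X$ itself and take $\mathcal{X}$ to be the full poset (under the subsequence relation) of all sequences $\mathbf{x}=(x_i)_{i\in\N^*}$ of pairwise distinct elements of $X$. To verify the $\mathbf{W}$ property, I would place this in the framework of \Cref{sec:weaving} by declaring $\sqsubseteq$ to be the trivial preorder (every pair is related) and $\simeq$ to be equality. Then the condition $\simeq\,\subseteq\,\sqsubseteq$ is automatic, cofinality is automatic, ``increasing'' is automatic, and ``pairwise inequivalent'' means ``pairwise distinct'', so $\mathcal{X}$ coincides with the set we described. An application of \Cref{prop:abstract:W} then yields the $\mathbf{W}$ property with no further work; the interweaving construction in that proof goes through because at each step we only need to pick a new element of one of the finitely many given sequences that is distinct from the finitely many elements used so far, which is always possible since each sequence has infinitely many distinct terms.

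Next I would verify transitivity of the $G$-action on $\mathcal{X}$. Given two sequences $\mathbf{x}=(x_i)$ and $\mathbf{y}=(y_i)$ of pairwise distinct elements, set $A=\{x_i:i\in\N^*\}$ and $B=\{y_i:i\in\N^*\}$. Because $X$ is uncountable, one can choose a countably infinite $Z\subseteq X\setminus(A\cup B)$. On the countable set $W:=A\cup B\cup Z$, both complements $W\setminus A$ and $W\setminus B$ are countably infinite, so any bijection $W\setminus A\to W\setminus B$ combines with the obvious bijection $A\to B$, $x_i\mapsto y_i$, to give a bijection $W\to W$ carrying $\mathbf{x}$ to $\mathbf{y}$. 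Extending by the identity on $X\setminus W$ produces an element of $G$ (its support lies in $W$, hence is countable) that sends $\mathbf{x}$ to $\mathbf{y}$.

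Finally, the stabilizer of a fixed sequence $\mathbf{x}\in\mathcal{X}$ consists of the countably supported bijections of $X$ that fix every $x_i$, so it is canonically isomorphic to the group of countably supported bijections of the set $X\setminus\{x_i:i\in\N^*\}$, which is again uncountable. \Cref{prop:auxiliary:bac} applied to this set therefore shows that the stabilizer is binate, and so \Cref{thm:binate:acyclic,thm:binate:bac} give that it is both acyclic and boundedly acyclic. Together with the previous two steps, this verifies all the hypotheses of \Cref{thm:criterion,thm:criterion2}. The only mildly delicate point is in the transitivity step, where one uses the uncountability of $X$ to balance the cardinalities of $W\setminus A$ and $W\setminus B$; everything else is essentially formal.
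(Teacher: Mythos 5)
Your proposal is correct and follows essentially the same route as the paper: the same poset of all sequences of pairwise distinct elements, the $\mathbf{W}$ property via Proposition \ref{prop:abstract:W} with $\simeq$ equality and $\sqsubseteq$ trivial, transitivity from countability of sequences, and stabilizers handled by Proposition \ref{prop:auxiliary:bac}. Your transitivity step merely spells out the cardinality-balancing detail (the auxiliary countable set $Z$) that the paper leaves implicit, which is a welcome but not different argument.
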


\begin{proof}
Let $\X$ be the set of sequences of pairwise distinct elements in $X$. Then $\X$ is a $G$-poset of sequences, which satisfies the $\mathbf{W}$ property by Proposition \ref{prop:abstract:W} by taking $\simeq$ to be equality and $\sqsubseteq$ to be the trivial relation. Because sequences are countable, the action is transitive. The stabilizer $G_{\mathbf{x}}$ of a sequence $\mathbf{x}$ with image $Y$ is the subgroup of $G$ supported on $X \setminus Y$; since this set has the same cardinality as $X$, it follows that $G_{\mathbf{x}}$ is isomorphic to $G$, and thus is acyclic and boundedly acyclic by Proposition \ref{prop:auxiliary:bac}.
\end{proof}

\section{Proof of the criterion}
\label{sec:proof}

In this section, we prove the criteria from Theorems \ref{thm:criterion} and \ref{thm:criterion2}. Most of the section will be devoted to the proof of Theorem \ref{thm:criterion}, which yields the bounded acyclicity results that are the main subject of this paper, we will show how to adapt the arguments to prove Theorem \ref{thm:criterion2} in Subsection \ref{proof:acyclic}.

\medskip

We start in Subsection \ref{proof:ss} with some background on the bounded cohomology of semisimplicial sets. This will include a result from \cite{MN} which allows to compute the bounded cohomology of a group via an action on a boundedly acyclic semisimplicial set with boundedly acyclic stabilizers. In Subsection \ref{proof:poset} we examine in detail the case of posets, and see how the $\mathbf{W}$ property implies bounded acyclicity; this will require the proof of several basic results about the cohomology of posets to be made uniform, in the spirit of \cite{matsumor, simplicial}.

In Subsection \ref{proof:orbit} we examine the orbit complex for the action of a group as in Theorem \ref{thm:criterion}. This is quite straightforward, but it is perhaps the most surprising part of the argument: It will imply that a group as in Theorem \ref{thm:criterion} has the same bounded cohomology as the monoid $\emb(\N^*)$ of order-preserving embeddings of $\N^*$. Finally, in Subsection \ref{proof:monoid} we show that $\emb(\N^*)$ is boundedly acyclic, this will be proved indirectly, by appealing to the bounded acyclicity of the group of countably supported bijections of an uncountable set, which we treated in Subsection \ref{ss:auxiliary}.

\subsection{Bounded cohomology of semisimplicial sets}
\label{proof:ss}

Let $X_\bullet$ be a semisimplicial set. Its \emph{bounded cohomology} $H^n_b(X_\bullet)$ is the cohomology of the complex
\[ 0 \to \ell^\infty(X_0) \to \ell^\infty(X_1) \to \ell^\infty(X_2) \to \cdots\]
where the coboundary operators are defined as the duals of the face maps of $X_\bullet$. The coefficients are always understood to be $\R$ unless specified otherwise. We say that $X_\bullet$ is \emph{boundedly acyclic} if $H^n_b(X_\bullet) = 0$ for all $n > 0$.

If $G$ is a discrete group and $X_\bullet$ is the nerve of $G$, this defines the \emph{bounded cohomology} of $G$, denoted by $H^n_b(G)$. More explicitly, we have $X_\bullet = G^\bullet$, with face maps defined as
\begin{align*}
    d_i& \colon X_n \to X_{n-1} \\
    d_0&(g_1, \ldots, g_n) = (g_2, \ldots, g_n); \\
    d_i&(g_1, \ldots, g_n) = (g_1, \ldots, g_{i-1}, g_i g_{i+1}, g_{i+2}, \ldots, g_n), \quad i \in \{1, \ldots, n-1\}; \\
    d_n&(g_1, \ldots, g_n) = (g_1, \ldots, g_{n-1}).
\end{align*}
This definition can be directly generalized to define the bounded cohomology of a discrete \emph{semigroup} (or monoid), which will appear in Section \ref{proof:monoid}. In both cases, we say that the (semi)group $G$ is \emph{boundedly acyclic} if $H^n_b(G) = 0$ for all $n > 0$.

\medskip

In the case of groups, the homogeneous resolution identifies the bounded cohomology of $G$ with the cohomology of the subcomplex of invariants:
\[0 \to \ell^\infty(G)^G \to \ell^\infty(G^2)^G \to \ell^\infty(G^3)^G \to \cdots\]
In this context, we have a semisimplicial set $X_\bullet$, which is nothing but the full simplex on $G$ with the usual face maps, endowed with a simplicial action of $G$; and the subcomplex of invariants is the complex computing the bounded cohomology of the semisimplicial set of orbits $G \backslash X_\bullet$. Simplicial actions of $G$ can calculate its bounded cohomology in much greater generality:

\begin{theorem}[{\cite[Theorem 3.3]{MN}}, see also \cite{MR}]
\label{thm:resolutions}
    Let $G$ be a group acting on a boundedly acyclic and connected semisimplicial set $X_\bullet$. Suppose that, for all $p \in \N$:
    \begin{enumerate}
        \item The stabilizer of each point in $X_p$ is boundedly acyclic;
        \item There are only finitely many isomorphism types of such stabilizers.
    \end{enumerate}
    Then there is an isomorphism $H^n_b(G) \cong H^n_b(G \backslash X_\bullet)$.
\end{theorem}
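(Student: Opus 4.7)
The plan is to realize both sides as the two natural spectral sequences of a single Hochschild-type bicomplex. Set
\[
C^{p,q} \;=\; \ell^\infty(G^{p+1} \times X_q)^G,
\]
where $G$ acts diagonally (by left translation on $G^{p+1}$ and by the given simplicial action on $X_q$), with horizontal differential the alternating sum of the face maps that omit a $G$-coordinate, and vertical differential the dual of the face maps of $X_\bullet$. Both filtrations of the total complex yield spectral sequences converging to the same limit, so it suffices to identify one $E_2$-page with $H^*_b(G)$ and the other with $H^*_b(G\backslash X_\bullet)$.

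For the first spectral sequence, I would take vertical cohomology first. The freeness of left translation on the initial $G$-factor lets one normalize $g_0=e$, identifying $C^{p,q}$ with $\ell^\infty(X_q\times G^p)$, and the vertical differential becomes the cochain differential of $X_\bullet$ with coefficient module $\ell^\infty(G^p)$. Bounded acyclicity of $X_\bullet$ with real coefficients upgrades, via a bounded contracting homotopy acting pointwise in the $G^p$-variable, to bounded acyclicity of $\ell^\infty(X_\bullet\times G^p)$ in positive degrees; the connectedness hypothesis on $X_\bullet$ identifies the degree-zero cohomology with $\ell^\infty(G^p)$. After vertical collapse one is left with the standard inhomogeneous bar complex, whose horizontal cohomology is $H^*_b(G)$.

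For the other spectral sequence, I would take horizontal cohomology first. For each $q$, decompose $X_q=\bigsqcup O$ into $G$-orbits $O=G\cdot\sigma$ and use the Shapiro-type identification
\[
\ell^\infty(G^{p+1}\times O)^G \;\cong\; \ell^\infty(G^{p+1})^{G_\sigma}
\]
obtained by restricting a $G$-invariant function to a chosen orbit representative. The column at height $q$ thereby becomes an $\ell^\infty$-sum over orbits of the homogeneous bar complexes for the stabilizers $G_\sigma$, each of which is boundedly acyclic by hypothesis~(1). The column cohomology should then collapse to $\ell^\infty(G\backslash X_q)$ in degree $0$, and the $E_2$-page becomes $H^*_b(G\backslash X_\bullet)$, which combined with the first spectral sequence yields the desired isomorphism.

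The main obstacle is precisely this last collapse: bounded acyclicity of each summand does not in general propagate to an infinite $\ell^\infty$-sum of complexes without uniform norm bounds on their contracting homotopies. Hypothesis~(2) is exactly the input needed for such uniformity, since among finitely many isomorphism types of boundedly acyclic groups one can select contracting homotopies of uniformly bounded norm, and these assemble into a single homotopy on the whole $\ell^\infty$-sum. The analogous passage to $\ell^\infty(G^p)$-coefficients in the first spectral sequence does not suffer from this issue, as the coefficient module there is $\ell^\infty(Y)$ for a bare set $Y$ and a single bounded primitive for $X_\bullet$ acts pointwise in $Y$.
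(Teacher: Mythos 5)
Your proposal is essentially the proof behind the cited result \cite[Theorem 3.3]{MN} (cf.\ also \cite{MR}): the first-quadrant double complex $\ell^\infty(G^{p+1}\times X_q)^G$ with its two spectral sequences, one collapsing to the inhomogeneous bounded complex of $G$ (using connectedness and bounded acyclicity of $X_\bullet$ with $\ell^\infty(G^p)$-coefficients), the other collapsing via the orbitwise Shapiro identification $\ell^\infty(G^{p+1}\times G\sigma)^G\cong\ell^\infty(G^{p+1})^{G_\sigma}$ to $\ell^\infty(G\backslash X_q)$ in degree zero, with hypothesis~(2) correctly identified as the source of the uniformity needed to commute cohomology with the infinite $\ell^\infty$-products. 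The one imprecision is your appeal to bounded \emph{linear} contracting homotopies: vanishing of the cohomology of a Banach complex does not in general provide a linear splitting, but by the open mapping theorem it does provide a vanishing modulus (every cocycle admits a primitive of norm at most a constant multiple of its own norm), and a pointwise, possibly non-linear, choice of primitives with these constants --- uniform over the finitely many isomorphism types of stabilizers, and applied pointwise in the $G^p$-variable for the first spectral sequence --- is exactly what both collapse arguments need, so the proof goes through as intended once phrased this way.
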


In order to prove bounded acyclicity of the relevant semisimplicial sets, we will use a framework, introduced by Matsumoto and Morita \cite{matsumor}, and recently developed further by Kastenholz and Sroka \cite{simplicial}. Given a semisimplicial set $X_\bullet$, we let $\Ctilde(X_\bullet)$ denote the corresponding augmented simplicial chain complex, endowed with the $\ell^1$-norm with respect to the standard basis.

\begin{definition}
    Let $f, g \colon X_\bullet \to Y_\bullet$ be two simplicial maps between semisimplicial sets. A \emph{bounded homotopy} between $f$ and $g$ is a sequence of bounded linear maps $\{ h_p \colon \Ctilde_p(X_\bullet) \to \Ctilde_{p+1}(Y_\bullet)\}_{p \geq 0}$ such that
    \[d_1 h_0 = f_0 - g_0\]
    and for all $p \in \N^*$:
    \[d_{p+1} h_p + h_{p-1} d_p = f_p - g_p.\]
    If a bounded homotopy exists, we say that $f$ and $g$ are \emph{boundedly homotopic}.
    
    Two semisimplicial sets are \emph{boundedly homotopy equivalent} if there exist simplicial maps $f \colon X_\bullet \to Y_\bullet$ and $g \colon Y_\bullet \to X_\bullet$ such that $fg$ and $gf$ are both boundedly homotopic to the identity.
\end{definition}

Now we consider the notion of uniform acyclicity for semisimplicial sets, which is what we will use to prove [bounded] acyclicity in the sequel.

\begin{definition}
    We say that a semisimplicial set $X_\bullet$ is \emph{uniformly acyclic} if the following holds.
    For every $p \geq 0$ there exists a constant $K_p^{X_\bullet} > 0$ such that for every cycle $z \in \Ctilde_p(X_\bullet)$ there exists $c \in \Ctilde_{p+1}(X_\bullet)$ such that $d_{p+1}(c) = z$ and $\| c \| \leq K_p^{X_\bullet} \cdot \| z \|$.
\end{definition}

This is relevant to us because of the following result, already implicit in \cite{matsumor}:

\begin{lemma}[{\cite[Corollary 7.4]{simplicial}}]
\label{lem:uac:bac}
    If $X_\bullet$ is uniformly acyclic, then it is boundedly acyclic.
\end{lemma}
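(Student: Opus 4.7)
The plan is to construct, by induction on $p \geq -1$, a bounded chain contraction of $\Ctilde_\bullet(X_\bullet)$: that is, a sequence of bounded linear maps $h_p \colon \Ctilde_p(X_\bullet) \to \Ctilde_{p+1}(X_\bullet)$ satisfying $d_{p+1} h_p + h_{p-1} d_p = \mathrm{id}$. From such a contraction both conclusions follow by standard formalities. Ordinary acyclicity is immediate since every cycle $z$ in positive degree is exhibited as the boundary $d_{p+1}(h_p z)$. Bounded acyclicity follows by dualizing: for any bounded cocycle $\varphi \in \ell^\infty(X_p)$ with $p \geq 1$, the composition $\varphi \circ h_{p-1} \in \ell^\infty(X_{p-1})$ is a bounded cochain (because $h_{p-1}$ is bounded), and the chain-level identity together with $\varphi \circ d_{p+1} = 0$ gives $d^*(\varphi \circ h_{p-1}) = \varphi$, exhibiting $\varphi$ as a bounded coboundary.

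For the inductive construction I assume $X_\bullet$ is non-empty (otherwise there is nothing to prove), fix a vertex $\sigma_0 \in X_0$, and set $h_{-1}(1) := \sigma_0$ on $\Ctilde_{-1}(X_\bullet) = \R$, so that $d_0 h_{-1} = \mathrm{id}$. Suppose $h_{-1}, \ldots, h_{p-1}$ have been constructed as bounded linear maps satisfying the contracting identity. For each basis element $\sigma \in X_p$ I form the chain
\[ z_\sigma := \sigma - h_{p-1}(d_p \sigma) \in \Ctilde_p(X_\bullet). \]
Using $d_{p-1} d_p = 0$ and the inductive identity gives $d_p z_\sigma = d_p \sigma - (\mathrm{id} - h_{p-2} d_{p-1})(d_p \sigma) = h_{p-2}(d_{p-1} d_p \sigma) = 0$, so $z_\sigma$ is a cycle; moreover $\|z_\sigma\| \leq 1 + (p+1) \|h_{p-1}\|$ uniformly in $\sigma$. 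By uniform acyclicity in degree $p$, choose a filling $c_\sigma$ with $d_{p+1} c_\sigma = z_\sigma$ and $\|c_\sigma\| \leq K_p^{X_\bullet} \|z_\sigma\|$. Define $h_p(\sigma) := c_\sigma$ on $X_p$ and extend $\R$-linearly to $\Ctilde_p(X_\bullet)$. Because $\Ctilde_p(X_\bullet)$ carries the $\ell^1$-norm with respect to the basis $X_p$, the operator norm of $h_p$ is dominated by $\sup_\sigma \|c_\sigma\| \leq K_p^{X_\bullet}(1+(p+1)\|h_{p-1}\|) < \infty$; the contracting identity holds on each basis element by construction, hence everywhere by linearity.

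There is no substantial obstacle beyond verifying that the inductive bound stays finite, which is precisely what the uniform constants $K_p^{X_\bullet}$ buy us. The one subtlety worth flagging is that uniform acyclicity provides the existence, but not a preferred linear choice, of fillings; linearity of $h_p$ is recovered for free because $X_p$ is a basis of $\Ctilde_p(X_\bullet)$, so any set-theoretic assignment $\sigma \mapsto c_\sigma$ extends uniquely to a linear map, and this extension is automatically bounded once the values on the basis are uniformly bounded in the $\ell^1$-norm. The whole argument is therefore a routine upgrade of the classical chain-contraction construction to the normed setting, in the spirit of \cite{matsumor}.
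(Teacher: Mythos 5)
Your proof is correct: the inductive construction of a bounded chain contraction $h_p$ of the augmented complex with $d_{p+1}h_p + h_{p-1}d_p = \mathrm{id}$, whose operator norms are kept finite precisely by the uniform constants $K_p^{X_\bullet}$, followed by dualization $\varphi \mapsto \varphi \circ h_{p-1}$ for bounded cocycles, is exactly the standard argument, and all the verifications (that $z_\sigma$ is a cycle, the $\ell^1$-boundedness of the linear extension, and the coboundary identity using $\varphi \circ d_{p+1}=0$) are sound. The paper gives no proof of this lemma---it is quoted from \cite{simplicial} and is implicit in \cite{matsumor}---and your self-contained argument is essentially the one underlying that citation, so there is nothing further to compare.
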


In order to check uniform acyclicity of a semisimplicial set, it can be useful to restrict to its finite sub-semisimplicial sets.

\begin{definition}
    Let $f \colon X_\bullet \to Y_\bullet$ be a simplicial map. We say that $f$ is \emph{uniformly acyclic} if the following holds. For every $p \geq 0$ there exists a constant $K_p^f > 0$ such that for every cycle $z \in \Ctilde_p(X_\bullet)$ there exists $c \in \Ctilde_{p+1}(Y_\bullet)$ such that $d_{p+1}(c) = f_p(z)$ and $\| c \| \leq K_p^f \cdot \| z \|$.
\end{definition}

\begin{lemma}
\label{lem:finite}
    Let $Y_\bullet$ be a semisimplicial set. Suppose that there exist constants $K_p > 0$ such that every inclusion of a finite sub-semisimplicial set $\iota \colon X_\bullet \hookrightarrow Y_\bullet$ is uniformly acyclic with constants $K_p^\iota \leq K_p$. Then $Y_\bullet$ is uniformly acyclic, with constants bounded by $K_p$.
\end{lemma}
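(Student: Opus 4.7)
The plan is to reduce the uniform acyclicity of $Y_\bullet$ to the hypothesis by exploiting the fact that chains in $\widetilde{C}_\bullet(Y_\bullet)$ are finitely supported, so any cycle already lives in some finite sub-semisimplicial set.

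First I would fix $p \geq 0$ and an arbitrary cycle $z \in \widetilde{C}_p(Y_\bullet)$, written as a finite $\R$-linear combination $z = \sum_{i=1}^{N} a_i \sigma_i$ with $\sigma_i \in Y_p$. I would then let $X_\bullet \subseteq Y_\bullet$ be the sub-semisimplicial set generated by $\{\sigma_1, \ldots, \sigma_N\}$: concretely, $X_q$ consists of all iterated face-maps of the $\sigma_i$ landing in dimension $q$, and is empty for $q > p$. Since each $\sigma_i$ has only finitely many iterated faces in each dimension, $X_\bullet$ is a finite sub-semisimplicial set of $Y_\bullet$, and the inclusion $\iota \colon X_\bullet \hookrightarrow Y_\bullet$ therefore satisfies $K_p^\iota \leq K_p$ by hypothesis.

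Next I would observe that $z$ naturally defines a chain in $\widetilde{C}_p(X_\bullet)$ with identical $\ell^1$-norm, and because the face maps of $X_\bullet$ are the restrictions of those of $Y_\bullet$, $z$ remains a cycle in $\widetilde{C}_p(X_\bullet)$ with $\iota_p(z) = z$. Applying the uniform acyclicity of $\iota$ produces $c \in \widetilde{C}_{p+1}(Y_\bullet)$ such that $d_{p+1}(c) = \iota_p(z) = z$ and $\|c\| \leq K_p^\iota \|z\| \leq K_p \|z\|$. Since $p$ and $z$ were arbitrary, this shows $Y_\bullet$ is uniformly acyclic with constants bounded by $\{K_p\}_{p \geq 0}$.

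There is essentially no obstacle here: the argument is a direct reduction, and the only substantive point is the basic observation that the sub-semisimplicial set generated by finitely many simplices is again finite, which is immediate from the definition of a semisimplicial set. The usefulness of the lemma lies in decoupling the combinatorial problem on $Y_\bullet$ from the (typically easier) problem of bounding fillings inside arbitrary finite subcomplexes.
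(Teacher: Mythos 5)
Your argument is correct and is exactly the argument the paper intends: its proof is the one-line remark that cycles are supported on finitely many simplices, and your write-up simply fills in the details (pass to the finite sub-semisimplicial set generated by the support, note the cycle condition is preserved there, and apply the hypothesis on the inclusion to fill in $Y_\bullet$ with norm at most $K_p\|z\|$). No difference in approach, and no gaps.
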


\begin{proof}
    This is a direct consequence of the fact that uniform acyclicity is defined in terms of simplicial cycles, which are supported on finitely many simplices.
\end{proof}

The easiest case of a uniformly acyclic semisimplicial set is that of a cone.

\begin{lemma}[{\cite[Corollary 7.4]{simplicial}}]
\label{lem:cone}
    A simplicial cone $CX_\bullet$ over the semisimplicial set $X_\bullet$ is uniformly acyclic, and the constant $K_p^{CX_\bullet}$ can be chosen to be equal to $1$.
\end{lemma}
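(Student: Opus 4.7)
The plan is to write down an explicit bounded chain contraction of $\Ctilde_\bullet(CX_\bullet)$ given by the classical cone operator, and to observe that it has $\ell^1$-operator norm at most $1$ on the standard basis. Write $v$ for the cone vertex. Then, for each $p \geq 0$, the $(p+1)$-simplices of $CX_\bullet$ are precisely the $(p+1)$-simplices of $X_\bullet$ together with the joins $v \ast \sigma$ for $\sigma$ a $p$-simplex of $X_\bullet$ (and in degree $0$ one has the extra vertex $v$). I would define a contraction $h_p \colon \Ctilde_p(CX_\bullet) \to \Ctilde_{p+1}(CX_\bullet)$ on the standard basis by $h_p(\sigma) = v \ast \sigma$ for $\sigma \in X_p$, by $h_p(v \ast \tau) = 0$ for $\tau \in X_{p-1}$, and by $h_{-1}(1) = v$ at the augmentation. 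Since distinct basis elements are sent to distinct basis elements or to zero, the operator $h_p$ has $\ell^1$-operator norm at most $1$.

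The standard semisimplicial cone identity then gives $d_{p+1}(v \ast \sigma) = \sigma - v \ast d_p \sigma$, because the $0$-th face of $v \ast \sigma$ is $\sigma$ while the remaining faces are $v \ast d_{i-1} \sigma$, and the alternating signs combine as usual. A symmetric check on $h_p(v \ast \tau) = 0$ then yields the contraction identity $d_{p+1} h_p + h_{p-1} d_p = \id$ on all of $\Ctilde_p(CX_\bullet)$. Consequently, for any cycle $z \in \Ctilde_p(CX_\bullet)$ the chain $c := h_p(z)$ satisfies $d_{p+1}(c) = z - h_{p-1}(d_p z) = z$ and $\|c\| \leq \|z\|$, so the filling constant $K_p^{CX_\bullet} = 1$ works.

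No serious obstacle is expected: the only care required is in the semisimplicial sign conventions for the face maps on $v \ast \sigma$ and in handling the augmentation case $p = -1$, where a scalar $r \in \R$ is filled by $r \cdot v$ with the same $\ell^1$-norm. Everything reduces to a routine verification of the cone identity on the standard basis, and since the cone operator permutes basis elements (with zeros), bounding the operator norm by $1$ is immediate.
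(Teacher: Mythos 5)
Your proof is correct. Note that the paper does not actually prove this lemma in the text: it simply cites Kastenholz--Sroka \cite[Corollary 7.4]{simplicial}, so your explicit argument is a self-contained substitute rather than a parallel of an in-paper proof. What you write is the standard cone contraction adapted to the semisimplicial setting: since one cannot form $v \ast (v \ast \tau)$ without degeneracies, setting $h_p(v \ast \tau) = 0$ is exactly the right fix, and the identity $d_{p+1}h_p + h_{p-1}d_p = \id$ then checks out case by case ($dh(\sigma) = \sigma - v\ast d\sigma$ and $hd(\sigma) = v \ast d\sigma$ for $\sigma \in X_p$; $dh(v\ast\tau) = 0$ and $hd(v\ast\tau) = h(\tau - v\ast d\tau) = v\ast\tau$; the augmentation handled by $h_{-1}(1) = v$). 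The norm bound $\|h_p\| \leq 1$ is immediate because $\sigma \mapsto v\ast\sigma$ is injective on basis elements, so $K_p^{CX_\bullet} = 1$ as claimed, exactly matching the definition of uniform acyclicity used in the paper (cycles in the augmented complex $\Ctilde$). One small point you should make explicit: you also need $h_0(v) = 0$ for the cone vertex itself (naturally subsumed under your convention by viewing $v$ as $v$ joined to the empty simplex), since a reduced $0$-cycle may have $v$ in its support; with that convention the computation $d_1h_0(z) = z$ still goes through because the coefficients of a reduced cycle sum to zero. With that clarification, your argument gives precisely the statement, and arguably with more information than the bare citation, since the constant $1$ is exhibited by an explicit norm-one contraction.
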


We rephrase this in the relative setting of uniformly acyclic maps:

\begin{lemma}
\label{lem:cone:inclusion}
    Let $f \colon X_\bullet \to Y_\bullet$ be a simplicial map that factors through the simplicial cone $CX_\bullet$. Then $f$ is uniformly acyclic, and the constant $K_p^f$ can be chosen to be equal to $1$.
\end{lemma}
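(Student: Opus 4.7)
The plan is to reduce to Lemma \ref{lem:cone} by pushing the cone filling forward through the factorization. By hypothesis we can write $f = g \circ \iota$, where $\iota \colon X_\bullet \hookrightarrow CX_\bullet$ is the canonical inclusion of the base into the cone and $g \colon CX_\bullet \to Y_\bullet$ is a simplicial map. Given a cycle $z \in \Ctilde_p(X_\bullet)$, the chain $\iota_p(z) \in \Ctilde_p(CX_\bullet)$ is again a cycle since $\iota$ is a chain map, and Lemma \ref{lem:cone} produces $c' \in \Ctilde_{p+1}(CX_\bullet)$ with $d_{p+1}(c') = \iota_p(z)$ and $\|c'\| \leq \|\iota_p(z)\|$. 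I would then simply set $c := g_{p+1}(c')$. Since simplicial maps commute with boundaries,
\[
d_{p+1}(c) \;=\; g_p\bigl(d_{p+1}(c')\bigr) \;=\; g_p\bigl(\iota_p(z)\bigr) \;=\; f_p(z),
\]
which is the desired filling in $Y_\bullet$.

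The norm bound relies on a single elementary observation, which deserves to be spelled out once and used twice: any simplicial map $h \colon A_\bullet \to B_\bullet$ induces a chain map on $\Ctilde$ of operator norm at most $1$ with respect to the $\ell^1$-norm, because basis simplices are sent to basis simplices and the triangle inequality takes care of any collisions. Applied to $\iota$, which is moreover injective on simplices, this gives the equality $\|\iota_p(z)\| = \|z\|$; applied to $g$ it gives $\|g_{p+1}(c')\| \leq \|c'\|$. Chaining the two inequalities yields $\|c\| \leq \|c'\| \leq \|z\|$, so the constant $K_p^f = 1$ works.

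There is no real obstacle in this argument: the content is entirely in Lemma \ref{lem:cone}, and the present lemma is a bookkeeping statement recording that a uniformly acyclic simplicial map can be post-composed with any simplicial map without worsening the constants. The only point to keep in mind is that norm-preservation versus mere $\ell^1$-contraction plays different roles for $\iota$ and for $g$, but both are subsumed by the general contraction property of simplicial chain maps.
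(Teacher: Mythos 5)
Your argument is correct and is exactly the verification the paper leaves implicit: the paper states Lemma \ref{lem:cone:inclusion} as a mere ``rephrasing'' of Lemma \ref{lem:cone}, and your proof spells out that rephrasing by filling $\iota_p(z)$ in the cone with constant $1$ and pushing the filling forward along the second factor of the factorization. The supporting observation that any simplicial map between semisimplicial sets induces an $\ell^1$-contraction on augmented chains (hence $\|c\| \leq \|c'\| \leq \|z\|$) is right and is all that is needed, so there is nothing to add.
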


These notions are compatible with bounded homotopy equivalences.

\begin{lemma}
\label{lem:htpy:acyclic}
    Let $f, g \colon X_\bullet \to Y_\bullet$ be two simplicial maps that are boundedly homotopic via the bounded homotopy $\{ h_p \}_{p \geq 0}$. If $f$ is uniformly acyclic, then so is $g$, with constants $K_p^g \leq K_p^f + \| h_p \|$.
\end{lemma}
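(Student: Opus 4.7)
The plan is a direct chain-level computation: given a cycle $z \in \Ctilde_p(X_\bullet)$, first use the uniform acyclicity of $f$ to produce a filling of $f_p(z)$, then use the bounded homotopy to convert this filling into one for $g_p(z)$, at the cost of adding $h_p(z)$ to the filling.

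More precisely, fix a cycle $z \in \Ctilde_p(X_\bullet)$ (so $d_p(z) = 0$ if $p \geq 1$, and $z$ has vanishing augmentation if $p = 0$). By uniform acyclicity of $f$ we pick $c_f \in \Ctilde_{p+1}(Y_\bullet)$ with $d_{p+1}(c_f) = f_p(z)$ and $\|c_f\| \leq K_p^f \|z\|$. The defining relation of the bounded homotopy, evaluated on $z$, reads
\[
d_{p+1}(h_p(z)) + h_{p-1}(d_p(z)) = f_p(z) - g_p(z)
\]
for $p \geq 1$, and $d_1(h_0(z)) = f_0(z) - g_0(z)$ for $p = 0$. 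Since $d_p(z) = 0$ in the first case, in both cases we obtain $d_{p+1}(h_p(z)) = f_p(z) - g_p(z)$. Setting $c \coloneqq c_f - h_p(z)$, we therefore get $d_{p+1}(c) = g_p(z)$, and
\[
\|c\| \leq \|c_f\| + \|h_p(z)\| \leq K_p^f \|z\| + \|h_p\| \cdot \|z\| = \bigl(K_p^f + \|h_p\|\bigr) \|z\|,
\]
which is the desired bound.

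There is no real obstacle here; the only subtle point is to observe that because $z$ is a cycle the term $h_{p-1}(d_p(z))$ drops out of the chain homotopy identity, which is exactly what makes the argument work without picking up any additional combinatorial constants. The same scheme also shows that the analogous statement for $g$ relative to any target semisimplicial set is stable under composition with further bounded homotopies, but that is not needed here.
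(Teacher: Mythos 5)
Your proof is correct and is essentially identical to the paper's: both fill $f_p(z)$ using uniform acyclicity of $f$, use the chain homotopy identity together with the fact that $z$ is a (reduced) cycle to kill the $h_{p-1}d_p$ term, and take $c_f - h_p(z)$ as the filling of $g_p(z)$, yielding the bound $K_p^f + \|h_p\|$. Your explicit treatment of the degree-$0$ case via the augmentation matches the paper's use of reduced cycles in $\Ctilde$.
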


\begin{proof}
    Let $z \in \Ctilde_p(X_\bullet)$ be a reduced cycle. Since $f$ is uniformly acyclic, there exists $c \in \Ctilde_{p+1}(Y_\bullet)$ such that $d_{p+1}(c) = f_p(z)$ and $\| c \| \leq K_p^f \cdot \| z \|$. Now
    \[g_p(z) = d_{p+1}(h_p(z)) + h_{p-1}(d_p(z)) - f_p(z) = d_{p+1}(h_p(z) - c),\]
    where we used that $z$ is a reduced cycle and the definition of $c$. Moreover
    \[\| h_p(z) - c \| \leq \| h_p \| \cdot \| z \| + K_p^f \cdot \| z \|. \qedhere\]
\end{proof}

\subsection{Bounded cohomology of posets}
\label{proof:poset}

Every poset $(\PO, \preceq)$ has an associated semisimplicial set: Its \emph{nerve}, where a $p$-simplex is a chain $\{ x_0 \preceq x_1 \preceq \cdots \preceq x_p \}$, and the face maps correspond to deleting an element from the chain. Let us stress that this definition allows for degenerate simplices, which is why we are working with semisimplicial sets instead of simplicial complexes.

We will denote posets by $\PO$, and the corresponding semisimplicial sets by $\PO_\bullet$. Posets are more amenable to proofs of acylicity than general semisimplicial sets \cite{bjoerner}, and some of those techniques can be adapted to bounded cohomology \cite{simplicial}. In this section, we prove bounded analogues of some basic lemmas in this context, which will lead to a useful combinatorial criterion for uniform acyclicity of a poset. We start with the \emph{acyclic carrier lemma} \cite[Lemma 10.1]{bjoerner}.

\begin{definition}
    Let $X_\bullet, Y_\bullet$ be semisimplicial sets. A \emph{carrier} is a map $\Phi$ which to each simplex $\sigma \in X_p$ associates a sub-semisimplicial set $\Phi(\sigma)$ of $Y_\bullet$, and such that $\Phi(\sigma) \subseteq \Phi(\tau)$ whenever $\sigma \subseteq \tau$.
    A carrier $\Phi$ is said to be \emph{uniformly acyclic} if for all $\sigma \in X_\bullet$, the semisimplicial set $\Phi(\sigma)$ is uniformly acyclic, and the corresponding constants $K_p^{\Phi(\sigma)}$ in degree $p$ are bounded by constants $K_p$ independent of $\sigma$.

    A \emph{carrier for a simplicial map $f \colon X_\bullet \to Y_\bullet$} is a carrier $\Phi$ such that $f(\sigma) \in \Phi(\sigma)$ for all $\sigma \in X_\bullet$.    
\end{definition}

\begin{lemma}
\label{lem:carrier}
    Let $f, g \colon X_\bullet \to Y_\bullet$ be two simplicial maps. Suppose that $f, g$ admit a common uniformly acyclic carrier $\Phi$. Then $f$ and $g$ are boundedly homotopic. More precisely, if the constants $K_p$ witness the uniform acyclicity of $\Phi$, and satisfy $1 \leq K_p \leq K_{p+1}$, then the bounded homotopy $\{ h_p \}_{p \geq 0}$ satisfies $\| h_p \| \leq 2(p+1)(K_p)^p$.
\end{lemma}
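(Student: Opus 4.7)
The plan is to construct the homotopy $\{h_p\}$ by induction on $p$, defining $h_p(\sigma)$ one simplex at a time as an element of $\widetilde{C}_{p+1}(\Phi(\sigma))$ and extending linearly, then extracting the norm estimate from a recurrence. Since a carrier is monotone, the values $h_{p-1}(d_i\sigma)$ will automatically land in $\widetilde{C}_p(\Phi(\sigma))$ whenever $\sigma\in X_p$, which is what makes the induction go through.

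For the base case, fix $\sigma\in X_0$. Both $f_0(\sigma)$ and $g_0(\sigma)$ lie in $\Phi(\sigma)$, so the difference $f_0(\sigma)-g_0(\sigma)$ is an augmented $0$-cycle in $\widetilde{C}_0(\Phi(\sigma))$ of norm at most $2$; by uniform acyclicity of $\Phi(\sigma)$ with constant $K_0$ there exists $h_0(\sigma)\in\widetilde{C}_1(\Phi(\sigma))$ with $d_1 h_0(\sigma)=f_0(\sigma)-g_0(\sigma)$ and $\|h_0(\sigma)\|\le 2K_0$. For the inductive step, assume $h_0,\dots,h_{p-1}$ have been defined so as to satisfy the homotopy identity and so that each $h_{p-1}(\tau)$ is supported in $\Phi(\tau)$. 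For $\sigma\in X_p$ set
\[
c_\sigma \;:=\; f_p(\sigma)-g_p(\sigma)-h_{p-1}(d_p\sigma)\;\in\;\widetilde{C}_p(Y_\bullet).
\]
Monotonicity of $\Phi$ puts $c_\sigma$ in $\widetilde{C}_p(\Phi(\sigma))$, and a direct computation using the inductive homotopy identity together with $d_{p-1}d_p=0$ and the fact that $f,g$ are simplicial shows $d_p c_\sigma=0$. Uniform acyclicity of $\Phi(\sigma)$ then supplies $h_p(\sigma)\in\widetilde{C}_{p+1}(\Phi(\sigma))$ with $d_{p+1} h_p(\sigma)=c_\sigma$ and $\|h_p(\sigma)\|\le K_p\|c_\sigma\|$. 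Extending linearly defines $h_p$, and by construction the identity $d_{p+1}h_p+h_{p-1}d_p=f_p-g_p$ holds.

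The remaining and slightly delicate point is the norm bound, which is where the hypotheses $1\le K_p\le K_{p+1}$ are used. Since $h_p$ is defined simplex-by-simplex on a basis for the $\ell^1$-norm, its operator norm equals $\sup_\sigma\|h_p(\sigma)\|$. Estimating
\[
\|c_\sigma\|\;\le\;\|f_p(\sigma)\|+\|g_p(\sigma)\|+(p+1)\|h_{p-1}\|\;\le\;2+(p+1)\|h_{p-1}\|
\]
gives the recurrence $\|h_p\|\le K_p\bigl(2+(p+1)\|h_{p-1}\|\bigr)$, starting from $\|h_0\|\le 2K_0$. Iterating this recurrence and using $K_{p-1}\le K_p$ and $K_p\ge 1$ at each step yields the stated bound $\|h_p\|\le 2(p+1)(K_p)^p$; I expect this bookkeeping to be the only real obstacle, everything else being a routine adaptation of the classical acyclic carrier argument to the setting of uniformly bounded chain-level fillings.
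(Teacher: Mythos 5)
Your construction is exactly the paper's: the same inductive acyclic-carrier argument, filling $(f_0-g_0)(\sigma)$ in degree $0$ and the cycle $c_\sigma=(f_p-g_p-h_{p-1}d_p)(\sigma)$ inside $\Phi(\sigma)$ in degree $p$, with the same justification that $c_\sigma$ is a cycle supported on $\Phi(\sigma)$ and that the operator norm of $h_p$ is the supremum of $\|h_p(\sigma)\|$ over simplices. All of that is correct. The gap is in the last step. From your (correct) estimate $\|c_\sigma\|\le 2+(p+1)\|h_{p-1}\|$ you obtain the recurrence $\|h_p\|\le K_p\bigl(2+(p+1)\|h_{p-1}\|\bigr)$ and then assert that iterating it ``yields the stated bound $2(p+1)(K_p)^p$''. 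It does not: already for $p=1$ and $K_0=K_1=1$ the recurrence only gives $\|h_1\|\le 2+2\cdot 2=6$, whereas the claimed bound is $4$. In general the iteration produces
\[
\|h_p\|\;\le\;2\sum_{j=0}^{p}\frac{(p+1)!}{(j+1)!}\,K_jK_{j+1}\cdots K_p,
\]
which is of factorial type in $p$ (and has exponent $p+1$ in $K_p$), not $2(p+1)(K_p)^p$. So the quantitative half of the lemma is not established by the bookkeeping as you describe it.

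For perspective: the paper reaches the stated constant only by bounding $\|h_{p-1}(d_p\sigma)\|$ by $\|h_{p-1}\|$ rather than by $(p+1)\|h_{p-1}\|$, i.e.\ by ignoring that $d_p\sigma$ has $\ell^1$-norm up to $p+1$; your estimate is the careful one, and with it the advertised constant must be weakened. This is harmless downstream, since in Theorem \ref{thm:quillen}, Corollary \ref{cor:W} and Lemma \ref{lem:finite} the only thing that matters is that $\|h_p\|$ is bounded by a quantity depending solely on $p$ and the constants $K_i$ (in particular, independent of the finite subposet $\mathcal{Q}$), which both your recurrence and the paper's provide. To repair your write-up, either carry out the iteration honestly and state the larger bound it actually gives (remarking that any degree-wise bound suffices for the applications), or reproduce the cruder estimate if you want the literal constant in the statement; as written, the final sentence claims an inequality that your recurrence does not imply.
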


The proof will show that a sharper bound is possible, even unconditionally, but this is all we will need for the proofs.

\begin{proof}
    We construct the bounded homotopy $\{ h_p \}_{p \geq 0}$ by induction. For $\sigma \in X_0$, both $f_0(\sigma)$ and $g_0(\sigma)$ represent an element in $\Phi(\sigma)_0$, and so $z \coloneqq (f_0 - g_0)(\sigma) \in \Ctilde_0(\Phi(\sigma))$ is a reduced cycle. By the uniform acyclicity assumption, there exists a chain $c \in \Ctilde_1(\Phi(\sigma))$ such that $d_1(c) = z$ and $\| c \| \leq K_0 \cdot \| z \| \leq 2 K_0$. We define $h_0(\sigma) \coloneqq c$, extend it linearly to a map $h_0 \colon \Ctilde_0(X_\bullet) \to \Ctilde_1(Y_\bullet)$ and notice that $d_1 h_0 = f_0 - g_0$ and $\| h_0 \| \leq 2 K_0$.

    Now let $p \in \N^*$ and suppose by induction that for all $0 \leq i < p$ bounded maps $h_i \colon \Ctilde_i(X_\bullet) \to \Ctilde(Y_\bullet)$ have been defined so that the homotopy identity holds, and moreover $h_i(\sigma)$ is supported on $\Phi(\sigma)$ for all $\sigma \in X_i$. Fix $\sigma \in X_p$, and set $z \coloneqq (f_p - g_p - h_{p-1} d_p)(\sigma)$. Then the induction hypothesis shows that $d_p(z) = 0$ and $z$ is supported on $\Phi(\sigma)$, so $z \in \Ctilde_p(\Phi(\sigma))$ is a cycle. By the uniform acyclicity assumption, there exists $c \in \Ctilde_p(\Phi(\sigma))$ such that $d_{p+1}(c) = z$ and $\| c \| \leq K_p \cdot \| z \| \leq K_p \cdot (2 + \| h_p \|)$. We define $h_p(\sigma) \coloneqq c$, extend it linearly to a map $h_p \colon \Ctilde_p(X_\bullet) \to \Ctilde_{p+1}(Y_\bullet)$, and notice that $d_{p+1} h_p - h_{p-1} d_p = f_p - g_p$, and
    \[\| h_p \| \leq K_p (2 + \| h_{p-1} \|) \leq 2K_p + K_p 2p (K^{p-1})^{p-1} \leq 2(p+1) (K_p)^p. \qedhere\]
\end{proof}

This can be used to prove a bounded version of Quillen's Order Homotopy Theorem \cite[Theorem 10.11]{bjoerner}:

\begin{theorem}
\label{thm:quillen}
    Let $X_\bullet$ be a semisimplicial set, and let $\PO$ be a poset. Let $f, g \colon X_\bullet \to \PO_\bullet$ be simplicial maps. If $f(x) \preceq g(x)$ for every $x \in X_0$, then $f$ and $g$ are boundedly homotopic via a bounded homotopy $\{ h_p \}_{p \geq 0}$ with $\| h_p \| \leq 2(p+1)$.
\end{theorem}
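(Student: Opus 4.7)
The strategy is to apply the acyclic carrier lemma (Lemma \ref{lem:carrier}) with a carrier whose value on every simplex is a semisimplicial cone, so that the uniform acyclicity constant can be taken equal to $1$ at all levels, and the resulting bound from Lemma \ref{lem:carrier} reads $\|h_p\| \leq 2(p+1)(K_p)^p = 2(p+1)$.

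For each simplex $\sigma \in X_p$ with vertex sequence $(\sigma_0, \ldots, \sigma_p)$ (the $0$-simplices obtained by iterated face maps), I would define $\Phi(\sigma)$ to be the sub-semisimplicial set of $\PO_\bullet$ corresponding to the sub-poset
\[
\{f(\sigma_0),\ldots,f(\sigma_p),\, g(\sigma_0),\ldots,g(\sigma_p)\}\subseteq \PO.
\]
Three properties need to be verified. First, $\Phi$ is a carrier: if $\sigma$ is a face of $\tau$, then the vertex set of $\sigma$ is a subset of the vertex set of $\tau$, so $\Phi(\sigma) \subseteq \Phi(\tau)$. Second, $\Phi$ carries both $f$ and $g$: since these are simplicial maps, the chains $f(\sigma) = (f(\sigma_0) \preceq \cdots \preceq f(\sigma_p))$ and $g(\sigma) = (g(\sigma_0) \preceq \cdots \preceq g(\sigma_p))$ are $p$-simplices of $\Phi(\sigma)$. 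Third, $\Phi(\sigma)$ has a maximum element, namely $g(\sigma_p)$: indeed, $f(\sigma_i) \preceq f(\sigma_p) \preceq g(\sigma_p)$ (using monotonicity on the chain $f(\sigma)$ for the first inequality and the hypothesis $f(x) \preceq g(x)$ applied at $x=\sigma_p \in X_0$ for the second) and $g(\sigma_i) \preceq g(\sigma_p)$.

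A poset with a maximum element has a nerve which is a simplicial cone, so by Lemma \ref{lem:cone} every $\Phi(\sigma)_\bullet$ is uniformly acyclic with constants bounded by $1$, independently of $\sigma$; this is exactly the uniform acyclicity of the carrier $\Phi$. Applying Lemma \ref{lem:carrier} with $K_p = 1$ for every $p$ then produces a bounded homotopy between $f$ and $g$ satisfying $\|h_p\| \leq 2(p+1)$. There is no real obstacle here: all the work has already been done in the preceding lemmas, and the content of the proof is simply choosing a carrier small enough to always be a cone while still containing the images of both maps.
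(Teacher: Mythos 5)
Your proposal is correct and follows essentially the same route as the paper: the same carrier $\Phi(\sigma)$ spanned by $f(\sigma)\cup g(\sigma)$, the observation that its nerve is a cone (the paper cones off the minimum $f(\sigma_0)$ rather than your maximum $g(\sigma_p)$, an immaterial difference), and then Lemma \ref{lem:cone} plus Lemma \ref{lem:carrier} with $K_p=1$ to get $\|h_p\|\leq 2(p+1)$.
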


\begin{proof}
    For each simplex $\sigma \in X_\bullet$ define $\Phi(\sigma)$ to be the sub-semisimplicial set of $\PO_\bullet$ spanned by $f(\sigma) \cup g(\sigma)$. Every vertex of $\Phi(\sigma)$ dominates the minimal element of $f(\sigma)$, so the corresponding semisimplicial set is a cone, which is uniformly acyclic and whose constant can be chosen to be equal to $1$ in all degrees (Lemma \ref{lem:cone}). The result then follows from Lemma \ref{lem:carrier}.
\end{proof}

We will only use the following consequence of Theorem \ref{thm:quillen} (the $\mathbf{W}$ property is given in Definition \ref{def:W}).

\begin{corollary}
\label{cor:W}
    Let $\PO$ be a poset with the $\mathbf{W}$ property. Then $\PO_\bullet$ is acyclic and uniformly acyclic.
\end{corollary}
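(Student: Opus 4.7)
The plan is to combine the reduction to finite subposets with the acyclic-carrier machinery developed above. By Lemma~\ref{lem:finite} it suffices to exhibit constants $K_p$, independent of the finite subposet $\mathcal{Q} \subseteq \PO$, such that every inclusion $\iota \colon \mathcal{Q}_\bullet \hookrightarrow \PO_\bullet$ of the nerve of such a $\mathcal{Q}$ is uniformly acyclic with $K_p^\iota \leq K_p$. The strategy is to compare $\iota$ with an auxiliary simplicial map $f \colon \mathcal{Q}_\bullet \to \PO_\bullet$ that manifestly factors through a cone.

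Fix such a $\mathcal{Q}$, let $x_1, \ldots, x_k$ be its minimal elements, and let $\{y_I\}$ for $\emptyset \neq I \subseteq \{1, \ldots, k\}$ denote the elements of $\PO$ provided by the $\mathbf{W}$ property. For $x \in \mathcal{Q}$ set $I(x) \coloneqq \{i : x_i \preceq x\}$, which is non-empty since every element of a finite poset dominates at least one minimal element. Define $f(x) \coloneqq y_{I(x)}$. Property~(1) of $\mathbf{W}$ makes $f$ order-preserving, because $x \preceq x'$ implies $I(x) \subseteq I(x')$; hence $f$ induces a simplicial map $\mathcal{Q}_\bullet \to \PO_\bullet$. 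Applying the same property with $J = \{1, \ldots, k\}$ gives $f(x) = y_{I(x)} \preceq y_{\{1, \ldots, k\}}$ for every $x \in \mathcal{Q}$, so $f$ extends to a simplicial map $C\mathcal{Q}_\bullet \to \PO_\bullet$ by sending the cone apex to $y_{\{1, \ldots, k\}}$, and Lemma~\ref{lem:cone:inclusion} then gives $K_p^f \leq 1$.

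Property~(2) of $\mathbf{W}$ guarantees $f(x) = y_{I(x)} \preceq x = \iota(x)$ for every $x \in \mathcal{Q}$, since $x_i \preceq x$ for all $i \in I(x)$. Theorem~\ref{thm:quillen} therefore supplies a bounded homotopy between $f$ and $\iota$ with $\|h_p\| \leq 2(p+1)$, and Lemma~\ref{lem:htpy:acyclic} yields
\[
K_p^\iota \;\leq\; K_p^f + \|h_p\| \;\leq\; 2p + 3,
\]
a bound that depends only on $p$. Invoking Lemma~\ref{lem:finite} finishes the argument.

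I do not foresee a serious obstacle, since the preceding lemmas have been tailored precisely for an argument of this shape; the only creative step is the definition of $f$. The point is that the $\mathbf{W}$ property is engineered so that property~(1) simultaneously yields the monotonicity of $f$ and its factorization through a cone (by way of the largest element $y_{\{1, \ldots, k\}}$), while property~(2) yields exactly the domination $f \preceq \iota$ required to apply Theorem~\ref{thm:quillen}.
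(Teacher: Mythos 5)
Your proposal is correct and follows essentially the same route as the paper's proof: reduce to finite subposets via Lemma~\ref{lem:finite}, define $f(x) = y_{I_x}$ from the $\mathbf{W}$ data, use Lemma~\ref{lem:cone:inclusion} for $f$ and Theorem~\ref{thm:quillen} plus Lemma~\ref{lem:htpy:acyclic} to transfer uniform acyclicity to the inclusion, with the same constant $K_p^\iota \leq 1 + 2(p+1)$. No gaps.
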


Note that we defined uniform acyclicity for the real chain complex, so it immediately implies $\R$-acyclicity, but not ($\Z$-)acyclicity.

\begin{proof}
    We start with uniform acyclicity, for which we will show that $\PO_\bullet$ satisfies the criterion of Lemma \ref{lem:finite}. That is, for every finite subposet $\mathcal{Q} \subset \PO$, we will show that the inclusion $\iota \colon \mathcal{Q}_\bullet \to \PO_\bullet$ is uniformly acyclic with constants independent of $\mathcal{Q}$. Let $x_1, \ldots, x_k$ be the minimal elements of $\mathcal{Q}$, and let $y_I, I \subseteq \{ 1, \ldots, k\}$ be the elements of $\PO$ whose existence is ensured by the $\mathbf{W}$ property. Define the map $f \colon \mathcal{Q} \to \PO$ by sending $x \in \mathcal{Q}$ to $y_{I_x}$, where $I_x = \{i \in \{ 1, \ldots, k \} \mid x_i \preceq x \}$. Item 1 of Definition \ref{def:W} states that $y_I \preceq y_J$ if $I \subseteq J$, and this implies that $f$ is order-preserving. Item 2 states that $y_{I_x} \preceq x$, and so  $f(x) \preceq x$, for all $x \in \mathcal{Q}$. Therefore Theorem \ref{thm:quillen} applies and shows that $f \colon \mathcal{Q}_\bullet \to \PO_\bullet$ is boundedly homotopic to $\iota$, via a bounded homotopy $\{ h_p \}_{p \geq 0}$ with $\| h_p \| \leq 2(p+1)$. However, every element of $f(\mathcal{Q})$ is dominated by $y_{\{1, \ldots, k\}}$, so $f(\mathcal{Q}_\bullet)$ is contained in a cone. Lemma \ref{lem:cone:inclusion} then shows that $f$ is uniformly acyclic with $K_p^f \leq 1$ for all $p$. Finally, we apply Lemma \ref{lem:htpy:acyclic} to deduce that $\iota$ is uniformly acyclic with $K_p^\iota \leq K_p^f + \| h_p \| \leq 1 + 2(p+1)$. Since this bound is independent of $\mathcal{Q}$, we conclude from Lemma \ref{lem:finite} that $\PO$ is uniformly acyclic.

    The proof can be adapted verbatim to the case of integral coefficients (without keeping track of the norms) using \cite[Theorem 10.11]{bjoerner} instead of Theorem \ref{thm:quillen}. This proves that $\PO_\bullet$ is also acyclic.
\end{proof}

\subsection{The orbit complex}
\label{proof:orbit}

Let us now begin the proof of Theorem \ref{thm:criterion}. Recall the setup: The group $G$ is acting on a set $X$, and there is a $G$-invariant poset of sequences in $X$ such that the action of $G$ on $\X$ is transitive, and has boundedly acyclic stabilizers. Moreover, $\X$ satisfies the $\mathbf{W}$ property, which implies that the semisimplicial set $\X_\bullet$ is boundedly acyclic, by Corollary \ref{cor:W}. Therefore the action of $G$ on $\X_\bullet$ satisfies the hypotheses of Theorem \ref{thm:resolutions}. It remains to show that the orbit complex $G \backslash \X_\bullet$ is boundedly acyclic.

\medskip

Consider a simplex $\{ \mathbf{x}_0 \preceq \mathbf{x}_1 \preceq \cdots \preceq \mathbf{x}_p \}$ for $p \geq 0$. Define $\iota_i \colon \mathbf{x}_i \to \N^*$ to be the index map, which associates to a fat point in $\mathbf{x}_i$ its index in the sequence.
Now $\mathbf{x}_{p-1}$ is uniquely determined by the indices that describe it as a subsequence of $\mathbf{x}_p$. Therefore, once $\mathbf{x}_p$ has been given, $\mathbf{x}_{p-1}$ can be described in terms of an \emph{order-preserving embedding} $\eta_p \colon \N^* \hookrightarrow \N^*$, such that the following diagram commutes:
\[\begin{tikzcd}
	{\mathbf{x}_{p-1}} & {} & {\mathbf{x}_p} \\
	\\
	\N^* && \N^*
	\arrow["{\eta_p}", hook, from=3-1, to=3-3]
	\arrow[hook, from=1-1, to=1-3]
	\arrow["{\iota_p}", from=1-3, to=3-3]
	\arrow["{\iota_{p-1}}"', from=1-1, to=3-1]
\end{tikzcd}\]
Proceding inductively in the same way, we define order-preserving embeddings $\eta_i \colon \N^* \to \N^*$ that intertwine the inclusion of $\mathbf{x}_{i-1}$ into $\mathbf{x}_i$ with the order isomorphisms $\iota_{i-1}, \iota_i$, so that the following diagram commutes:
\[\begin{tikzcd}
	{\mathbf{x}_0} & {\mathbf{x}_1} & \cdots & {\mathbf{x}_{p-1}} & {\mathbf{x}_p} \\
	\N^* & \N^* & \cdots & \N^* & \N^*
	\arrow[hook, from=1-1, to=1-2]
	\arrow[hook, from=1-2, to=1-3]
	\arrow[hook, from=1-3, to=1-4]
	\arrow[hook, from=1-4, to=1-5]
	\arrow["{\iota_p}", from=1-5, to=2-5]
	\arrow["{\iota_{p-1}}"', from=1-4, to=2-4]
	\arrow["{\iota_1}", from=1-2, to=2-2]
	\arrow["{\iota_0}"', from=1-1, to=2-1]
	\arrow["{\eta_1}"', hook, from=2-1, to=2-2]
	\arrow["{\eta_2}"', hook, from=2-2, to=2-3]
	\arrow["{\eta_{p-1}}"', hook, from=2-3, to=2-4]
	\arrow["{\eta_p}"', hook, from=2-4, to=2-5]
\end{tikzcd}\]

We denote by $\emb(\N^*)$ the monoid of order-preserving embeddings of $\N^*$. We write multiplication in $\emb(\N^*)$ in terms of the right action of $\emb(\N^*)$ on $\N^*$, so given $\eta, \zeta \in \emb(\N^*)$, their product $\eta \zeta$ is the embedding given by applying $\eta$, then $\zeta$. Let $I_p \colon \X_p \to \emb(\N^*)^p$ be the map that associates to a $p$-simplex $\{ \mathbf{x}_0 \preceq \cdots \preceq \mathbf{x}_p \}$ the $p$-tuple $(\eta_1, \ldots, \eta_p)$.

\begin{lemma}
\label{lem:orbit:bij}
    The map $I_p \colon \fat_p \to \emb(\N^*)^p$ induces a bijection $G \backslash \X_p \to \emb(\N^*)^p$, which we also denote by $I_p$.
\end{lemma}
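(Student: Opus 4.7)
The plan is to check three things: that $I_p$ is constant on $G$-orbits, that the induced map $\bar I_p \colon G\backslash\X_p \to \emb(\N^*)^p$ is surjective, and that it is injective. The key observation throughout is that the $G$-action on a sequence $\mathbf{x}=(x_n)_{n\in\N^*}$ is entry-wise, $g.\mathbf{x}=(g.x_n)_{n\in\N^*}$, so $g$ intertwines the index maps of $\mathbf{x}_i$ and $g.\mathbf{x}_i$ via the identity on $\N^*$. In particular, the subsequence relations $\mathbf{x}_{i-1}\hookrightarrow\mathbf{x}_i$ and $g.\mathbf{x}_{i-1}\hookrightarrow g.\mathbf{x}_i$ select the same indices in $\N^*$, so the embeddings $\eta_i$ are unchanged by the diagonal $G$-action. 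Hence $I_p$ factors through $G\backslash\X_p$ to define $\bar I_p$.

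For surjectivity of $\bar I_p$, given $(\eta_1,\ldots,\eta_p)\in\emb(\N^*)^p$, I would pick any $\mathbf{x}\in\X$ (nonempty by transitivity), set $\mathbf{x}_p\coloneqq\mathbf{x}$, and recursively define $\mathbf{x}_{i-1}$ to be the subsequence of $\mathbf{x}_i$ whose $n$-th entry is the $\eta_i(n)$-th entry of $\mathbf{x}_i$. In every setting treated in \Cref{sec:verifying}, the poset $\X$ is closed under passage to subsequences (cf.\ \Cref{rem:W:suposet}), so each $\mathbf{x}_i$ lies in $\X$, and the resulting chain is a $p$-simplex of $\X_\bullet$ whose image under $I_p$ is $(\eta_1,\ldots,\eta_p)$ by construction.

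For injectivity, I would take two $p$-simplices $\{\mathbf{x}_0\preceq\cdots\preceq\mathbf{x}_p\}$ and $\{\mathbf{y}_0\preceq\cdots\preceq\mathbf{y}_p\}$ with the same image $(\eta_1,\ldots,\eta_p)$, use transitivity to choose $g\in G$ with $g.\mathbf{x}_p=\mathbf{y}_p$, and then argue by downward induction on $i$: since the $G$-action preserves indexing, $\mathbf{x}_{i-1}$ is characterized inside $\mathbf{x}_i$ by the set of indices specified by $\eta_i$, and the same characterization applies to $\mathbf{y}_{i-1}$ inside $\mathbf{y}_i$, so $g.\mathbf{x}_i=\mathbf{y}_i$ forces $g.\mathbf{x}_{i-1}=\mathbf{y}_{i-1}$. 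Hence the two simplices lie in one $G$-orbit. The only delicate point is keeping track of indices and of the composition convention for $\emb(\N^*)$; apart from this purely formal bookkeeping, all three steps are routine.
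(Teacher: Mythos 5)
Your proposal is correct and follows essentially the same route as the paper's proof: the entrywise nature of the $G$-action shows that the index data $(\eta_1,\ldots,\eta_p)$ is $G$-invariant, and injectivity is obtained exactly as in the paper by choosing $g$ with $g.\mathbf{x}_p=\mathbf{y}_p$ via transitivity and arguing by downward induction that the index set prescribed by $\eta_i$ pins down $\mathbf{x}_{i-1}$ inside $\mathbf{x}_i$. The only difference is that you also spell out surjectivity, using that $\X$ is closed under passage to subsequences (as it is in every application, cf.\ \Cref{sec:weaving} and \Cref{rem:W:suposet}); the paper's proof leaves this step implicit, so your extra paragraph is a harmless and in fact clarifying addition rather than a deviation.
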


\begin{proof}
    Given a $p$-simplex $\{ \mathbf{x}_0 \preceq \cdots \preceq \mathbf{x}_p \}$, the action of $G$ intertwines the maps $\iota_i$, i.e. the following diagram commutes:
    \[\begin{tikzcd}
	{\mathbf{x}_i} && {\mathbf{x}_i.g} \\
	\\
	\N^* && \N^*
	\arrow["{=}", from=3-1, to=3-3]
	\arrow["{.g}", from=1-1, to=1-3]
	\arrow["{\iota_p}", from=1-3, to=3-3]
	\arrow["{\iota_p}"', from=1-1, to=3-1]
    \end{tikzcd}\]
    Since this is true for each $i$, we see that $I_p$ is invariant under the action of $G$.

    Now let $\{ \mathbf{x}_0 \preceq \cdots \preceq \mathbf{x}_p \}$ and $\{ \mathbf{y}_0 \preceq \cdots \preceq \mathbf{y}_p \}$ be two $p$-simplices with the same image under $I_p$. Let $g \in G$ be an element such that $g.\mathbf{x}_p = \mathbf{y}_p$; this exists by Lemma \ref{lem:R:transitivity}. Since the embedding $\eta_p \colon \N^* \to \N^*$ is valid for both $g.\mathbf{x}_{p-1} \to g.\mathbf{x}_p$ and $\mathbf{y}_{p-1} \to \mathbf{y}_p$, the indices that define $\mathbf{y}_{p-1}$ as a subsequence of $\mathbf{y}_p$ are the same that describe $g.\mathbf{x}_{p-1}$ as a subsequence of $g.\mathbf{x}_p = \mathbf{y}_p$. This shows that $g.\mathbf{x}_{p-1} = \mathbf{y}_{p-1}$, and repeating the same argument by induction we conclude that these two $p$-simplices are in the same $G$-orbit.
\end{proof}

This produces an identification of the simplices of the orbit complex. We now show that this extends to an isomorphism:

\begin{lemma}
\label{lem:orbit:iso}
    The maps $I_p \colon G \backslash \X_p \to \emb(\N^*)^p$ define an isomorphism of semisimplicial sets $G \backslash \X_\bullet \to \emb(\N)^\bullet$ from the orbit complex to the nerve of $\emb(\N)^\bullet$. In particular, there is an isomorphism in bounded cohomology $H^n_b(\emb(\N^*)) \to H^n_b(G \backslash \X_\bullet)$.
\end{lemma}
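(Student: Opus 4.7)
The plan is to promote the bijections of Lemma \ref{lem:orbit:bij} to an isomorphism of semisimplicial sets by verifying that the collection $\{I_p\}_{p \geq 0}$ is compatible with the face operators. Once this is established, the second assertion is purely formal: bounded cohomology is defined directly from the coface operators of a semisimplicial set, so any isomorphism of semisimplicial sets induces an isomorphism of the cochain complexes $\ell^\infty(\bullet)$, hence an isomorphism in bounded cohomology in every degree.

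To check that the $I_p$ commute with the face maps, I would fix a $p$-simplex $\{\mathbf{x}_0 \preceq \mathbf{x}_1 \preceq \cdots \preceq \mathbf{x}_p\}$ together with its indexing maps $\iota_j \colon \mathbf{x}_j \to \N^*$ and the induced embeddings $(\eta_1, \ldots, \eta_p) = I_p(\mathbf{x}_0, \ldots, \mathbf{x}_p)$. There are three cases to treat. The boundary faces $d_0$ and $d_p$ are immediate from the construction: deleting $\mathbf{x}_0$ or $\mathbf{x}_p$ leaves a sub-chain whose indexing maps are precisely $\iota_1, \ldots, \iota_p$ (respectively $\iota_0, \ldots, \iota_{p-1}$), so the associated embeddings are $(\eta_2, \ldots, \eta_p)$ and $(\eta_1, \ldots, \eta_{p-1})$, matching $d_0$ and $d_p$ on the nerve of $\emb(\N^*)$.

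The only case requiring a genuine computation is an interior face $d_i$ with $1 \leq i \leq p-1$. Deleting $\mathbf{x}_i$ forces us to describe $\mathbf{x}_{i-1}$ as a subsequence of $\mathbf{x}_{i+1}$ directly. By construction $\eta_{i+1}$ encodes the indexing of $\mathbf{x}_i$ inside $\mathbf{x}_{i+1}$ (via the square $\iota_i = \eta_{i+1} \circ \iota_{i+1}$ up to restriction), and $\eta_i$ encodes the indexing of $\mathbf{x}_{i-1}$ inside $\mathbf{x}_i$. Composing these embeddings shows that $\mathbf{x}_{i-1}$ sits in $\mathbf{x}_{i+1}$ via $\eta_i \eta_{i+1}$ in the right-action convention fixed just before the lemma, which is exactly the $i$-th face operator on $\emb(\N^*)^p$. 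Thus $I_{p-1} \circ d_i = d_i \circ I_p$ in all cases, and combined with Lemma \ref{lem:orbit:bij} this concludes the proof.

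There is no real obstacle here beyond a careful unwinding of definitions; the only subtle point is maintaining the convention for multiplication in $\emb(\N^*)$ consistently with the chosen face maps on the nerve, so that the interior face case yields $\eta_i \eta_{i+1}$ and not $\eta_{i+1} \eta_i$.
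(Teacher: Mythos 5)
Your proposal is correct and takes essentially the same route as the paper's proof: both promote the bijections of Lemma \ref{lem:orbit:bij} to a semisimplicial isomorphism by checking compatibility with the face maps, where the outer faces $d_0, d_p$ are immediate and the only computation is the interior face, which yields $\eta_i\eta_{i+1}$ in the right-action convention, after which the bounded-cohomology statement is formal. One small slip in a parenthetical: the commuting square gives $\iota_{i+1}|_{\mathbf{x}_i} = \eta_{i+1}\circ\iota_i$ rather than $\iota_i = \eta_{i+1}\circ\iota_{i+1}$, but the conclusion you draw from it (the composite embedding of $\mathbf{x}_{i-1}$ into $\mathbf{x}_{i+1}$ is $\eta_i\eta_{i+1}$, matching the $i$-th face of the nerve) is exactly right.
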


\begin{proof}
    We already know that $I_p$ is a bijective map sending simplices to simplices. We need to show that these maps intertwine the face maps. Let $\{ \mathbf{x}_0 \preceq \cdots \preceq \mathbf{x}_p \}$ be a simplex, whose image under $I_p$ is $(\eta_1, \ldots, \eta_p)$. Removing $\mathbf{x}_i$ yields the following commutative diagrams. For $0 < i < p$:
\[\begin{tikzcd}
	{\mathbf{x}_0} & {\mathbf{x}_1} & \cdots & {\mathbf{x}_{i-1}} & {\mathbf{x}_{i+1}} & \cdots & {\mathbf{x}_{p-1}} & {\mathbf{x}_p} \\
	\N^* & \N^* & \cdots & \N^* & \N^* & \cdots & \N^* & \N^*
	\arrow[hook, from=1-1, to=1-2]
	\arrow[hook, from=1-2, to=1-3]
	\arrow[hook, from=1-3, to=1-4]
	\arrow[hook, from=1-4, to=1-5]
	\arrow["{\iota_{i+1}}", from=1-5, to=2-5]
	\arrow["{\iota_{i-1}}"', from=1-4, to=2-4]
	\arrow["{\iota_1}"', from=1-2, to=2-2]
	\arrow["{\iota_0}"', from=1-1, to=2-1]
	\arrow["{\eta_1}"', hook, from=2-1, to=2-2]
	\arrow["{\eta_2}"', hook, from=2-2, to=2-3]
	\arrow["{\eta_{i-1}}"', hook, from=2-3, to=2-4]
	\arrow["{\eta_i \eta_{i+1}}"', hook, from=2-4, to=2-5]
	\arrow[hook, from=1-5, to=1-6]
	\arrow["{\eta_{i+2}}"', from=2-5, to=2-6]
	\arrow["{\eta_{p-1}}"', hook, from=2-6, to=2-7]
	\arrow[hook, from=1-6, to=1-7]
	\arrow["{\iota_{p-1}}"', from=1-7, to=2-7]
	\arrow["{\iota_p}", from=1-8, to=2-8]
	\arrow[hook, from=1-7, to=1-8]
	\arrow["{\eta_p}"', hook, from=2-7, to=2-8]
\end{tikzcd}\]
    While for $i = 0$ and $i = p$ we are simply removing the first and last columns, respectively. We now compute the effect on face maps, starting with $i = 0$:
    \begin{align*}
         I_{p-1}(d_p^0(\{\mathbf{x}_0 \preceq \cdots \preceq \mathbf{x}_p\})) &= I_{p-1}(\{\mathbf{x}_1 \preceq \cdots \preceq \mathbf{x}_p\}) \\
         &= (\eta_2, \ldots, \eta_p) \\
         &= d_p^0(\eta_1, \ldots, \eta_p) = d_p^0(I_p(\{\mathbf{x}_0 \preceq \cdots \preceq \mathbf{x}_p\})).
    \end{align*}
    Analogously:
    \[I_{p-1}(d_p^n(\{\mathbf{x}_0 \preceq \cdots \preceq \mathbf{x}_p\})) = d_p^n(I_p(\{\mathbf{x}_0 \preceq \cdots \preceq \mathbf{x}_p\})).\]
    And for $0 < i < p$:
    \begin{align*}
         I_{p-1}(d_p^i(\{\mathbf{x}_0 \preceq \cdots \preceq \mathbf{x}_p\})) &= I_{p-1}(\{\mathbf{x}_1 \preceq \cdots \preceq \mathbf{x}_{i-1} \preceq \mathbf{x}_{i+1} \preceq \cdots \preceq \mathbf{x}_p\}) \\
         &= (\eta_2, \ldots, \eta_{i-1}, \eta_i \eta_{i+1}, \eta_{i+2}, \ldots, \eta_p) \\
         &=d_p^i(\eta_1, \ldots, \eta_p) = d_p^i(I_p(\{\mathbf{x}_0 \preceq \cdots \preceq \mathbf{x}_p\})). \qedhere
    \end{align*}
\end{proof}

The discussion in this subsection is summarized in the following result:

\begin{proposition}
\label{prop:reduction}
    Let $G$ be a groups satisfying the criterion from Theorem \ref{thm:criterion}. Then there is an isomorphism in bounded cohomology $H^n_b(\emb(\N^*)) \cong H^n_b(G)$.
\end{proposition}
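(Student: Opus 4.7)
The plan is to assemble the general machinery developed in the previous two subsections and apply it to the $G$-action on the semisimplicial set $\X_\bullet$. First I would verify that $\X_\bullet$ is boundedly acyclic and connected: the $\mathbf{W}$ property on the poset $\X$ gives uniform acyclicity of its nerve by Corollary \ref{cor:W}, and Lemma \ref{lem:uac:bac} then upgrades this to bounded acyclicity as well as ordinary acyclicity, so in particular $\X_\bullet$ is connected (provided it is non-empty, which is ensured by the fact that the $G$-action on $\X$ is transitive and we have assumed $\X$ to exist). These are precisely the topological inputs required for Theorem \ref{thm:resolutions}.

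Next I would analyze the simplex stabilizers for the $G$-action on $\X_\bullet$. Given a chain $\{\mathbf{x}_0 \preceq \cdots \preceq \mathbf{x}_p\}$, an element of $G$ fixes it vertexwise if and only if it fixes every $\mathbf{x}_i$ as an ordered sequence; since each $\mathbf{x}_i$ is a subsequence of $\mathbf{x}_p$ indexed by a fixed set of positions, fixing $\mathbf{x}_p$ automatically fixes every $\mathbf{x}_i$. Hence the stabilizer of the chain coincides with the stabilizer of $\mathbf{x}_p$ in $G$. Transitivity of the $G$-action on $\X$ (hypothesis 2 of Theorem \ref{thm:criterion}) then implies that, for each fixed $p$, all such stabilizers are conjugate in $G$ and hence isomorphic, and they are boundedly acyclic by hypothesis 3. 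In particular there is only one isomorphism type of stabilizer in each degree, so the assumptions of Theorem \ref{thm:resolutions} are met and I obtain an isomorphism $H^n_b(G) \cong H^n_b(G \backslash \X_\bullet)$.

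To conclude I would invoke Lemma \ref{lem:orbit:iso} established just above: the combinatorial identification of the orbit complex with the nerve of $\emb(\N^*)$ provides an isomorphism of semisimplicial sets $G \backslash \X_\bullet \cong \emb(\N^*)^\bullet$, hence $H^n_b(G \backslash \X_\bullet) \cong H^n_b(\emb(\N^*))$, and composition with the previous isomorphism yields the desired identification $H^n_b(\emb(\N^*)) \cong H^n_b(G)$. No genuine obstacle remains at this point, since every nontrivial input has been packaged into the earlier results; the only subtle step is the collapse of the chain stabilizer to the stabilizer of the top vertex, which is what ensures the finite-type hypothesis of Theorem \ref{thm:resolutions} is satisfied even though $G \backslash \X_\bullet$ has uncountably many simplices in each positive degree.
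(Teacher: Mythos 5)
Your proposal is correct and follows essentially the same route as the paper: uniform (hence bounded) acyclicity of $\X_\bullet$ via Corollary \ref{cor:W} and Lemma \ref{lem:uac:bac}, identification of chain stabilizers with the stabilizer of the top vertex $\mathbf{x}_p$ (conjugate by transitivity and boundedly acyclic by hypothesis), Theorem \ref{thm:resolutions} to get $H^n_b(G)\cong H^n_b(G\backslash\X_\bullet)$, and Lemma \ref{lem:orbit:iso} to identify the orbit complex with the nerve of $\emb(\N^*)$. Your added checks (connectedness from acyclicity, non-emptiness, and the single isomorphism type of stabilizers per degree) are details the paper leaves implicit, not a different argument.
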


\begin{proof}
    Consider the action of $G$ on $\X_\bullet$. This semisimplicial set is uniformly acyclic by Corollary \ref{cor:W}, so it is boundedly acyclic by Lemma \ref{lem:uac:bac}. The stabilizer of a simplex $\{ \mathbf{x}_0 \preceq \cdots \preceq \mathbf{x}_p \}$ coincides with the stabilizer of $\mathbf{x}_p$, which is boundedly acyclic, and all such stabilizers are conjugate by transitivity. So Theorem \ref{thm:resolutions} applies and gives an isomorphism $H^n_b(G) \cong H^n_b(G \backslash \X_\bullet)$. By Lemma \ref{lem:orbit:bij}, the latter is in turn isomorphic to $H^n_b(\emb(\N^*))$.
\end{proof}

\subsection{Bounded cohomology of the embedding monoid}
\label{proof:monoid}

It remains to show that the embedding monoid $\emb(\N^*)$ is boundedly acyclic. This will be proved indirectly, using the group of countably supported permutations on an uncountable set, from Subsection \ref{ss:auxiliary}.

\begin{theorem}
\label{thm:emb:bac}

The monoid $\emb(\N^*)$ is boundedly acyclic.
\end{theorem}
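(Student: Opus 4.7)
The plan is to obtain the bounded acyclicity of $\emb(\N^*)$ by \emph{running the criterion backwards}: if we can exhibit any single group $G$ that satisfies the hypotheses of Theorem \ref{thm:criterion} and is itself known to be boundedly acyclic, then \Cref{prop:reduction} will force $\emb(\N^*)$ to be boundedly acyclic as well. The most natural candidate is the auxiliary group from Subsection \ref{ss:auxiliary}: the group $G$ of countably supported bijections of an uncountable set $X$.

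The concrete steps are as follows. First, recall that \Cref{prop:criterion:auxiliary} established that $G$, acting on the $G$-poset $\X$ of sequences of pairwise distinct elements of $X$, satisfies all three conditions of Theorem \ref{thm:criterion}: the $\mathbf{W}$ property holds by \Cref{prop:abstract:W}, the action on sequences is transitive since sequences are countable and $X$ is uncountable, and every stabilizer is again isomorphic to $G$ itself (hence boundedly acyclic by \Cref{prop:auxiliary:bac}). Second, apply \Cref{prop:reduction} to this action to obtain an isomorphism
\[
H^n_b(\emb(\N^*)) \;\cong\; H^n_b(G)
\]
for every $n \geq 1$. Third, invoke \Cref{prop:auxiliary:bac} a second time to conclude that the right-hand side vanishes in positive degrees, because $G$ is binate and hence boundedly acyclic.

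The argument is essentially a tautology once the machinery of Section \ref{sec:proof} is in place, so there is no real obstacle to overcome; the content lies entirely in the prior work. The one conceptual point worth flagging is that \Cref{prop:reduction} was proved for transformation groups of sequences where the poset $\X$ and the identification $G\backslash\X_\bullet \cong \emb(\N^*)^\bullet$ from \Cref{lem:orbit:iso} are manifestly general: nothing in Subsections \ref{proof:ss}--\ref{proof:orbit} uses anything special about the set $X$ or the nature of the $G$-action beyond the three bullet points of Theorem \ref{thm:criterion}. Thus the identification of orbits with tuples of order-preserving embeddings of $\N^*$ goes through for the permutation group $G$ exactly as it does for homeomorphism groups, and the proof is complete.
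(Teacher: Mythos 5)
Your proposal is correct and is exactly the paper's own argument: run \Cref{prop:reduction} for the auxiliary group of countably supported bijections of an uncountable set (which satisfies the criterion by \Cref{prop:criterion:auxiliary}) and conclude from its bounded acyclicity (\Cref{prop:auxiliary:bac}). Nothing to add.
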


\begin{proof}
By Proposition \ref{prop:reduction}, it suffices to exhibit a boundedly acyclic group that satisfies the criterion from Theorem \ref{thm:criterion}. This is the group of countably supported permutations of an uncountable set, by Propositions \ref{prop:auxiliary:bac} and \ref{prop:criterion:auxiliary}.
\end{proof}

\begin{proof}[Proof of Theorem \ref{thm:criterion}]
Combine Proposition \ref{prop:reduction} and Theorem \ref{thm:emb:bac}.
\end{proof}

\subsection{Acyclicity}
\label{proof:acyclic}

A completely analogous proof holds in the case of acyclicity. As before, we split the proof into two statements.

\begin{proposition}
\label{prop:reduction2}
    Let $G$ be a group satisfying the criterion from Theorem \ref{thm:criterion2}. Then there is an isomorphism in homology $H_n(\emb(\N^*); \Z) \cong H_n(G; \Z)$.
\end{proposition}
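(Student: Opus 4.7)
The plan is to mirror the proof of Proposition~\ref{prop:reduction} in ordinary homology, using acyclicity in place of bounded acyclicity wherever the argument calls for it. The structural skeleton is identical: we have a group $G$ acting on the semisimplicial set $\X_\bullet$, whose orbit complex is the nerve of $\emb(\N^*)$ by Lemma~\ref{lem:orbit:iso} (a purely combinatorial identification, independent of any bounded/unbounded distinction). So the task reduces to two ingredients: first, showing that $\X_\bullet$ is acyclic (in the ordinary sense), and second, invoking a homological analogue of Theorem~\ref{thm:resolutions} to conclude that $H_n(G;\Z) \cong H_n(G\backslash \X_\bullet;\Z)$.

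For the acyclicity of $\X_\bullet$, I would appeal to Corollary~\ref{cor:W}: the hypothesis that $\X$ has the $\mathbf{W}$ property implies $\X_\bullet$ is uniformly acyclic, and by Lemma~\ref{lem:uac:bac} a uniformly acyclic semisimplicial set is in particular acyclic (the uniform norm bound is irrelevant for ordinary acyclicity, but the underlying chain-level nullhomotopies still provide the exactness needed here). So $\X_\bullet$ is acyclic with integer coefficients.

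For the second ingredient, I would use the equivariant homology spectral sequence associated to the action of $G$ on $\X_\bullet$. Concretely, consider the double complex obtained from a free resolution of $\Z$ over $\Z G$ tensored with the chains $C_*(\X_\bullet;\Z)$; the two filtrations give spectral sequences converging to the equivariant homology $H^G_*(\X_\bullet;\Z)$. The first filtration collapses because $\X_\bullet$ is acyclic, yielding $H^G_*(\X_\bullet;\Z) \cong H_*(G;\Z)$. The second filtration has $E^2$-page given by group homology of $G$ with coefficients in the permutation modules $\Z[G\backslash \X_p]$ arranged by $p$. By transitivity, each orbit is a single $G$-set $G/G_{\mathbf{x}}$ (for a stabilizer conjugate to that of the top vertex, and these coincide up to conjugation along a chain), so by Shapiro's lemma each $E^2_{p,q} = H_q(G_\sigma;\Z)$ for the stabilizer $G_\sigma$ of a chosen representative; acyclicity of the stabilizer forces $E^2_{p,q}=0$ for $q>0$, leaving the bottom row $E^2_{p,0}=\Z[G\backslash \X_p]$, whose homology is exactly $H_*(G\backslash \X_\bullet;\Z)$. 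The spectral sequence then collapses and yields the required isomorphism $H_n(G;\Z)\cong H_n(G\backslash \X_\bullet;\Z)$.

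Combining with Lemma~\ref{lem:orbit:iso}, which identifies $G\backslash \X_\bullet$ with the nerve of $\emb(\N^*)$, we obtain $H_n(G;\Z)\cong H_n(\emb(\N^*);\Z)$. The step that deserves the most care is arguably the spectral sequence argument, since the reader needs to be convinced that Shapiro's lemma applies orbit-by-orbit and that the stabilizers of all simplices are acyclic (not just the stabilizer of a single sequence). The latter follows from the observation made in the proof of Proposition~\ref{prop:reduction}: the stabilizer of a chain $\{\mathbf{x}_0\preceq\cdots\preceq\mathbf{x}_p\}$ coincides with the stabilizer of $\mathbf{x}_p$ (since stabilizing the largest subsequence stabilizes all its subsequences), and all such stabilizers are conjugate by transitivity, hence all acyclic by hypothesis.
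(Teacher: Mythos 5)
Your proposal is correct and follows essentially the same route as the paper: the paper likewise uses the equivariant (isotropy) homology spectral sequence for the $G$-action on $\X_\bullet$ (citing Brown, Ch.~VII.7, with the sign coefficients $\Z_\sigma$ trivial since stabilizers of chains in a poset fix the vertices), kills the higher rows using acyclicity of the simplex stabilizers, and identifies the surviving bottom row with $H_*(G\backslash\X_\bullet;\Z) \cong H_*(\emb(\N^*);\Z)$ via Lemma~\ref{lem:orbit:iso}, with the acyclicity of $\X_\bullet$ coming from Corollary~\ref{cor:W} exactly as you say. Your minor indexing slips (calling the $E^1$-page of the second filtration $E^2$, and omitting the direct sum over orbit representatives) do not affect the argument.
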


\begin{proof}
    The analogue of Theorem \ref{thm:resolutions} holds for homology with integral coefficients, in our case. Indeed, consider the action of $G$ on $\X_\bullet$, which is acyclic by Corollary \ref{cor:W}. Then, by \cite[Chapter VII.7]{brown} there is a spectral sequence
    \[E^1_{p, q} = \bigoplus_{\sigma \in \Sigma_p} H_p(G_\sigma; \Z_\sigma) \Rightarrow H_{p+q}(G; \Z),\]
    where $G_\sigma$ is the stabilizer of the simplex $\sigma$, and $\Z_\sigma$ is the module $\Z$ with the $\Z[G_\sigma]$-module structure defined by the sign of the permutation that an element of $G_\sigma$ induces on the set of vertices of $\sigma$. In our setting, the semisimplicial sets in questions are posets, so $G_\sigma$ fixes the vertices of $\sigma$ pointwise, and thus we recover the isomorphism $H_p(G; \Z) \cong H_p(G \backslash X_\bullet; \Z)$.
    Since the orbit complex is isomorphic to the nerve of $\emb(\N^*)$ (Lemma \ref{lem:orbit:iso}), we conclude.
\end{proof}

\begin{theorem}
\label{thm:emb:acyclic}

The monoid $\emb(\N^*)$ is acyclic.
\end{theorem}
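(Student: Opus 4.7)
The plan is to mirror verbatim the proof of Theorem \ref{thm:emb:bac}, with every bounded-cohomological ingredient replaced by its homological analogue. The route has already been laid out: Proposition \ref{prop:reduction2} reduces the acyclicity of $\emb(\N^*)$ to exhibiting \emph{some} acyclic group $G$ satisfying the hypotheses of Theorem \ref{thm:criterion2}, since such a $G$ would yield an isomorphism $H_n(G;\Z)\cong H_n(\emb(\N^*);\Z)$ and the former vanishes in positive degrees.

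The candidate group is the same one used in the bounded setting, namely the group of countably supported bijections of an uncountable set, treated in Subsection \ref{ss:auxiliary}. The two facts we need are both already in hand: Proposition \ref{prop:auxiliary:bac} establishes that this group is binate, which by Theorem \ref{thm:binate:acyclic} gives acyclicity (in addition to the bounded acyclicity used before); and Proposition \ref{prop:criterion:auxiliary} verifies that it satisfies both criteria simultaneously, including in particular the criterion of Theorem \ref{thm:criterion2}. Combining these, the group is an acyclic group with an action on a $\mathbf{W}$-poset of sequences that is transitive and has acyclic stabilizers, so Proposition \ref{prop:reduction2} applies and produces $H_n(\emb(\N^*);\Z)\cong H_n(G;\Z)=0$ for all $n\geq 1$.

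There is essentially no obstacle here, since all the heavy lifting has been done earlier: the analogue of Theorem \ref{thm:resolutions} for integral homology was built into the statement of Proposition \ref{prop:reduction2} via the Brown spectral sequence argument, and the fact that the orbit complex is the nerve of $\emb(\N^*)$ (Lemma \ref{lem:orbit:iso}) is model-independent. The one point worth flagging is that, since the semisimplicial sets in question are nerves of posets and $G$ fixes the vertices of each simplex pointwise, the sign-twisting $\Z_\sigma$ in Brown's spectral sequence is trivial, which is precisely what makes the argument go through with integral coefficients; this is already noted in the proof of Proposition \ref{prop:reduction2}. The entire proof therefore reduces to a one-line invocation of Proposition \ref{prop:reduction2} together with Propositions \ref{prop:auxiliary:bac} and \ref{prop:criterion:auxiliary}.
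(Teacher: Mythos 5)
Your proposal is correct and is essentially identical to the paper's own proof: the paper likewise applies Proposition \ref{prop:reduction2} to the group of countably supported bijections of an uncountable set, using Propositions \ref{prop:auxiliary:bac} and \ref{prop:criterion:auxiliary}. You also correctly avoid circularity by invoking the reduction (Proposition \ref{prop:reduction2}) directly rather than Theorem \ref{thm:criterion2} itself.
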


\begin{proof}
As in the proof of Theorem \ref{thm:emb:bac}, we appeal to Proposition \ref{prop:reduction2} and the acyclicity of the group of countably supported bijections of an uncountable set (Proposition \ref{prop:auxiliary:bac}).
\end{proof}

\begin{proof}[Proof of Theorem \ref{thm:criterion2}]
Combine Proposition \ref{prop:reduction2} and Theorem \ref{thm:emb:acyclic}.
\end{proof}

\begin{remark}
    As we have seen in the previous section, actions on posets of sequences with \emph{boundedly acyclic} stabilizers are easier to construct than actions with \emph{acyclic} stabilizers. However, let us point out that the isotropy spectral sequence that we used in the proof of Proposition \ref{prop:reduction2} only requires the acyclicity of the semisimplicial set being acted on, which in our setting follows from Corollary \ref{cor:W}. A more careful analysis of this spectral sequence for the groups in question, aided by the acyclicity of the orbit complex established via Theorem \ref{thm:emb:acyclic}, could shed some light on their homology, even when we know that acyclicity does not hold.
\end{remark}

\subsection{An alternative approach}

The final step in the proof of the criteria is the [bounded] acyclicity of the monoid $\emb(\N^*)$, which we achieved by appealing to the auxiliary group of countably supported permutations of an uncountable set. This is the most streamlined approach that we were able to find, which has the peculiarity of leaving the world of transformation groups. In this section, we sketch an alternative approach that is more technically involved, but more thematically coherent: We show how to prove that the group $G \leq \homeop(\R)$ of elements that fix a neighbourhood of $(-\infty, 0]$ pointwise, is [boundedly] acyclic directly. Then the same arguments as above can be applied to deduce the [bounded] acyclicity of $\emb(\N^*)$, concluding the proof of the criterion. We only deal with bounded acyclicity in this sketch, but the proof could be carried through analogously for acyclicity (alternatively, recall that the acyclicity of $\homeop(\R)$ is already known \cite{mcduff1980homology}, and the acyclicity of the group $G$ follows from the same manipulations we did in the proof of Theorem \ref{thm:acyclic}).

\medskip

For the scope of this argument, we call a \emph{fat sequence} in $\R_{>0}$ a collection $\mathbf{x} = (x_i)_{i \in \N^*}$ of fat points in $\R_{>0}$ such that the cores $\dot{\mathbf{x}} = (\dot{x_i})_{i \in \N}$ are pairwse distinct and form a discrete subset of $\R_{> 0}$. This is a similar definition as before, except that the sequence of cores does not need to be increasing, although the discreteness assumption implies that it is diverging.

The crucial difference is in the order relation. We denote $\mathbf{x} \preceq \mathbf{y}$ if $\mathbf{x}$ is a \emph{subset} of $\mathbf{y}$: We do not require that the inclusion preserves the order given by the indices in the sequences. Now the relation $\preceq$ is not antisymmmetric anymore, and so we are dealing with a preordered set (\emph{proset} for short), and not a poset. We denote by $\fat$ this proset. The group $G$ acts on $\fat$ by preserving the preorder, and thus it also acts on the nerve $\fat_\bullet$, which is a semisimplicial set in the usual way.

The uniform acyclicity of $\fat_\bullet$ now requires modifications of the arguments in Subsection \ref{proof:poset}, where posets are replaced with prosets, cones are replaced with nerves of prosets admitting a global maximum, and the $\mathbf{W}$ property has to be modified appropriately. By the discreteness and disjointness assumption, the stabilizer of a fat sequences is isomorphic to a product of copies of $\homeoc(\R)$, and thus it is boundedly acyclic.

\medskip

It remains to study the orbit complex $G \backslash \fat_\bullet$. Since $G$ preserves the natural order on $\R_{>0}$, but fat sequences are not required to follow this order, the action of $G$ on $\fat$ is not transitive and accordingly the orbit complex will be larger. We fix a basepoint in $\fat$, for example the fat sequence with cores the natural numbers and trivial germs. Every other fat sequence with the same cores is in a different $G$-orbit, and is parametrized by a permutation of $\N^*$. This permutation is a complete invariant of the $G$-orbit, since $G$ can send any fat sequence to a unique fat sequence with cores the natural numbers.

Now, to a $p$-simplex $\{ \mathbf{x}_0 \preceq \cdots \preceq \mathbf{x}_p \}$ we can associate both the underlying permutations of $\N^*$ associated to each $\mathbf{x}_i$, and also $p$ embeddings $\N^* \to \N^*$ which describe the way $\mathbf{x}_{i-1}$ sits inside $\mathbf{x}_i$. The key difference is that now these embeddings need not be order-preserving. Unraveling the definitions, and describing explicitly the face maps as we did in Subsection \ref{proof:orbit}, we see that the orbit complex is isomorphic to the nerve of the category $\mathrm{Emb}(\N^*) \times \Pi$. Here $\mathrm{Emb}(\N^*)$ is the monoid of all self-embeddings of $\N^*$, seen as a category with one object; $\Pi$ is a category with objects indexed by permutations of $\N^*$ and a unique morphism between any two objects; and $\times$ denotes the product of categories.

We can now study the bounded cohomology of this nerve via a spectral sequence, noticing that the complex $\ell^\infty(\mathrm{Nerve}_\bullet(\mathrm{Emb}(\N^*) \times \Pi))$ is the total complex of the double complex $\ell^\infty(\mathrm{Nerve}_p(\mathrm{Emb}(\N^*)); \ell^\infty(\mathrm{Nerve}_q(\Pi)))$. The bounded acyclicity will then follow from the fact that both $\mathrm{Nerve}_\bullet(\Pi)$ and $\mathrm{Emb}(\N^*)$ are boundedly acyclic.

\medskip

First, $\mathrm{Nerve}_\bullet(\Pi)$ is boundedly acyclic by virtue of being the nerve of a proset with a global maximum, which as we saw before is a replacement for cones in the context of prosets.

As for the monoid $\mathrm{Emb}(\N^*)$, one can prove this similarly to an argument sketched (for acyclicity) by de la Harpe and McDuff \cite[Appendix 2]{dlhmd}, who in turn attribute it to unpublished work of Quillen \cite{quillen}. The fundamental notion is that of \emph{semiconjugacy}: Two endomorphisms $f, g \colon \mathrm{Emb}(\N^*) \to \mathrm{Emb}(\N^*)$ are semiconjugate if there exists $\mu \in \mathrm{Emb}(\N^*)$ such that $\mu f(x) = f(x) \mu$ for all $x \in \mathrm{Emb}(\N^*)$.
The key property of semiconjugacy is that semiconjugate maps are homotopic. In fact they are even boundedly homotopic, and this can be proven analogously: See e.g. \cite[Lemma 8.2.7]{monod} for a proof in the case of groups, where semiconjugacy is the more familiar notion of conjugacy. Then one shows that there exists an endomorphism of $\mathrm{Emb}(\N^*)$ that is semiconjugate to both the identity endomorphism, and to the constant endomorphism at the identity embedding \cite[Appendix 2]{dlhmd}. This implies that the identity and a constant map induce the same map in bounded cohomology, which is only possible if $\mathrm{Emb}(\N^*)$ is boundedly acyclic.

\medskip

It is worth pointing out that, even in this approach, the bounded acyclicity of embedding monoids plays a crucial role in the proof.

\section{Unboundedness of the topological Pontryagin classes}
\label{sec:pontryagin}

Recall from the introduction that the map
\[
B\dHomeo(M)\to B\homeo(M),
\]
is an acyclic map when $M$ is a compact manifold or the interior of a compact manifold with boundary (\cite[Section 2, Theorem 2.5]{mcduff1980homology}). The same holds for $\homeop(M)$. If we combine this result for $M=\R^n$ and the non-vanishing result of Galatius and Randal-Williams \cite{galatius2022algebraic} we obtain that 
 \[
 \R[e,p_1, p_2, \dots]\to H^*(B\dHomeop(\R^{2n});\R),
 \]
 is injective for $2n\geq 6$, and in the odd-dimensional case, it follows from \cite[Corollary 1.2]{galatius2022algebraic} that
  \[
 \R[p_1, p_2, \dots]\to H^*(B\dHomeop(\R^{2n+1});\R),
 \]
 is injective for $2n+1\geq 7$. In dimension $2$ and $3$, we obtain that 
 $\R[p_1]\to H^*(B\dHomeop(\R^3);\R)$ and $\R[e]\to H^*(B\dHomeop(\R^2);\R)$ are isomorphisms. Given that $\homeop(\R^n)$ is a boundedly acyclic group by Theorem \ref{thm:main:homeo}, all the nontrivial classes in $H^*(B\dHomeop(\R^n);\R)$ are unbounded. This gives a proof of Corollary \ref{cor:flat R^n bundles}. In general, for oriented $C^r$-flat $\R^n$-bundles  for any regularity $r$, we have the following result.

 \begin{theorem}\label{C^r unboundedness}
     For oriented $C^r$-flat $\R^n$-bundles of any regularity $r\geq 0$, the polynomials in terms of the Euler class (when $n$ is even) and Pontryagin classes $p_i$ that are of degree less than $n+2$ are all unbounded.
 \end{theorem}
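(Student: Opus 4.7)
The plan is to combine the bounded acyclicity proved earlier in the paper with the classical detection of tangential characteristic classes via Haefliger theory.

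First, Theorems~\ref{thm:main:homeo} and~\ref{thm:main:diff} imply that the discrete group $\drDiff_{+}(\R^n)$ (with the convention that $r=0$ corresponds to $\dHomeop(\R^n)$) is boundedly acyclic, so $H^k_b(B\drDiff_{+}(\R^n);\R)=0$ in every positive degree $k$. The comparison map $H^k_b\to H^k$ is therefore the zero map in positive degrees, and consequently \emph{every} nonzero cohomology class in positive degree is automatically unbounded. The theorem thus reduces to showing that the polynomials in $e$ (when $n$ is even) and the $p_i$ of cohomological degree strictly less than $n+2$ give nonzero classes in $H^*(B\drDiff_{+}(\R^n);\R)$.

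I would establish this nontriviality uniformly in $r\geq 0$ by chaining together the two inputs recalled just before the statement: Segal's homology isomorphism $B\drDiff_{+}(\R^n)\to B\mathrm{S}\Gamma^{r}_{n}$, and the $(n+1)$-connectivity of the normal bundle classifying map $\nu\colon B\mathrm{S}\Gamma^{r}_{n}\to B\mathrm{GL}_n(\R)_{+}$ from~\cite{haefliger1971homotopy}, which is valid in all regularities. These together produce an injection
\[
H^{\leq n+1}(B\mathrm{GL}_n(\R)_{+};\R)\longrightarrow H^{\leq n+1}(B\drDiff_{+}(\R^n);\R).
\]
Since $H^*(B\mathrm{GL}_n(\R)_{+};\R)$ is the polynomial ring $\R[p_1,p_2,\dots]$, augmented by the Euler class $e$ when $n$ is even, every nonzero polynomial of cohomological degree $\leq n+1$ in these generators lands as a nonzero class in $H^*(B\drDiff_{+}(\R^n);\R)$, and by the first paragraph this class is then unbounded.

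There is no serious obstacle: the argument is a bookkeeping combination of the main bounded acyclicity theorem of the paper with known nontriviality results at the level of classifying spaces. The only point that merits attention is matching the degree bound ``strictly less than $n+2$'' in the statement with the precise injectivity range ``degrees at most $n+1$'' produced by the connectivity of $\nu$, which is of course exactly the same condition.
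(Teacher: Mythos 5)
Your argument is correct and is essentially the paper's own proof: bounded acyclicity of $\drDiff_{+}(\R^n)$ (Theorems \ref{thm:main:homeo} and \ref{thm:main:diff}) makes every nontrivial class unbounded, and nontriviality in degrees $\leq n+1$ follows from Segal's homology isomorphism $B\drDiff_{+}(\R^n)\to B\mathrm{S}\Gamma^{r}_{n}$ together with the $(n+1)$-connectivity of $\nu\colon B\mathrm{S}\Gamma^{r}_{n}\to B\mathrm{GL}_n(\R)_{+}$. The only cosmetic slip is describing $H^*(B\mathrm{GL}_n(\R)_{+};\R)$ as a polynomial ring on infinitely many $p_i$ (it is the truncated ring $H^*(B\mathrm{SO}(n);\R)$), which does not affect the detection in the relevant range of degrees.
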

\begin{proof}
    
By a deep result of Segal \cite[Propositions~1.3 and~3.1]{segal1978classifying}, we know that there is a natural map $$B\drDiff_{+}(\R^n)\to B\mathrm{S}\Gamma^{r}_{n},$$ which is a homology isomorphism, where $B\mathrm{S}\Gamma^{r}_{n}$ is the classifying space of Haefliger structures for codimension $n$ foliations that are transversely oriented, see~\cite[Section~1]{segal1978classifying} for more details. There is also a map 
\[
\nu\colon B\mathrm{S}\Gamma^{r}_{n}\to B\mathrm{GL}_n(\R)_{+},
\]
which classifies oriented normal bundles to the codimension $n$ foliations where $\mathrm{GL}_n(\R)_{+}$ is the group invertible matrices with positive determinant. For all regularities, it is known that the map $\nu$ is at least $(n+1)$-connected, see~\cite[Remark~1, Section~II.6]{haefliger1971homotopy}. Hence, in particular, the induced map 
\[
H^*(B\mathrm{GL}_n(\R)_{+})\to H^*(B\drDiff_{+}(\R^n)),
\]
is an isomorphism for $*\leq n$. So the polynomials of the Euler class, when $n$ is even, and the Pontryagin classes $p_i$ are nontrivial for all $r$ when the degree of the polynomial is less than $n+2$. Given that $\diffrp(\R^n)$ is a bounded acyclic group by Theorem \ref{thm:main:diff}, all these nontrivial classes in $H^*(B\drDiff_{+}(\R^n))$ are unbounded.
\end{proof}
To prove the unboundedness of Pontryagin classes for $C^0$-flat sphere bundles, we need the following lemma.

\begin{lemma}\label{lem:stabilization}
    Let $\mathrm{st}\colon \homeo(\R^n)\to\homeo(\R^{n+1})$ be the stabilization map that sends a homeomorphism $f$ to $f\times \mathrm{id}$. Let $\alpha$ be the following composition
    \[
    \homeo(\R^n)\to \homeo(S^n)\to \homeo(D^{n+1})\to\homeo(\mathrm{int}(D^{n+1}))\xrightarrow{\cong} \homeo(\R^{n+1}),
    \]
    where the first map is induced by the one-point compactification, the second map is induced by taking a cone over the origin, the third map is the restriction to the interior of the disc which is homeomorphic to $\R^{n+1}$. By choosing an orientation preserving homomeorphism between $\mathrm{int}(D^{n+1})$ and $\R^{n+1}$, we obtain a continuous homomorphism $\alpha\colon \homeo(\R^n)\to \homeo(\R^{n+1})$ which is independent of the choice up to homotopy. Then also the maps $\mathrm{st}$ and $\alpha$ are homotopic through homomorphisms.
\end{lemma}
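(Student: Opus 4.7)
The plan is to construct a continuous one-parameter family of homomorphisms $\gamma_t\colon \homeo(\R^n)\to\homeo(\R^{n+1})$ for $t\in[0,1]$ with $\gamma_0=\mathrm{st}$ and $\gamma_1=\alpha$. The starting observation is that both endpoints admit a common description in terms of the universal action of $\homeo(\R^n)$ on $S^n\times\R$ by $f\mapsto \bar f\times\mathrm{id}$, where $\bar f\in\homeo(S^n)$ is the one-point compactification extension of $f$. First, I would verify, using radial coordinates on $\mathrm{int}(D^{n+1})\cong\R^{n+1}$, that $\alpha(f)$ fixes the origin and acts as $\bar f\times\mathrm{id}$ on $\R^{n+1}\setminus\{0\}\cong S^n\times(0,\infty)$ (via polar coordinates $(u,\rho)\mapsto\rho u$); this unwinds the cone construction in the definition of $\alpha$ and holds regardless of the chosen radial identification. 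Second, $\mathrm{st}(f)=f\times\mathrm{id}$ on $\R^n\times\R$ is the restriction of $\bar f\times\mathrm{id}$ on $S^n\times\R$ to the $\bar f\times\mathrm{id}$-invariant open subset $\R^n\times\R\subset S^n\times\R$ (using that $\bar f$ fixes $\infty$).

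With this common framework in place, $\mathrm{st}$ and $\alpha$ differ only in the way an invariant open subset of $S^n\times\R$ (possibly with a collapse) is identified with $\R^{n+1}$. I would then construct a continuous family of such identifications $\Phi_t\colon U_t\to\R^{n+1}$, with $U_0=\R^n\times\R\subset S^n\times\R$ (giving $\mathrm{st}$) and $U_1=(S^n\times\R)\cup\{*\}$ obtained by adjoining a single point to $S^n\times\R$ and identifying via the polar extension (giving $\alpha$). The homotopy $\gamma_t$ would be defined by transporting the $\bar f\times\mathrm{id}$-action across $\Phi_t$. The family $\Phi_t$ can be produced by deforming the standard inclusion $\R^n\times\R\hookrightarrow S^n\times\R$ through a continuous family of open embeddings that converge (as $t\to 1$) to the full inclusion together with a limit point corresponding to the origin; the required isotopy is built using Kirby's theorem $\homeop(\R^{n+1})=\homeo_\circ(\R^{n+1})$ and topological isotopy extension.

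The main obstacle is the topological discrepancy between the two endpoints: $\mathrm{st}(f)$ generically has no fixed point on $\R^{n+1}$, whereas $\alpha(f)$ always fixes the origin (and even the entire north-pole ray after polar identification). Consequently, $\Phi_t$ cannot be realized as a path of self-homeomorphisms of $\R^{n+1}$ conjugating $\mathrm{st}$ to $\alpha$, since such a single conjugacy would send the universal fixed point of $\alpha$ to a common fixed point of the $\mathrm{st}$-action, which does not exist. The homotopy must instead be implemented by a family of identifications in which the ``future origin'' of $\R^{n+1}$ emerges as a limit point only at $t=1$, coming from a direction at infinity in $S^n\times\R$ for $t<1$. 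Carrying out this degeneration continuously and checking that $\gamma_t$ remains a homomorphism in $f$ at each $t$ is the technical heart of the argument; it reduces to a controlled application of isotopy extension and the path-connectedness of $\homeo_\circ(\R^{n+1})$, both of which are available in the $C^0$ category.
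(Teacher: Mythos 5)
Your reduction of both maps to the single action $f\mapsto\bar f\times\mathrm{id}$ on $S^n\times\R$ is correct, and it is essentially the paper's starting point (the paper phrases it via $S^n\times[0,1]$ and the unreduced suspension $SS^n$); your observation that $\mathrm{st}(f)$ and $\alpha(f)$ are in general not conjugate is also correct. But the proposal stops exactly where the content of the lemma lies. For $\gamma_t(f)=\Phi_t\circ(\bar f\times\mathrm{id})\circ\Phi_t^{-1}$ to be a homomorphism, the domain $U_t$ must be invariant under \emph{every} $\bar f\times\mathrm{id}$, and the orbits of this family are the slices $\R^n\times\{s\}$, the points $(\infty_n,s)$, and the adjoined cone point. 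Checking the possibilities, an invariant open set homeomorphic to $\R^{n+1}$ either is of the form $\R^n\times J$ (then $\gamma_t$ is conjugate to $\mathrm{st}$ by an honest homeomorphism of $\R^{n+1}$) or contains the cone point (then all $\gamma_t(f)$ share the fixed point $\Phi_t(*)$, as for $\alpha$). So there is no interpolating family of domains: the homotopy must cross between these two regimes, and everything hinges on continuity of $(f,t)\mapsto\gamma_t(f)$ at that crossing. For this decisive step you offer only ``isotopy extension and path-connectedness of $\homeoo(\R^{n+1})$'', but those tools produce precisely conjugations by self-homeomorphisms of $\R^{n+1}$, i.e.\ they move a homomorphism within its conjugacy class --- which your own fixed-point remark shows can never reach $\alpha$ from $\mathrm{st}$. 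No argument for the limit is actually given.

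Moreover, the mechanism you sketch --- identifications ``converging to the full inclusion together with a limit point'' --- fails in its natural formalization: a homeomorphism $h$ of $\R^{n+1}$ onto the complement of a closed ray $R$ can never satisfy $d(h(x),x)\leq\epsilon$ on a ball $\bar B_\rho(p)$ with $p\in R$ and $\rho\gg\epsilon$. Indeed, $h(S_\rho(p))$ is an embedded sphere whose bounded complementary region contains $B_{\rho-2\epsilon}(p)$ (straight-line homotopy and degree), surjectivity onto $\R^{n+1}\setminus R$ forces $h(B_\rho(p))$ to be that region minus $R$, and then points of $R$ near $p$ are limits of $h(\bar B_\rho(p))$, contradicting that the image of this compact set is closed and disjoint from $R$. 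So the convergence $\gamma_t\to\alpha$ cannot come from identifications that are close to the inclusion on compacta; if such a degeneration argument works at all, it needs a genuinely different mechanism, and supplying it is the heart of the lemma, not a technicality. The paper takes a different route that avoids any limit: after noting that all identification choices only change the homomorphisms by conjugation by an orientation-preserving homeomorphism (hence, by Kirby's theorem and conjugation along an isotopy, by a homotopy through homomorphisms), it exhibits models of $\mathrm{st}$ and of $\alpha$ that are \emph{literally the same} composite $\homeo(S^n,\infty_n)\to\homeo(S^n\times[0,1],\infty_n\times[0,1])\to\homeo(SS^n,l)\to\homeo(S^{n+1},\infty_{n+1})$, i.e.\ $f\mapsto$ the unreduced suspension of $\bar f$, so no one-parameter degeneration is needed. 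As it stands, your proposal identifies the right framework and the right obstruction but leaves the step that carries all the difficulty unproved, with a hint that points to tools that cannot close it.
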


\begin{proof}
 First note that different identifications are induced by conjugation by an orientation preserving homomorphism of $\R^n$. Since conjugation by elements that are isotopic to the identity induces a homomorphism on $\homeo(\R^n)$ that is homotopic to the identity, the homomorphism $\alpha$ is well defined up to homotopy. 
 
 We identify $S^n$ as the one-point compactification of $\R^n$ and choose $\infty_n$, the point at infinity, as the base point of $S^n$. Let $CS^n$ be the  cone $S^n\times [0,1]/S^n\times \{0\}$ and let $\dot{C}S^n$ be the open cone $S^n\times [0,1)/S^n\times \{0\}$. Let also $SS^n=CS^n/S^n\times\{1\}$ be the unreduced suspension of $S^n$. Let us denote the path in $CS^n$ and $SS^n$ which is the image of $\infty_n\times [0,1]$ by $l$. We identify $S^{n+1}$ with $SS^n$ and the point $\infty_{n+1}$ with the image of $S^n\times \{1\}$.

For a closed subset $Y$ of a space $X$, we write $\homeo(X,Y)$ to denote those homomorphisms of $X$ that restrict to the identity on $Y$. We can identify $\homeo(\R^n)$ with $\homeo(S^n, \infty_n)$. So we consider the following model of the stabilization map
\begin{equation*}
\resizebox{.99\hsize}{!}{$
 \homeo(S^n, \infty_n)\to \homeo(S^n\times [0,1], \infty_n\times [0,1])\to \homeo(SS^n, l)\to \homeo(S^{n+1}, \infty_{n+1}).$}
\end{equation*}

  On the other hand, we can identify $\mathrm{int}(D^{n+1})$ with $\dot{C}S^n$ such that the line $l$ is identified with the line from the origin to $\infty_n\in S^n$. With this identification, we have a model of the homomorphism $\alpha$ as the following composition
  \[
   \homeo(S^n, \infty_n)\to \homeo(\dot{C}S^n, l)\to\homeo(SS^n, l)\to \homeo(S^{n+1}, \infty_{n+1}),
  \]
  which is the same homomorphism as the above model for the stabilization map.
\end{proof}

Using this lemma we can prove the unboundedness of certain Pontryagin classes for flat sphere bundles as follows.
    \begin{proof}[Proof of Theorem \ref{thm:spherebundles}] First we shall see the nonvanishing of topological Pontryagin classes in $H^*(B\dHomeo(S^n);\R)$. By Lemma \ref{lem:stabilization} and the Thurston--McDuff homology isomorphism \cite{mcduff1980homology}, we know that the composition
    \[
    H^*(B\dHomeo(\R^{n+1});\R)\to H^*(B\dHomeo(S^n); \R)\to H^*(B\dHomeo(\R^n);\R),
    \]
    is induced by the stabilization map. By Galatius--Randal-Williams \cite{galatius2022algebraic} we know that 
    \[
    \R[p_1,p_2,\dots]\hookrightarrow  H^*(B\dHomeo(\R^n);\R),
    \]
    is injective for all $n\geq 6$. Therefore, we also have the injective map
    \[
    \R[p_1,p_2,\dots]\hookrightarrow  H^*(B\dHomeo(S^n);\R).
    \]
    Now consider the following commutative diagram

    \begin{tikzpicture}[node distance=5cm, auto]
  \node (A) {$H^*(B\dHomeo(S^{n+1}); \R)$};
  \node (B) [right of=A] {$H^*(B\dHomeo(\R^{n+1}); \R)$};
  \node (E) [right of=B] {$H^*(B\dHomeo(S^{n}); \R)$};
  \node (C) [below of=A, node distance=1.2cm] {$H_b^*(B\dHomeo(S^{n+1}); \R)$};  
  \node (D) [below of=B, node distance=1.2cm] {$H_b^*(B\dHomeo(\R^{n+1}); \R)$};
   \node (F) [below of=E, node distance=1.2cm] {$H_b^*(B\dHomeo(S^{n}); \R).$};

  \draw[->] (C) to node {} (D);
    \draw[->] (D) to node {} (F);
  \draw[->] (B) to node {} (E);
  \draw[<-] (E) to node {} (F);
  \draw [<-] (A) to node {}(C);
  \draw [->] (A) to node {} (B);
  \draw [<-] (B) to node {} (D);
\end{tikzpicture}

By Theorem \ref{thm:main:homeo}, we know that $H_b^*(B\dHomeo(\R^{n+1}); \R)=0$ in positive degrees. A diagram chase implies that all the classes $\R[p_1,p_2,\dots]$ in $H^*(B\dHomeo(S^{n+1});\R)$ are unbounded.

For $4\leq n\leq 6$, we can use iterations of these composition of maps to get the diagram
  \begin{tikzpicture}[node distance=5cm, auto]
  \node (A) {$H^*(B\dHomeo(S^{n}); \R)$};
  \node (B) [right of=A] {$H^*(B\dHomeo(\R^{n}); \R)$};
  \node (E) [right of=B, node distance=3.2cm] {$\dots$};
    \node (G) [right of=E, node distance=3.2cm] {$H^*(B\dHomeo(S^{3}); \R)$};
  \node (C) [below of=A, node distance=1.2cm] {$H_b^*(B\dHomeo(S^{n}); \R)$};  
  \node (D) [below of=B, node distance=1.2cm] {$H_b^*(B\dHomeo(\R^{n}); \R)$};
   \node (F) [below of=E, node distance=1.2cm] {$\dots$};
   \node (H) [below of=G, node distance=1.2cm] {$H_b^*(B\dHomeo(S^{3}); \R).$};
  \draw[->] (C) to node {} (D);
    \draw[->] (D) to node {} (F);
  \draw[->] (B) to node {} (E);
  \draw [<-] (A) to node {}(C);
  \draw [->] (A) to node {} (B);
  \draw [<-] (B) to node {} (D);
  \draw [->] (E) to node {} (G);
  \draw [->] (H) to node {} (G);
  \draw [->] (F) to node {} (H);
\end{tikzpicture}

Recall that $\homeo(S^3)\simeq \mathrm{O}(4)$, so the Thurston--McDuff theorem implies that
\[\R[p_1, p_2]\to H^*(B\dHomeo(S^{3}); \R)\]
is an isomorphism. Therefore, the same argument as before implies that the classes $\R[p_1, p_2]\hookrightarrow H^*(B\dHomeo(S^{n}); \R)$ for $4\leq n\leq 6$ are unbounded.

If we continue the same diagram up $H^*(B\dHomeo(S^1); \R)$, we obtain that the classes $\R[p_1]\hookrightarrow H^*(B\dHomeo(S^n); \R)$ for $n=2,3$ are also unbounded.
    \end{proof}

\section{Further results and questions}
\label{sec:further}

We conclude by mentioning other settings in which Theorem \ref{thm:criterion} applies, with minor variations of the arguments of Section \ref{sec:verifying}. On the other hand, when trying to tackle other transformation groups of interest, we encounter technical difficulties, and we leave these as open questions. Let us stress that even though Theorem \ref{thm:criterion} may not apply for some of these cases, the general proof strategy seems fruitful beyond its specific scope. For example, our proof shows that the $\mathbf{W}$ property is nothing but a simple combinatorial condition to ensure bounded acyclicity of posets of sequences, and more sophisticated criteria could work in more general contexts. Moreover, our statement of Theorem \ref{thm:resolutions} made the convenient assumption that the stabilizers belong to finitely many isomorphism classes, which was enough for our purposes, but the principle works more generally, as long as the \emph{vanishing modulus} of the stabilizers is controlled \cite[Theorem 3.3]{MN}.

\subsection{Results}

We start by considering other transformation groups of the line. Let $\PLpw(\R)$ denote the group of orientation preserving piecewise linear homeomorphisms of $\R$ with a discrete set of breakpoints. Subgroups of this group are central in the theory of left orderable groups, and include lifts of piecewise linear groups of the circle \cite{ghyssergiescu}, as well as the first examples of finitely generated simple left orderable groups \cite{hydelodha, mbt}. The bounded acyclicity of these simple groups was already proven in \cite[Example 5.41]{ccc}; the proof relies on their \emph{quasi-periodic} nature, and in particular it does not generalize to the whole of $\PLpw(\R)$. With a very minor variation on the arguments in Subsection \ref{verifying:line}, we obtain:

\begin{theorem}
\label{thm:PL}
    The group $\PLpw(\R)$ is boundedly acyclic.
\end{theorem}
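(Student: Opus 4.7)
The plan is to follow the proof of Theorems \ref{thm:main:homeo} and \ref{thm:main:diff} for $n=1$ from Subsection \ref{verifying:line}, replacing $\diffrp(\R)$ by $\PLpw(\R)$ throughout and applying Theorem \ref{thm:criterion} to the subgroup $G \leq \PLpw(\R)$ of elements fixing a neighborhood of $(-\infty, 0]$ pointwise. A \emph{fat point} in $\R_{>0}$ is defined exactly as in Definition \ref{def:R:fatpoint}, now requiring the germ to come from an orientation-preserving PL embedding $(-1,1) \to \R_{>0}$, and $\fat$ is the resulting poset of fat sequences. The $\mathbf{W}$ property is immediate from Proposition \ref{prop:abstract:W}, since the combinatorial input does not depend on regularity.

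For transitivity, given fat sequences $\mathbf{x}$ and $\mathbf{y}$, I pick $\varepsilon_i \in (0, \tfrac12)$ so that the closed intervals $x_i([-\varepsilon_i, \varepsilon_i])$ are pairwise disjoint, and likewise for the $y_i([-\varepsilon_i, \varepsilon_i])$. On each piece containing $\dot x_i$ I set $g \coloneqq y_i \circ x_i^{-1}$, which is PL. On each intervening gap I extend $g$ by the unique orientation-preserving affine map matching endpoints to endpoints. The resulting $g$ is globally PL with locally finite breakpoint set, lies in $G$, and satisfies $g.\mathbf{x} = \mathbf{y}$. Affine interpolation replaces the cutoff-function argument of Lemma \ref{lem:R:transitivity} and, if anything, simplifies the proof.

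The stabilizer of a fat sequence $\mathbf{x}$ in $G$ is the infinite direct product of compactly supported PL groups $\PLpw(I_j) \cap \homeoc(I_j)$, where $I_1 = (0, \dot x_1)$ and $I_j = (\dot x_{j-1}, \dot x_j)$ for $j \geq 2$. Each factor has commuting $\Z$-conjugates: given a finitely generated subgroup $H$ supported on a compact $K \subset I_j$, I pick a PL homeomorphism $t \in \PLpw(I_j) \cap \homeoc(I_j)$ mapping $K$ into the component of $I_j \setminus K$ adjacent to $\inf I_j$ and satisfying $t(x) < x$ on an initial segment containing $K$; then $t^p(K)$ is contained in $(\inf I_j, \inf K)$ for every $p \geq 1$ and hence disjoint from $K$, so $[H, t^p H t^{-p}] = 1$. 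Coordinatewise witnesses give commuting $\Z$-conjugates for the infinite product, and Theorem \ref{thm:ccc} yields bounded acyclicity of the stabilizer. Theorem \ref{thm:criterion} then shows that $G$ is boundedly acyclic.

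To conclude, I repeat the short exact sequence manipulations at the end of Subsection \ref{verifying:line}. Since every element of $\PLpw(\R)$ has locally finite breakpoint set, its germ at $\pm\infty$ is a single affine map, so the germ group $\mathcal{G}_\pm$ is isomorphic to the affine group $\R \rtimes \R_{>0}$, which is solvable, hence amenable, hence boundedly acyclic. The kernel $\PLpw(\R) \cap \homeoc(\R)$ is boundedly acyclic by the same commuting $\Z$-conjugates argument used for the stabilizer factors. Applying Proposition \ref{prop:quotient} first to $1 \to G \cap \homeoc(\R) \to G \to \mathcal{G}_+ \to 1$ and then to $1 \to \PLpw(\R) \cap \homeoc(\R) \to \PLpw(\R) \to \mathcal{G}_- \times \mathcal{G}_+ \to 1$ propagates bounded acyclicity from $G$ to $\PLpw(\R)$. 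The main technical point beyond the smooth case is that Theorem \ref{thm:compactly} does not cover compactly supported PL groups; this is easily overcome by the commuting $\Z$-conjugates criterion of Theorem \ref{thm:ccc}.
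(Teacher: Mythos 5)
Your proposal is correct in outline and, after two small repairs, gives a complete proof; it follows the paper's general scheme (reduce to the one-sided subgroup $G$, verify Theorem \ref{thm:criterion}, then propagate through germ groups at the ends via Proposition \ref{prop:quotient}), but it takes a genuinely different route at the stabilizer step. The paper works with sequences of plain \emph{points} rather than fat points, so its stabilizers are $\PLp(I)^{\N}$, whose elements may have nontrivial linear germs at the endpoints of $I$; bounded acyclicity is then obtained by noting that germs at fixed points are abelian, so the commutator subgroup is a product of fixed-point-free subgroups of $\homeoc(\R)$, and combining commuting $\Z$-conjugates with a coamenability argument. You instead keep the fat points of Definition \ref{def:R:fatpoint} (now PL germs), which makes the stabilizer an honest product of compactly supported PL groups, so Theorem \ref{thm:ccc} applies directly and no coamenability step is needed; the price is that you must redo transitivity in the PL category, and your affine interpolation is the right replacement for the cutoff functions of Lemma \ref{lem:R:transitivity}. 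Both routes are valid; yours is slightly more self-contained at the stabilizer step, while the paper's avoids fat points altogether.

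Two points need fixing. First, and most importantly, one assertion in your final paragraph is false: an element of $\PLpw(\R)$ has a \emph{discrete} (locally finite) set of breakpoints, which may be infinite and unbounded (e.g.\ a breakpoint at every positive integer), so its germ at $+\infty$ need not be affine; the germ group $\mathcal{G}_\pm$ is not $\R \rtimes \R_{>0}$ and is in fact non-amenable (it contains nonabelian free subgroups, e.g.\ coming from lifts of PL actions on the circle). Had this claim been load-bearing, the proof would collapse; fortunately it is redundant, since your first exact sequence $1 \to G \cap \homeoc(\R) \to G \to \mathcal{G}_+ \to 1$ together with Proposition \ref{prop:quotient} already shows that $\mathcal{G}_+$, hence $\mathcal{G}_- \times \mathcal{G}_+$, is boundedly acyclic once $G$ is, exactly as in the paper's argument for $\homeop(\R)$, and the second sequence then finishes the propagation to $\PLpw(\R)$. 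So simply delete the amenability claim. Second, two minor slips: a compactly supported homeomorphism of the open interval $I_j$ cannot satisfy $t(x) < x$ on an initial segment containing $K$, since it is the identity near $\inf I_j$; but this condition is unnecessary, because once $t$ maps a compact interval $K$ containing the support of $H$ strictly below $K$, monotonicity alone gives $t^p(K) \cap K = \emptyset$ for all $p \geq 1$ (alternatively, quote \cite[Corollary 5.32]{ccc} for each factor, as the paper does). Likewise, the affine extension on the first gap $(0, x_1(-\varepsilon_1))$ does not fix a neighbourhood of $0$, so it does not land in $G$ as defined; insert an identity piece near $0$ (one extra breakpoint) before interpolating.
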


\begin{proof}
    Any subgroup of $\homeoc(\R)$ acting without global fixed points has commuting $\Z$-conjugates \cite[Corollary 5.32]{ccc}. Therefore, by the same argument as in the proof of Theorem \ref{thm:main:homeo} for $\homeop(\R)$, it suffices to prove bounded acyclicity of the subgroup $G < \PLpw(\R)$ of elements that fix pointwise $(-\infty, 0]$. We consider the poset $\X$ of strictly increasing diverging sequences of \emph{points} in $\R_{\geq 0}$. The action of $G$ is transitive, and this poset satisfies the $\mathbf{W}$ property, by Proposition \ref{prop:abstract:W} (where $x \simeq y \Leftrightarrow x = y$ and $x \sqsubseteq y \Leftrightarrow x \leq y$). The stabilizer of a sequence is isomorphic to $\PLp(I)^{\N}$, where $I = [0, 1]$ and $\PLp(I)$ denotes the group of piecewise linear homeomorphisms of $I$ with a \emph{finite} set of breakpoints. Because germs at fixed points are abelian, the commutator subgroup of $\PLp(I)^{\N}$ is a product of subgroups of $\homeoc(\R)$ acting without global fixed points, so it has commuting $\Z$-conjugates by \cite[Corollary 5.32]{ccc} and \cite[Lemma 4.8]{ccc}. It then follows from Theorem \ref{thm:ccc} and coamenability \cite[Proposition 8.6.6]{monod} that $\PLp(I)^{\N}$ is boundedly acyclic. Thus Theorem \ref{thm:criterion} applies, and we conclude.
\end{proof}

The proof generalizes seamlessly to the piecewise projective case $\PPpw(\R)$; again, subgroups of this group have provided very important examples in group theory, particularly in the realm of non-amenable groups without free subgroups \cite{monod:PP, lodhamoore}:

\begin{theorem}
    The group $\PPpw(\R)$ is boundedly acyclic.
\end{theorem}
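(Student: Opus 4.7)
The plan is to adapt almost verbatim the argument of Theorem \ref{thm:PL} from the piecewise linear to the piecewise projective setting. First I would pass to the subgroup $G \leq \PPpw(\R)$ of elements fixing a neighborhood of $(-\infty, 0]$ pointwise. Since $\PP_c(\R)$ is a subgroup of $\homeoc(\R)$ acting without global fixed points, it has commuting $\Z$-conjugates by \cite[Corollary 5.32]{ccc} and is therefore boundedly acyclic; the same exact-sequence manipulations as in the proof of Theorems \ref{thm:main:homeo} and \ref{thm:main:diff} for $n=1$ then reduce bounded acyclicity of $\PPpw(\R)$ to that of $G$.

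I would then apply Theorem \ref{thm:criterion} to the action of $G$ on the poset $\X$ of strictly increasing, divergent sequences of points in $\R_{>0}$. The $\mathbf{W}$ property holds by Proposition \ref{prop:abstract:W} with $x \simeq y \Leftrightarrow x = y$ and $x \sqsubseteq y \Leftrightarrow x \leq y$. Transitivity is straightforward: given two such sequences $(x_i)$ and $(y_i)$, the map that is the identity on $(-\infty,0]$, affine on $[0,x_1]$ sending it to $[0,y_1]$, and affine on each $[x_i,x_{i+1}]$ sending it to $[y_i,y_{i+1}]$, is an element of $G$ with discrete breakpoint set $\{0, x_1, x_2, \ldots\}$.

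The main obstacle, and the only place where the argument genuinely departs from the PL case, is the bounded acyclicity of the stabilizer $\PPp(I)^{\N}$, where $I = [0,1]$. In the PL case one crucially used that germs of PL homeomorphisms at a fixed point are abelian, so that the commutator subgroup of $\PLp(I)^{\N}$ is a product of subgroups of $\homeoc(\R)$ acting without global fixed points. This fails in the projective setting, because germs of projective homeomorphisms fixing an endpoint form the non-abelian affine group $\mathrm{Aff}(\R)_+ \cong \R \rtimes \R_{>0}$.

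I would overcome this by replacing the commutator subgroup with the normal subgroup $N \trianglelefteq \PPp(I)^{\N}$ consisting of elements that are compactly supported in the interior of each factor. Each factor of $N$ is isomorphic to $\PP_c(\R)$, a subgroup of $\homeoc(\R)$ acting without global fixed points, hence has commuting $\Z$-conjugates by \cite[Corollary 5.32]{ccc}; the product $N$ then inherits this property by \cite[Lemma 4.8]{ccc}, and is boundedly acyclic by Theorem \ref{thm:ccc}. The quotient $\PPp(I)^{\N}/N$ embeds into $(\mathrm{Aff}(\R)_+ \times \mathrm{Aff}(\R)_+)^{\N}$, which is solvable of derived length at most $2$, hence amenable. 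Combining Proposition \ref{prop:quotient} with the coamenability result \cite[Proposition 8.6.6]{monod} gives bounded acyclicity of $\PPp(I)^{\N}$, and Theorem \ref{thm:criterion} then concludes the proof.
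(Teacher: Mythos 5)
Your proposal is correct and follows essentially the same route as the paper, whose proof of this statement is literally the argument of Theorem \ref{thm:PL} with the single modification that germs at fixed points are metabelian rather than abelian; your device of passing to the normal subgroup of elements compactly supported in the interiors, whose quotient embeds in a product of (metabelian, hence amenable) affine germ groups, is exactly one natural way to implement that modification. The only cosmetic point is a mismatch between your definition of $G$ (fixing a \emph{neighbourhood} of $(-\infty,0]$ pointwise) and both your transitivity witness (which is affine immediately to the right of $0$, so fixes only $(-\infty,0]$ itself) and your identification of the stabilizer with the full power $\mathrm{PL}$-style group; this is resolved either by adopting the convention of the paper's proof of Theorem \ref{thm:PL}, where $G$ consists of elements fixing $(-\infty,0]$ pointwise, or by starting your interpolation slightly to the right of $0$ and noting that the same quotient argument handles the slightly smaller stabilizer.
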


\begin{proof}
    The proof of Theorem \ref{thm:PL} applies, with the only modification that germs at fixed points are not abelian but metabelian.
\end{proof}

Next, we consider point stabilizers in transformation groups of Euclidean spaces. Let $\diffrp(\R^n, 0)$ denote the stabilizer of $0$ in $\diffrp(\R^n)$. For $r \geq 1$, the derivative at $0$ gives a surjective homomorphism $\diffrp(\R^n, 0) \to \mathrm{GL}_n(\R)_+$, which admits a section: The inclusion $\mathrm{GL}_n(\R)_+ \to \diffrp(\R^n, 0)$. This implies that the bounded cohomology of $\mathrm{GL}_n(\R)_+$ embeds into the bounded cohomology of $\diffrp(\R^n)$, which is therefore not boundedly acyclic \cite{MR0686042}. On the other hand, we prove:

\begin{theorem}
\label{thm:homeo0}
    For all $n \in \N^*$, the group $\homeop(\R^n, 0)$ is boundedly acyclic.
\end{theorem}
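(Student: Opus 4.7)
The plan is to handle $n = 1$ by a direct product decomposition and to treat $n \geq 2$ by applying Theorem~\ref{thm:criterion} to a poset of \emph{bi-infinite} fat sequences, following the strategy of Subsection~\ref{verifying:Rn} but now accumulating concentrically both at infinity and at the fixed point $0$.

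For $n = 1$, an orientation-preserving self-homeomorphism of $\R$ fixing $0$ preserves $\R_{\geq 0}$ and $\R_{\leq 0}$, so
$$\homeop(\R, 0) \cong \homeop([0, \infty), 0) \times \homeop((-\infty, 0], 0).$$
Any increasing self-homeomorphism of $(0, \infty)$ automatically extends to a self-homeomorphism of $[0, \infty)$ fixing $0$, so $\homeop([0, \infty), 0) \cong \homeop((0, \infty)) \cong \homeop(\R)$ via the logarithm. Each factor is boundedly acyclic by Theorem~\ref{thm:main:homeo} in dimension $1$, and bounded acyclicity of the product follows from Proposition~\ref{prop:quotient} applied to either projection.

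For $n \geq 2$, let $G = \homeop(\R^n, 0)$. Call a fat sphere $S$ in the sense of Definition~\ref{def:Rn:fatsphere} (with $r = 0$) \emph{centered} if $0 \in S_b$, and define a \emph{bi-infinite fat sequence} to be a family $\mathbf{S} = (S_i)_{i \in \Z}$ of centered fat spheres with $S_{i+1}$ englobing $\dot S_i$ for every $i \in \Z$, such that every compact subset of $\R^n$ is englobed by $S_i$ for $i$ large positive and $\bigcap_{i \in \Z} (S_i)_b = \{0\}$. Let $\fat$ be the corresponding poset under bi-infinite subsequences, on which $G$ acts naturally. Take the model bi-infinite sequence $\mathbf{\Sigma}$ with $\dot\Sigma_i$ the standard sphere of radius $2^i$ around the origin. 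The stabilizer of $\mathbf{\Sigma}$ in $G$ is exactly $\prod_{i \in \Z} \homeoc(A_i)$ where $A_i$ is the open annulus between $\dot\Sigma_i$ and $\dot\Sigma_{i+1}$: any family $(f_i)_{i \in \Z}$ with $f_i$ compactly supported in $A_i$ assembles to an element of $G$ because the assembled map sends each $A_i$ to itself, and since the outer radius $2^{i+1}$ of $A_i$ tends to $0$ as $i \to -\infty$, continuity at $0$ is automatic. Each $A_i$ is homeomorphic to $S^{n-1} \times \R$, so Theorem~\ref{thm:compactly} gives bounded acyclicity of the stabilizer. The $\mathbf{W}$ property is established by adapting the cyclic interweaving of Proposition~\ref{prop:abstract:W}: apply the construction separately to the positive halves (ordered by englobing toward $\infty$) and to the negative halves (with the reversed englobing order, where cofinality amounts to shrinking into $\{0\}$) of the given finite collection of bi-infinite sequences, and concatenate.

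The main obstacle is transitivity of $G$ on $\fat$. I would proceed by a bilateral version of the argument of Lemma~\ref{lem:Rn:transitivity}: given the model $\mathbf{\Sigma}$ and any target $\mathbf{S}$, construct $h_k \in G$ inductively matching $\Sigma_i$ with $S_i$ for $|i| \leq k$, with $h_{k+1}^{-1} h_k$ trivial on a neighborhood of the compact region bounded by $\dot\Sigma_{-k}$ and $\dot\Sigma_k$. The positive step is exactly the argument of Lemma~\ref{lem:Rn:transitivity}, while the negative step is its mirror image performed in the annulus between $\dot S_{-k-1}$ and $\dot S_{-k}$, which lies in $\R^n \setminus \{0\}$ and so does not interfere with the fixed point. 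Two technical points require care. First, transitivity of $G$ itself on centered fat spheres: given a centered $S$, by Definition~\ref{def:Rn:fatsphere} there is some $g_0 \in \homeop(\R^n)$ with $g_0.\Sigma = S$, and we post-compose with a homeomorphism compactly supported in $S_b$ carrying $g_0(0)$ back to $0$; this exists because $S_b$ is homeomorphic to an open ball containing both $0$ and $g_0(0)$ in its interior. Second, the limiting homeomorphism $g = \lim_k h_k$ is continuous at $0$ because $g$ sends $(\Sigma_{-k})_b$ onto $(S_{-k})_b$ for every $k$, and the latter shrink to $\{0\}$ by the defining condition on $\mathbf{S}$. This verifies the three hypotheses of Theorem~\ref{thm:criterion} and completes the proof.
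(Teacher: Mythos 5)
Your $n \geq 2$ argument has a genuine gap at the point where you invoke Theorem \ref{thm:criterion}. That criterion is stated, and proved, only for $G$-posets of sequences indexed by $\N^*$ with the subsequence order, and the restriction is not cosmetic: the proof (Proposition \ref{prop:reduction}) identifies the orbit complex $G \backslash \X_\bullet$ with the nerve of the monoid $\emb(\N^*)$ of order-preserving self-embeddings of $\N^*$ (Lemma \ref{lem:orbit:iso}) and then uses the bounded acyclicity of that particular monoid (Theorem \ref{thm:emb:bac}). For your poset of $\Z$-indexed bi-infinite fat sequences the orbit complex is instead the nerve of the monoid of order-preserving embeddings $\Z \to \Z$ whose image is unbounded in both directions, and the bounded acyclicity of that monoid is established nowhere in the paper; likewise Proposition \ref{prop:abstract:W} is stated only for one-sided cofinal sequences, so even the $\mathbf{W}$ property needs its own two-sided statement and proof rather than an appeal to "concatenation". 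None of this looks unfixable --- one could redo Section \ref{sec:proof} for $\Z$-indexed sequences, using the auxiliary group of countably supported permutations acting on $\Z$-indexed sequences of distinct points to kill the new embedding monoid, and your geometric input (centered fat spheres, the product-of-annuli stabilizer with automatic continuity at $0$, the two-sided transitivity induction) is plausible --- but as written the sentence "this verifies the three hypotheses of Theorem \ref{thm:criterion} and completes the proof" is not valid, and filling the gap amounts to a parallel development of the whole criterion, not a citation.

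The paper's own proof is much lighter and leverages what is already proved: identify $\homeop(\R^n, 0) \cong \homeop(S^n, \{0, \infty\})$ and quotient by the normal subgroup of homeomorphisms fixing a neighbourhood of $\{0, \infty\}$ pointwise, which is isomorphic to $\homeoc(S^{n-1} \times \R)$ and hence boundedly acyclic by Theorem \ref{thm:compactly}; by Proposition \ref{prop:quotient} this reduces the statement to the product of germ groups $\mathcal{G}_0 \times \mathcal{G}_\infty$, and each germ group sits in the exact sequence $1 \to \homeoc(\R^n) \to \homeop(\R^n) \to \mathcal{G}_\infty \to 1$, so it is boundedly acyclic by Theorems \ref{thm:compactly} and \ref{thm:main:homeo} together with Proposition \ref{prop:quotient}. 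In other words, Theorem \ref{thm:main:homeo} does the heavy lifting and no new poset is needed; this argument also covers $n = 1$, so your (correct) product decomposition for the line, while fine, is not needed as a separate case. I would either replace your $n \geq 2$ argument by this reduction, or, if you want to keep the bi-infinite poset, explicitly state and prove the $\Z$-indexed analogues of Proposition \ref{prop:abstract:W}, Lemma \ref{lem:orbit:iso} and Theorem \ref{thm:emb:bac}, and write out the transitivity induction in full.
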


\begin{proof}
    We identify $\homeop(\R^n, 0) \cong \homeop(S^n, \{ 0, \infty \})$, which gives a short exact sequence
    \[1 \to \homeo(S^n, \mathrm{near } \{0, \infty\}) \to \homeop(S^n, \{ 0, \infty \}) \to \mathcal{G}_0 \times \mathcal{G}_\infty \to 1.\]
    Here $\homeo(S^n, \mathrm{near } \{0, \infty\})$ denotes the group of homeomorphisms of $S^n$ that fix pointwise a neighbourhood of $\{ 0, \infty \}$, and $\mathcal{G}_0$ and $\mathcal{G}_\infty$ denote the germs at $0$ and $\infty$, respectively. By Theorem \ref{thm:compactly} the group $\homeo(S^n, \mathrm{near }\{ 0, \infty \}) \cong \homeoc(S^{n-1} \times \R)$ is boundedly acyclic, so Proposition \ref{prop:quotient} reduces the statement to the bounded acyclicity of $\mathcal{G}_0 \times \mathcal{G}_\infty$. Since $\mathcal{G}_0 \cong \mathcal{G}_\infty$, it suffices to show that $\mathcal{G}_\infty$ is boundedly acyclic. We also have a short exact sequence
    \[1 \to \homeoc(\R^n) \to \homeop(\R^n) \to \mathcal{G}_\infty \to 1,\]
    so Theorem \ref{thm:compactly}, Theorem \ref{thm:main:homeo} and Proposition \ref{prop:quotient} imply that $\mathcal{G}_\infty$ is boundedly acyclic.
\end{proof}

In a similar way, we can deal with compact annuli, which were mentioned in the introduction. We denote by $\homeoo(S^n \times [0, 1], \partial)$ the identity component of the group of homeomorphisms of the annulus fixing both boundary components pointwise.

\begin{proof}[Proof of Theorem \ref{thm:annuli:homeo}]
    Consider the short exact sequence
    \[1 \to \homeoco(S^n \times (0, 1)) \to \homeoo(S^n \times [0, 1], \partial) \to \mathcal{G}_0 \times \mathcal{G}_1 \to 1.\]
    Here $\mathcal{G}_0$ and $\mathcal{G}_1$ denote the germs at the boundary component $S^n \times \{ 0 \}$ and $S^n \times \{ 1 \}$, respectively. As in the proof of Theorem \ref{thm:homeo0}, it suffices to show that $\mathcal{G}_0 \cong \mathcal{G}_1$ is boundedly acyclic. This follows from Theorem \ref{thm:discs}, by considering the short exact sequence
    \[1 \to \homeoc(B^{n+1}) \to \homeo(D^n, \partial) \to \mathcal{G}_1 \to 1. \qedhere\]
\end{proof}

\begin{corollary}
\label{cor:annulus:restriction}
    For all $n \in \N^*$, the restriction $\homeoo(S^n \times [0, 1]) \to \homeoo(S^n) \times \homeoo(S^n)$ induces an isomorphism in bounded cohomology in all degrees.
\end{corollary}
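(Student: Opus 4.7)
I would mimic the proof of Corollary~\ref{cor:discs}, building the short exact sequence
\[
1 \longrightarrow K \longrightarrow \homeoo(S^n \times [0, 1]) \stackrel{r}{\longrightarrow} \homeoo(S^n) \times \homeoo(S^n) \longrightarrow 1,
\]
where $r$ is the restriction to the two boundary components and $K = \ker(r)$. Once $K$ is shown to be boundedly acyclic, Proposition~\ref{prop:quotient} yields the desired isomorphism. Surjectivity of $r$ is a standard isotopy-blending construction: given $(f_0, f_1) \in \homeoo(S^n) \times \homeoo(S^n)$, I would choose isotopies $H^i_s$ from $\id$ to $f_i$ for $i = 0, 1$ and use cutoff functions on $[0, 1]$ to produce an element of $\homeoo(S^n \times [0, 1])$ restricting to $(f_0, f_1)$. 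This is the annular analogue of the cone construction used in Corollary~\ref{cor:discs}.

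The substance of the argument lies in controlling $K$. Naively, one would hope that $K = \homeoo(S^n \times [0, 1], \partial)$, which would yield bounded acyclicity immediately from Theorem~\ref{thm:annuli:homeo}. However, this inclusion is generically strict: already for $n = 1$, the Dehn twist of the annulus belongs to $K$ (a rotation of one boundary component provides an ambient isotopy to the identity) yet represents a nontrivial class in the mapping class group $\pi_0(\homeo(S^1 \times [0, 1], \partial)) \cong \Z$.

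Via the long exact sequence of the restriction Serre fibration $\homeo(S^n \times [0, 1], \partial) \hookrightarrow \homeo(S^n \times [0, 1]) \twoheadrightarrow \homeo(\partial)$ (where $\partial = S^n \sqcup S^n$), the quotient $K/\homeoo(S^n \times [0, 1], \partial)$ identifies with the image of the connecting map $\pi_1(\homeoo(S^n))^2 \to \pi_0(\homeo(S^n \times [0, 1], \partial))$, which is an abelian, hence amenable, group. Applying Proposition~\ref{prop:quotient} to the intermediate sequence $1 \to \homeoo(S^n \times [0, 1], \partial) \to K \to K/\homeoo(S^n \times [0, 1], \partial) \to 1$, combined with Theorem~\ref{thm:annuli:homeo} and the vanishing of bounded cohomology for amenable groups, yields the bounded acyclicity of $K$. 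The main obstacle is precisely the control of this mapping-class-group excess, which makes the annulus case genuinely more subtle than the disc case.
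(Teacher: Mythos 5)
Your proof is correct, and its skeleton is the same as the paper's: the short exact sequence induced by restriction to the two boundary components, bounded acyclicity of the kernel, and Proposition \ref{prop:quotient}. The difference is in how the kernel is treated, and here your version is actually the more careful one. The paper's proof simply writes the sequence
\[1 \to \homeoo(S^n \times [0, 1], \partial) \to \homeoo(S^n \times [0, 1]) \to \homeoo(S^n) \times \homeoo(S^n) \to 1,\]
i.e.\ it identifies the kernel $K$ of the restriction with the identity component $\homeoo(S^n \times [0,1], \partial)$ and invokes Theorem \ref{thm:annuli:homeo} directly. As you observe, this identification is not literally correct in general: already for $n=1$ the Dehn twist lies in $K$ (it is isotopic to the identity once the boundary is released) but generates $\pi_0(\homeo(S^1\times[0,1],\partial))\cong\Z$, so $K$ strictly contains $\homeoo(S^1\times[0,1],\partial)$. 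Your repair --- identifying $K/\homeoo(S^n\times[0,1],\partial)$ with the image of the connecting homomorphism $\pi_1(\homeoo(S^n))^2 \to \pi_0(\homeo(S^n\times[0,1],\partial))$ coming from the restriction fibration (isotopy extension), noting that this image is abelian hence amenable and therefore boundedly acyclic with real coefficients, and then applying Proposition \ref{prop:quotient} twice --- is exactly what is needed to make the paper's one-line argument airtight, at the modest cost of invoking the fibration property of restriction and the vanishing of bounded cohomology of amenable groups. Your surjectivity argument by blending isotopies in a collar is also fine and matches the cone-type argument used for Corollary \ref{cor:discs}. In short: same route as the paper, but your handling of the mapping-class-group excess in the kernel fills a point the paper glosses over.
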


\begin{proof}
    Consider the short exact sequence
    \[1 \to \homeoo(S^n \times [0, 1], \partial) \to \homeoo(S^n \times [0, 1]) \to \homeoo(S^n) \times \homeoo(S^n) \to 1\]
    and apply Theorem \ref{thm:annuli:homeo} and Proposition \ref{prop:quotient}.
\end{proof}

As in Corollary \ref{cor:disc:low}, using \cite[Theorems 1.8 and 1.10]{MN} we can now compute the bounded cohomology of $\homeoo(S^n \times [0, 1])$ for $n \geq 2$, in low degrees (note that in this case we also need a degree-wise version of Proposition \ref{prop:quotient}, see \cite[Proposition 2.4]{MN}).

\begin{corollary}
\label{cor:annulus:low}
    For all $n \geq 2$, the bounded cohomology of $\homeoo(S^n \times [0, 1])$ vanishes in degrees $2$ and $3$. The bounded cohomology of $\homeoo(S^3 \times [0, 1])$ vanishes in degree $4$.
\end{corollary}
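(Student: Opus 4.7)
The plan is to chain together three inputs: the restriction isomorphism of Corollary \ref{cor:annulus:restriction}, the known vanishing of $H^*_b(\homeoo(S^n))$ in low degrees from \cite[Theorems 1.8 and 1.10]{MN}, and a degree-wise version of Proposition \ref{prop:quotient} (as recorded in \cite[Proposition 2.4]{MN}) to transport vanishing across a product. Concretely, I would first invoke Corollary \ref{cor:annulus:restriction} to obtain an isomorphism
\[H^*_b(\homeoo(S^n \times [0,1])) \cong H^*_b(\homeoo(S^n) \times \homeoo(S^n))\]
in every degree, thereby reducing the problem to vanishing on the product side.

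Next, I would import the low-degree computations from \cite[Theorems 1.8 and 1.10]{MN}, which give $H^k_b(\homeoo(S^n)) = 0$ for $k = 2, 3$ whenever $n \geq 2$, and additionally $H^4_b(\homeoo(S^3)) = 0$. Combined with the universal vanishing $H^1_b(G; \R) = 0$, this shows that $\homeoo(S^n)$ is boundedly acyclic in degrees $1$ through $3$ for $n \geq 2$, and in degrees $1$ through $4$ for $n = 3$. I would then apply the degree-wise Proposition \ref{prop:quotient} to the split short exact sequence
\[1 \to \homeoo(S^n) \to \homeoo(S^n) \times \homeoo(S^n) \to \homeoo(S^n) \to 1\]
obtained by projecting onto (say) the second factor. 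Since the kernel is boundedly acyclic in degrees $1, \ldots, k$ (with $k = 3$ for $n \geq 2$, and $k = 4$ for $n = 3$), this gives an isomorphism $H^i_b(\homeoo(S^n) \times \homeoo(S^n)) \cong H^i_b(\homeoo(S^n))$ in the range $i \leq k$, and the right-hand side vanishes for $i = 2, 3$ (resp.\ also $i = 4$) by the previous step.

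There is no serious obstacle: once Corollary \ref{cor:annulus:restriction} and the inputs of \cite{MN} are taken as black boxes, the argument is essentially bookkeeping. The only point requiring a bit of care is to match degree ranges, namely that the kernel must be boundedly acyclic up to (and including) the target degree for the isomorphism across the product to apply; this is precisely why one invokes the universal $H^1_b = 0$ to cover the bottom of the range, and why the improvement from degree $3$ to degree $4$ for $n = 3$ follows only from the corresponding stronger input about $\homeoo(S^3)$.
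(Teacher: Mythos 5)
Your proposal is correct and follows exactly the route the paper intends: Corollary \ref{cor:annulus:restriction} reduces to the product $\homeoo(S^n)\times\homeoo(S^n)$, the low-degree vanishing for $\homeoo(S^n)$ is imported from \cite[Theorems 1.8 and 1.10]{MN}, and the degree-wise version of Proposition \ref{prop:quotient} (\cite[Proposition 2.4]{MN}) applied to the split projection sequence transfers that vanishing to the product, with the degree ranges matched just as the paper's brief remark indicates.
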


In case $n = 1$ we can say more.

\begin{proof}[Proof of Corollary \ref{cor:annuli:regularity}]
    The first statement was already proved in Corollary \ref{cor:annulus:restriction} for $r = 0$. For $r \geq 1$, it follows from a similar argument, given the bounded acyclicity of $\diffro(D^2, \partial)$, which is part of the proof that the restriction $\diffro(D^2) \to \diffro(S^1)$ induces an isomorphism in bounded cohomology \cite[Theorem 1.4]{MN}. For the second statement, \cite[Theorem 1.2]{MN} gives an isomorphism of graded $\R$-algebras $H^*_b(\diffro(S^1)) \cong \R[\mathscr{E}]$. The computation for the product follows from a suitable K\"unneth argument: See \cite[Proposition 6.13]{binate} for a proof of the analogous statement for the product of two copies of Thompson's group $T$ (the statement is conditional on the bounded acyclicity of Thompson's group $F$, which was proven in \cite{lamplighters}). Alternatively, the desired statement can be recovered from \cite[Proposition 6.13]{binate}, by using the fact that there is an embedding $T \to \diffro(S^1)$ which is semiconjugate to the standard embedding \cite{ghyssergiescu}, and therefore induces an isomorphism in bounded cohomology in all degrees.
\end{proof}

Using the computation in degree $2$, we recover a theorem of Militon. For a group $G$, we denote by $Q(G)$ the space of homogeneous quasimorphisms $G \to \R$, and by $H^1(G)$ the subspace of homomorphisms. There is a natural homogeneous quasimorphisms on $\diffro(S^1 \times [0, 1])$, called the \emph{torsion number} \cite{militon}. Given $f \in \diffro(S^1 \times [0, 1])$, let $F \in \diffro(\R \times [0, 1])$ be a lift to the universal cover. The restriction to the two boundary components defines maps $F_0, F_1$ in the universal cover of $\homeoo(S^1)$, to which we can associate the asymptotic translation number $\tau$. We define the torsion number as
\[\rho(f) \coloneqq \tau(F_0) - \tau(F_1).\]
This is well-defined: If we replace $F$ by another lift, then $\tau(F_0)$ and $\tau(F_1)$ are both shifted by an integer, so their difference is the same. Moreover, $\rho$ is a homogeneous quasimorphisms, in fact, $\delta \rho$ is the difference of the canonical real bounded Euler cocycles of $\mathscr{E}_1$ and $\mathscr{E}_0$, as follows from the definitions \cite{matsumoto} (see also \cite[Section 10.8]{frigerio}).

\begin{proof}[Proof of Corollary \ref{cor:militon}]
    There is an exact sequence (see e.g. \cite[Proposition 2.8]{frigerio}):
    \[0 \to H^1(G) \to Q(G) \to H^2_b(G) \to H^2(G).\]
    We know that $H^2_b(G) = \R \mathscr{E}_0 \oplus \R \mathscr{E}_1$ by Corollary \ref{cor:annuli:regularity}. Both $\mathscr{E}_0$ and $\mathscr{E}_1$ are non-trivial in $H^2(G)$; this is because the restriction $G \to \diffro(S^1)$ to either boundary component admits a section. The fact that $\delta \rho$ is the difference of the canonical real bounded Euler cocycles of $\mathscr{E}_1$ and $\mathscr{E}_0$ implies at once that $\mathscr{E}_1 - \mathscr{E}_0$ generates the kernel of $H^2_b(G) \to H^2(G)$, and that $Q(G)/H^1(G)$ is one-dimensional, spanned by $\rho$.

    The last statement is a consequence of the fact that, for $r \neq 2, 3$, the group $G$ is perfect \cite[Theorem 5]{rybicki} (see also \cite{abefukui}).
\end{proof}

\subsection{Questions}

A very obvious question is whether Theorem \ref{thm:discs} holds in higher regularity:

\begin{question}
    Is $\diffr(D^n, \partial)$ boundedly acyclic for $r \geq 1$ and $n \geq 3$?
\end{question}

The case of $n  = 2$ is known \cite[Theorems 1.3 and 1.4]{MN}, by methods specific to dimension $2$. Our method is not dependent on dimension, but dealing with differentiability at the boundary presents several complications.

The main application of Theorem \ref{thm:discs} is the isomorphism in bounded cohomology between homeomorphism groups of discs and spheres (Corollary \ref{cor:discs}). However, the bounded cohomology of $\homeop(S^n)$, and more generally of $\diffrp(S^n)$, is understood (beyond low degrees) only in dimension $1$ \cite[Theorems 1.1 and 1.2]{MN}.

\begin{question}
    Is $\diffrp(S^n)$ boundedly acyclic for $r \geq 0$ and $n \geq 2$?
\end{question}

See also \cite[Questions 7.3 and 7.4]{MN} for related questions.

\medskip

One interesting feature of the proof of bounded acyclicity of $\homeo(\R^n)$ was that we had to study the bounded cohomology of certain monoids of self-embeddings. Normally the classifying space of the monoid of self-embeddings is more flexible to study than that of their maximal subgroup. But we had to go through auxiliary groups to prove bounded acyclicity of $\emb(\N^*)$. On the other hand $\homeo(\R^n)$ is the maximal subgroup of the monoid $\mathrm{Emb}(\R^n)$ of self-embeddings of $\R^n$. As the topological monoid, Kister \cite{MR0180986} showed that the inclusion
\[
\homeo(\R^n)\to \mathrm{Emb}(\R^n),
\]
induces a weak homotopy equivalence. As a discrete monoid, Segal's results in \cite{segal1978classifying} imply that 
\[
B\dHomeo(\R^n)\to B\mathrm{Emb}^{\delta}(\R^n),
\]
induces a homology isomorphism. Now that our main theorem proves the bounded acyclicity of $\homeo(\R^n)$ as a discrete group, it seems natural to ask the following question.
\begin{question}
    Is $\mathrm{Emb}^{\delta}(\R^n)$ boundedly acyclic?
\end{question}

Hirsch--Thurston \cite{MR0370615} proved that the Euler class for flat $C^0$-sphere bundles over an amenable base vanishes. Moreover, Calegari \cite[Theorem D]{Calegari} showed that the Euler class for $C^1$-flat $\R^2$-bundles over a torus should vanish. Even though the Euler class vanishes for such bundles with amenable bases, we know that the Euler class is unbounded  \cite{MN, Calegari}. This suggests the following question.
\begin{question}
    Are the Euler classes in $H^{2n}(\diffr(\R^{2n}, \mathrm{vol});\R)$ and in $H^{2n}(\diffr(S^{2n-1}, \mathrm{vol});\R)$ bounded?
\end{question}
Let us point out that $\diffrc(\R^n; \mathrm{vol})$ is boundedly acyclic \cite[Corollary 1.12]{ccc}, but our methods from Section \ref{verifying:Rn} break down in several ways when trying to prove that $\diffr(\R^n; \mathrm{vol})$ is boundedly acyclic.

\appendix
\section{Controlled annulus theorem (by Alexander Kupers)}
\label{appendix}

In this appendix, we will prove that the action of $\homeo(D^n,\partial)$ of homeomorphisms of the disc fixing the boundary pointwise on the set $\mathcal{F}$ of ``fat sequences'' is transitive: Combine Theorem \ref{thm:homeo-dd} with Corollary \ref{cor:add-germs}.

\subsection{Sequences of spheres in $\mathbb{R}^n$} As preparation, we will study the action of homeomorphisms of $\mathbb{R}^n$ on sequences of spheres. A \emph{parametrised sphere} in $\R^n$ is a locally flat embedding $S \colon S^{n-1} \hookrightarrow \mathbb{R}^n$ so that the composition 
\[S^{n-1} \overset{S}\lra \mathbb{R}^n \backslash\{\text{point in bounded component of $\mathbb{R}^n\backslash \mathrm{im}(S)$}\} \overset{\pi}\lra S^{n-1}\]
is orientation-preserving (i.e.~preserves the fundamental class). We say $S_1$ \emph{englobes} $S_0$ if $S_0$ is contained in the bounded component of $\mathbb{R}^n \setminus S_1$.

\begin{definition}A \emph{sequence} of parametrised spheres is a collection $\mathbf{S} = \{S_i\}_{i \in \mathbf{N}^\ast}$ of disjoint parametrised spheres such that $S_i$ englobes $S_{i-1}$.\end{definition}

\begin{example}The standard sequence $\mathbf{\Sigma}$ of parametrised spheres is given by letting $\Sigma_i \colon S^{n-1} \hookrightarrow \mathbb{R}^n$ be given by $x \mapsto i\cdot x$.\end{example}

The identity component of the group of homeomorphisms of $\mathbb{R}^n$ acts on the collection $\mathcal{S}$ of all sequences of parametrised spheres by post-composition. As a warm-up we will prove the following:

\begin{theorem}\label{thm:homeo-rd} A sequence $\mathbf{S}$ lies in the orbit of the standard nested sequence $\mathbf{\Sigma}$ under $\homeo_\circ(\mathbb{R}^n)$ if and only if every compact subset of $\mathbb{R}^n$ intersects finitely many $S_i$.
\end{theorem}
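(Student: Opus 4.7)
The forward implication is an immediate compactness argument: if $\mathbf{S} = h \cdot \mathbf{\Sigma}$ with $h \in \homeoo(\R^n)$, then for any compact $K \subset \R^n$ the preimage $h^{-1}(K)$ is bounded, hence meets only finitely many concentric round spheres $\Sigma_i$; by equivariance $K$ meets only finitely many $S_i$.

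For the converse I would first observe that $\R^n$ is exhausted by the closed bounded components $\overline{U_i}$ of $\R^n \setminus S_i$: for any $p \in \R^n$, take a path $\gamma$ from $p$ to a point inside $S_1$; by hypothesis $\gamma$ meets $S_i$ for only finitely many $i$, so for some large $N$ the path $\gamma$ lies in the complement of $S_N$ and hence in $\overline{U_N}$. Set $A_1 \coloneqq \overline{U_1}$ and let $A_i$ denote the closed region between $S_{i-1}$ and $S_i$ for $i \geq 2$, with $B_i$ defined analogously for $\mathbf{\Sigma}$. Since each $S_i$ is locally flat, the Brown--Mazur generalised Schoenflies theorem gives $A_1 \cong D^n$, and the topological annulus theorem (Kirby for $n \neq 4$, Quinn for $n = 4$) gives $A_i \cong S^{n-1} \times [0,1]$ for $i \geq 2$.

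The main step is to construct inductively orientation-preserving homeomorphisms $h_i \colon B_i \to A_i$ satisfying $h_i \circ \Sigma_j = S_j$ for $j \in \{i-1, i\}$ on each boundary (interpreting $\Sigma_0, S_0$ as empty in the base case). For $i = 1$, start with a Schoenflies homeomorphism $D^n \to A_1$ and precompose by the Alexander cone of a boundary correction. For $i \geq 2$, start with any orientation-preserving homeomorphism $\psi \colon B_i \to A_i$ and post-compose by a self-homeomorphism of $A_i \cong S^{n-1} \times [0,1]$ whose boundary restrictions realise the two required correction maps. The existence of such a self-homeomorphism reduces to the fact that $\pi_0 \homeop(S^{n-1})$ is trivial in all dimensions---a standard consequence of the annulus theorem via the machinery of stable homeomorphisms---so that each correction is isotopic to the identity and the two isotopies concatenate to a self-homeomorphism of $S^{n-1} \times [0,1]$ with the prescribed boundary values.

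Finally, glue the $h_i$ into a map $h \colon \R^n \to \R^n$ via $h|_{B_i} \coloneqq h_i$: the pieces agree on the common boundaries by the inductive boundary condition, and $\{B_i\}$ is a locally finite closed cover of $\R^n$, so $h$ is continuous; the same argument applied to $\{h_i^{-1}\}$ gives a continuous inverse using the exhaustion $\R^n = \bigcup_i A_i$ established above. The resulting map $h$ is orientation-preserving by construction, so lies in $\homeoo(\R^n)$, and satisfies $h \cdot \mathbf{\Sigma} = \mathbf{S}$ by design. I expect the principal obstacle to be the inductive boundary-adjustment step: tracking orientations carefully so that the two correction homeomorphisms of $S^{n-1}$ are orientation-preserving, and hence lie in the identity component of $\homeop(S^{n-1})$, is essential for the isotopy-concatenation argument to produce the required self-homeomorphism of the annular region.
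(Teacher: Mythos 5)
Your proposal is correct and takes essentially the same approach as the paper's proof: decompose $\R^n$ into the Schoenflies ball bounded by $S_1$ and the annular regions between consecutive spheres, fix the boundary parametrisations using the Alexander trick on the ball and the path-connectedness of $\homeop(S^{n-1})$ on the annuli, then glue along the locally finite closed cover, with the going-to-infinity hypothesis providing surjectivity. The only difference is presentational: the paper packages the boundary corrections as sharpened Schoenflies and annulus theorems rather than as an inductive adjustment step.
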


We refer to the condition in the previous definition as $\mathbf{S}$ \emph{going to infinity}. It is equivalent to $\mathbb{R}^n$ being equal to union of the bounded components of $\mathbb{R}^n \setminus \mathrm{im}(S_i)$ for all $i \geq 1$: if a compact $K$ intersects the image of infinitely many $S_i$, pick a sequence $k_j \in K$ with $k_j \in \mathrm{im}(S_{i_j})$ where $i_j \to \infty$ and let $k$ be the limit of a convergent subsequence, then we must have that $k$ must lie in the unbounded component of $\mathbb{R}^n \setminus \mathrm{im}(S_{i_j})$ for all $j$ and hence the intersection of the unbounded components of all $\mathbb{R} \setminus \mathrm{im}(S_i)$ is non-empty.

The proof will use as main input two slightly sharpened variants of the Schoenflies and annulus theorems.

\begin{theorem}[Schoenflies theorem] \label{thm:schoenflies} Let $S$ be a parametrised sphere and $W$ denote the closure of the bounded component of $\mathbb{R}^n \setminus \mathrm{im}(S)$. Then there is a homeomorphism $h \colon D^n \to W$ that agrees with $S$ on $S^{n-1}$.\end{theorem}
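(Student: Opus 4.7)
The plan is to deduce this parametrised sharpening of the topological Schoenflies theorem from the classical Brown--Mazur theorem, using the Alexander trick to correct the boundary parametrisation. The only substantive input is the locally flat Schoenflies theorem itself, which is built into the hypothesis that $S$ is locally flat.

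First, I would pass to the one-point compactification $S^n = \R^n \cup \{\infty\}$. Since $\mathrm{im}(S)$ is compact in $\R^n$, the embedding $S$ extends to a locally flat embedding $S \colon S^{n-1} \hookrightarrow S^n$ whose image misses $\infty$, and $\infty$ lies in the unbounded component of $\R^n \setminus \mathrm{im}(S)$. By Brown's topological Schoenflies theorem for locally flat codimension-one spheres, the closure $W$ of the component of $S^n \setminus \mathrm{im}(S)$ not containing $\infty$ is homeomorphic to $D^n$. This produces \emph{some} homeomorphism $h_0 \colon D^n \to W$, but there is no reason for $h_0$ to restrict to $S$ on $\partial D^n$.

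To correct the boundary, I would proceed as follows. Both $h_0|_{S^{n-1}}$ and $S$ are homeomorphisms from $S^{n-1}$ onto $\partial W = \mathrm{im}(S)$, so the composition
\[
\phi \;:=\; (h_0|_{S^{n-1}})^{-1} \circ S \colon S^{n-1} \longrightarrow S^{n-1}
\]
is a self-homeomorphism of the sphere. The Alexander trick provides a radial extension
\[
\Phi \colon D^n \longrightarrow D^n, \qquad \Phi(tx) \;=\; t\,\phi(x) \quad (x \in S^{n-1},\ t \in [0,1]),\qquad \Phi(0) = 0,
\]
which is a homeomorphism because $|\Phi(y)| = |y|$ ensures continuity at the origin and the analogous formula with $\phi^{-1}$ gives the inverse. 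Setting $h := h_0 \circ \Phi$ yields a homeomorphism $D^n \to W$, and on the boundary one computes $h(x) = h_0(\phi(x)) = S(x)$ as required.

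The only potential obstacle is the appeal to the locally flat Schoenflies theorem in the ambient sphere $S^n$; but local flatness of the compactified embedding follows immediately from local flatness of $S$ near its (compact) image, so this is not a real issue. Note that the orientation-preserving condition built into the definition of a parametrised sphere plays no role in the extension step, since the Alexander trick accepts any self-homeomorphism of $S^{n-1}$; it is only used to canonically single out the bounded component of $\R^n \setminus \mathrm{im}(S)$ as the side to be identified with the interior of $D^n$.
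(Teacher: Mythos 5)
Your proposal is correct and is essentially the paper's own argument: invoke the classical Brown--Mazur Schoenflies theorem to get some homeomorphism $D^n \to W$, then correct the boundary parametrisation by precomposing with the Alexander-trick extension of $(h_0|_{S^{n-1}})^{-1} \circ S$. The extra remarks on compactification and on the irrelevance of the orientation condition for the extension step are fine but do not change the substance.
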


\begin{proof}The usual Schoenflies theorem provides a homeomorphism $g \colon D^n \to W$ (\cite{BrownSchoenflies,Mazur,Morse}). By the Alexander trick $(g|_{S^{n-1}})^{-1} \circ S$ extends to a homeomorphism of $D^n$ and precompose $g$ with this to get the desired homeomorphism.
\end{proof}

\begin{theorem}[Annulus theorem] \label{thm:annulus} Let $S_1,S_2$ be parametrised spheres with $S_1$ englobed by $S_2$, and let $W$ denote the closure of the region between these. Then there is a homeomorphism $h \colon S^{n-1} \times [1,2] \to W$ that agrees with $S_1$ on $S^{n-1} \times \{1\}$ and $S_2$ on $S^{n-1} \times \{2\}$.\end{theorem}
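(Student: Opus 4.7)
The plan is to deduce Theorem \ref{thm:annulus} from the classical (unparametrised) annulus theorem by a boundary-adjustment argument in the same spirit as the proof of Theorem \ref{thm:schoenflies}. First, I would apply the classical annulus theorem (Kirby's theorem for $n\neq 4$, and Quinn for $n=4$) to the closed region $W$ between $S_1$ and $S_2$ to obtain \emph{some} homeomorphism $g\colon S^{n-1}\times[1,2]\to W$. By composing, if necessary, with an orientation-reversing self-homeomorphism of the source, I may assume that $g$ is orientation-preserving. The restrictions $g_i:=g|_{S^{n-1}\times\{i\}}$ then parametrise the spheres $\mathrm{im}(S_i)$, but they need not agree with $S_i$ on the nose; the whole task is to fix this.

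Next, I would set $\phi_i:=S_i^{-1}\circ g_i\in\homeo_+(S^{n-1})$ for $i=1,2$, where orientation-preservation of each $\phi_i$ follows from the orientation conventions already built into the notion of parametrised sphere together with the choice of $g$ above. Given any continuous path $\psi\colon[1,2]\to\homeo_+(S^{n-1})$ with $\psi(1)=\phi_1^{-1}$ and $\psi(2)=\phi_2^{-1}$, I define the self-homeomorphism $F\colon S^{n-1}\times[1,2]\to S^{n-1}\times[1,2]$ by $F(x,t):=(\psi(t)(x),t)$ and set $h:=g\circ F$. A direct computation then gives
\[ h(x,i) \;=\; g_i(\phi_i^{-1}(x)) \;=\; S_i(\phi_i\phi_i^{-1}(x)) \;=\; S_i(x), \]
which is exactly the required boundary condition.

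The key substantive input is the existence of the path $\psi$, which is equivalent to the path-connectedness of $\homeo_+(S^{n-1})$. This is elementary for $n-1\leq 2$ (via explicit rotations and the classical Schoenflies theorem) and for larger $n$ rests on the topological smoothing and isotopy theory developed by Kirby--Siebenmann, combined with Hatcher's and Quinn's work in the low-dimensional cases. I expect this step to be the main obstacle in the write-up: it is the only place where the proof draws on inputs external to this appendix, whereas the remaining steps are a routine collar manipulation. An alternative route, avoiding the explicit invocation of path-connectedness, would be to combine the Schoenflies theorem (Theorem \ref{thm:schoenflies}) for each bounded component with an Alexander-trick extension across a thin collar of each boundary sphere; this variant localises the adjustment to two shells $S^{n-1}\times[1,1+\epsilon]$ and $S^{n-1}\times[2-\epsilon,2]$, leaving $g$ unchanged on the middle slab.
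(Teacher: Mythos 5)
Your proposal is correct and takes essentially the same approach as the paper's proof: apply the classical annulus theorem to get some homeomorphism onto $W$, then fix the boundary parametrisations by precomposing with a level-preserving self-homeomorphism of $S^{n-1}\times[1,2]$ built from a path in $\homeop(S^{n-1})$ (the paper concatenates two isotopies to the identity, which is the same as your single path $\psi$), with path-connectedness of $\homeop(S^{n-1})$ as the key external input in both cases.
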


\begin{proof}The usual annulus theorem provides a homeomorphism $h\colon S^{n-1} \times [1,2] \to W$ \cite{Moise,Kirby,Quinn}. We may assume by precomposing with a reflection if necessary that $h$ is orientation-preserving on both boundary components. Then $(h|_{S^{n-1} \times \{1\}})^{-1} \circ S_1$ is orientation-preserving and since $\homeop(S^{n-1})$ is path-connected, there is an isotopy $b_t \colon S^{n-1} \times [0,1] \to S^{n-1}$ from it to the identity. Similarly, there is an isotopy $b'_t$ from the identity to $(h|_{S^{n-1} \times \{2\}})^{-1} \circ S_2$. Now define a homeomorphism of $S^{n-1} \times [1,2]$ by
	\[(x,t) \longmapsto \begin{cases} (b_{2t}(x),t) & \text{if $t \in [0,1/2]$,} \\
		(b'_{2t-1}(x),t) & \text{if $t \in [1/2,1]$} \end{cases}\]
and precompose $h$ with it to get the desired homeomorphism. 
\end{proof}

\begin{proof}[Proof of Theorem \ref{thm:homeo-rd}] The direction $\Leftarrow$ is clear as the standard nested sequence has the property of going to infinity and this property is preserved by homeomorphisms. For the direction $\Rightarrow$, without any hypothesis on $\mathbf{S} = \{S_i\}_{i \in \mathbf{N}^*}$ we will inductively construct an embedding $h \colon \mathbb{R}^n \to \mathbb{R}^n$ so that $h \Sigma_i = S_i$. The condition that the sequence goes to infinity guarantees this is surjective, and any surjective embedding is a homeomorphism. It will lie in the identity component because by construction it will be orientation-preserving.
	
Let $W_1$ denote the closure of the bounded component of $\mathbb{R}^n \setminus \mathrm{im}(S_1)$ and $W_{i+1} \colon S_i \leadsto S_{i+1}$ for $i \geq 1$ denote the closure of the region between $\mathrm{im}(S_i)$ and $\mathrm{im}(S_{i+1})$. Apply Theorem \ref{thm:schoenflies} to get a homeomorphism $h_1 \colon D^n \to W_1$ and apply Theorem \ref{thm:annulus} to get homeomorphisms $h_{i+1} \colon S^{n-1} \times [i,i+1] \to W_{i+1}$, all agreeing with the appropriate $S_i$'s on their boundary components. Now we define $h$ as
\[h(x) \coloneqq \begin{cases} h_1(x) & \text{if $x \in D^n$,} \\
		h_{i+1}(x) & \text{if $x \in S^{n-1} \times [i,i+1]$ for $i \geq 1$,}\end{cases}\]
where we identify $S^{n-1} \times [i,i+1]$ with a subset of $\mathbb{R}^n$ using radial coordinates.
\end{proof}

\subsection{Sequences of spheres in $D^n$}
Let $\homeo(D^n,\partial)$ denote those homeomorphisms that are the identity on $\partial D^n$ (\emph{not} necessarily near the boundary). Identifying the interior of $D^n$ with $\mathbb{R}^n$, these are those homeomorphisms $h$ of $\mathbb{R}^n$ such the function $\tfrac{h(tx)}{|h(tx)|} \colon S^{n-1} \to S^{n-1}$, defined for $t$ sufficiently large, converges uniformly to the identity at $t \to \infty$. We may ask what the orbit of the standard nested sequence under $\homeo(D^n, \partial) \subset \homeop(\mathbb{R}^n)$ is. 

To state the answer, we introduce a notion of ``radial control'' using reference maps to $S^{n-1}$, with metric $d$ induced from the standard one on Euclidean space. Given reference maps $r_0 \colon X_0 \to S^{n-1}$ and $r_1 \colon X_1 \to S^{n-1}$, we say that a continuous map $f \colon X_0 \to X_1$ has \emph{diameter $<\epsilon$} if $\sup\{d(r_0(x),r_1(f(x)))\mid x \in X_0\}<\epsilon$. The map $\smash{x \mapsto \tfrac{x}{|x|}}$ induces a map to $S^{n-1}$ on any subset of $\mathbb{R}^n$ avoiding the origin; unless we say otherwise, this is the reference map to $S^{n-1}$ on such subsets. Let us make some elementary observations about this notion:
\begin{itemize}
	\item If $f,g$ are composable so that $\diam(f)<\epsilon$ and $\diam(g)<\delta$ then $\diam(gf)<\epsilon+\delta$.
	\item If $h$ is a homeomorphism so that $\diam(h)<\epsilon$ then $\diam(h^{-1})<\epsilon$.
	\item If $\{A_i\}$ is a cover then $\diam(f) = \sup\,\diam(f|_{A_i})$.
\end{itemize}

\begin{theorem}\label{thm:homeo-dd} A sequence of parametrised spheres $\mathbf{S} = \{S_i\}_{i \in \mathbb{N}^*} \in \mathcal{S}$ lies in the orbit of the standard  sequence $\mathbf{\Sigma}$ under $\homeo(D^n,\partial)$ if and only if $\diam(S_i) \to 0$ as $i \to \infty$.
\end{theorem}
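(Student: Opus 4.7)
The plan is to split the biconditional into its two directions. The forward direction is conceptual and essentially unpacks the defining property of $\homeo(D^n,\partial)$, while the reverse direction mimics the proof of Theorem~\ref{thm:homeo-rd} but replaces the annulus theorem with a controlled version. Recall that under the radial identification of $\mathrm{int}(D^n)$ with $\mathbb{R}^n$, an embedding $h \colon \mathbb{R}^n \to \mathbb{R}^n$ extends to the identity on $\partial D^n$ if and only if $\sup\{d(\tfrac{x}{|x|},\tfrac{h(x)}{|h(x)|}) : |x| \geq R\} \to 0$ as $R \to \infty$.

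For the forward direction, suppose $h \in \homeo(D^n,\partial)$ and $\mathbf{S} = h \cdot \mathbf{\Sigma}$. Then $S_i(x) = h(i \cdot x)$, so $\diam(S_i) = \sup_{x \in S^{n-1}} d\bigl(x, \tfrac{h(ix)}{|h(ix)|}\bigr)$, and this tends to $0$ as $i \to \infty$ by the radial characterization above.

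For the reverse direction, suppose $\diam(S_i) \to 0$. Let $W_1$ denote the closure of the bounded component of $\mathbb{R}^n \setminus \mathrm{im}(S_1)$, and for $i \geq 1$ let $W_{i+1}$ be the closure of the region between $\mathrm{im}(S_i)$ and $\mathrm{im}(S_{i+1})$. I would use Theorem~\ref{thm:schoenflies} to obtain a homeomorphism $h_1 \colon D^n \to W_1$ and, for each $i \geq 1$, an annular fill $h_{i+1} \colon S^{n-1} \times [i,i+1] \to W_{i+1}$ agreeing with $S_i$ and $S_{i+1}$ on the boundary. Gluing these pieces along their common boundaries using radial coordinates, as in Theorem~\ref{thm:homeo-rd}, produces a homeomorphism $h \colon \mathbb{R}^n \to \mathbb{R}^n$ with $h \circ \Sigma_i = S_i$. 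To ensure $h$ lies in $\homeo(D^n,\partial)$, one must choose the fills so that $\diam(h_{i+1}) \leq C\max(\diam(S_i),\diam(S_{i+1}))$ for a universal constant $C$; the hypothesis $\diam(S_i) \to 0$ then forces $\sup_{i \geq I}\diam(h|_{S^{n-1}\times[i,i+1]}) \to 0$ as $I \to \infty$, which is exactly the radial control required for $h$ to extend continuously to the identity on $\partial D^n$.

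The hard part will be producing such controlled annular fills. The standard annulus theorem used in Theorem~\ref{thm:annulus} gives a homeomorphism but no control whatsoever on how it distorts radial directions, so what is needed is a genuine strengthening that upgrades diameter bounds on the two boundary spheres to a diameter bound on the interpolating homeomorphism. This is exactly a controlled annulus theorem, and I would regard it as the key technical input to be extracted from the appendix. Heuristically, I would expect its proof to exploit the smallness of $\diam(S_i), \diam(S_{i+1})$ to first isotope these spheres radially to round spheres lying in a thin spherical shell, then fill the standard annulus between those round spheres, and finally conjugate back by the inverse isotopy, with every map in sight trapped inside an enlarged thin shell and therefore having controlled diameter.
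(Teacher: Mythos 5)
Your proposal follows essentially the same route as the paper: the easy direction via the radial characterization of $\homeo(D^n,\partial)$, and the converse by filling $W_1$ with the Schoenflies theorem and the regions $W_{i+1}$ with controlled annular homeomorphisms glued in radial coordinates, with the controlled annulus theorem (Proposition \ref{prop:controlled-annulus}, proved via Edwards--Kirby local contractibility) as the key input, exactly as in the appendix. The only deviation is that you ask for a linear bound $\diam(h_{i+1}) \leq C\max(\diam(S_i),\diam(S_{i+1}))$, which is stronger than what the appendix provides and than what you need: the $\epsilon$--$\delta$ form suffices, choosing fills of diameter $<1/k$ once $\diam(S_i),\diam(S_{i+1})<\delta_k$ and using the uncontrolled annulus theorem for the finitely many remaining indices.
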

We refer to the condition in the theorem as $\mathbf{S}$ having \emph{trivial germ at the boundary}.

This theorem will follow by essentially the same proof as for Theorem \ref{thm:homeo-rd} once we establish the following ``controlled annulus theorem''. Such a result was surely known to the experts, but we could not find it in the literature.

\begin{proposition}[Controlled annulus theorem] \label{prop:controlled-annulus}  Let $S_1,S_2$ be parametrised spheres with $S_1$ englobing the origin and $S_2$ englobing $S_1$, and let $W$ denote the closure of the region between these. For all $\epsilon>0$ there exists a $\delta>0$ so that if $S_1,S_2$ have diameter $<\delta$, then there is a homeomorphism $h \colon S^{n-1} \times [1,2] \to W$ of diameter $<\epsilon$ that agrees with $S_1$ on $S^{n-1} \times \{1\}$ and $S_2$ on $S^{n-2} \times \{2\}$.
\end{proposition}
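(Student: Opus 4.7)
My plan is to reduce to the case where $S_1$ and $S_2$ are \emph{radial} parametrised spheres --- those of the form $x \mapsto \rho(x)\,x$ for a continuous $\rho \colon S^{n-1} \to (0,\infty)$ --- and then to write the homeomorphism down by hand. When $S_1,S_2$ are radial with radii $\rho_1 \leq \rho_2$ the formula
\[
h_{\mathrm{rad}}(x,t) = \bigl((2-t)\rho_1(x) + (t-1)\rho_2(x)\bigr)\,x
\]
is a homeomorphism $S^{n-1} \times [1,2] \to W$ matching each sphere on the appropriate boundary component. Every point of its image lies on the ray from the origin through its source direction, so its diameter with respect to the reference map $y \mapsto y/|y|$ is exactly zero.

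The heart of the proof will therefore be a \emph{controlled straightening lemma}: for any $\eta>0$ there is $\delta>0$ such that any parametrised sphere $S$ with $\diam(S) < \delta$ admits an ambient isotopy $H_s$ of $\R^n$, supported in a prescribed thin bicollar of $\mathrm{im}(S)$, of diameter $< \eta$, and carrying $S$ to a radial parametrisation. Granting this, I would apply the lemma separately to $S_1$ and $S_2$, arranging the two straightening isotopies to have disjoint supports (possible since $S_1$ and $S_2$ are disjoint locally flat spheres and hence admit disjoint bicollars by Brown's theorem), and then compose with $h_{\mathrm{rad}}$. The diameter of the resulting $h$ is at most $2\eta$, which is made $< \epsilon$ by choosing $\eta = \epsilon/2$.

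For the straightening lemma itself, the natural candidate isotopy on $S^{n-1}$ is
\[
F_s(x) = |S(x)|\bigl[(1-s)\bigl(S(x)/|S(x)|\bigr) + s\,x\bigr],
\]
a path of maps from $S$ to the radial sphere $x \mapsto |S(x)|\,x$. For $\delta$ small the bracketed vector stays uniformly close to $x$, so $F_s$ has diameter $< \delta$ and never hits the origin, and one checks that each $F_s$ is an embedding. To promote $\{F_s\}$ to an ambient isotopy with small diameter I would invoke the Edwards--Kirby isotopy extension theorem \cite{EdwardsKirby} inside a bicollar of $\mathrm{im}(S)$, chosen thin enough (in both the normal and angular directions) so that the ambient extension inherits the small diameter of $F_s$ up to a factor depending on the angular thickness of the bicollar.

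The main obstacle, which I expect requires the most care, is twofold: verifying that each $F_s$ is genuinely an embedding --- since $S$ is only locally flat, this is not as automatic as in the smooth setting and likely needs a compactness argument over the sphere using uniform local-flatness charts to rule out self-intersections for small $\delta$ --- and arranging that the bicollar around $\mathrm{im}(S)$ be contained in a prescribed angular slab around $\mathrm{im}(S)$, so that the Edwards--Kirby extension has diameter controlled linearly by $\delta$. The angular smallness of the collar would be extracted by starting from an arbitrary bicollar given by Brown's theorem and shrinking it radially until it fits inside the slab in which $\mathrm{im}(S)$ already sits by hypothesis. Once these two ingredients are in place, the composition of the two straightenings with $h_{\mathrm{rad}}$ yields the required $h$ with $\diam(h) < \epsilon$.
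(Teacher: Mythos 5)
Your overall architecture---straighten $S_1$ and $S_2$ to radial graphs by small ambient homeomorphisms with disjoint supports, then use the explicit radial interpolation $h_{\mathrm{rad}}$---would indeed prove the proposition, and the zero-diameter computation for $h_{\mathrm{rad}}$ is fine. The genuine gap is in the proof of your controlled straightening lemma: the straight-line track $F_s(x)=|S(x)|\bigl[(1-s)\,S(x)/|S(x)|+sx\bigr]$ can fail to be injective at intermediate $s$, and no choice of small $\delta$ repairs this, because $\diam(S)<\delta$ bounds only the angular displacement of $S$, not its local distortion. Concretely, write $S$ on a patch in polar form $\theta\mapsto\bigl(\theta+\alpha(\theta),\,R(\theta)\bigr)$ (angle, radius) with $|\alpha|\le\delta$, where $\alpha\equiv\delta$ on a plateau, then descends steeply from $\delta$ to $0$ on an interval of length $b\ll\delta$, then $\alpha\equiv 0$. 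The map $\theta\mapsto\theta+\alpha(\theta)$ folds, and injectivity of $S$ is rescued solely by the radii $R$ separating the angularly identified points. At time $s$ the angle of $F_s$ is $\theta+\arg\bigl((1-s)e^{i\alpha(\theta)}+s\bigr)$ (for $s=\tfrac12$ this is exactly $\theta+\alpha(\theta)/2$), which folds at \emph{different} pairs of points, while the radius becomes $R(\theta)\bigl|(1-s)e^{i\alpha(\theta)}+s\bigr|$ (for $s=\tfrac12$, $R(\theta)\cos(\alpha(\theta)/2)$). So a plateau point and a point in the flat region that become angularly identified only at $s=\tfrac12$ have $F_{1/2}$-radii $R\cdot\cos(\delta/2)$ and $R$ respectively, and one can tune a continuous $R$, arbitrarily close to $1$, so that these coincide while all the $s=0$ identifications (which occur at different pairs) keep distinct radii; then $S$ is injective but $F_{1/2}$ is not. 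Spinning this two-dimensional picture (letting $\alpha,R$ depend on one latitude coordinate) gives locally flat, indeed smoothable, counterexamples in every dimension and for every $\delta>0$, so the failure is not a wildness issue to be fixed by ``uniform local-flatness charts''. A second unaddressed point: even when the track happens to run through embeddings, the Edwards--Kirby isotopy extension theorem needs those intermediate embeddings to be locally flat, which linear interpolation does not provide in the topological category.

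For comparison, the paper's proof constructs no explicit isotopy at all: it applies the controlled local contractibility theorem for spaces of locally flat codimension-one embeddings (Theorem \ref{thm:local-contractibility}, due to Edwards--Kirby) inside a round annulus containing the sphere, obtaining directly an ambient homeomorphism of controlled size, supported in that annulus and fixing its boundary, which carries $S_1$ to a round sphere; it then repeats this for the image of $S_2$ and takes the inverse of the composition. If you wish to keep your reduction to radial graphs, the straightening lemma should be deduced from that local contractibility input (or from the proposition itself) rather than from a hand-made track; but at that point the detour through radial graphs buys nothing over the paper's direct argument.
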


The proof relies on local contractibility of spaces of locally flat embeddings, in the compact-open topology. If $Y$ is a metric space and $X$ any space, for any two maps $f,g \colon X \to Y$ we define $d(f,g) = \sup\{d(f(x),g(x)) \mid x \in X\}$. The following is a consequence of \cite{EdwardsKirby}, explicitly stated as \cite[Theorem 7.3.1]{DavermanVenema}.

\begin{theorem} \label{thm:local-contractibility} Let $N$ be a compact PL-manifold with metric and $M \subset \mathrm{int}\,N$ be a two-sided closed PL-submanifold of codimension one. For all $\epsilon>0$ there exists a $\delta>0$ so that for any locally flat embedding $e \colon M \to \mathrm{int}\,N$ satisfying $d(e,\mathrm{inc}_M)<\delta$ there is a homeomorphism $h \colon N \to N$ fixing the boundary pointwise, satisfying $he = \mathrm{inc}_M$, and $d(h,\id_N)<\epsilon$.	
\end{theorem}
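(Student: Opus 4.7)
The plan is to prove Theorem~\ref{thm:local-contractibility} by a two-stage argument: a global reduction to a Euclidean local model via a PL handle decomposition of $(N,M)$, and a solution of the local model via Kirby's torus trick.

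\textbf{Step 1 (Handle reduction).} Refine the PL structure on $N$ so that $M$ appears as a subcomplex, with all simplices of diameter much smaller than $\epsilon$. The dual handle decomposition writes $N$ as a union of PL balls $B_1,\dots,B_k$, each either disjoint from $M$ or meeting it in a standard PL codimension-one slice. The plan is to construct $h$ as a composition $h_k\circ\cdots\circ h_1$, where $h_j$ is supported in a slight thickening of $B_j$, fixes $\partial N$ and the portion of $M$ already straightened by $h_1,\dots,h_{j-1}$, and arranges $(h_j\circ\cdots\circ h_1)\circ e$ to agree with $\mathrm{inc}_M$ on $B_j\cap M$. Local flatness of $e$ together with a preliminary collar push guarantees that at each stage the restriction of the current embedding to the PL slice $B_j\cap M$ is locally flat and nearby the inclusion in a PL-chart around $B_j$. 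Diameters accumulate additively, so it suffices to realize each $h_j$ with $\mathrm{diam}(h_j)<\epsilon/k$.

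\textbf{Step 2 (Local model via the torus trick).} The handle-local problem becomes: given a locally flat $e'\colon \mathbb{R}^{n-1}\hookrightarrow\mathbb{R}^n$ that coincides with the standard inclusion outside a compact set and satisfies $d(e',\mathrm{inc})<\delta'$, produce a compactly supported homeomorphism $h'$ of $\mathbb{R}^n$ with $h'\circ e'=\mathrm{inc}$ and $d(h',\id)<C\delta'$ for a universal constant $C$. The idea is to invoke Kirby's torus trick: use a large cube immersed into $T^n$ to transport $e'$ to a codimension-one embedding $T^{n-1}\hookrightarrow T^n$ close to the standard one, PL-approximate it (exploiting the triviality of $\mathrm{TOP}/\mathrm{PL}$ in codimension one, in the relevant dimension range), straighten the PL approximation on $T^n$ by the PL annulus theorem and PL isotopy extension, lift the resulting small ambient isotopy back to the universal cover $\mathbb{R}^n$, and absorb the deck-translation commutator by the Alexander trick on an outer buffer shell to compactify the support.

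\textbf{Main obstacle.} The principal difficulty is the quantitative control in Step 2: one must calibrate three competing scales -- the size of the cube lifted into $T^n$, the $C^0$-distance $\delta'$, and the size of the Alexander buffer used to localize the lift -- so as to preserve both the linear estimate $d(h',\id)\leq C\delta'$ and the compactness of $\mathrm{supp}(h')$. The passage from the small PL-approximation on $T^n$ back to a \emph{small} ambient isotopy is non-trivial and is exactly where the Edwards--Kirby stability lemma enters. In dimensions $n\neq 4$ the $\mathrm{TOP}/\mathrm{PL}$-approximation substep closes cleanly; in dimension $n=4$ it must be replaced by Quinn's controlled handle machinery, which is the reason the appendix quotes the theorem from \cite{EdwardsKirby} and \cite[Theorem 7.3.1]{DavermanVenema} rather than reproving it from scratch.
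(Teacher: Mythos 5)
First, note that the paper does not prove this statement at all: it is quoted verbatim from the literature, as a consequence of Edwards--Kirby, explicitly stated as \cite[Theorem 7.3.1]{DavermanVenema}. So any comparison is between your sketch and the published Edwards--Kirby argument, not an argument in this paper.

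Your proposal has a genuine gap: Step 2 \emph{is} the theorem, and you do not prove it. The handle reduction in Step 1 is standard bookkeeping (meshing plus additivity of diameters), but the local model -- producing a compactly supported homeomorphism $h'$ with $h'\circ e'=\mathrm{inc}$ and $d(h',\id)\leq C\delta'$ from a nearby locally flat codimension-one embedding -- is exactly the controlled handle-straightening content of Edwards--Kirby, and your ``main obstacle'' paragraph concedes this by invoking ``the Edwards--Kirby stability lemma'' at precisely the point where the work happens; that is circular as a proof of the quoted theorem. Moreover, the route you sketch for Step 2 is not viable as stated: Edwards--Kirby do not PL-approximate the transported embedding $T^{n-1}\hookrightarrow T^n$ and then apply the PL annulus theorem -- PL approximation of codimension-one topological embeddings is itself a delicate matter, and ``triviality of $\mathrm{TOP}/\mathrm{PL}$ in codimension one'' is not a meaningful vanishing statement to appeal to. Their torus trick for embedding spaces works by wrapping up and comparing the embedding with the standard one through bounded isotopies, deliberately avoiding any structure-theoretic input; this is also why local contractibility holds in \emph{all} dimensions, including $n=4$, without Quinn's controlled handle machinery (Quinn is cited in the appendix only for the annulus theorem in dimension $4$, not for this local contractibility statement). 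If you want a self-contained treatment, you would need to reproduce the actual Edwards--Kirby handle-straightening scheme with its quantitative estimates, or simply cite \cite{EdwardsKirby} and \cite[Theorem 7.3.1]{DavermanVenema} as the paper does.
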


\begin{proof}[Proof of Proposition \ref{prop:controlled-annulus}] We suppose $S_1,S_2$ have diameter $<\delta$ for some small $\delta$ to be specified at the end of the proof.

Picking spheres of radii $R_{\min}$ and $R_{\max}$ such that $S_1$ lies between them, we can regard using radial coordinates $S_1$ as an embedding $S^{n-1} \to S^{n-1} \times [R_{\min},R_{\max}]$. By picking a suitable self-homeomorphism $\lambda$ of $[R_{\min},R_{\max}]$ fixing the boundary, we can assume $d((\id \times \lambda) S_1,\mathrm{inc}_{S^{n-1} \times \{R_\mathrm{ave}\}})<\delta$ where $R_\mathrm{ave} \coloneqq \smash{\tfrac{1}{2}}(R_{\min}+R_{\max})$. For $\epsilon'$ to be picked later, by taking $\delta$ to be small we can use Theorem \ref{thm:local-contractibility} to find a homeomorphism $\widetilde{g}_1$ of $S^{n-1} \times [R_{\min},R_{\max}]$ that fixes the boundary, satisfying $d(\widetilde{g}_1,\id_{S^{n-1} \times [R_{\min},R_{\max}]})<\epsilon'$ and $\widetilde{g}_1 (\id \times \lambda) S_1=\mathrm{inc}_{S^{n-1} \times \{R_{\mathrm{ave}}\}}$. Extending $\widetilde{g}_1 (\id \times \lambda)$ by the identity and rescaling the radial coordinates by $\tfrac{1}{R_\mathrm{ave}}$, we get a homeomorphism $g_1$ of diameter $<\epsilon'$ so that $g_1S_1 = \mathrm{inc}_{S^{n-1} \times \{1\}}$. 

Now pick a sphere of radius $R'_{\max}$ so that $g_1S_2$ lies between it and of that of radius $1$. Using that $\diam(g_1S_2)<\epsilon'+\delta$, by making $\epsilon'$ and $\delta$ smaller we can use the same argument to construct a homeomorphism $g_2$ supported in $S^{n-1} \times [1,R'_{\max}]$ of diameter $<\epsilon''$ for $\epsilon' \leq \epsilon''$ and satisfying $g_2g_1S_2 = \mathrm{inc}_{S^{n-1} \times \{2\}}$. Then $(g_2g_1)^{-1}$ restricts to a homeomorphism $S^{n-1} \times [1,2] \to W$ of diameter $<\epsilon'+\epsilon''$ agreeing with $S_i$ on $S^{n-1} \times \{i\}$ for $i=1,2$. By taking $\delta$ small enough we can make $\epsilon'+\epsilon'' \leq \epsilon$.
\end{proof}

\begin{proof}[Proof of Theorem \ref{thm:homeo-dd}] The direction $\Leftarrow$ is clear as the standard sequence satisfies the condition of having trivial germ at the boundary and it is preserved by homeomorphisms of $D^n$ fixing $\partial D^n$ pointwise. For the direction $\Rightarrow$, we will construct an embedding $h \colon \mathbb{R}^n \to \mathbb{R}^n$ so that $h \Sigma_i = S_i$ and $\diam(h|_{S^{n-1} \times t}) \to 0$ as $t \to \infty$ using the second condition on the nested sequence. As before, the first condition guarantees this is surjective so a homeomorphism and it will be orientation-preserving so in the identity component.
	
Let $W_1$ denote the closure of the bounded component of $\mathbb{R}^n \setminus \mathrm{im}(S_1)$ and $W_{i+1} \colon S_i \leadsto S_{i+1}$ denote the closure of the region between $\mathrm{im}(S_i)$ and $\mathrm{im}(S_{i+1})$. Pick now for each $k \geq 1$ a $\delta_k>0$ so that Proposition \ref{prop:controlled-annulus} applies with $\epsilon = 1/k$.

Apply the Schoenfies theorem (Theorem \ref{thm:schoenflies}) to get a homeomorphism $h_1 \colon D^n \to W_1$ and apply the controlled annulus theorem (Proposition \ref{prop:controlled-annulus}) to get a homeomorphisms of $h_{i+1} \colon S^{n-1} \times [i,i+1] \to W_{i+1}$ of diameter $<1/k$ for $k$ largest such that $\diam(S_i),\diam(S_{i+1})<\delta_k$ when possible. There will be finitely many $i$ where this is not possible, and then apply the annulus theorem (Theorem \ref{thm:annulus}) to get homeomorphisms $h_{i+1} \colon S^{n-1} \times [i,i+1] \to W_{i+1}$. All of these agree with the appropriate $S_i$'s on the boundary. Now we can define $h$ as
	\[h(x) \coloneqq \begin{cases} h_1(x) & \text{if $x \in D^n$,} \\
		h_{i+1}(x) & \text{if $x \in S^{n-1} \times [i,i+1]$ for $i \geq 1$,}\end{cases}\]
where we identify $S^{n-1} \times [i,i+1]$ with a subset of $\mathbb{R}^n$ using radial coordinates, and by construction $\mathrm{diam}(h|_{S^{n-1} \times \{t\}}) \to 0$ as $t \to \infty$.
\end{proof}

\subsection{Sequences of germs in $\mathbb{R}^n$ or $D^n$} We can also consider germs near $S^{n-1} \times \{0\}$ of orientation-preserving topological embeddings $\overline{S} \colon S^{n-1} \times (-\tfrac{1}{4},\tfrac{1}{4}) \hookrightarrow \mathbb{R}^n$. 

\begin{definition}A \emph{sequence of germs of parametrised spheres} is a collection $\overline{\mathbf{S}} = \{\overline{S}_i\}_{i \geq 1}$ of disjoint germs of parametrised spheres such that $\overline{S}_i$ englobes $\overline{S}_{i-1}$.\end{definition}

There is a standard such sequence $\overline{\mathbf{\Sigma}}$ given by the germs of $\overline{\Sigma}_i \colon S^{n-1} \times (-\tfrac{1}{4},\tfrac{1}{4}) \to \mathbb{R}^n$ given by $(x,s) \mapsto (i+s) \cdot x$. We will use the notation $\dot{S}_i \coloneqq \overline{S}_i|_{S^{n-1} \times \{0\}}$ for the cores and $\dot{\mathbf{S}}$ for the resulting sequence of parametrised spheres.

\begin{corollary}\label{cor:add-germs} A sequence $\overline{\mathbf{S}}$ of germs of parametrised spheres lies in the orbit of the standard sequence $\overline{\mathbf{\Sigma}}$ under $\homeo^+(\mathbb{R}^n)$ or $\homeo(D^n,\partial)$ if and only if its sequence of cores $\dot{\mathbf{S}}$ does.
\end{corollary}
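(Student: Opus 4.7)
The ``only if'' direction is immediate, as any homeomorphism sending $\overline{\mathbf{\Sigma}}$ to $\overline{\mathbf{S}}$ automatically sends the cores to the cores. For the converse, suppose $h_0$ lies in the relevant group ($\homeop(\mathbb{R}^n)$ or $\homeo(D^n,\partial)$) and satisfies $h_0\dot{\Sigma}_i=\dot{S}_i$ as parametrised spheres for every $i$. Replacing $\overline{\mathbf{S}}$ by $h_0^{-1}\overline{\mathbf{S}}$, I may assume $\dot{S}_i=\dot{\Sigma}_i$ on the nose for every $i$. The task then reduces to constructing an element $g$ of the same group that fixes each $\dot{\Sigma}_i$ pointwise and carries the germ $\overline{\Sigma}_i$ to the germ $\overline{S}_i$ for every $i$; then $h_0\circ g$ realises the orbit relation on the original germ sequences.

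I would build $g$ as a disjoint product $g=\prod_i g_i$, where each $g_i$ is supported in a small neighbourhood $U_i$ of $\dot{\Sigma}_i$, the $U_i$ are chosen pairwise disjoint, $g_i$ fixes $\dot{\Sigma}_i$ pointwise, and $g_i$ carries the germ of $\overline{\Sigma}_i$ to that of $\overline{S}_i$. The existence of such $g_i$ is a parametrised bicollar-uniqueness statement for locally flat codimension-one submanifolds: $\overline{\Sigma}_i$ and $\overline{S}_i$ are both germs of bicollars of $\dot{\Sigma}_i$ inducing the same parametrisation of the core. Concretely, it can be produced by a two-sided application of the controlled annulus theorem (\Cref{prop:controlled-annulus}): for small $\delta>0$, the images $\overline{\Sigma}_i(S^{n-1}\times\{-\delta\})$ and $\overline{S}_i(S^{n-1}\times\{-\delta\})$ are parametrised spheres close to $\dot{\Sigma}_i$ on the inner side, and after comparing both to a common reference sphere further inside one obtains a homeomorphism of the inner annular region matching the two; the symmetric construction on the outer side, followed by gluing along $\dot{\Sigma}_i$, produces $g_i$ supported in a prescribed neighbourhood $U_i$.

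Since the $U_i$ are disjoint, $g=\prod g_i$ is well-defined and is a homeomorphism on the interior. In the $\homeop(\mathbb{R}^n)$ case each $g_i$ is compactly supported, so $g$ belongs to $\homeop(\mathbb{R}^n)$ without further ado. In the $\homeo(D^n,\partial)$ case I additionally need $g$ to extend continuously by the identity across $\partial D^n$. Theorem \ref{thm:homeo-dd} applied to $\dot{\mathbf{S}}$ gives $\diam(\dot{S}_i)\to 0$; hence after shrinking the germ representatives I can arrange $\diam(U_i)\to 0$, which forces $\diam(g_i)\to 0$ and therefore continuity of the patched $g$ at $\partial D^n$.

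The main technical point, and the reason the appendix's \emph{controlled} annulus theorem is required rather than the classical one, lies precisely in this last step: without quantitative diameter control on the $g_i$, the infinite product $g$ would in general fail to be continuous at the boundary of the disc. Everything else is a fairly standard bicollar-patching argument in the locally flat topological category.
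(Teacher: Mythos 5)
Your outline runs parallel to the paper's: reduce, via a group element matching the cores, to the case $\dot{S}_i=\dot{\Sigma}_i$, then correct the germs by a locally finite family of disjointly supported homeomorphisms $g_i$ with diameter control. The paper extracts the local correction (with control) from the collaring uniqueness theorem of Kirby--Siebenmann, whereas you propose to manufacture $g_i$ from a two-sided application of \Cref{prop:controlled-annulus}; that substitution is reasonable, provided you make explicit that $g_i$ is \emph{prescribed} to be $\overline{S}_i\circ\overline{\Sigma}_i^{-1}$ on a thin bicollar $\overline{\Sigma}_i(S^{n-1}\times[-\delta_i,\delta_i])$ (matching only the level spheres, or ``gluing along $\dot{\Sigma}_i$'', does not by itself carry one germ to the other), and that the annulus homeomorphisms are used only on the two regions between the level spheres and the round reference spheres, where they glue with the identity.

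The genuine gap is in your justification of continuity at $\partial D^n$. The claim ``after shrinking the germ representatives I can arrange $\diam(U_i)\to 0$, which forces $\diam(g_i)\to 0$'' is false, and in fact cannot even be formulated: $U_i$ is a neighbourhood of the entire sphere $\dot{\Sigma}_i$, so its Euclidean diameter is at least $2i$ and its diameter in the radial-control sense of the appendix is that of $S^{n-1}$; what shrinking the representatives controls is only the radial \emph{thickness} of $U_i$, and a homeomorphism supported in an arbitrarily thin round annulus about radius $i$ can still have radial-control diameter close to $\pi$ (e.g.\ a rotation by a large angle damped to the identity near the two boundary spheres). If such $g_i$ occurred for infinitely many $i$, the patched $g$ would fail to extend by the identity over $\partial D^n$. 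The correct source of control is the $\epsilon$ in the \emph{conclusion} of \Cref{prop:controlled-annulus}: apply it with $\epsilon=1/i$, choosing the level $\delta_i$ so small that the spheres $\overline{S}_i(\,\cdot\,,\pm\delta_i)$ have diameter below the corresponding threshold $\delta(1/i)$ (possible because, after your reduction, the cores are the round spheres $\dot{\Sigma}_i$), and note separately that $\overline{S}_i\circ\overline{\Sigma}_i^{-1}$ on the thin bicollar has small diameter once $\delta_i$ is small. Then $\diam(g_i)\to 0$ and continuity at the boundary follows; this is exactly the role played in the paper's proof by the control in the collaring uniqueness theorem. With that repair (the $\R^n$ case needs no control and even the uncontrolled annulus theorem suffices there), your argument goes through.
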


\begin{proof}
	The direction $\Rightarrow$ is obvious. For $\Leftarrow$, given $\overline{\mathbf{S}} = \{\overline{S}_i\}_{i \in \mathbf{N}^\ast}$ we pick a homeomorphism sending $\dot{\mathbf{S}}$ to the standard sequence. It remains to adjust the germs near the standard spheres and to do so, we will use that for any germ of embedding $S^{n-1} \times [0,1) \to S^{n-1} \times [0,1)$ near $S^{n-1} \times \{0\}$ that is the identity on $S^{n-1} \times \{0\}$, there is a compactly-supported homeomorphism that is the identity on the boundary which sends it to the germ of the identity of arbitrarily small diameter. To find this, note that both the given germ and the identity germ can be represented by collars on the boundary and invoke the collaring uniqueness theorem \cite[Theorem A.1, Essay I]{KirbySiebenmann}, which comes with control. We now pick for each germ $\overline{S}_i$ extending $\Sigma_i$ two homeomorphisms as above---one for each side, in $S^{n-1} \times (i-1/4,i]$ and $S^{n-1} \times [i,i+1/4)$---that map its germ to the standard germ, have diameter smaller than $1/i$, and necessarily agree on $S^{n-1} \times \{i\}$ so may be combined into a single homeomorphisms. The resulting homeomorphisms, one for each $i \in \mathbf{N}^\ast$, by construction have disjoint compact support so may be combined into a single homeomorphism $h$ so that $h \overline{S}_i = \overline{\Sigma}_i$ and $\mathrm{diam}(h|_{S^{n-1} \times \{t\}}) \to 0$ as $t \to \infty$.
\end{proof}

\footnotesize

\bibliographystyle{amsalpha}
\bibliography{ref}

\vspace{0.5cm}

\normalsize

\noindent{\textsc{Department of Pure Mathematics and Mathematical Statistics, University of Cambridge, UK}}

\noindent{\textit{E-mail address:} \texttt{ff373@cam.ac.uk}} \\

\noindent{\textsc{\'Ecole Polytechnique F\'ed\'erale de Lausanne (EPFL), Switzerland}}

\noindent{\textit{E-mail address:} \texttt{nicolas.monod@epfl.ch}} \\

\noindent{\textsc{Department of Mathematics, Purdue University, US}}

\noindent{\textit{E-mail address:} \texttt{snariman@purdue.edu}} \\

\noindent{\textsc{Department of Computer and Mathematical Sciences, University of Toronto Scarborough, Canada}}

\noindent{\textit{E-mail address:} \texttt{a.kupers@utoronto.ca}}

\end{document}